\newtheorem{prop}{Proposition}[subsection]
\newtheorem{coro}[prop]{Corollary}
\newtheorem{lem}[prop]{Lemma}
\newtheorem{rem}[prop]{Remark}
\newtheorem{exe}[prop]{Example}
\newtheorem{defi}[prop]{Definition}
\newtheorem{theo}[prop]{Theorem}
\newtheorem{propdefi}[prop]{Proposition-Definition}
\newtheorem{conj}[prop]{Conjecture}
\newtheorem{ques}[prop]{Question}
\newtheorem{theoi}{Theorem}
\newtheorem{coroi}[theoi]{Corollary}
\newtheorem{conji}[theoi]{Conjecture}
\newcommand{\R}{\mathbb R}
\newcommand{\Q}{\mathbb Q}
\newcommand{\C}{\mathbb C}
\newcommand{\N}{\mathbb N}
\newcommand{\V}{\mathbb V}
\newcommand{\Z}{\mathbb Z}
\newcommand{\F}{\mathcal F}
\renewcommand{\L}{\mathcal L}
\newcommand{\M}{\mathcal M} 
\newcommand{\diff}{\mathrm{Diff_X}}
\newcommand{\Ox}{{\mathcal O}_{X} }
\newcommand{\Dx}{{\mathcal D}_{X} }
\newcommand{\omod}{\mathbf{Mod}({\mathcal O}_{X})}
\newcommand{\coh}{\mathbf{Coh}({\mathcal O}_{X})}
\newcommand{\dmod}{\mathbf{Mod}({\mathcal D}_{X})}
\newcommand{\filtdmod}{Filt\mathbf{Mod}({\mathcal D}_{X})}
\newcommand{\cohd}{\mathbf{Coh}({\mathcal D}_{X})}
\newcommand{\hol}{\mathbf{Hol}({\mathcal D}_{X})}
\newcommand{\wt}{\widetilde}
\newcommand{\Ng}{\mathcal{N}(\Gamma)}
\newcommand{\Ug}{\mathcal{U}(\Gamma)}
\newenvironment{prf}{Proof:}{$\Box$}
\begin{document}

\title[$L^2$-Invariants of coherent $\mathcal{D}$-modules and Hodge Modules]{Towards a $L_2$ cohomology theory for Hodge Modules on infinite covering spaces: $L_2$ Constructible Cohomology and $L_2$ De Rham Cohomology for coherent $\mathcal{D}$-modules.  }

\date{\today}

\author[P.Eyssidieux]{Philippe Eyssidieux}
\address{Institut Fourier,
Laboratoire de Mathematiques UMR 5582,   
Universit\'e Grenoble Alpes,
CS 40700, 
38058 Grenoble, France}
\email{philippe.eyssidieux@univ-grenoble-alpes.fr}
\urladdr{http://www-fourier.ujf-grenoble.fr/$\sim$eyssi/}
\thanks{This  research was partially supported by the ANR project Hodgefun ANR-16-CE40-0011-01.}

\begin{abstract} 

This article constructs Von Neumann invariants for constructible complexes and coherent $\mathcal{D}$-modules on compact complex manifolds,  generalizing the  work of the author on coherent $L_2$-cohomology. 
We formulate a  conjectural generalization of Dingoyan's $L_2$-Mixed Hodge structures   in terms of Saito's Mixed Hodge Modules and give partial results in this direction. 

\noindent  2020 AMS Classification: 32J27, 32C38, 32Q30, 14C30, 46L10, 58J22. 
 
\noindent Keywords: Complex manifolds, $\mathcal{D}$-modules, Constructible sheaves, Hodge Modules, Mixed Hodge Theory,   Atiyah's $L_2$-index theorem, Group Von Neumann algebras, $L_2$ Betti numbers. 
\end{abstract}

\maketitle

This article is an extension of Dingoyan's $L_2$-Mixed Hodge theory \cite{Din} and 
a first step towards a  version of Gromov's influential article on K\"ahler-hyperbolic manifolds \cite{Gro2} that would apply to singular K\"ahler varieties. 

Gromov found a way to use  the  $L_2$-De Rham theory of an infinite Galois covering space of a compact K\"ahler manifold $X$ and obtain algebro-geometric restrictions 
if $X$ is K\"ahler-hyperbolic, for instance a compact complex submanifold of a neat quotient of a bounded symmetric domain.
This inspired the influential works of Campana \cite{Cam,Cam2} and Koll\'ar \cite{Kol1,Kol2} masterfully exploiting  in K\"ahler geometry the striking ideas of \cite{Ati} to study compact K\"ahler manifolds with infinite fundamental group. 
Gromov's ideas  were also extended in \cite{Eys1} to polarized Variations of Hodge Structures 
(actually to harmonic bundles) on a {\em compact} K\"ahler manifold $X$.  
They were also extended in \cite{CamDem, Eys} to  a theory of coherent $L_2$-cohomology in Complex Analytic Geometry. 
Some applications 
were given, say in \cite{E5,E3, Taka},  and an extremely striking one was recently found  \cite{Braun}. 

With applications in mind, the author is interested in further extending 
the theory to Mixed Hodge Modules \cite{SaiMHM}. 

\vskip 0.3cm

Let $X$ be a compact complex manifold. Let $\pi: \widetilde{X} \to X$ be an infinite Galois covering space 
with $\mathrm{Deck}(\widetilde{X}/ X)=\Gamma$.

If $F^{\bullet}$ is a bounded complex of $\C$-vector spaces with constructible cohomology on $X$ we construct, using a classical observation of Kashiwara,   cohomology groups 
$\mathbb{H}_{(2)}^{\bullet} (\widetilde X , F^{\bullet})$ that co\"incide in the case  $F^{\bullet}= \C_X$ to the $L_2$-cohomology of $\widetilde{X}$, see \cite{Luc}. 
They obey Atiyah's $L_2$ index theorem,  Poincar\'e-Verdier duality and are compatible with proper morphisms of complex analytic spaces. 

If $\mathcal{M}$ is a coherent $\mathcal{D}$-module on $X$, 
we construct, using the construction of \cite{Eys},   cohomology groups 
$\mathbb{H}_{DR, (2)}^{\bullet} (\widetilde X , \mathcal{M})$ that co\"incide in the case  $\mathcal{M}= \mathcal{O}_X$   to the $L_2$-De Rham cohomology of $\widetilde{X}$ 
with respect to a Riemannian metric pulled back fom $X$ and if $\mathcal{M}=\mathcal{D}\otimes_{\mathcal{O}_X} \mathcal{F}$, $\mathcal{F}$ being a coherent analytic sheaf with the $L_2$ cohomology groups 
$H^{\bullet}_2(\widetilde{X}, \mathcal{F}\otimes \omega^n_X)$ constructed in \cite{Eys}. They obey Atiyah's $L_2$ index theorem. We did not check except in the simplest cases whether they are compatible with proper holomorphic mappings
and did not study Verdier duality.

When an isomorphism  in the derived category of sheaves $rh: F^{\bullet} \cong DR(\mathcal{M})$ is given, $\mathcal{M}$ being holonomic, we construct a natural  isomorphism
$rh_{(2)}: \mathbb{H}_{(2)}^{\bullet} (\widetilde X , F^{\bullet}) \cong \mathbb{H}_{DR, (2)}^{\bullet} (\widetilde X , \mathcal{M})$. 

These cohomology groups are typically infinite dimensional quotients of Hilbertian $\Gamma$-modules by non necessarily closed submodules. They are also modules over $\Ng$ the Von Neumann algebra of $\Gamma$.   But one can be much more precise. 

Given $\Gamma$ a discrete countable group,  the exact  category of finite type projective Hibert $\Gamma$-modules naturally embeds in a rather simple abelian 
category $E_f(\Gamma)$ due to  Farber \cite{Farb1} and L\"uck \cite{Luc}  endowed with a faithful functor to $Mod(\Ng)$. This abelian category has projective dimension one, its projective objects being finite type projective Hilbert $\Gamma$-modules. The preceding $L_2$ cohomology groups 
are in the essential image 
of the forgetful functor and the  isomorphism $rh_{(2)}$ lifts too.

\begin{theoi}\label{thm1}

Let $X$ be a compact complex manifold and $\widetilde{X} \to X$ be a Galois covering with Galois group $\Gamma$. 
Let $MD(X)$  be the abelian category  whose objects are triples $$\mathbb{M}=(\mathcal{M}=\mathbb{M}^{DR}, P=\mathbb{M}^{Betti}, \alpha)$$ where $\mathcal{M}$ is a holonomic $\Dx$-module  admitting a good filtration,
 $P$ is a perverse sheaf of $\R$-vector spaces and $\alpha: P\otimes_{\R}\C \to DR(\mathcal{M})$ is an isomorphism in the derived category of sheaves and whose morphisms are the obvious ones.

There is a  
 $\partial$-functor which, on the Betti side, is compatible with proper direct images, satisfies Atiyah's $L_2$ index theorem and Poincar\'e-Verdier duality:
$$ L_2dR: D^b MD(X) \to D^b E_f(\Gamma)
$$
and for each $\mathbb{M}\in MD(X)$ and $q\in \Z$  functorial isomorphisms in $E_f(\Gamma)$ $$H^q(L_2dR(\mathbb{M})) \cong\mathbb{H}^q_{(2)}(\widetilde{X}, \mathbb{M}^{Betti})\cong\mathbb{H}^q_{DR, (2)}(\widetilde{X}, \mathbb{M}^{DR}).$$ 
\end{theoi}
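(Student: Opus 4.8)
The plan is to construct $L_2dR$ first at the level of an honest functorial complex of finite type projective Hilbert $\Gamma$-modules attached to an object of the abelian category $MD(X)$, then to extend it to $D^b MD(X)$ by dévissage, and finally to transport the three structural properties from the constructible side — where they are already available by the earlier sections — through the comparison isomorphism $rh_{(2)}$. Throughout, the role of the abelian category $E_f(\Gamma)$, which has global dimension one, is to provide the long exact sequences that reduced $L_2$-cohomology does not.

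\emph{The functor on $MD(X)$.} Given $\mathbb{M}=(\mathcal{M},P,\alpha)$, choose a good filtration $F_{\bullet}\mathcal{M}$ and form the de Rham complex $DR(\mathcal{M})=[\mathcal{M}\to\Omega^1_X\otimes_{\Ox}\mathcal{M}\to\cdots\to\Omega^n_X\otimes_{\Ox}\mathcal{M}]$ ($n=\dim_{\C}X$), whose graded pieces for the induced filtration are coherent $\Ox$-modules. Feeding these into the $L_2$-Dolbeault construction of \cite{Eys} and totalizing yields a bounded complex $L_2dR(\mathbb{M})$ of finite type projective Hilbert $\Gamma$-modules with $H^q(L_2dR(\mathbb{M}))\cong\mathbb{H}^q_{DR,(2)}(\widetilde X,\mathcal{M})$. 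This is manifestly functorial in morphisms of $MD(X)$, which are in particular morphisms of filtered $\Dx$-modules; and a second good filtration produces a canonically quasi-isomorphic complex, because the coherent $L_2$-cohomology of \cite{Eys} does not depend on such a choice, so $\mathbb{M}\mapsto L_2dR(\mathbb{M})$ is well defined as a functor to $D^bE_f(\Gamma)$. In parallel, the constructible $L_2$-cohomology built in the earlier sections from Kashiwara's observation assembles into a $\partial$-functor $D^b_c(X,\R)\to D^bE_f(\Gamma)$ whose value on $P$ has cohomology $\mathbb{H}^\bullet_{(2)}(\widetilde X,P)$; extending scalars along $\R\hookrightarrow\C$ and applying the already constructed, and already lifted to $E_f(\Gamma)$, isomorphism $rh_{(2)}(\alpha)$ identifies $H^q(L_2dR(\mathbb{M}))$ with $\mathbb{H}^q_{(2)}(\widetilde X,P)$, naturally in $\mathbb{M}$ since $rh_{(2)}$ depends only on $\alpha$ and a morphism of $MD(X)$ is by definition compatible with the $\alpha$'s.

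\emph{Extension to $D^bMD(X)$ and the three properties.} The essential point is that $L_2dR$ carries a short exact sequence $0\to\mathbb{M}'\to\mathbb{M}\to\mathbb{M}''\to 0$ in $MD(X)$ to a distinguished triangle: at the level of the algebraic complexes computing coherent $L_2$-cohomology the sequence becomes exact up to quasi-isomorphism, and since $E_f(\Gamma)$ is abelian the snake lemma produces the long exact sequence of $L_2$-cohomology objects. By dévissage $L_2dR$ then extends to the announced $\partial$-functor $D^bMD(X)\to D^bE_f(\Gamma)$. Compatibility with proper direct images on the Betti side follows from $DR(f_+\mathcal{M})\cong Rf_*DR(\mathcal{M})$ for $f$ proper — together with the (standard for proper $f$) fact that $f_+$ preserves the categories $MD$ — and the corresponding statement already established for the constructible $L_2$-cohomology; Atiyah's $L_2$ index theorem is inherited because $rh_{(2)}$ is an isomorphism in $E_f(\Gamma)$ and hence preserves von Neumann dimensions, so that the index of $L_2dR(\mathbb{M})$ equals the already-known index on the constructible side; and Poincar\'e--Verdier duality follows from holonomic duality $\mathbb{D}\mathcal{M}$ inducing Verdier duality on $P$, combined with the duality statement already proved for $\mathbb{H}^\bullet_{(2)}(\widetilde X,-)$ and the compatibility of $rh_{(2)}$ with it.

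\textbf{Main obstacle.} The delicate steps are the independence of $L_2dR(\mathbb{M})$, up to functorial quasi-isomorphism, of the auxiliary good filtration, and the exactness needed to turn short exact sequences in $MD(X)$ into distinguished triangles — both of which require a clean complex-level reformulation of the construction of \cite{Eys} and genuinely use $E_f(\Gamma)$ rather than reduced $L_2$-cohomology — together with checking that $rh_{(2)}$ is natural enough to be compatible simultaneously with distinguished triangles, proper push-forward and duality. I expect the first of these to be the heart of the argument; the transport of the Betti-side properties should then be essentially formal.
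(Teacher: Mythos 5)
Your overall architecture (Betti side via constructible $L_2$-cohomology lifted to $E_f(\Gamma)$, De Rham side via good filtrations and the coherent machinery of \cite{Eys}, comparison via $rh_{(2)}$, properties transported from the Betti side) matches the paper's, but two steps you treat as available are in fact the substance of the proof, and one of them is stated in a form that would compute the wrong object. First, the De Rham side: the terms of $DR(\mathcal{M})$ are not coherent and its differentials are differential operators, not $\Ox$-linear maps, so the coherent $L_2$-functor of \cite{Eys} does not apply as it stands. Your fix --- feed the \emph{graded pieces} of the induced filtration into the $L_2$-Dolbeault construction and totalize --- produces at best $Gr_F$ of the $L_2$ de Rham cohomology, which differs from $\mathbb{H}^{\bullet}_{DR,(2)}(\widetilde{X},\mathcal{M})$ unless the $F$-spectral sequence degenerates; that degeneration is precisely what cannot be assumed here (it is part of Conjecture \ref{theconj}). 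What is actually needed, and what the paper does, is: (i) the quasi-isomorphism $F_pDR(\mathcal{M})\to DR(\mathcal{M})$ for $p\gg 0$ (Saito, Bj\"ork), replacing $DR(\mathcal{M})$ by a complex with coherent terms but whose differentials are still differential operators; (ii) the extension of $l^2\pi_*$ and of the \v{C}ech functor $\mathcal{C}$ to the additive category $\mathbf{Coh}(\Ox,\diff)$, which rests on Lemma \ref{cauchy} (the left and right $\Ox$-structures on $\L'\otimes_{\Ox}\Dx$ give the same $l^p\pi_*$, via Cauchy estimates) and on the continuity of differential operators on Fr\'echet sections; (iii) the localization at differential quasi-isomorphisms $dqi$, which is what makes the outcome independent of $p$ and of the chosen good filtration and yields a $\partial$-functor on $D^b\cohd_{good filt}$ without any d\'evissage. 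None of (i)--(iii) appears in your sketch, and (ii) in particular cannot be waved through.

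Second, the comparison isomorphism. You invoke $rh_{(2)}$ as ``already lifted to $E_f(\Gamma)$'', but what the earlier sections provide (Proposition \ref{rhcomp}) is only a quasi-isomorphism of complexes of \emph{sheaves} of $\Ng$-modules, hence an isomorphism of the underlying $\Ng$-modules in hypercohomology. That does not by itself give an isomorphism in $D^b E_f(\Gamma)$: one needs an actual zigzag of continuous morphisms of complexes of Hilbert $\Gamma$-modules joining the simplicial model of $\mathbb{H}^{\bullet}_{(2)}(\widetilde{X},P)$ to the \v{C}ech/Montelian model of $H^{\bullet}_{DR,2}(\widetilde{X},\mathcal{M})$, to which Corollary \ref{algiso} can then be applied. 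Producing this zigzag is the content of Section \ref{liftrh}: represent $\alpha$ by an honest morphism of complexes $P\to F_pDR(\mathcal{M})$ after refining a triangulation adapted to the stratification (\cite{KS}, Prop.~8.1.9), compare \v{C}ech complexes on a covering by Oka-Weil domains refining the star covering and having contractible intersections, and rigidify with \cite[Prop.~4.4.14]{Eys}. Framing the remaining difficulty as ``naturality of $rh_{(2)}$'' understates it: the existence of the lift is itself the point. By contrast, your transport of the three structural properties is indeed essentially formal once this is in place, since the theorem asserts them only on the Betti side --- you do not need $DR(f_+\mathcal{M})\cong Rf_*DR(\mathcal{M})$ or holonomic duality for that, only Proposition \ref{subalrefcoh} and Proposition \ref{duality}.
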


If $X$ is a
projective algebraic manifold every coherent $\Dx$-module admits a global good filtration   - a fact the author has learned from talks given by B. Malgrange. The author does not believe admitting a good filtration is an essential restriction here.

For applications, it seems to be useful to consider the case  $X$ is only a compact complex-analytic space such that one can embed $X$ in a complex manifold $Z'$. In that situation,  
one can construct, taking a regular neighborhood $Z$ of $X$,    an infinite Galois covering space 
with $\mathrm{Deck}(\widetilde{Z}/ Z)=\Gamma$ and a $\Gamma$-equivariant embedding $\widetilde{X} \to \widetilde{Z}$ covering the closed embedding $X \to Z$. Theorem  \ref{thm1} extends
to this situation restricting one's attention to modules on $Z$ whose support is contained in  $X$. 

Saito's category of Mixed Hodge Modules  $MHM(X)$ \cite{SaiMHM} is an abelian subcategory of $MD(X)$.

\begin{coroi}

Let $X$ be a compact K\"ahler manifold and $\widetilde{X} \to X$ be a Galois covering with Galois group $\Gamma$. 

There is a  
 $\partial$-functor which, on the Betti side, is compatible with proper direct images, satisfies Atiyah's $L_2$ index theorem and Poincar\'e-Verdier duality:
$$ L_2dR: D^b MHM(X) \to D^b E_f(\Gamma)
$$
and for each $\mathbb{M}\in MHM(X)$ and $q\in \Z$  functorial isomorphisms in $E_f(\Gamma)$ $$H^q(L_2dR(\mathbb{M})) \cong\mathbb{H}^q_{(2)}(\widetilde{X}, \mathbb{M}^{Betti})\cong\mathbb{H}^q_{DR, (2)}(\widetilde{X}, \mathbb{M}^{DR}).$$ 

These cohomology groups are endowed with  a
real structure, a real filtration $W$ coming from the weight filtration on $\mathbb{M}^{Betti}$ and a  complex filtration $F$  coming from Saito's Hodge filtration on $\mathbb{M}^{DR}$. These filtrations and real structures are compatible
with morphisms of Mixed Hodge Modules.

\end{coroi}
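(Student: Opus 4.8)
The plan is to deduce the Corollary from Theorem \ref{thm1} by restriction, and then to add the extra structures by elementary arguments. By Saito's theory $MHM(X)$ is an abelian subcategory of $MD(X)$ and the inclusion $\iota$ is exact; deriving $\iota$ gives a triangulated functor $D^bMHM(X)\to D^bMD(X)$, and we let $L_2dR$ on $D^bMHM(X)$ be its composite with the $\partial$-functor of Theorem \ref{thm1}, which is again a $\partial$-functor. The isomorphisms $H^q(L_2dR(\mathbb{M}))\cong\mathbb{H}^q_{(2)}(\widetilde{X},\mathbb{M}^{Betti})\cong\mathbb{H}^q_{DR,(2)}(\widetilde{X},\mathbb{M}^{DR})$ are then special cases of those in Theorem \ref{thm1}. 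Compatibility with proper direct images and Poincar\'e--Verdier duality hold because on the Betti side Saito's proper direct image and duality functors on $MHM$ are intertwined, via the forgetful functor $\mathbb{M}\mapsto\mathbb{M}^{Betti}$, with the topological proper pushforward and Verdier duality on perverse sheaves, while Atiyah's $L_2$ index theorem only concerns the underlying $\Gamma$-module; all three were already established for $MD(X)$ in Theorem \ref{thm1}.

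For the real structure, note that in $MD(X)$, hence in $MHM(X)$, the Betti component $\mathbb{M}^{Betti}$ is a perverse sheaf of $\R$-vector spaces; Kashiwara's construction defining $\mathbb{H}^\bullet_{(2)}(\widetilde{X},-)$ applies verbatim over $\R$ and, since $-\otimes_{\R}\C$ is exact, commutes with this base change, giving a natural isomorphism $\mathbb{H}^q_{(2)}(\widetilde{X},\mathbb{M}^{Betti})\otimes_{\R}\C\cong\mathbb{H}^q_{(2)}(\widetilde{X},\mathbb{M}^{Betti}\otimes_{\R}\C)$: this is the asserted real structure, functorial by construction. For the weight filtration, recall that each $\mathbb{M}\in MHM(X)$ carries a canonical finite increasing weight filtration $W_\bullet\mathbb{M}$ by sub-objects, functorial in $\mathbb{M}$; viewing the inclusions $W_{k-1}\mathbb{M}\hookrightarrow W_k\mathbb{M}$ in $D^bMHM(X)$ and applying the cohomological functor $H^q\circ L_2dR$ yields morphisms $H^q(L_2dR(W_k\mathbb{M}))\to H^q(L_2dR(\mathbb{M}))$ in $E_f(\Gamma)$, whose images define $W_kH^q(L_2dR(\mathbb{M}))$. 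This is a finite increasing filtration, functorial because $W_\bullet$ is, and under the identifications above it coincides with the filtration induced by the weight filtration of the perverse sheaf $\mathbb{M}^{Betti}$, Betti realization being exact and sending $W_\bullet\mathbb{M}$ to it.

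For the Hodge filtration, write $\mathbb{M}^{DR}=(\mathcal{M},F_\bullet\mathcal{M})$ with $F_\bullet\mathcal{M}$ Saito's good filtration by coherent $\mathcal{O}_X$-submodules, and form the filtered de Rham complex
$$F_pDR(\mathcal{M})=\bigl[F_p\mathcal{M}\to\Omega^1_X\otimes_{\mathcal{O}_X}F_{p+1}\mathcal{M}\to\cdots\to\Omega^n_X\otimes_{\mathcal{O}_X}F_{p+n}\mathcal{M}\bigr],$$
an exhausting filtration of $DR(\mathcal{M})$ by subcomplexes with coherent terms. The coherent $L_2$-cohomology of \cite{Eys}, through which $\mathbb{H}^\bullet_{DR,(2)}(\widetilde{X},\mathcal{M})$ is constructed, is functorial for morphisms of bounded complexes of coherent $\mathcal{O}_X$-modules; applying it to the inclusions $F_pDR(\mathcal{M})\hookrightarrow DR(\mathcal{M})$ and passing to $H^q$ gives a decreasing filtration $F^\bullet$ on $\mathbb{H}^q_{DR,(2)}(\widetilde{X},\mathcal{M})\cong H^q(L_2dR(\mathbb{M}))$, functorial because morphisms in $MHM(X)$ are strictly $F$-filtered.

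The main obstacle is this last point: one must verify that the coherent $L_2$-cohomology construction of \cite{Eys} is genuinely functorial for inclusions of subcomplexes of coherent sheaves, and that this is compatible with the definition of $\mathbb{H}^\bullet_{DR,(2)}$ adopted in the body of the paper, so that the $F$-filtered de Rham complex indeed computes the wanted $F$-filtration in $D^bE_f(\Gamma)$. Nothing stronger is asserted here --- neither strictness of the filtrations $W$, $F$ on the $L_2$-cohomology, nor any internal mixed-Hodge-structure property of these infinite-dimensional objects --- so only well-definedness and functoriality of the data need to be checked.
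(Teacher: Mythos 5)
Your proposal is correct and follows essentially the same route as the paper: the corollary is obtained by restricting the $\partial$-functor of Theorem \ref{thm1} along the inclusion $MHM(X)\subset MD(X)$, the real structure comes from working over $K\subset\R$ exactly as in the paper's section on real structures, and the filtrations $W$ and $F$ are defined, as in the paper's Proposition-Definition of section 4.5, as images of the $L_2$-cohomology of $W_\bullet\mathbb{M}^{Betti}$ and of $F_\bullet DR(\mathcal{M})$ respectively. The functoriality issue you flag at the end is precisely what the paper's machinery of differential complexes of coherent sheaves ($K^b\mathbf{Coh}(\Ox,\diff)_{dqi}$ and the functor $tDR$) is set up to handle.
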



It is not clear whether these filtrations define a Mixed Hodge Structure. It seems difficult not to pass to reduced $L^2$-cohomology. 
 Using an idea of Dingoyan   \cite{Din}, we conjecture: 

\begin{conji}\label{theconj}
 Let $\Ng \subset \Ug$ be the algebra of affiliated operators. Let $\mathbb{M}$ be a Mixed Hodge module (resp. a pure Hodge Module).  
 $$\Ug \otimes_{\Ng} H^q(\wt{X}, L_2dR(\mathbb{M}))$$ carries a Mixed (resp. a pure) Hodge structure in the abelian category of real $\Ug$-modules with finite $\Gamma$-dimension. The restrictions on the 
 Hodge numbers are as in the compact case. 
\end{conji}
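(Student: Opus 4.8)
\medskip

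\noindent\textbf{Strategy for a proof of Conjecture~\ref{theconj}.} The plan is to reduce the statement to the two cases where an $L_2$ Hodge theory is already available — polarized variations of Hodge structure on the compact K\"ahler manifold itself (Eyssidieux \cite{Eys1}) and cohomology of Zariski-open subsets (Dingoyan \cite{Din}) — and then to propagate the resulting $\Ug$-linear Hodge structure through Saito's d\'evissage. First one observes that $\Ug\otimes_{\Ng}-$ is an exact functor that descends from $D^b E_f(\Gamma)$ to the derived category of the abelian category $\mathcal{V}$ of real $\Ug$-modules of finite $\Gamma$-dimension; since $\Ug$ is von Neumann regular (hence coherent), $\mathcal{V}$ is, for the purpose of spectral-sequence arguments, as well-behaved as vector spaces over a field — finitely generated objects are finitely presented, hence projective ``up to dimension'', and $\dim_\Gamma$ is additive on short exact sequences. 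So the task is to exhibit, functorially in $\mathbb{M}$, a weight filtration $W$ on $H^q(\widetilde X, L_2dR(\mathbb{M}))\otimes_{\Ng}\Ug$ and a Hodge filtration $F$ on its complexification whose graded pieces satisfy the $(p,q)$-symmetry of a Hodge structure of the expected weight, with the expected vanishing ranges.

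For a \emph{pure} Hodge module $\mathbb{M}$ of weight $w$ with strict support a subvariety $i\colon Z\hookrightarrow X$, Saito's structure theorem presents $\mathbb{M}$, after resolving $Z$ and its polar locus, as a direct summand of $\mathcal{H}^j f_*(\mathbb{V}^H)$ for a polarized VHS $\mathbb{V}$ on a smooth projective $Y$ and a projective $f\colon Y\to X$. Using the compatibility of $L_2dR$ with proper direct images on the Betti side from Theorem~\ref{thm1}, together with the degeneration of the associated decomposition/Leray spectral sequence — which over $\Ug$ should follow from the hard Lefschetz package for $\Gamma$-$L_2$ cohomology of the polarized VHS on $Y$, i.e. from \cite{Eys1} — one transports to $H^q(\widetilde X, L_2dR(\mathbb{M}))\otimes\Ug$ the pure $\Ug$-Hodge structure carried by $H^{q-j}(\widetilde Y, L_2dR(\mathbb{V}^H))\otimes\Ug$. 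For a general \emph{mixed} Hodge module one runs the same argument on each $\mathrm{Gr}^W_k\mathbb{M}$ (which is pure), obtaining a Hodge structure on each $H^q(\widetilde X, L_2dR(\mathrm{Gr}^W_k\mathbb{M}))\otimes\Ug$, and then defines $W$ on $H^q(\widetilde X, L_2dR(\mathbb{M}))\otimes\Ug$ as the filtration induced by the weight filtration of $\mathbb{M}$; for this to produce a mixed Hodge structure one needs the weight spectral sequence to degenerate at $E_2$ (equivalently, its connecting maps to be strict) \emph{after} $\otimes_{\Ng}\Ug$ — the passage to $\Ug$ being, as in Dingoyan \cite{Din}, precisely what repairs the failure of strictness visible over $\Ng$. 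Finally, the constraints on the Hodge numbers follow from Atiyah's $L_2$ index theorem (Theorem~\ref{thm1}) applied to the coherent complexes $\mathrm{Gr}^F_p DR(\mathbb{M})$ on the compact $X$ pulled back to $\widetilde X$: their $\Gamma$-equivariant Euler characteristics equal the ordinary ones downstairs, and the classical vanishing ranges for $H^q(X,\mathbb{M})$ transfer verbatim.

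The main obstacle is the degeneration and strictness statements in the $\Ug$-linear setting — both the $E_1$-degeneration of the Hodge-to-de Rham spectral sequence of $L_2dR(\mathbb{M})$ and the $E_2$-degeneration of the weight spectral sequence — for an arbitrary Hodge module rather than a VHS on $X$. Classically these are Saito's strictness theorems, proved by an intricate induction on the dimension of support that threads through the nearby- and vanishing-cycle functors $\psi,\phi$ and the intermediate-extension functor; making this work for $L_2dR$ requires first establishing that $L_2dR$ commutes, as a functor to $D^b E_f(\Gamma)$, with $\psi$, $\phi$ and with intermediate extensions on the covering — compatibilities that Theorem~\ref{thm1} does not assert and that are genuinely delicate, because the $L_2$ condition interacts subtly with restriction to, and deletion of, a divisor. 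The alternative route — a direct $\Gamma$-equivariant harmonic theory for the filtered de Rham complex of $\mathbb{M}$, in the spirit of \cite{Ati} and \cite{Gro2} — runs into the difficulty that a general $\mathbb{M}^{DR}$ does not carry a canonical metric satisfying K\"ahler-type identities; such a metric is available, via Simpson and Mochizuki, only when $\mathbb{M}$ is, up to proper pushforward, a polarized VHS, which is exactly the case one already understands. Resolving one of these two difficulties, and then checking that the $W$ and $F$ produced by the d\'evissage are independent of the chosen resolution and compatible with morphisms of (mixed) Hodge modules, would complete the proof.
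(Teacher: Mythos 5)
The statement you set out to prove is stated in the paper as a \emph{conjecture} and is not proved there in general: the paper establishes it only in the three cases of Theorem \ref{thm2} (pushforward of a polarized VHS under a closed immersion, and $Rj_*j^{-1}$, $Rj_!j^{-1}$ of a VHS off an snc divisor), and explicitly records that the general pure polarizable case is open, the missing ingredient being a proof that the algebraically defined Hodge filtration coincides with an analytically defined one. Your text is a reduction strategy rather than a proof, and you say so yourself; what can be compared is how it matches the partial results. Your reduction of the mixed case to the pure case is exactly the paper's Lemma \ref{reducpure}: once each $Gr^W_k\mathbb{M}$ yields a $\Ug$-Hodge complex of weight $k$, Deligne's two-filtrations formalism, transposed verbatim to finitely presented real $\Ug$-modules, gives the $E_2$-degeneration of the weight spectral sequence and the strictness you worry about \emph{for free}; presenting that degeneration as an outstanding difficulty misplaces the burden, which lies entirely in producing the Hodge complexes on the graded pieces. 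Your treatment of the Hodge-number constraints is also off: Atiyah's $L_2$ index theorem only controls alternating sums of $\Gamma$-dimensions, whereas the constraints on the $I^{p,q}$ come from the ranges of the filtrations built into the construction, exactly as in the compact case.

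The genuine gap, which you gesture at but do not close, is the pure case with arbitrary strict support. Your plan --- realize $\mathbb{M}$ as a direct summand of $\mathcal{H}^jf_*\mathbb{V}^H$ for a resolution $f:Y\to X$ and transport the $\Ug$-Hodge structure from $H^{*}(\wt{Y}, L_2dR(\mathbb{V}^H))$ using compatibility with proper direct images and the decomposition theorem --- would at best produce \emph{a} Hodge structure on the summand; but the conjecture in its refined form \ref{L2hlpack} concerns the \emph{algebraically defined} filtration, i.e.\ the image of $H^k_2(\wt{X}, F_\bullet DR(\mathcal{M}))$ in $H^k_2(\wt{X}, DR(\mathcal{M}))$. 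One must show that the filtration induced on the summand coincides with this one, and that identification is precisely the algebraic-versus-analytic comparison the author isolates as the obstruction (see Section \ref{Poin}, where even the strong $\Gamma$-Fredholmness of the $L_2$ intersection complex in the Poincar\'e metric is left unproved). Your suggested alternative of deducing perverse Leray degeneration from hard Lefschetz over $\wt{Y}$ does not address this identification either. In short, your proposal maps the landscape and its obstacles accurately --- and in the three cases where those obstacles can be avoided it essentially reproduces the paper's Theorem \ref{thm2} --- but it does not constitute a proof of the statement, and no such proof exists in the paper.
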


We will use the notation $H^q(L_2dR(\mathbb{M}))\buildrel{not.}\over{=} H^q(\wt{X}, L_2dR(\mathbb{M}))$ whenever it is necessary to emphasize that $\mathbb{M}$ lives on $X$ and that we are considering the covering space $\wt{X}\to X$.

We do not understand 
Saito's theory well enough to conjecture a similar statement for the derived category of $MHM(X)$. 

\begin{theoi}\label{thm2}
Conjecture \ref{theconj} is true in the following cases:
\begin{itemize}
 \item  There is a closed complex submanfold $i:Z\hookrightarrow X$ and a smooth polarized $\Q$-VSH $(Z, \mathbb V, F, S)$ on $Z$ such that $\mathbb{M}=\mathbb{M}_{i}(\mathbb{V})$ is the corresponding Hodge Module on $X$.
 \item  There is an open embedding $j:U\hookrightarrow X$ such that $X\setminus U$ is a divisor with 
 simple normal crossings and a smooth $\Q$-VSH $(X, \mathbb V, F, S)$ on $X$ such that $\mathbb{M}=Rj_*j^{-1} \mathbb{M}_{X}(\mathbb{V})$.
 \item  There is an open embedding $j:U\hookrightarrow X$ such that $X\setminus U$ is a divisor with simple normal 
 crossings and a smooth $\Q$-VSH $(X, \mathbb V, F, S)$ on $X$ such that $\mathbb{M}=Rj_!j^{-1} \mathbb{M}_{X}(\mathbb{V})$.
\end{itemize}

\end{theoi}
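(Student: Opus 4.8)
The plan is to treat the three cases by reducing each to situations where the $L_2$ De Rham complex can be computed explicitly and the classical Hodge-theoretic input of Dingoyan-type or of $L_2$-VHS theory applies. In all three cases the ambient manifold carries a Kähler metric (this is needed for Hodge theory), and I would start by fixing a Kähler form on $X$ and pulling it back to $\wt X$. The common mechanism is: after tensoring with $\Ug$, the $L_2$-cohomology computed by $L_2dR(\mathbb M)$ becomes \emph{reduced} $L_2$-cohomology, i.e.\ a Hilbert $\Gamma$-module, because $\Ug\otimes_{\Ng}(-)$ kills the torsion part of the $E_f(\Gamma)$-object and what remains is the von Neumann completion of the reduced cohomology; this is the key functorial fact I would isolate as a lemma before entering the case analysis.

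For the first case, $\mathbb M=\mathbb M_i(\mathbb V)$ for a closed submanifold $i:Z\hookrightarrow X$, I would use compatibility with proper direct images on the Betti side (part of Theorem \ref{thm1}) to identify $\mathbb H^q_{(2)}(\wt X,\mathbb M^{Betti})$ with the $L_2$-cohomology of $\wt Z$ with coefficients in the local system underlying $\mathbb V$ (up to a degree shift by $\operatorname{codim} Z$). Here $\wt Z$ is the preimage of $Z$, a Galois covering of $Z$ with a quotient of $\Gamma$ acting — but one can pass to the full $\Gamma$ by inducing, which does not affect the existence of a Hodge structure in the $\Ug$-module category. Then the desired Mixed (in fact pure) Hodge structure on $\Ug\otimes_{\Ng}H^q$ is exactly the $L_2$ Hodge structure on the $L_2$-cohomology of a compact Kähler manifold with coefficients in a polarized VHS, which is Eyssidieux's extension \cite{Eys1} of Gromov's theory (harmonic bundles on compact Kähler manifolds); the weight bounds follow from the weights of $\mathbb V$ shifted by $q$. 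The only thing to check is that the real structure, $W$ and $F$ produced abstractly in the Corollary agree with the ones coming from \cite{Eys1}, which is a matter of unwinding the identification $rh_{(2)}$.

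For the second and third cases, $\mathbb M=Rj_*j^{-1}\mathbb M_X(\mathbb V)$ and $\mathbb M=Rj_!j^{-1}\mathbb M_X(\mathbb V)$ with $D=X\setminus U$ a simple normal crossings divisor and $\mathbb V$ a VHS defined on \emph{all} of $X$ (so no actual degeneration of the variation), I would use that on the Betti side $\mathbb H^q_{(2)}(\wt X, Rj_*j^{-1}P)$ is the $L_2$-cohomology of $\wt U$ with coefficients in the pullback of $P$, and dually for $j_!$, by the compatibility with the open/closed restriction triangles and with proper direct images applied to a compactification. The point is that $\wt U\to U$ is a covering of a \emph{noncompact} manifold, but $U$ is Zariski-open in the compact Kähler $X$ with snc boundary, so the $L_2$-cohomology of $\wt U$ with the pulled-back Poincaré-type (or Kähler) metric with respect to $\mathbb V$ is exactly the setting of the intersection-cohomology / $L_2$-cohomology computations for VHS on the complement of an snc divisor: the harmonic theory identifies reduced $L_2$-cohomology with a finite-dimensional-over-$\Ng$ Hilbert $\Gamma$-module carrying a Hodge structure, and Poincaré duality between the $j_*$ and $j_!$ cases matches the Hodge structures. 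After applying $\Ug\otimes_{\Ng}(-)$ this gives the claimed Mixed Hodge structure, with weights controlled by the weight of $\mathbb V$ together with the one extra weight jump coming from $Rj_*$ versus $Rj_!$.

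The main obstacle is the second and third cases: one must know that, for a Galois covering of the complement of an snc divisor in a compact Kähler manifold, the $L_2$-Dolbeault/De Rham cohomology of a VHS with the Poincaré metric is \emph{Hodge-theoretic} — i.e.\ that the Kähler identities and the resulting $(p,q)$-decomposition of \emph{reduced} $L_2$-cohomology survive in the $\Gamma$-equivariant setting. On the covering itself this needs a careful choice of complete Kähler metric (a Poincaré-type metric near $D$) and the corresponding $L_2$-estimates, together with the observation that these estimates are local on $X$ hence $\Gamma$-equivariant, so Gromov's and Eyssidieux's arguments carry over verbatim once the finiteness over $\Ng$ is known; the finiteness itself comes from Atiyah's $L_2$ index theorem part of Theorem \ref{thm1}. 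Checking that the abstract filtrations $W,F$ of the Corollary coincide with the analytically defined ones, and that the real structure is respected, is the remaining bookkeeping.
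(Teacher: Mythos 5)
Your treatment of the first case is essentially the paper's: reduce to a polarized VHS on the compact K\"ahler submanifold $Z$ via compatibility of the construction with the closed immersion, and invoke the analytic $L^2$-Hodge theory of a VHS on a Galois cover of a compact K\"ahler manifold (Theorem \ref{HodgeComplex}, ultimately \cite{Eys1}). That part is fine, modulo the bookkeeping you acknowledge.

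For the second and third cases, however, your route has a genuine gap. You propose to realize $\mathbb{H}^q_{(2)}(\wt X, Rj_*j^{-1}P)$ analytically as harmonic theory on $\wt U$ with a Poincar\'e-type metric and to get finiteness ``from Atiyah's $L_2$ index theorem.'' Two things go wrong. First, Atiyah's theorem controls Euler characteristics, not $\Gamma$-Fredholmness of the analytic complex; and knowing that the cohomology is a finite-dimensional $\Ng$-module does \emph{not} imply the $L^2$ De Rham complex on the noncompact cover $\wt U$ is (strongly) $\Gamma$-Fredholm --- this implication is precisely Question \ref{fredcomplex}, left open, and Section \ref{Poin} states explicitly that no proof of $\Gamma$-Fredholmness is available in the Poincar\'e-metric setting. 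Without it you cannot run the harmonic decomposition needed for the $(p,q)$-splitting of reduced cohomology. Second, even granting Fredholmness, the Poincar\'e-metric $L_2$ complex is a model for $\mathcal{IC}_X(\mathbb V)$ (Proposition \ref{ckskk}, after \cite{CKS,KK}), not for $Rj_*j^{-1}\mathbb V$ or $Rj_!j^{-1}\mathbb V$, so it is the wrong analytic object for these two cases. The paper avoids all of this by an algebraic, Deligne-style argument: since $D$ is snc and $\mathbb V$ extends over $X$, the weight-graded pieces $Gr^W_k$ of $Rj_*j^{-1}\mathbb{M}_X(\mathbb V)$ are direct sums of modules $\mathbb{M}_i(\mathbb V^\alpha)$ supported on the compact K\"ahler intersections of components of $D$, so the first case applies to them; Lemma \ref{reducpure} (the $\Ug$-Mixed Hodge complex formalism) then transfers the conclusion from the $Gr^W$ to $\mathbb{M}$ itself. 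The third case is deduced from the second by Verdier duality of Mixed Hodge Modules, $Rj_!j^{-1}\mathbb{M}_X(\mathbb V)=\mathbb{D}\,Rj_*j^{-1}\mathbb{M}_X(\mathbb V^{\vee})$, together with $\mathbb{D}(Gr^W_k)=Gr^W_{-k}\mathbb{D}$ and the $L_2$ Poincar\'e--Verdier duality of Proposition \ref{duality} --- no new analysis on $\wt U$ is needed.
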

The first case follows easily from \cite{Eys1}.
The second item in case $\V=\Q_X$ follows from \cite{Din} and  Theorem \ref{thm2} could be proved with a slight variation on Dingoyan's approach. We nevertheless felt it was helpful  to recast Dingoyan's results in our language. 
The third case does not follow from \cite{Din}. A more general result holds, it is enough that the $Gr_W$ of the Mixed Hodge module is a direct sum of modules of the form $\mathbb{M}_{i}(\mathbb{V}^{\alpha})$.

The general case requires only to be able to settle the case of  pure polarizable Hodge modules.
The author believes one can settle this in case $\dim(X)=1$.  The author hopes the general case will be doable when a proof of the coincidence of the algebraic and the analytic definition of the Hodge filtration will be available. 

The author believes one can endow the reduced $L^2$-cohomology of an infinite Galois cover of a projective algebraic variety with a functorial $\Ug$-Mixed Hodge 
structure using  techniques developped here and cohomological descent \cite{DelH3} and hopes to come back to this question in a future work. Subsuming Deligne's approach into Saito's is not straightforward \cite{SaiMHC}
and one really needs to exercise some more care to dare  draw this conclusion. 

The recent preprint \cite{DinSch} suggests an extension of the theory for twistor $\Dx$-modules might be possible.

\vskip 0.2cm

The article is organized as follows. The first section constructs $L_p$-constructible cohomology. The second section constructs $L_p$ De Rham cohomology for coherent $\mathcal{D}$-module on complex manfolds. The third section reviews 
some facts on the homological algebra for $\Ng$-modules and about $\Ug$. The fourth section lifts the $L_2$-cohomology theory to $E_f(\Gamma)$ and  finishes the proof of Theorem \ref{thm1}. 
It gives a statement of a refined form of
 Conjecture \ref{theconj} in terms of the reduced $L^2$ cohomology of Mixed Hodge Modules and a brief treatment of the singular case. The fifth section
studies analytic $L^2$ Hodge decomposition in the K\"ahler case. The sixth section gives a proof of Theorem \ref{thm2}. An appendix gives more details on  some technical facts the author prefered not to include in the text. 
A final section indicates briefly how using the algebra of affiliated operators simplifies the theory in \cite{Eys}
and gives a version of Theorem \ref{thm1} valid without good filtrations. The reader is referred
to \cite{Eys1,Din} for examples.

The article is a write-up of a project that was started 18 years ago with the definition of constructible  $L_2$-cohomology in $E_f(\Gamma)$. 
After \cite{Din} appeared,  the scope of the project was extended to include Mixed Hodge Modules.
The author has  given a handful of seminar and conference
talks on this project during these years and wishes to apologize for not having made a text available. At some point, it was a work in collaboration 
with P. Dingoyan, who withdrew from the projet. The author would like to address special thanks to him for 
many enlightening discussions. 

The author also thanks  P. Bressler, S. Diverio, S. Guillermou, F. Ivorra, B. Jean, W. L\"uck, J. Tapia and C. Sabbah for valuable discussions, some of them 20 years ago, on topics related to this article.

\section{\label{sec1} Constructible $L_p$-cohomology}

In the following $1\le p <+\infty$ will be a real number. No applicable results will be lost if one restricts oneself to the case $p=2$. We also let $\Q\subset K \subset \C$ be  a subfield of the complex numbers.

\subsection{\label{sc} Equivariant constructible sheaves on $\Gamma$-simplicial complexes}

In this section, we recall basic well-known definitions, cf \cite{KS}, chap. VIII.

Let $\Gamma$ be a discrete countable group.
Let $T$ be a paracompact topological space endowed with an action
of $\Gamma$ (by homeomorphisms). 
We denote
by  $\mathbf{Mod}_{\Gamma}(K_T)$,  the category of $\Gamma$-equivariant sheaves of $K$-vector spaces
\footnote{ A compatible action of $\Gamma$ on a sheaf $\mathcal S$ is a continuous action on
$Et(\mathcal{S})$ the espace \'etal\'e of $\mathcal{S}$ such that the canonical
local homeomorphism $Et(\mathcal{S})\to T$ is $\Gamma$-equivariant. }.
 Let  $A$ be an abelian category, we also call $D^b(A)$ its bounded derived category\footnote{There is no need to restrict to the bounded derived category until section 4.2, but we will not pursue more generality.}.
We use the shorter notation $D^b_{K,\Gamma}(T) :=D^b \mathbf{Mod}_{\Gamma}(K_T)$. 
We shall drop dependance on $K$ when $K=\C$.

A  $\Gamma$-simplicial complex $\mathbb S$
is a locally finite
simplicial complex endowed with a proper left action
of $\Gamma$, i.e. $\mathbb{S}=(S,\Delta, i)$ where $S$
is a non-empty set endowed with an action of
$\Gamma$ $i:\Gamma\to \mathfrak{S}(S)$ and $\Delta$
 is a set of non-empty finite subsets of $S$, the {\em simplices}
  of $\mathbb{S}$
such that:

\begin{itemize}
\item For every element $s$ of $S$, the singleton $\{ s \}$  belongs to $\Delta$.
\item For every element $\sigma$ of $\Delta$, any non-empty subset $\tau$ of $\sigma$
belongs to $\Delta$.
\item For every element $s$ of $S$, the subset of $\Delta$ consisting in the simplices containing
$s$ is finite.
\item $\Gamma$ preserves $\Delta$.
\item $\Gamma$ acts on $S$ with finite stabilizers.
\end{itemize}

Obviously, $\Gamma$ acts on $\Delta$ with finite stabilizers and $\Gamma$ acts properly
 on the topological realization $|\mathbb{S} |$
of $\mathbb{S}$. $|\mathbb{S}|$ is a closed subspace of $\mathbb{R}^S$
(endowed with the product topology) decomposed as $|\mathbb{S}|=\cup_{\sigma\in\Delta} |\sigma|$
where
 $$|\sigma|=\{ x\in \mathbb{R}^{S} | x(p)=0  \ {\rm{if}}  \ p\not \in \sigma, \  x(p)>0 \ {\rm{if}}  \ p \in \sigma, \
\sum_{p\in\sigma} x(p)=1\}.$$

Say $\mathbb{S}$ is finite dimensional if $\sup_{\sigma\in\Delta} {\text{Card}} (\sigma)<\infty$.
Say $\mathbb{S}$ is cocompact if it is finite dimensional and $\Gamma\backslash S$ is finite.
In this case,  $\Gamma \backslash \Delta$ is finite and
$\Gamma \backslash |\mathbb{S}|$ is compact.

A $\Gamma$-equivariant sheaf of $K$-vector spaces ${F}$ on
$|\mathbb{S}|$ is {\em weakly} $\mathbb{S}$-{\em constructible} sheaf,
resp. $\mathbb{S}$-{\em constructible}, if for
 every simplex $\sigma$, $i_{|\sigma|}^{-1} F$ is a constant sheaf\footnote{
Whenever $Z$ is a locally closed subset of $X$, we denote by $i_Z:Z\to X$
the resulting embedding.},
resp. and for every $x$ in $|\mathbb{S}|$, $F_x$ is of finite dimension. The abelian category of
$\mathbb{S}$-constructible (resp. weakly $\mathbb{S}$-constructible) equivariant  sheaves
will be denoted by $\mathbf{Cons}_{K,\Gamma}(\mathbb{S})$ (resp. $w\mathbf{Cons}_{K,\Gamma}(\mathbb{S})$)
A complex of $\Gamma$-equivariant  sheaves $F^{\bullet}$ with bounded
cohomology (i.e.: an object of $D^b_{\Gamma}(|\mathbb{S}|))$
is called  $\mathbb{S}$-constructible (resp. weakly $\mathbb{S}$-constructible) 
if its cohomology sheaves $\mathcal{H}^j(F^{\bullet})$
are  $\mathbb{S}$-constructible (resp. weakly $\mathbb{S}$-constructible). $\mathbb {S}$-constructible complexes (resp. weakly
$\mathbb {S}$-constructible complexes) are the objects of a full thick triangulated subcategory
$D^b_{\mathbb{S}-c,K,\Gamma}(|\mathbb{S}|)$ (resp. $D^b_{w-\mathbb{S}-c,K,\Gamma}(|\mathbb{S}|)$) of $D^b_{K,\Gamma}(|\mathbb{S}|)$.

\begin{prop} \label{ks8111} Let $\mathbb{S}$ be a finite dimensional $\Gamma$-simplicial complex.
Then the natural functors

$$ D^b (w\mathbf{Cons}_{K,\Gamma}(\mathbb{S})) \to D^b_{w-\mathbb{S}-c,K, \Gamma}(|\mathbb{S}|), \ 
D^b (\mathbf{Cons}_{K, \Gamma}(\mathbb{S})) \to D^b_{\mathbb{S}-c,K, \Gamma}(|\mathbb{S}|)$$

are equivalences of triangulated categories.
\end{prop}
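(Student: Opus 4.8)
The plan is to reduce the $\Gamma$-equivariant statement to the classical non-equivariant comparison theorem (Kashiwara--Schapira, loc. cit.) by a combination of a local argument on stars of simplices and a quotient argument. First I would recall the non-equivariant case: for an ordinary finite-dimensional simplicial complex $\mathbb{S}_0$, the functor $D^b(\mathbf{Cons}_K(\mathbb{S}_0))\to D^b_{\mathbb{S}_0\text{-}c,K}(|\mathbb{S}_0|)$ is an equivalence, and similarly with weak constructibility; this is exactly \cite{KS}, Chapter VIII. The strategy is then to observe that a $\Gamma$-equivariant $\mathbb{S}$-constructible sheaf is the same data as an $\mathbb{S}$-constructible sheaf on $|\mathbb{S}|$ together with a lift of the $\Gamma$-action, and since $\Gamma$ acts properly with finite stabilizers on $|\mathbb{S}|$, the equivariant derived category can be understood through a simplicial model.

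The key steps, in order, would be: (1) Show the functors are well-defined and exact, and that both sides have the expected $t$-structures so it suffices to check the statement is an equivalence after forgetting to $D^b$ and on hearts — i.e. prove fully faithfulness and essential surjectivity at the level of complexes. (2) Fully faithfulness: reduce to showing that for $F,G\in \mathbf{Cons}_{K,\Gamma}(\mathbb{S})$ one has $\mathrm{Ext}^j_{\mathbf{Cons}_{K,\Gamma}(\mathbb{S})}(F,G)\xrightarrow{\sim}\mathrm{Hom}_{D^b_{K,\Gamma}}(F,G[j])$. Here I would use that $\Gamma$ acts with finite stabilizers and that we are over a field $K$ of characteristic zero, so that taking $\Gamma$-invariants (and the relevant stabilizer-invariants) is exact; this lets one relate equivariant $\mathrm{Ext}$ groups to $\Gamma$-invariants of the non-equivariant $\mathrm{Ext}$ groups, where the non-equivariant comparison from \cite{KS} applies. (3) Essential surjectivity: given an $\mathbb{S}$-constructible complex $F^\bullet$ in $D^b_{\mathbb{S}\text{-}c,K,\Gamma}(|\mathbb{S}|)$, use the non-equivariant equivalence to find a complex of constructible sheaves representing it, then promote the quasi-isomorphism to a $\Gamma$-equivariant one — this is where one uses that over $K\supseteq\mathbb{Q}$ with finite stabilizers the obstruction groups vanish, or equivalently one can average. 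Concretely, I would represent objects by complexes of sheaves associated to the simplicial structure: on a finite-dimensional $\Gamma$-simplicial complex there is a combinatorial/cellular model (the sheaf is determined by the stalks on open stars together with cospecialization maps), and $\Gamma$-equivariance is just $\Gamma$-equivariance of that finite-type combinatorial data, for which the derived equivalence is essentially formal.

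An alternative and perhaps cleaner route: filter $|\mathbb{S}|$ by skeleta $|\mathbb{S}^{(k)}|$, which are $\Gamma$-stable closed subsets, and induct on dimension using the recollement $(j_!,j^*,j_*)$ for the open inclusion of the top-dimensional open cells' union; each stratum $|\mathbb{S}^{(k)}|\setminus|\mathbb{S}^{(k-1)}|$ is a disjoint union of open simplices permuted by $\Gamma$ with finite stabilizers, on which an equivariant locally constant sheaf with the constructibility hypothesis is just a $\Gamma$-equivariant local system, and the derived-category statement on such a stratum is immediate. Gluing via the octahedral axiom and the $5$-lemma then propagates the equivalence up the skeletal filtration. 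I would likely present the skeletal-induction proof as the main line and mention the invariants argument as the reason the gluing obstructions vanish.

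The main obstacle I expect is the bookkeeping for essential surjectivity and full faithfulness in the equivariant derived category: one must be careful that $D^b(\mathbf{Cons}_{K,\Gamma}(\mathbb{S}))$ is computed with the correct notion of equivariant injective/flabby resolutions, and that the comparison functor is compatible with the triangulated structure and with the skeletal recollement. The subtle point is not the topology — properness and finite stabilizers make everything behave like the non-equivariant case up to exact invariants functors — but rather verifying that a non-equivariant quasi-isomorphism can be rigidified to an equivariant one without introducing higher homotopies; this is handled by the characteristic-zero hypothesis on $K$ together with finiteness of stabilizers, which kills the relevant group cohomology with coefficients in the $\mathrm{Hom}$-complexes. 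Once that vanishing is in place, the proof is a routine dévissage on the skeletal filtration reducing to the classical statement of \cite{KS}, Chapter VIII.
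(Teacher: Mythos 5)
Your proposal is workable but takes a much longer route than the paper, whose proof consists of the observation that the arguments for Theorems 8.1.10 and 8.1.11 of Kashiwara--Schapira apply mutatis mutandis in the equivariant setting (the key local inputs --- exactness of $\Gamma(U(\sigma);-)$ on weakly $\mathbb{S}$-constructible sheaves and the isomorphism $\Gamma(U(\sigma);F)\cong F_x$ --- are local and insensitive to the group action), together with the remark that for a free action one reduces formally to the non-equivariant statement on $\Gamma\backslash|\mathbb{S}|$. Your skeletal recollement dévissage and the $\mathrm{Ext}$-comparison criterion are a legitimate alternative and have the merit of isolating exactly where properness, finiteness of stabilizers and $\mathbb{Q}\subset K$ enter. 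Two corrections, though. First, your justification of the $\mathrm{Ext}$ comparison is wrong as stated: for an \emph{infinite} group $\Gamma$, taking $\Gamma$-invariants is not exact, characteristic zero or not. What actually saves the argument is that the relevant $\mathrm{Hom}$-modules decompose along $\Gamma$-orbits into modules (co)induced from the finite stabilizers, so Shapiro's lemma reduces the obstruction groups to cohomology of finite groups with coefficients in $K$-vector spaces, which vanishes; this is the same mechanism (projectivity of $K[\Gamma/H]$ for $H$ finite) that the paper exploits to prove exactness of $Rl_p\Gamma\otimes_{K\Gamma}\pi_!$. You should say this explicitly rather than invoking exactness of invariants. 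Second, your separate rigidification argument for essential surjectivity is unnecessary and is the shakiest part of the write-up: here $D^b_{K,\Gamma}$ is by definition the derived category of the \emph{abelian} category of equivariant sheaves, not a Bernstein--Lunts-type equivariant derived category, so once full faithfulness is established, essential surjectivity follows by truncation dévissage --- the essential image of a fully faithful triangulated functor is a triangulated subcategory, and it contains the heart objects of $\mathbf{Cons}_{K,\Gamma}(\mathbb{S})$ by construction. No averaging of quasi-isomorphisms is needed.
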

\begin{prf}
The proof of Theorems 8.1.10 and  8.1.11 p.326 in
\cite{KS} (which is the special case where $\Gamma$
is the trivial group) applies here mutatis mutandis.
Actually if the action of $\Gamma$ is free we can use the natural equivalence of categories between the various categories of
$\Gamma$-equivariant sheaves on $|\mathbb{S}|$  and of sheaves on $\Gamma \backslash |\mathbb{S}|$ to formally reduce the statement to \cite[Chapter VIII]{KS}. 
\end{prf}

\subsection{$L_p$-cohomology for equivariant constructible sheaves. The case of simplicial complexes.}

Let $\mathbb{S}$ be a finite dimensional $\Gamma$-simplicial complex.
Consider the natural quotient map $\pi: |\mathbb{S}| \to \Gamma \backslash |\mathbb{S}|$.
It is easy to see that $\pi_!$ the direct image with proper
support is exact on $\mathbb{S}$-constructible sheaves \footnote{It is a direct consequence of
 \cite{KS} Proposition 8.1.4  p. 323 in case the action is free. The general case is easily taken care
 of by a barycentric subdivision argument.}.
On the category of equivariant $\mathbb{S}$-constructible sheaves, $\pi_!$ factorizes through
the category of sheaves of left $K\Gamma$-modules on $\Gamma \backslash |\mathbb{S}|$.

The left and right regular representations, denoted by $\lambda$ and $\rho$, on  the set $l_p\Gamma$ of complex
valued functions $(a_{\gamma})_{\gamma \in \Gamma}$
defined on $\Gamma$ such that $\sum_{\gamma\in\Gamma} |a_{\gamma}|^p <\infty$ defines a bimodule
over $\C\Gamma$.
We call $Rl_p\Gamma$ the right $\Gamma$-module attached to $\rho$.
In particular, given a sheaf $F$ of left $K\Gamma$-modules on a topological
space $T$, the tensor product $Rl_p\Gamma \otimes_{K\Gamma} F$ is a sheaf of left
$\C  \Gamma$-modules. It is actually a sheaf of $W_{l,p}(\Gamma)$-modules where $W_{l,p}(\Gamma)$
 is the
bicommutant of  $\lambda (\mathbb{Z}\Gamma)$
in the algebra of continuous linear endomorphisms of $l_p\Gamma$.

\begin{lem} The functor $F\mapsto Rl_p\Gamma \otimes_{K\Gamma} \pi_!F$ is exact
on $\mathbf{Cons}_{K,\Gamma}(\mathbb{S})$.
\end{lem}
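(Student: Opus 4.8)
The strategy is to factor the functor as a composition and check exactness of each factor. We already know from the text immediately preceding the statement that $\pi_!$ is exact on $\mathbf{Cons}_{K,\Gamma}(\mathbb{S})$, so it suffices to prove that the functor $G \mapsto Rl_p\Gamma \otimes_{K\Gamma} G$ is exact on the essential image of $\pi_!$, i.e. on the category of sheaves of left $K\Gamma$-modules on $\Gamma\backslash|\mathbb{S}|$ that arise as $\pi_! F$ for $F$ an $\mathbb{S}$-constructible equivariant sheaf. Exactness of tensoring can be checked stalkwise, since the stalk functors on $\Gamma\backslash|\mathbb{S}|$ are exact and jointly faithful, and $(Rl_p\Gamma\otimes_{K\Gamma}G)_{\bar x} = Rl_p\Gamma\otimes_{K\Gamma}G_{\bar x}$ because $\otimes$ commutes with the filtered colimit defining the stalk. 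So the whole question reduces to: for which left $K\Gamma$-modules $V$ is $Rl_p\Gamma\otimes_{K\Gamma} V$ obtained by applying an exact functor?

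\emph{Key step: identify the stalks.} First I would show that the stalk $(\pi_!F)_{\bar x}$, for $\bar x\in\Gamma\backslash|\mathbb{S}|$ the image of $x\in|\mathbb{S}|$, is a free left $K\mathrm{Stab}_\Gamma(x)$-module induced up to $K\Gamma$ — more precisely, unwinding $\pi_!$ (pushforward with proper supports along the quotient map, which on stalks picks out the sections over the fibre $\Gamma x$ with proper support, hence the direct sum $\bigoplus_{\gamma\in\Gamma/\mathrm{Stab}(x)} F_{\gamma x}$) one gets $(\pi_!F)_{\bar x}\cong K\Gamma\otimes_{K\mathrm{Stab}(x)} F_x$ where $F_x$ carries the natural action of the (finite!) stabilizer. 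The finiteness of stabilizers, built into the definition of a $\Gamma$-simplicial complex, is exactly what makes this work. Then $Rl_p\Gamma\otimes_{K\Gamma}(\pi_!F)_{\bar x}\cong Rl_p\Gamma\otimes_{K\mathrm{Stab}(x)}F_x$.

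\emph{Key step: exactness over a finite group.} Now the problem is local in the group: I must check that $W\mapsto Rl_p\Gamma\otimes_{KH}W$ is exact on the category of $KH$-modules of the form $F_x$ for $H=\mathrm{Stab}(x)$ a finite subgroup. Since $H$ is finite, $KH$ is a finite-dimensional algebra; and $Rl_p\Gamma$, viewed as a right $KH$-module via the embedding $KH\hookrightarrow K\Gamma$, decomposes (as $l_p\Gamma = \bigoplus_{\text{cosets}} l_p(Hg)$ and each $l_p(Hg)\cong l_pH = KH$ since $H$ is finite) as a direct sum of copies of the right regular representation $KH$ of $H$; if $K$ has characteristic zero — which it does, being a subfield of $\C$ — then $KH$ is semisimple by Maschke, hence $Rl_p\Gamma$ is a projective (indeed free, on countably many generators) right $KH$-module, so $-\otimes_{KH}-$ against it is exact. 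Assembling: exactness of $\pi_!$ (given), exactness of stalkwise tensoring against a flat module, and the stalk identification above combine to give exactness of $F\mapsto Rl_p\Gamma\otimes_{K\Gamma}\pi_!F$ on $\mathbf{Cons}_{K,\Gamma}(\mathbb{S})$.

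\emph{Main obstacle.} The delicate point is the stalk computation for $\pi_!$: one has to be careful that $\pi$ is not a covering map when stabilizers are nontrivial, so "direct image with proper supports along a quotient by a proper action" has to be analyzed via a barycentric subdivision (as the paper does in its footnote for the exactness of $\pi_!$ itself) or by a direct limit argument over relatively compact opens, to see that the stalk at $\bar x$ is genuinely the induced module $K\Gamma\otimes_{KH}F_x$ and not something larger coming from the non-Hausdorff quotient behavior. Everything else — commuting $\otimes$ with stalks, Maschke, decomposing $l_p\Gamma$ over a finite subgroup — is routine once that identification is in hand. An alternative route that sidesteps the stalk bookkeeping: observe directly that $Rl_p\Gamma$ is flat as a right $K\Gamma$-module (again because, restricting to any finite subgroup, it is free and $K\Gamma$ is the filtered union of... no — rather, one shows $Rl_p\Gamma$ is flat over $K\Gamma$ because $l_p\Gamma$ contains $K\Gamma$ and the quotient is torsion-free in a suitable sense), so that $Rl_p\Gamma\otimes_{K\Gamma}(-)$ is exact on \emph{all} sheaves of $K\Gamma$-modules; composed with the exact $\pi_!$ this gives the lemma. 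I would present the first route, as the flatness claim over the non-Noetherian ring $K\Gamma$ is exactly what the induced-module decomposition makes transparent.
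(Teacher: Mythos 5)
Your proposal is correct and follows essentially the same route as the paper: reduce to stalks, identify the stalk of $\pi_!F$ as a module induced from the finite stabilizer $H_{\tilde p}$, and exploit finiteness of the stabilizer via an averaging/Maschke-type argument to conclude exactness of the tensor product. The only cosmetic difference is where the key homological input is placed — the paper shows the stalk $\C\Gamma\otimes_{\C H}F_x$ is a \emph{projective} left $\C\Gamma$-module (so short exact sequences of stalks split), whereas you show $Rl_p\Gamma$ is \emph{flat} as a right $KH$-module and use $Rl_p\Gamma\otimes_{K\Gamma}K\Gamma\otimes_{KH}F_x\cong Rl_p\Gamma\otimes_{KH}F_x$ — two equivalent faces of the same finite-group argument.
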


\begin{prf} Since $\mathbb C$ is a  $K$-vector space it is flat over $K$ and $\pi_!$ commutes
with $\otimes_{K}\mathbb C$.  Hence, it is enough to prove exactness of $F_{\mathbb C} \mapsto Rl_p\Gamma \otimes_{\mathbb{C}\Gamma} \pi_!F_{\mathbb C}$
on $\mathbb S$-constructible equivariant sheaves of $\mathbb C$-vector spaces. 
Since the stalk at $p\in \Gamma\backslash |\mathbb{S}|$
of $\pi_!\mathcal F_{\mathbb C}$ is isomorphic to $\mathbb{C} [\Gamma \slash H_{\tilde p}]^{\oplus n}$ where $n$ is a nonnegative integer and $H_{\tilde p}$ is the stabilizer of  some lift $\tilde p \in |\mathbb{S}|$
of $p$, it follows that it is a projective module over $\mathbb{C}\Gamma$.
Indeed, whenever $H$ is a finite subgroup of $\Gamma$,
 the pull-back injection $i:\mathbb{C} [\Gamma/H] \to \mathbb{C}\Gamma$ has a right inverse $\pi((a_{\gamma})_{{\gamma}\in \Gamma})_g=
\frac{1}{\text{Card}(H)} \sum_{h\in H} a_{gh}$ which is equivariant
for the left action of $\Gamma$.
Exactness follows from the facts that stalks of tensor products are computed stalkwise, that $\pi_!$ is exact and
 that a short exact sequence of projective modules splits.
\end{prf}
\begin{lem} \label{extension1} The functor $F\mapsto Rl_p\Gamma \otimes_{K\Gamma} \pi_!F$ is exact
on $\mathbf{Mod}_{\Gamma}(K_{|\mathbb{S}|})$.
\end{lem}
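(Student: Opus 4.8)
The plan is to reduce the claim about arbitrary $\Gamma$-equivariant sheaves on $|\mathbb S|$ to the already-established exactness on $\mathbb S$-constructible sheaves (the previous Lemma), via a resolution argument. First I would use that $|\mathbb S|$ is the realization of a finite dimensional $\Gamma$-simplicial complex: for each $k$ let $S_k$ be the (barycenters of the) $k$-simplices, let $j_k : U_k \hookrightarrow |\mathbb S|$ be the inclusion of the open star-neighborhood of the $k$-skeleton minus the $(k-1)$-skeleton — more precisely the standard locally closed stratification of $|\mathbb S|$ by the open simplices $|\sigma|$, grouped by dimension. Since $\mathbb S$ is finite dimensional this stratification is finite, so the associated "skeletal" filtration of $|\mathbb S|$ by closed subsets $|\mathbb S|^{(k)}$ (unions of closed simplices of dimension $\le k$) has finite length, and it is $\Gamma$-stable.

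Next, for an arbitrary object $F$ of $\mathbf{Mod}_{\Gamma}(K_{|\mathbb S|})$, I would build a finite resolution
\[
0 \to F \to C^0 \to C^1 \to \cdots \to C^N \to 0
\]
functorial in $F$, whose terms $C^i$ are $\Gamma$-equivariant and are direct sums of sheaves of the form $(i_{Z})_{!}\, (i_{Z})^{-1}\!$ applied to equivariant sheaves supported on individual $\Gamma$-orbits of strata, or alternatively the Čech-type resolution coming from the closed cover by the $\Gamma$-orbits of closed simplices. The point is that each $C^i$, being supported stratum-wise with locally constant restrictions to each $|\sigma|$, is weakly $\mathbb S$-constructible — and in fact a filtered colimit of $\mathbb S$-constructible equivariant subsheaves. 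Concretely: any equivariant sheaf on $|\mathbb S|$ whose restriction to every open simplex is constant is the filtered union of its finitely-generated (over the stalk) equivariant subsheaves, each of which is $\mathbb S$-constructible. Since $\pi_!$ commutes with filtered colimits and with the relevant $\Gamma$-stalk computations, and since $Rl_p\Gamma \otimes_{K\Gamma}(-)$ also commutes with filtered colimits, exactness of $F \mapsto Rl_p\Gamma \otimes_{K\Gamma}\pi_! F$ on $\mathbb S$-constructible sheaves (the previous Lemma) propagates to all weakly $\mathbb S$-constructible sheaves, hence to each $C^i$.

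Finally I would assemble: applying the exact-on-each-term functor $G := Rl_p\Gamma \otimes_{K\Gamma}\pi_!(-)$ to the resolution $F \to C^\bullet$ and using that $G$ is right exact (it is a composition of the exact $\pi_!$ on equivariant sheaves — here one only needs $\pi_!$ exact, which holds since $\pi$ is a covering-type quotient — with the right-exact tensor product), a standard dimension-shift / acyclic-resolution argument shows $G$ is exact on all of $\mathbf{Mod}_{\Gamma}(K_{|\mathbb S|})$: the $C^i$ are $G$-acyclic and compute $L_\bullet G(F)$, which vanishes in positive degrees because $C^\bullet$ itself is $G$-exact degreewise. The main obstacle I anticipate is the reduction step in the previous paragraph — namely verifying carefully that every weakly $\mathbb S$-constructible equivariant sheaf is a filtered colimit of $\mathbb S$-constructible ones in a way compatible with the $\Gamma$-action and with $\pi_!$, and that the stalk of $\pi_! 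F$ at a point of $\Gamma\backslash|\mathbb S|$ really is computed as claimed (a sum of permutation modules $\mathbb C[\Gamma/H_{\tilde p}]$) even without finite generation, so that flatness of $Rl_p\Gamma$ over such modules — which fails for a general $\mathbb C\Gamma$-module — is still available in the colimit. One must also check that the terms $C^i$ of the chosen resolution are genuinely equivariant and that the resolution is finite, which is exactly where finite dimensionality of $\mathbb S$ is used.
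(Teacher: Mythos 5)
Your route has a genuine, fatal gap at the very first step: an arbitrary object of $\mathbf{Mod}_{\Gamma}(K_{|\mathbb{S}|})$ admits \emph{no} finite resolution $0\to F\to C^0\to\cdots\to C^N\to 0$ by weakly $\mathbb{S}$-constructible sheaves. Weak constructibility is stable under kernels (the kernel of a map of constant sheaves on a connected, locally connected space is again constant), so if such a resolution existed, $F=\ker(C^0\to C^1)$ would itself be weakly $\mathbb{S}$-constructible --- which a general equivariant sheaf (e.g.\ a skyscraper at an interior point of a simplex) is not. The concrete terms you propose do not help: $(i_Z)_!(i_Z)^{-1}F$ for a stratum $Z=|\sigma|$ restricts on $|\sigma|$ to $F|_{|\sigma|}$, an arbitrary sheaf, so your assertion that these terms have ``locally constant restrictions to each $|\sigma|$'' is false. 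In addition, the closing homological step is confused: a right resolution computes right derived functors, whereas for the right exact functor $G=Rl_p\Gamma\otimes_{K\Gamma}\pi_!(-)$ you would need to control $L_1G$; one can salvage a dimension shift by splicing long exact sequences upward, but only if the $C^i$ exist and are $L_\bullet G$-acyclic, and in any case the ambient sheaf category lacks enough projectives to set this up naively.

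The point you flag as ``the main obstacle'' --- whether the stalk of $\pi_!F$ is still an induced module without finiteness hypotheses --- is in fact the whole proof, and it resolves affirmatively and directly. For any equivariant sheaf $F$ (constructible or not), properness of the action gives $(\pi_!F)_{\pi(\tilde p)}\cong\bigoplus_{q\in\pi^{-1}(\pi(\tilde p))}F_q\cong \C\Gamma\otimes_{\C H_{\tilde p}}F_{\tilde p}$ after extending scalars to $\C$; since $H_{\tilde p}$ is finite and we are in characteristic zero, $F_{\tilde p}$ is a projective $\C H_{\tilde p}$-module (no finite-dimensionality needed), and induction preserves projectivity --- the same averaging operator $\pi(a)_g=\frac{1}{\mathrm{Card}(H)}\sum_{h\in H}a_{gh}$ splits the relevant inclusion. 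Hence every stalk of $\pi_!F$ is a projective $\C\Gamma$-module, the Tor$_1$ terms vanish stalkwise, and exactness follows exactly as in the constructible case. This is what the paper means by ``the above proof also works with a minor modification'': no approximation by constructible sheaves, no resolution, and no derived-functor machinery is required.
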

\begin{prf} The above proof also works with a minor modification for all sheaves of $K$-vector spaces. 
\end{prf}

\begin{defi} \label{lpco}
Let $F^{\bullet}$ be an object of $D^b(\mathbf{Cons}_{K,\Gamma}(\mathbb{S}))$. Its $k$th $L_p$- hypercohomology
groups is the $W_{l,p}(\Gamma)$-module
$$\mathbb{H}^k_{(p)}(|\mathbb{S}|, F^{\bullet}):=
\mathbb{H}^k(\Gamma \backslash |\mathbb{S}|,
Rl_p\Gamma\otimes_{K\Gamma} \pi_!
F^{\bullet}).$$
\end{defi}
\begin{lem}
The composition of the derived functor of $H^0(|\mathbb{S}|, -)$ and of $Rl_p\Gamma \otimes_{K\Gamma} -$ gives a $\partial$-functor  \footnote{As in \cite[p. 22]{HaRD}, a $\partial$-functor of triangulated categories is an additive functor 
which commutes with the translation functor and respects distinguished triangles. } of triangulated categories
 $$\mathbb{H}^\bullet_{(p)}(|\mathbb{S}|,-):  D^b(\mathbf{Cons}_{K,\Gamma}(\mathbb{S}))\to D^b ({\text{Mod}}_{W_{l,p}(\Gamma)}),$$
where
${\text{Mod}}_{W_{l,p}(\Gamma)}$ stands for the
category of left $W_{l,p}(\Gamma)$-modules such that the $L_p$-hypercohomology  groups are its cohomology objects.
\end{lem}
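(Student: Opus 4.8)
The plan is to assemble the $\partial$-functor from two pieces that are already under control: the tensor operation $Rl_p\Gamma\otimes_{K\Gamma}-$ and the global sections functor $R\Gamma(|\mathbb{S}|,-)$. First I would observe that the previous lemma shows $\Phi(F):=Rl_p\Gamma\otimes_{K\Gamma}\pi_!F$ is an \emph{exact} functor $\mathbf{Cons}_{K,\Gamma}(\mathbb{S})\to\mathbf{Mod}_{\Gamma}(\C_{\Gamma\backslash|\mathbb{S}|})$ (valued in sheaves of $W_{l,p}(\Gamma)$-modules), so it extends termwise to a $\partial$-functor $D^b(\mathbf{Cons}_{K,\Gamma}(\mathbb{S}))\to D^b(\mathbf{Mod}(\C_{\Gamma\backslash|\mathbb{S}|}))$ without any need for derivation — exact functors preserve quasi-isomorphisms and distinguished triangles. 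The second piece is the right derived functor $R\Gamma(\Gamma\backslash|\mathbb{S}|,-):D^b(\mathbf{Mod}(\C_{\Gamma\backslash|\mathbb{S}|}))\to D^b(\mathbf{Mod}_{W_{l,p}(\Gamma)})$, which exists as a $\partial$-functor by the standard theory (the $W_{l,p}(\Gamma)$-module structure is carried along because it is induced by endomorphisms of the coefficient sheaf that commute with the geometric operation of taking sections). Composing, $\mathbb{H}^\bullet_{(p)}(|\mathbb{S}|,-):=R\Gamma(\Gamma\backslash|\mathbb{S}|,-)\circ\Phi$ is a composition of $\partial$-functors, hence a $\partial$-functor.

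Next I would identify its cohomology objects with the groups of Definition \ref{lpco}. By construction $H^k\big(R\Gamma(\Gamma\backslash|\mathbb{S}|,\Phi(F^\bullet))\big)=\mathbb{H}^k(\Gamma\backslash|\mathbb{S}|,Rl_p\Gamma\otimes_{K\Gamma}\pi_!F^\bullet)=\mathbb{H}^k_{(p)}(|\mathbb{S}|,F^\bullet)$, which is exactly the asserted description; the $W_{l,p}(\Gamma)$-module structure on $\mathbb{H}^k$ is the one inherited from the sheaf of $W_{l,p}(\Gamma)$-modules $\Phi(F^\bullet)$. One should also note that $\Gamma\backslash|\mathbb{S}|$ is paracompact of finite cohomological dimension (it is finite-dimensional as a CW space), so $R\Gamma$ indeed sends $D^b$ to $D^b$ and the resulting complex is bounded.

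The one point deserving care — and the step I expect to be the main obstacle — is checking that $R\Gamma(\Gamma\backslash|\mathbb{S}|,-)$ really is defined on all of $D^b(\mathbf{Mod}(\C_{\Gamma\backslash|\mathbb{S}|}))$, i.e.\ that the category of sheaves of $\C$-vector spaces on $\Gamma\backslash|\mathbb{S}|$ has enough injectives and that these compute cohomology of the $W_{l,p}(\Gamma)$-module-valued functor correctly. This is classical for the underlying abelian-sheaf cohomology; the extra module structure is harmless because it is functorial and $R\Gamma$ commutes with the forgetful functor to abelian groups, so one can compute the derived functor after forgetting and then transport the $W_{l,p}(\Gamma)$-action. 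I would also remark that $\Phi$ need not send injectives to injectives, which is precisely why it is essential that $\Phi$ is \emph{exact}: exactness is what lets us apply $R\Gamma$ after $\Phi$ and get the derived functor of the composite, rather than a spectral sequence. With these remarks the lemma follows formally; no genuine computation is required.
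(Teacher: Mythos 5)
Your proposal is correct and follows exactly the route the paper intends: the lemma is stated without proof there, being a formal consequence of the exactness of $F\mapsto Rl_p\Gamma\otimes_{K\Gamma}\pi_!F$ established in the preceding lemma, combined with the standard derived global-sections functor on the finite-dimensional quotient $\Gamma\backslash|\mathbb{S}|$. Your added remarks on boundedness and on transporting the $W_{l,p}(\Gamma)$-module structure are exactly the routine verifications the paper leaves implicit.
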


This definition gives rise to the long
exact sequence attached to a short exact sequence
and to various spectral sequences generalizing it.

Thanks to lemma \ref{extension1}, we also get:

\begin{lem} \label{lemext1}
The same formula as in definition \ref{lpco} defines an extension of $\mathbb{H}^*_{(p)}(|\mathbb{S}|,-)$ to a $\partial$-functor  $$\mathbb{H}^\bullet_{(p)}(|\mathbb{S}|,-):D^b_{K,\Gamma}(|\mathbb{S}|) \to  D^b ({\text{Mod}}_{W_{l,p}(\Gamma)}).$$
\end{lem}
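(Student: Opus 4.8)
The plan is to reduce Lemma \ref{lemext1} to Lemma \ref{extension1} in exactly the same way that the preceding lemma was reduced to the exactness statement on $\mathbf{Cons}_{K,\Gamma}(\mathbb{S})$, the only novelty being that we no longer have an equivalence of categories identifying the ambient bounded derived category with a derived category of an abelian category of equivariant constructible sheaves. First I would record that the functor $G\mapsto Rl_p\Gamma\otimes_{K\Gamma}\pi_!G$ is an exact functor $\mathbf{Mod}_{\Gamma}(K_{|\mathbb{S}|})\to\mathbf{Mod}_{W_{l,p}(\Gamma)}$ by Lemma \ref{extension1}, hence it induces a triangulated $\partial$-functor $D^b(\mathbf{Mod}_{\Gamma}(K_{|\mathbb{S}|}))\to D^b(\mathbf{Mod}_{W_{l,p}(\Gamma)})$ on the naive bounded derived category of complexes of equivariant sheaves. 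Composing with the total right derived functor $R\Gamma(\Gamma\backslash|\mathbb{S}|,-)$ of global sections on the (paracompact, finite-dimensional) quotient gives a $\partial$-functor into $D^b(\mathbf{Mod}_{W_{l,p}(\Gamma)})$ whose $k$th cohomology object is precisely the right-hand side of Definition \ref{lpco}; this is where I would invoke that $\pi_!G$ is a genuine sheaf on the finite-dimensional space $\Gamma\backslash|\mathbb{S}|$, so $R\Gamma$ has finite cohomological dimension and lands in the bounded derived category.

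Next I would explain why this descends to $D^b_{K,\Gamma}(|\mathbb{S}|)=D^b(\mathbf{Mod}_{\Gamma}(K_{|\mathbb{S}|}))$: these are by definition the same category (the bounded derived category of the abelian category of equivariant sheaves), so in fact there is nothing to descend — the functor built in the previous step already has the stated source. The only thing to check is that it genuinely extends the functor of Definition \ref{lpco} and of the lemma immediately preceding: that is immediate since on a complex $F^\bullet$ with constructible cohomology the formula is literally the same, and the equivalence of Proposition \ref{ks8111} identifies $D^b(\mathbf{Cons}_{K,\Gamma}(\mathbb{S}))$ with the full subcategory $D^b_{\mathbb{S}-c,K,\Gamma}(|\mathbb{S}|)\subset D^b_{K,\Gamma}(|\mathbb{S}|)$, on which both constructions agree. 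So the extension is compatible with the restriction functor along that fully faithful embedding, and the $\partial$-functor property (commuting with shift, sending distinguished triangles to distinguished triangles) is inherited from the fact that exact functors of abelian categories induce triangulated functors on derived categories and that total derived functors of left-exact functors are triangulated.

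Concretely I would write: by Lemma \ref{extension1} the additive functor $T\colon\mathbf{Mod}_{\Gamma}(K_{|\mathbb{S}|})\to\mathbf{Mod}_{W_{l,p}(\Gamma)}$, $T(F)=Rl_p\Gamma\otimes_{K\Gamma}\pi_!F$, is exact, hence prolongs to a triangulated $\partial$-functor $D^b(T)\colon D^b_{K,\Gamma}(|\mathbb{S}|)\to D^b(\mathbf{Mod}_{W_{l,p}(\Gamma)})$; since $\Gamma\backslash|\mathbb{S}|$ is paracompact of finite cohomological dimension, $RH^0(\Gamma\backslash|\mathbb{S}|,-)$ is a triangulated $\partial$-functor $D^b(\mathbf{Mod}_{W_{l,p}(\Gamma)})\to D^b(\mathbf{Mod}_{W_{l,p}(\Gamma)})$; define $\mathbb{H}^\bullet_{(p)}(|\mathbb{S}|,-):=RH^0(\Gamma\backslash|\mathbb{S}|,-)\circ D^b(T)$. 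One then checks the cohomology objects reproduce Definition \ref{lpco}, and that restricting along $D^b(\mathbf{Cons}_{K,\Gamma}(\mathbb{S}))\hookrightarrow D^b_{K,\Gamma}(|\mathbb{S}|)$ recovers the earlier functor.

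I do not expect a genuine obstacle here; the statement is essentially formal once Lemma \ref{extension1} is in hand. The one point requiring a little care — and the closest thing to a difficulty — is boundedness of the target: one must know that $\pi_!F$ for $F$ an arbitrary (not necessarily constructible) equivariant sheaf is still handled by a global-sections functor of finite cohomological dimension, so that a bounded complex of equivariant sheaves is sent to a bounded complex; this is guaranteed by finite-dimensionality of $\mathbb{S}$, hence of $\Gamma\backslash|\mathbb{S}|$, but it is worth stating explicitly since for infinite-dimensional $\mathbb{S}$ the extension would only land in $D^+$. A secondary, purely bookkeeping point is to make sure the $W_{l,p}(\Gamma)$-module structure is respected throughout — it is, because $\pi_!$ produces sheaves of $K\Gamma$-modules and tensoring with the bimodule $Rl_p\Gamma$ upgrades this to $W_{l,p}(\Gamma)$ functorially, and global sections commute with this action.
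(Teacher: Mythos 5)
Your argument is correct and is essentially the paper's own: the paper deduces Lemma \ref{lemext1} directly from the exactness statement of Lemma \ref{extension1} with no further argument, exactly as you do by composing the induced triangulated functor with derived global sections on $\Gamma\backslash|\mathbb{S}|$. Your explicit remark that finite-dimensionality of $\mathbb{S}$ is what keeps the image in the \emph{bounded} derived category is a worthwhile point the paper leaves implicit.
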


We don't use a different notation hoping this will not cause any confusion. 

\subsection{The subanalytic case}

\subsubsection{Subanalytic stratifications and constructible sheaves}



A subanalytic\footnote{Actually, \lq  definable in a o-minimal structure\rq \  is the natural hypothesis.} $\Gamma$-space is a $\Gamma$-space  that can be 
realized as a locally closed $\Gamma$-invariant subanalytic
subset of a real analytic 
manifold 
endowed with a proper real analytic action of $\Gamma$. 
A stratified subanalytic space $\mathbb{X}$ is
a subanalytic space $\wt{X}:=|\mathbb X|$ endowed with a locally finite partition
$\wt{X}=\cup_i X_i$ in disjoint subanalytic submanifolds satisfying
$X_i\cap \bar X _j\not = \emptyset \Longrightarrow X_i\subset \bar X_j$.
A stratified subanalytic $\Gamma$-space $\mathbb X$ is a proper
analytic action of $\Gamma$ on $\wt{X}=|\mathbb{X} |$  such that
for every $g\in \Gamma$ and every point $x\in |\mathbb{X} |$ the germ
 at $x$ of the stratification is carried by $g$ to the germ at $gx$ of
 the stratification. When the stratification comes from a $\Gamma$-simplicial complex, 
one calls it a triangulation.
Proposition 8.2.5 in \cite{KS}
implies that, in the cocompact case, any $\Gamma$-stratification may be refined
to a $\Gamma$-triangulation and that the $\Gamma$-triangulations form a  cofinal system with respect to refinement.

The obvious extension of the definitions and notations of section \ref{sc}
will be left to the reader, the only change being that $\mathbb{X}$-constructible sheaves on $|\mathbb{X}|$
are now assumed to be locally constant along the strata of $\mathbb{X}$.

Let $\wt{X}$ be a subanalytic $\Gamma$-space. A sheaf of $K$-vector spaces $F$ on $\wt{X}$
is called constructible if it is constructible with respect to
some subanalytic stratification of $\wt{X}$.
We denote by $ \R \mathbf{Cons}_{K,\Gamma}(\wt{X})$  the category
of equivariant constructible sheaves on $\wt{X}$
and by  $D^b_{K, \R c,\Gamma}(\wt{X})$ the thick full subcategory of $D^b_{K, \Gamma}(\wt{X})$
consisting of complexes with bounded constructible cohomology.
Theorem 8.4.5 in \cite[p. 339]{KS} is easily generalized to
\begin{prop}
 The natural functor $D^b (\mathbb{R} \mathbf{Cons}_{K,\Gamma}(\wt{X}) )
\to D^b_{\mathbb{R}c, K, \Gamma}(\wt{X})$ is an equivalence of triangulated categories if $\wt{X}$ is cocompact.
\end{prop}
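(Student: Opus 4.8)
The plan is to reduce the subanalytic situation to the simplicial one proved in Proposition \ref{ks8111}, using a $\Gamma$-triangulation, and then invoke the standard non-equivariant comparison theorem of \cite{KS}. First I would use the refinement statement just recalled: in the cocompact case any $\Gamma$-stratification can be refined to a $\Gamma$-triangulation, and the $\Gamma$-triangulations are cofinal with respect to refinement (this is the generalization of \cite[Prop. 8.2.5]{KS}). Hence $\R\mathbf{Cons}_{K,\Gamma}(\wt X) = \varinjlim_{\mathbb S} \mathbf{Cons}_{K,\Gamma}(\mathbb S)$, the colimit running over $\Gamma$-triangulations $\mathbb S$ of $\wt X$, and likewise $D^b_{\R c, K,\Gamma}(\wt X) = \varinjlim_{\mathbb S} D^b_{\mathbb S - c, K,\Gamma}(|\mathbb S|)$. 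The functor in the statement is then the colimit (over this cofinal system) of the equivalences $D^b(\mathbf{Cons}_{K,\Gamma}(\mathbb S)) \to D^b_{\mathbb S - c, K,\Gamma}(|\mathbb S|)$ of Proposition \ref{ks8111}, so it would suffice to check that passing to the colimit of these triangulated categories is harmless and that the natural functor $D^b(\R\mathbf{Cons}_{K,\Gamma}(\wt X)) \to D^b_{\R c, K,\Gamma}(\wt X)$ is compatible with the colimit presentation.

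Concretely I would proceed in three steps. Step one: essential surjectivity. Given an object of $D^b_{\R c, K,\Gamma}(\wt X)$, its finitely many nonzero cohomology sheaves are each constructible for a single subanalytic $\Gamma$-stratification (refine finitely many to a common one using local finiteness and cocompactness); refine that to a $\Gamma$-triangulation $\mathbb S$; then the complex lies in $D^b_{\mathbb S - c, K,\Gamma}(|\mathbb S|)$, which by Proposition \ref{ks8111} is the essential image of $D^b(\mathbf{Cons}_{K,\Gamma}(\mathbb S))$, hence of $D^b(\R\mathbf{Cons}_{K,\Gamma}(\wt X))$. Step two: full faithfulness. For two bounded complexes of equivariant constructible sheaves, choose a single $\Gamma$-triangulation $\mathbb S$ adapted to all cohomology sheaves of both; then both $\mathrm{Hom}$ and $\mathrm{Ext}$ groups computed in $D^b(\R\mathbf{Cons}_{K,\Gamma}(\wt X))$ agree with those computed in $D^b(\mathbf{Cons}_{K,\Gamma}(\mathbb S))$ — because a morphism or extension in the colimit category is represented at some finite stage, and any two stages admit a common $\Gamma$-triangulation refinement — and these in turn agree with the corresponding groups in $D^b_{\mathbb S - c, K,\Gamma}(|\mathbb S|) \subset D^b_{\R c, K,\Gamma}(\wt X)$ by Proposition \ref{ks8111}. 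The key input making this work is that the inclusions $\mathbf{Cons}_{K,\Gamma}(\mathbb S) \hookrightarrow \mathbf{Cons}_{K,\Gamma}(\mathbb S')$ for a refinement $\mathbb S' $ of $\mathbb S$ are exact and fully faithful, so the transition functors on derived categories are fully faithful and the colimit is computed naively. Step three: assemble, noting the functor is triangulated because each stage-functor is.

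The main obstacle I expect is the interaction between equivariance and the colimit over triangulations. In the non-equivariant case \cite{KS} handles the subanalytic-to-simplicial passage by a triangulation of the pair, but here one must ensure every refinement can be taken $\Gamma$-equivariant and that the resulting categories and their transition functors are genuinely exact and fully faithful at the equivariant level; cocompactness is what rescues this, since it makes $\Gamma\backslash\wt X$ compact and $\Gamma\backslash\Delta$ finite so that only finitely many strata (up to $\Gamma$) intervene in any given complex. If the $\Gamma$-action is free one can sidestep the difficulty entirely: descend along $\pi:\wt X \to \Gamma\backslash\wt X$ to an ordinary cocompact subanalytic space and apply \cite[Theorem 8.4.5]{KS} verbatim, exactly as in the last sentence of the proof of Proposition \ref{ks8111}. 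For the general (finite-stabilizer) case I would either run the barycentric-subdivision argument already alluded to in the footnote on $\pi_!$, or remark that the proof of \cite[Theorem 8.4.5]{KS} is local on $\wt X$ and $\Gamma$-locally the space looks like a product of a stratified model by a finite $\Gamma$-set, so the cited proof applies mutatis mutandis — which is the phrasing I would ultimately use, mirroring the proof of Proposition \ref{ks8111}.
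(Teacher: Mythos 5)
Your argument is correct and follows exactly the route the paper intends: the paper gives no written proof, merely asserting that \cite[Theorem 8.4.5]{KS} "is easily generalized," and the intended generalization is precisely your reduction to the cofinal system of $\Gamma$-triangulations combined with Proposition \ref{ks8111}. Your observation that full faithfulness of the transition functors at the derived level follows by viewing both $D^b(\mathbf{Cons}_{K,\Gamma}(\mathbb S))$ and $D^b(\mathbf{Cons}_{K,\Gamma}(\mathbb S'))$ as full triangulated subcategories of the same ambient $D^b_{K,\Gamma}(\wt X)$ via Proposition \ref{ks8111} is exactly the right way to make the colimit argument rigorous.
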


\subsubsection{Constructible $L_p$-cohomology}

\begin{prop}\label{lpcocons}
Let $\wt{X}$ be a cocompact subanalytic $\Gamma$-space.
Let $\mathcal{F}^{\bullet}$ be an object of $D^b_{\R-c,K, \Gamma}(\wt{X})$. Its $k$th $L_p$-hypercohomology
groups is the $W_{l,p(\Gamma)}$-module
$$\mathbb{H}^k_{(p)}(\wt{X}, \mathcal{F}^{\bullet}):=
\mathbb{H}^k(\Gamma \backslash \wt{X},
Rl_p\Gamma\otimes_{K\Gamma} \pi_!
\mathcal{F}^{\bullet}).$$

There is a $\partial$-functor of triangulated categories $$\mathbb{H}^\bullet_{(p)}(\wt{X},-): 
 D^b_{\R-c, K,\Gamma}(\wt{X})\to D^b ({\text{Mod}}_{W_{l,p}(\Gamma)}), $$ where
${\text{Mod}}_{W_{l,p}(\Gamma)}$ stands for the
category of left $W_{l,p}(\Gamma)$-modules such that
$$\mathbb{H}^k_{(p)}(\wt{X}, \mathcal{F}^{\bullet})= H^k(\mathbb{H}^k_{(p)}(\wt{X}, \mathcal{F}^{\bullet})).$$

\end{prop}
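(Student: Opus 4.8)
The plan is to reduce the subanalytic statement to the simplicial statement already established in the previous subsection (Definition \ref{lpco} and its accompanying lemmas), using the triangulation theorem for cocompact $\Gamma$-stratifications. First I would invoke the fact, quoted just above from Proposition 8.2.5 of \cite{KS}, that in the cocompact case any $\Gamma$-stratification is refined by a $\Gamma$-triangulation, and that these triangulations are cofinal for refinement. Given $\mathcal{F}^\bullet \in D^b_{\R-c,K,\Gamma}(\wt X)$, its cohomology sheaves are constructible for some subanalytic $\Gamma$-stratification, hence for some $\Gamma$-triangulation $\mathbb{S}$ with $|\mathbb{S}| \cong \wt X$ $\Gamma$-equivariantly. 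Under this homeomorphism $\mathcal{F}^\bullet$ becomes an object of $D^b_{\mathbb{S}-c,K,\Gamma}(|\mathbb{S}|)$, and by Proposition \ref{ks8111} it lifts to $D^b(\mathbf{Cons}_{K,\Gamma}(\mathbb{S}))$. So Definition \ref{lpco} applies verbatim and produces the $W_{l,p}(\Gamma)$-modules $\mathbb{H}^k_{(p)}(|\mathbb{S}|, \mathcal{F}^\bullet) = \mathbb{H}^k(\Gamma\backslash|\mathbb{S}|, Rl_p\Gamma \otimes_{K\Gamma} \pi_! \mathcal{F}^\bullet)$; this is exactly the formula in the proposition. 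The key inputs are the exactness of $F \mapsto Rl_p\Gamma \otimes_{K\Gamma} \pi_! F$ on $\mathbf{Cons}_{K,\Gamma}(\mathbb{S})$ and, via Lemma \ref{extension1}, on all of $\mathbf{Mod}_\Gamma(K_{|\mathbb{S}|})$, which allows the formula to be read either as a derived composition on the constructible side or directly on sheaves; this gives the $\partial$-functor on $D^b_{\R-c,K,\Gamma}(\wt X)$ and the identification of its cohomology objects with the $L_p$-hypercohomology groups, as in Lemma \ref{lemext1}.

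The main obstacle is \emph{independence of the choice of triangulation}: I must check that $\mathbb{H}^k_{(p)}(\wt X, \mathcal{F}^\bullet)$ does not depend on the $\Gamma$-triangulation $\mathbb{S}$ refining the given stratification, so that the functor is genuinely defined on $D^b_{\R-c,K,\Gamma}(\wt X)$ rather than on a category of "stratified" objects. For this I would use cofinality: given two triangulations $\mathbb{S}_1, \mathbb{S}_2$ adapted to $\mathcal{F}^\bullet$, pass to a common refinement $\mathbb{S}_3$ (again by Proposition 8.2.5 of \cite{KS}), and observe that a refinement map $|\mathbb{S}_3| \to |\mathbb{S}_i|$ is $\Gamma$-equivariantly the identity on the underlying space, so $\pi_!\mathcal{F}^\bullet$ on $\Gamma\backslash\wt X$ is literally the same sheaf of $K\Gamma$-modules; hence $Rl_p\Gamma \otimes_{K\Gamma}\pi_!\mathcal{F}^\bullet$ and its hypercohomology are unchanged. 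The only subtlety is that $\pi_!$ was shown exact on $\mathbb{S}$-constructible sheaves using a barycentric-subdivision/finite-stabilizer argument that is a priori phrased per triangulation; but exactness of $\pi_!$ on the ambient category $\mathbf{Mod}_\Gamma(K_{\wt X})$ of all equivariant sheaves (which is what Lemma \ref{extension1} really delivers, the simplicial structure being used only to exhibit projectivity of stalks) makes the construction manifestly triangulation-free, so this is resolved by working in the larger sheaf category from the outset.

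Finally I would assemble the $\partial$-functor structure. Functoriality and the translation-compatibility are inherited from $R\Gamma(\Gamma\backslash\wt X,-)$ and the exact functor $Rl_p\Gamma\otimes_{K\Gamma}-$; distinguished triangles in $D^b_{\R-c,K,\Gamma}(\wt X)$ go to distinguished triangles because $\pi_!$ is exact (Lemma \ref{extension1} in the triangulated formulation, after choosing a common adapted triangulation for the three vertices — possible since each has finitely many nonzero constructible cohomology sheaves and the stratifications can be simultaneously refined) and $R\Gamma$ is a $\partial$-functor. The identification $\mathbb{H}^k_{(p)}(\wt X,\mathcal{F}^\bullet) = H^k\bigl(\mathbb{H}^\bullet_{(p)}(\wt X,\mathcal{F}^\bullet)\bigr)$ is then immediate from the definition of the target category $\mathrm{Mod}_{W_{l,p}(\Gamma)}$, exactly as in the simplicial case. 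The Atiyah $L_2$-index theorem, Poincaré--Verdier duality and compatibility with proper direct images alluded to in the introduction are not part of this proposition's statement and would be taken up separately.
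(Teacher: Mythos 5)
Your proposal is correct and follows essentially the same route as the paper: the paper likewise passes to $D^b\R\mathbf{Cons}_{K,\Gamma}(\wt{X})$ via the equivalence with the constructible derived category, writes this as the limit of the subcategories $D^b\mathbf{Cons}_{K,\Gamma}(\mathbb{X})$ over the cofinal system of $\Gamma$-triangulations, and then invokes Definition \ref{lpco}. Your more explicit verification that the formula $Rl_p\Gamma\otimes_{K\Gamma}\pi_!\mathcal{F}^{\bullet}$ is intrinsic to the sheaf (hence independent of the chosen triangulation) is exactly what the paper's terse "limit of its full subcategories" phrasing is relying on.
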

\begin{prf}
We can replace $D^b_{\R-c,K,\Gamma}(\wt{X})$ by $D^b\R\mathbf{Cons}_{K,\Gamma}(\wt{X})$
since the natural functor is an equivalence by Proposition \ref{ks8111} and $Rl_p\Gamma\otimes_{K\Gamma} \pi_!  \C \otimes -$ is exact by Lemma \ref{lemext1}. 

Since $D^b\R\mathbf{Cons}_{K,\Gamma}(\wt{X})$ is the limit of its full subcategories
 $D^b\mathbf{Cons}_{K,\Gamma}(\mathbb{X})$, $\mathbb{X}$
running through all subanalytic $\Gamma$-triangulations,
this follows from definition \ref{lpco}.

\end{prf}

\subsection{Complex Analytic case}

We assume here $K=\C$ and drop $K$ from the notation. 

Assume from now on that $\wt{X}$ is a cocompact complex $\Gamma$-space. The relevant stratifications are complex analytic  stratification
(by definition, a subanalytic stratification
is complex analytic if so are the closures of the strata)
and we say that an equivariant sheaf is constructible if is constructible with respect to some complex analytic
stratification and that a complex of equivariant sheaves is constructible if so are its cohomology sheaves. Then  $\mathbf{Cons}_{\Gamma}(\wt{X})$ is a full 
abelian subcategory of 
$\mathbb{R}\mathbf{Cons}_{\Gamma}(\wt{X}) $ stable by extensions, 
$D^b_{c,\Gamma}(\wt{X})$, the full  subcategory of $D^b_{\mathbb{R}c,\Gamma}(\wt{X})$ whose cohomology objects are in $
\mathbf{Cons}_{\Gamma}(\wt{X})$, is a thick triangulated subcategory
and we have a natural $\partial$-functor
$$D^b\mathbf{Cons}_{\Gamma}(\wt{X}) 
\to D^b_{c,\Gamma}(\wt{X}).$$
\begin{rem}
This functor is an equivalence of categories if $\wt{X}$ is a Galois topological covering space of the analytization of a  complex projective 
variety using GAGA and \cite{Nori}. 
\end{rem}


\begin{defi}\label{CconstrL2}
We can restrict  $\mathbb{H}^\bullet_{(p)}(\wt{X},-)$
to $D^b_{c,\Gamma}(\wt{X}) $ to get the {\em constructible $L_p$-cohomology functor}:
$$\mathbb{H}^\bullet_{(p)}(\wt{X},-):D^b_{c,\Gamma}(\wt{X})\to  D^b ({\text{Mod}}_{W_{l,p}(\Gamma)})
.$$
\end{defi}

An important special case is $L_p$-intersection cohomology. 
\begin{defi}\label{l2ic}
 Let $Z$ be a singular compact complex space and $\pi:\wt{Z}\to Z$ its universal covering space. Its $k$-th intersection
$L_p$ cohomology is the $W_{l,p}(\Gamma)$-module
$\mathbb{H}^k_{(p)}(\widetilde{Z}, \pi^{-1} \mathcal{IC}^{\bullet}_Z)$
where $\mathcal{IC}^{\bullet}_Z$ is the intersection cohomology sheaf of $Z$
\cite{BBD}.
\end{defi}

The initial impetus for this work was to formulate the following:

\begin{conj}
 If $p=2$, and $Z$ is a closed analytic subset of a compact K\"ahler hyperbolic manifold then $\mathbb{H}^k_{(2)}(\widetilde{Z}, \pi^{-1} \mathcal{IC}^{\bullet}_Z)=0$ for $k\not=\dim(Z)$.
\end{conj}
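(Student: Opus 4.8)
The plan is to reduce the conjecture to a Gromov-type vanishing theorem for $L_2$-cohomology with coefficients in a constructible complex, and then to propagate the statement along a resolution of singularities. First I would recall that $Z$ being a closed analytic subset of a compact Kähler hyperbolic manifold $X$ implies that $Z$ is itself of Kähler hyperbolic type in a suitable sense: the pullback to $\widetilde{Z}$ of a Kähler form on $X$ restricts to a $d(\mathrm{bounded})$ Kähler form on the smooth locus, and $\widetilde Z \to Z$ factors through (a component of) $\widetilde X|_Z \to Z$. The key analytic input is Gromov's argument producing a spectral gap near the middle degree for the $L_2$-de Rham complex twisted by a flat bundle, together with Atiyah's $L_2$-index theorem (available here via Theorem~\ref{thm1} and Proposition~\ref{lpcocons}) to control the resulting $\Gamma$-dimensions.

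The main step is to prove the vanishing for $\mathcal{IC}^\bullet_Z$ by dévissage. I would take a resolution $f\colon \widehat Z \to Z$ with $\widehat Z$ smooth projective (or at least compact Kähler), chosen so that the exceptional locus is a simple normal crossings divisor; pulling back the Kähler hyperbolic structure keeps $\widehat Z$ Kähler hyperbolic (Kähler hyperbolicity passes to submanifolds and is preserved under such birational pullbacks, since $f$ is an isomorphism over a dense open set and the relevant $L_2$-bounded primitive persists). On $\widehat Z$, Gromov's theorem gives $H^k_{(2)}(\widetilde{\widehat Z}, \C) = 0$ for $k \neq \dim Z$, and more generally the same holds for any local system underlying a VHS, in particular for $\R f_* \mathcal{IC}^\bullet_{\widehat Z} = \mathcal{IC}^\bullet_{\widehat Z}$ pushed forward. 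The Decomposition Theorem of \cite{BBD} writes $\R f_* \mathcal{IC}^\bullet_{\widehat Z}$ as $\mathcal{IC}^\bullet_Z$ plus a direct sum of shifted intersection complexes supported on the singular locus $Z_{\mathrm{sing}}$, which is again a closed analytic subset of a Kähler hyperbolic manifold but of strictly smaller dimension. Compatibility of $\mathbb H^\bullet_{(2)}$ with proper holomorphic maps (Theorem~\ref{thm1}, Betti side) and exactness of the $L_p$-cohomology functor on $E_f(\Gamma)$ then let me transport the vanishing from $\widehat Z$ to $\R f_* \mathcal{IC}^\bullet_{\widehat Z}$ on $Z$, and induction on dimension kills the summands supported on $Z_{\mathrm{sing}}$; what survives forces $\mathbb H^k_{(2)}(\widetilde Z, \mathcal{IC}^\bullet_Z) = 0$ for $k \neq \dim Z$. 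One must be careful that the covering $\widetilde Z$ used in Definition~\ref{l2ic} is the universal cover, so I would work with $\pi_1(\widehat Z) \to \pi_1(Z)$ (surjective, since $f$ has connected fibers) and descend along the induced correspondence of covering groups, using that $L_2$-cohomology of an infinite cover vanishes if it vanishes for the universal cover by the restriction/induction formalism for $\Gamma$-modules.

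The hard part will be the two-fold compatibility issue at the interface of analysis and the constructible formalism: (i) genuinely identifying Gromov's analytic $L_2$-de Rham vanishing on the smooth projective model $\widehat Z$ with the vanishing of the constructible-sheaf invariant $\mathbb H^\bullet_{(2)}(\widetilde{\widehat Z}, \C_{\widehat Z})$ — this is exactly the comparison isomorphism $rh_{(2)}$ of Theorem~\ref{thm1} specialized to $\mathcal O_{\widehat Z}$, so it should be available, but one needs the Kähler hyperbolic spectral gap to survive in reduced-versus-unreduced cohomology, which is precisely where Gromov's torsion-freeness argument is used; and (ii) verifying that the Decomposition Theorem splitting is compatible with the $L_2$-functor at the level of $E_f(\Gamma)$, i.e. that $L_2dR$ (or its constructible shadow $\mathbb H^\bullet_{(2)}$) sends a direct sum decomposition in $D^b_{c,\Gamma}$ to a direct sum decomposition, which follows from additivity of $\partial$-functors but requires that the BBD splitting can be realized $\Gamma$-equivariantly on the cover — this is automatic since it is pulled back from $Z$. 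I expect the genuinely delicate point to be controlling the passage between reduced and non-reduced $L_2$-cohomology: the conjecture as stated is about the (possibly non-reduced, $E_f(\Gamma)$-valued) groups, and Gromov's method naturally gives information about the reduced groups, so one either needs a vanishing-of-torsion statement in the Kähler hyperbolic setting (plausible, in the spirit of \cite{Gro2}) or must accept a reformulation; flagging this honestly is consistent with the paper's own stated caution about reduced $L^2$-cohomology.
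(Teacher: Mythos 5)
The statement you are attacking is presented in the paper as an open conjecture --- the author calls it ``the initial impetus for this work'' --- and no proof of it is given anywhere in the text, so your proposal has to stand on its own; it does not, because of a genuine gap at its central step. The formal bookkeeping is fine: resolving $f\colon \widehat Z\to Z$, invoking Saito's decomposition theorem for proper K\"ahler morphisms, using compatibility of $\mathbb{H}^\bullet_{(2)}$ with proper direct images, and extracting $\mathcal{IC}^\bullet_Z$ as a direct summand (which, incidentally, makes your induction on dimension superfluous: once $\mathbb{H}^k_{(2)}$ of $Rf_*\mathcal{IC}^\bullet_{\widehat Z}$ vanishes for $k\neq\dim Z$, so does that of every direct summand, $\mathcal{IC}^\bullet_Z$ included). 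The problem is the analytic input. Your claim that K\"ahler hyperbolicity ``is preserved under such birational pullbacks'' is false in the only sense you need. The pullback $f^*\omega$ of the K\"ahler hyperbolic form of $X$ to $\widehat Z$ is merely a semi-positive $(1,1)$-form, degenerate along the exceptional locus; its class is $\tilde d(\mathrm{bounded})$ but not K\"ahler, while a perturbed class $f^*[\omega]+\epsilon[\eta]$ is K\"ahler but has no reason to be $\tilde d(\mathrm{bounded})$ on the cover. Gromov's vanishing rests on hard Lefschetz for $L_2$-harmonic forms with respect to a complete, genuinely positive K\"ahler metric whose class admits a bounded primitive upstairs; with a degenerate $\tilde d(\mathrm{bounded})$ form that Lefschetz isomorphism is lost. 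Manifolds carrying only such a degenerate datum are the ``weakly K\"ahler hyperbolic'' ones, and for these the full non-middle-degree vanishing of $L_2$-cohomology is not a theorem. The same difficulty already appears in your opening sentence: the restriction of $\omega$ to $Z_{\mathrm{reg}}$ is an incomplete metric when $Z$ is singular, so Gromov's argument cannot even be launched there directly.

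Two of the worries you flag are, by contrast, not the real obstacles. First, after applying compatibility with proper direct images the relevant cover of $\widehat Z$ is $\widehat Z\times_Z\widetilde Z$, with deck group $\Gamma=\pi_1(Z)$, not the universal cover of $\widehat Z$; this is harmless --- it is precisely the cover on which the bounded primitive pulled back from $\widetilde X$ lives --- and no restriction/induction formalism is needed. Second, in the honestly K\"ahler hyperbolic case Gromov's method produces a spectral gap for the Laplacian away from the middle degree, which kills unreduced as well as reduced $L_2$-cohomology, so the reduced-versus-unreduced passage is not where the difficulty lies. The difficulty is, and remains, the absence of a Gromov-type vanishing theorem on the resolution $\widehat Z$; until that is supplied, the d\'evissage has nothing to propagate.
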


\subsection{Real structures}

The algebra $W_{l,p}(\Gamma)$ carry a real structure, namely a conjugate 
linear algebra involutive automorphism $\dagger$,   and a real structure on a module 
over $W_{l,p}(\Gamma)$ is just a conjugate linear automorphism on the underlying $\C$-vector space compatible with $\dagger$. For instance $l_p \Gamma$ has a real structure.

Modules with real structures form a $\R$-linear abelian category  $Mod_{W_{l,p}(\Gamma),\R}$ which has an exact faithful forgetful functor  to $Mod_{W_{l,p}(\Gamma),\R}$ and if $K\subset \R$ the functors of Proposition \ref{lpcocons}
and Definition \ref{CconstrL2} lift to 
$D^b(Mod_{W_{l,p}(\Gamma),\R})$. 

\section{\label{sec2}$L^p$-cohomology and differential operators for coherent sheaves}

\subsection{Coherent $L^p$-cohomology}

Let $X$ be a complex manifold and let $\Ox$ (resp. $\Dx$) denote its structure sheaf (resp. the sheaf 
of holomorphic differential operators). Let $\pi:\wt{X} \to X$ be a Galois topological covering space and 
$\Gamma=\mathrm{Gal}(\wt{X}/X)$ be its Galois group, acting on $\wt{X}$ on the left.

If $\mathcal{R}$ is a sheaf of rings on $X$, denote by $\mathbf{Mod}(\mathcal{R})$
the abelian category of sheaves of left $\mathcal{R}$-modules by $Hom_{\mathcal{R}}$ its group of morphisms and by $\underline{Hom}_{\mathcal{R}}$
the  internal Hom bifunctor on $\mathbf{Mod}(\mathcal{R})$. When considering
right $\mathcal{R}$-modules, we use the notations $\mathbf{Mod}(\mathcal{R}^o)$, $Hom_{\mathcal{R}^o}$, $\underline{Hom}_{\mathcal{R}^o}$. 
If $R$ is a ring then we denote by $R_X$  the sheaf of rings 
of locally constant functions with values in $R$. 

Denote  by $\coh$ the full abelian  subcategory of  $\omod$
whose objects are the
coherent analytic sheaves of $X$. 
Denote  by $\cohd$ (resp. $\hol$) its the full abelian
subcategory of $\dmod$
whose objects are the
coherent (resp. holonomic)  $\Dx$-modules. A $\Ox$-module is quasi coherent if it is locally the limit of its 
coherent submodules. 

\paragraph{}
For every $p\in[1,+\infty[$ and $\F$ a coherent analytic sheaf on $X$, 
\cite{Eys} (see also \cite{CamDem}) constructs
a subsheaf $l^p \pi_* \F \subset \pi_*\pi^{-1} \F$   
which can be described locally as follows. Choose $\varphi:\Ox^{\oplus N}\to \F$
a presentation of $\F$ on  a Stein open subset $U$ such that $\pi^{-1}(U)=\Gamma \times U$ then: 
\begin{eqnarray*}
l^p \pi_* \F (U)&=&\{ (s_{\gamma})_ {\gamma \in \Gamma} \in \F(U)^{\Gamma}, \  
 \exists s_{\gamma}' \in \Ox^{\oplus N} (U)^{\Gamma},  \ 
\varphi(s'_{\gamma})=s_{\gamma} \  \mathrm{and} \\
 &&\forall K\Subset U  \quad  \sum_{\gamma\in \Gamma} \int_{K} |s_{\gamma}|^p <+\infty \}.
\end{eqnarray*} 
The independance on $\varphi$ is checked in \cite{Eys}. 
Given $\phi: \F \to \F'$ a $\Ox$-linear morphism of coherent sheaves 
$$\pi_*\pi^{-1}\phi: \pi_*\pi^{-1} \F \to \pi_*\pi^{-1} \F'$$ maps $l^p \pi_* \F$ into $l^p \pi_* \F'$. 
Denote by $l^p\pi_*\phi: l^p \pi_* \F \to l^p \pi_* \F'$ the restriction of $\pi_*\pi^{-1} \phi$. 
The resulting functor $l^p\pi_*: \coh \to \mathbf{Mod}(W_{l,p}(\Gamma) \otimes_{\C} \Ox)$ is  exact \cite{Eys} and 
one can define  $$H_{L^p}^{\bullet}(\wt{X}, \F):= H^{\bullet}(X, l^p\pi_*\F).$$ 

Since $H^q_{L^p}(\wt{X}, \F)=0$ for $q>\dim_{\C}(X)$ (at least when $X$ is compact) this yields a 
good cohomology theory  on $\coh$, indeed a 
a $\partial$-functor
$$D^b \mathbf{Coh}(\Ox) \to D^b Mod_{W_{l,p}(\Gamma)}.$$

Observe that if $\F$ is coherent $l^p\pi_* \F= l^p\pi_*\Ox \otimes_{\Ox} \F$. 
Hence, the functor  $l^p\pi_*$  extends   to $D^b (\omod)$ setting $l^p\pi_* \L:= l^p\pi_*\Ox \otimes_{\Ox} \L$ thanks to:

\begin{lem}
 The functor $l^p\pi_* = l^p\pi_* \Ox \otimes_{\Ox}$ is exact on $\mathbf{Mod}(\Ox)$. 
\end{lem}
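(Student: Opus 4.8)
The plan is to reduce the statement to a purely local computation and then invoke the flatness properties already established in \cite{Eys}. The key point is that $l^p\pi_*\Ox$ is a sheaf of $W_{l,p}(\Gamma)\otimes_\C\Ox$-modules, so $l^p\pi_*\Ox\otimes_{\Ox}-$ is a well-defined functor from $\omod$ to $\mathbf{Mod}(W_{l,p}(\Gamma)\otimes_\C\Ox)$, and exactness of a tensor functor can be checked stalkwise. Thus I would fix a point $x\in X$, take a Stein contractible neighbourhood $U$ over which $\pi$ trivializes as $\Gamma\times U$, and work with the stalk $(l^p\pi_*\Ox)_x$ as a module over $\Ox_x$.

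First I would recall from \cite{Eys} the already-proven fact that $l^p\pi_*$ is exact on $\coh$; combined with the identity $l^p\pi_*\F=l^p\pi_*\Ox\otimes_{\Ox}\F$ for coherent $\F$ (noted just above in the text), this says precisely that $l^p\pi_*\Ox$ is a \emph{flat} $\Ox$-module in the sense that tensoring with it is exact on the subcategory $\coh$. Since $\Ox_x$ is a Noetherian local ring, flatness of an $\Ox_x$-module $M$ is equivalent to the vanishing of $\mathrm{Tor}^{\Ox_x}_1(\Ox_x/\mathfrak{a},M)$ for all ideals $\mathfrak{a}$, i.e. it suffices to test exactness against finitely generated modules, which are exactly the stalks of coherent sheaves. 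Therefore $(l^p\pi_*\Ox)_x$ is a flat $\Ox_x$-module, hence $(l^p\pi_*\Ox)_x\otimes_{\Ox_x}-$ is exact on \emph{all} $\Ox_x$-modules, not just the finitely generated ones. Taking stalks at every $x$ then gives exactness of $l^p\pi_*\Ox\otimes_{\Ox}-$ on $\mathbf{Mod}(\Ox)$, since a sequence of sheaves is exact iff it is exact on all stalks and $(l^p\pi_*\Ox\otimes_{\Ox}\L)_x=(l^p\pi_*\Ox)_x\otimes_{\Ox_x}\L_x$.

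One technical wrinkle I would be careful about is whether ``$l^p\pi_*$ exact on $\coh$'' in \cite{Eys} is literally the statement ``$\mathrm{Tor}_1$ against coherent modules vanishes'' or something slightly weaker; if the reference only states exactness of the sheaf-level functor one still needs to pass to stalks, but that is immediate since $\coh$ is closed under kernels and cokernels and stalks of coherent sheaves are cofinal among finitely generated $\Ox_x$-modules. A second, more serious point to check is that $l^p\pi_*\Ox\otimes_{\Ox}-$ really does compute stalkwise as a tensor product of $\Ox_x$-modules — this is the standard fact that tensor product of sheaves of modules commutes with taking stalks, valid because $\otimes$ is a colimit and stalks are filtered colimits. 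The main obstacle, modest as it is, is simply citing the right form of the $\coh$-exactness from \cite{Eys} and packaging the Noetherian local-ring criterion cleanly; there is no genuine difficulty, which is presumably why the authors leave the one-line proof to this lemma. In summary: reduce to stalks, use the established $\coh$-exactness to conclude flatness of $(l^p\pi_*\Ox)_x$ over the Noetherian local ring $\Ox_x$ via the finitely-generated-ideal criterion, then deduce exactness on all of $\mathbf{Mod}(\Ox)$.
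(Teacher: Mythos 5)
Your argument is correct and is essentially the paper's own proof: both reduce to stalks via the fact that tensor products of sheaves commute with taking stalks, deduce $\mathrm{Tor}_1^{\mathcal{O}_{X,x}}((l^p\pi_*\Ox)_x,\mathcal{O}_{X,x}/I_x)=0$ from the established exactness on $\coh$, and conclude flatness of $(l^p\pi_*\Ox)_x$ from the finitely-generated-ideal criterion. No substantive difference.
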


\begin{prf}
 The problem is local. Since this functor is exact on $\mathbf{Coh}(\Ox)$,  it follows  from the fact that tensor products of sheaves commute with 
 taking the stalks \cite[p. 137]{God} that $Tor_1^{\mathcal{O}_{X,x}}( (l^p\pi_* \Ox)_x, \mathcal{O}_{X,x}/I_x)=0$ for every (finitely generated) ideal of
  $\mathcal{O}_{X,x}$. Hence $(l^p\pi_* \Ox)_x $ is a flat $\mathcal{O}_{X,x}$-module and exactness follows applying \cite[p. 137]{God} once more. 
\end{prf}

\paragraph{}

It should however be noted that if a sheaf $\F$ has two different  $\Ox$-module structures,
say $\F_1$ and $\F_2$, it may be the case that $l^p\pi_*\F_1\neq l^p\pi_* \F_2$ as subsheaves of
$\pi_*\pi^{-1} \F_1=\pi_*\pi^{-1} \F_2=\pi_*\pi^{-1} \F$. 

\begin{rem}\label{RemLpdmo}
There is a natural structure of left $\Dx$-module on $l^p\pi_* \Ox$ hence $l^p\pi_*$ gives rise to an exact endofunctor of $\mathbf{Mod}(\Dx)$. When $\Gamma$ is infinite, 
it does not preserve the full subcategory of coherent or quasicoherent modules. 
\end{rem}

\subsection{Differential operators}
We need to check that differential operators between quasicoherent analytic sheaves preserve $l^p\pi_*$. 

Recall from \cite{Sai} that for $\L, \L'$ two $\Ox$-modules $\diff (\L,\L')$ is the image of
the natural injective morphism:
$$Hom_{\Dx^o}(\L\otimes_{\Ox} \Dx, \L'\otimes_{\Ox} \Dx)\to Hom_{\C_X}(\L,\L')$$
given by the composition of the natural adjunction $$Hom_{\Dx^o}(\L\otimes_{\Ox} \Dx, \L'\otimes_{\Ox} \Dx)
\buildrel{\sim}\over\to Hom_{\Ox}(\L, \L'\otimes_{\Ox} \Dx),$$  $\L'\otimes_{\Ox} \Dx$ being endowed with the
{\bf{right}} $\Ox$-module structure, with left composition by the natural $\C$-linear (actually left $\Ox$-linear)
morphism
$$ \nu_{\L'}:  \L'\otimes_{\Ox} \Dx \to \L'
$$
which maps $\ell \otimes P$ to $P(1)\ell$. One has $\nu_{\L'}= \L'  \otimes_{\Ox} \nu_{\Ox} $
where $\nu_{\Ox}: \Dx\to \Ox$ is the naturel {\bf{left}} $\Dx$-linear
(hence left $\Ox$-linear) morphism mapping $P\in \Dx$ to $P(1)\in \Ox$.

\begin{lem}\label{cauchy} Assume $\L'$ is quasicoherent.  Let $(\L'\otimes_{\Ox} \Dx) _l$ resp. $(\L'\otimes_{\Ox} \Dx )_r$
the left resp. the right $\Ox$-modules structures of $\L'\otimes_{\Ox} \Dx$. Then:
$$
l^p \pi_* (\L'\otimes_{\Ox} \Dx) _l=l^p \pi_* (\L'\otimes_{\Ox} \Dx) _r \subset \pi_*\pi^{-1} \L'\otimes \Dx.
$$ 
 \end{lem}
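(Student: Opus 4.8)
The statement is a local one about sheaves on $X$, so I would work on a Stein open set $U$ over which $\pi^{-1}(U) = \Gamma \times U$ and on which $\L'$ admits a presentation $\Ox^{\oplus M} \to \L' \to 0$ (possibly after passing to a direct limit, since $\L'$ is only quasicoherent: write $\L' = \varinjlim \L'_i$ with $\L'_i$ coherent, and note both $l^p\pi_*$ and $\otimes_{\Ox}\Dx$ commute with filtered colimits, so it suffices to treat the coherent case). The key structural observation is the one recalled just before the lemma: $\L' \otimes_{\Ox} \Dx$ carries two $\Ox$-module structures, the \emph{left} one $(\cdot)_l$ coming from the $\Ox$-module structure on $\L'$, and the \emph{right} one $(\cdot)_r$ coming from right multiplication by functions inside $\Dx$; and the $\C$-linear map $\nu_{\L'} \colon \L'\otimes_{\Ox}\Dx \to \L'$ is left $\Ox$-linear. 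Both structures are ``the same'' sheaf of $\C$-vector spaces, and the claim is that the two subsheaves $l^p\pi_*$ of $\pi_*\pi^{-1}(\L'\otimes_{\Ox}\Dx)$ that these two $\Ox$-structures cut out actually coincide.

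**Key steps.** First I would filter $\Dx$ by order of operators, $\Dx = \bigcup_k F_k\Dx$, giving $\L'\otimes_{\Ox}\Dx$ a filtration by $\L'\otimes_{\Ox} F_k\Dx$ that is \emph{stable under both} the left and the right $\Ox$-structure (since $F_k\Dx$ is an $(\Ox,\Ox)$-sub-bimodule of $\Dx$). The associated graded is $\L' \otimes_{\Ox} \mathrm{Sym}^{\bullet}(T_X)$, and here the left and right $\Ox$-structures visibly agree up to the usual symbol twist — in local coordinates, $F_k\Dx/F_{k-1}\Dx \cong \Ox^{\oplus r_k}$ as a bimodule with both structures free of the same finite rank. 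So the two filtered sheaves $(\L'\otimes_{\Ox}\Dx)_l$ and $(\L'\otimes_{\Ox}\Dx)_r$ have the same associated graded as $\Ox$-modules locally. Since $l^p\pi_*$ is exact on $\coh$ (hence on the quasicoherent category, by the lemma in the text), it is compatible with such finite filtrations step by step: to prove $l^p\pi_*(\cdot)_l = l^p\pi_*(\cdot)_r$ on all of $\L'\otimes_{\Ox}\Dx$ it suffices to prove it on each graded piece, i.e.\ on $\L'\otimes_{\Ox} F_k\Dx/F_{k-1}\Dx$, and there the two $\Ox$-structures literally agree locally (they are identified by choosing coordinates), so the two $l^p\pi_*$ subsheaves agree. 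One then propagates up the finite filtration: from $l^p\pi_*(\mathrm{Gr}_j)_l = l^p\pi_*(\mathrm{Gr}_j)_r$ and the five lemma in $\mathbf{Mod}(W_{l,p}(\Gamma)\otimes_{\C}\Ox)$, applied to the short exact sequences $0 \to \L'\otimes F_{k-1}\Dx \to \L'\otimes F_k\Dx \to \mathrm{Gr}_k \to 0$ (with $l^p\pi_*$ applied, which is exact), one gets equality of $l^p\pi_*(\L'\otimes F_k\Dx)$ for the two structures, for every $k$; passing to the union over $k$ (and the colimit over the $\L'_i$) gives the claim.

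**Alternative, cleaner route.** Rather than the five lemma, I would actually prefer to argue directly from the local description of $l^p\pi_*$ in the text: an element $(s_\gamma)_{\gamma\in\Gamma} \in (\L'\otimes_{\Ox}\Dx)(U)^\Gamma$ lies in $l^p\pi_*(\cdot)_?(U)$ iff it comes from a $\Gamma$-tuple of sections of a free module with the stated $L^p$-integrability. Choosing local coordinates, $\L'\otimes_{\Ox}\Dx$ is, \emph{as a sheaf}, the direct sum $\bigoplus_{\alpha} \L'\cdot\partial^\alpha$ over multi-indices $\alpha$, and in this decomposition the two $\Ox$-structures act on the components by operators that differ from one another by a strictly lower-order, $\Ox$-linear correction (the Leibniz terms): $f\cdot_r(\ell\otimes\partial^\alpha) = \ell\otimes\partial^\alpha f = \sum_{\beta\le\alpha}\binom{\alpha}{\beta}(\partial^\beta f)\,\ell\otimes\partial^{\alpha-\beta}$, whose leading term is exactly $f\cdot_l(\ell\otimes\partial^\alpha)$. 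Because these corrections are $\Ox$-linear and lower-order, and because the $L^p$ condition is local and unchanged under multiplication by smooth (indeed holomorphic) coefficients on relatively compact sets, an $L^p$-presentation for one structure is transformed into an $L^p$-presentation for the other by an explicit invertible triangular change. Writing this change of basis out in the multi-index grading and checking it preserves the $L^p$-summability condition of the text's formula is the concrete content.

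**Main obstacle.** The only real subtlety is the quasicoherence of $\L'$: the explicit local formula for $l^p\pi_*$ in \cite{Eys} is phrased for coherent sheaves via a finite presentation, so one must first justify that $l^p\pi_*$ extended to the quasicoherent (or arbitrary $\Ox$-module) category — which the text does via $l^p\pi_*\L = l^p\pi_*\Ox\otimes_{\Ox}\L$ — is still computed by the same kind of formula after a filtered colimit, and that tensoring with $\Dx$ commutes with that colimit and with the filtration by order. Once that bookkeeping is in place the geometric heart — that the two $\Ox$-structures on $\L'\otimes_{\Ox}\Dx$ differ only by lower-order $\Ox$-linear terms and hence cut out the same $l^p$-growth subsheaf — is essentially a Cauchy-estimate / triangular-change-of-basis argument, which is presumably why the lemma is named ``Cauchy''.
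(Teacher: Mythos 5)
Your ``alternative, cleaner route'' is essentially the proof in the paper: filter $\Dx$ by order, write sections locally in the two normal forms $\sum_\alpha f_{\alpha,\gamma}\partial^\alpha$ and $\sum_\alpha \partial^\alpha g_{\alpha,\gamma}$, observe that the passage between them is triangular with entries given by universal differential operators applied to the coefficients, and then control the resulting derivative terms: $\int_K\sum_\alpha|f_{\alpha,\gamma}|^p\le C_{K,K'}\int_{K'}\sum_\alpha|g_{\alpha,\gamma}|^p$ for $K\Subset K'\Subset U$, with $C_{K,K'}$ independent of $\gamma$ so the sum over $\Gamma$ survives. You correctly guess that this Cauchy inequality is the reason for the lemma's name, but you never state it, and the one justification you offer in that paragraph (``the $L^p$ condition is \dots unchanged under multiplication by smooth (indeed holomorphic) coefficients'') is not the relevant point: the change of normal form does not multiply the coefficient functions by fixed holomorphic functions, it \emph{differentiates} them ($f_{\alpha,\gamma}$ is a universal combination of $\partial^{\beta}g_{\alpha',\gamma}$, $\alpha'\ge\alpha$), and bounding $\int_K|\partial^{\beta}g|^p$ by $\int_{K'}|g|^p$ is exactly where holomorphy of the coefficients enters. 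That estimate is the entire analytic content of the lemma. For the general quasicoherent case the paper simply takes a (possibly infinite-rank) free presentation $\varphi:\Ox^N\to\L'$ and notes that $l^p\pi_*(\L'\otimes_{\Ox}\Dx)_\sharp$ is the image under $\pi_*\pi^{-1}\varphi$ of $l^p\pi_*(\Ox^N\otimes_{\Ox}\Dx)_\sharp$; your filtered-colimit bookkeeping would serve the same purpose but is not needed.

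Your first route, however, has a genuine gap and cannot be repaired. Knowing that two subobjects $A_l,A_r$ of $B=\pi_*\pi^{-1}(\L'\otimes_{\Ox}F_k\Dx)$ have the same intersection with $C=\pi_*\pi^{-1}(\L'\otimes_{\Ox}F_{k-1}\Dx)$ and the same image in $B/C$ does not imply $A_l=A_r$ (take $B=\C^2$, $C=\C\times 0$, $A_l=0\times\C$ and $A_r$ the diagonal); the five lemma only applies once one containment is already known, and producing that containment is precisely the content of the lemma. More tellingly, the induction on graded pieces never uses that the coefficients are holomorphic, whereas the analogous statement fails for merely $L^p_{loc}$ smooth coefficients (a function can be locally $L^p$ without its derivatives being so); any purely formal argument from the associated graded must therefore break down. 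Keep the second route and write out the Cauchy estimate.
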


\begin{prf}
 Let us begin by treating the case where $\L'=\Ox$. Then both $l^p(\pi_* (\Dx))_{\sharp}$,  $\sharp=l,r$ 
are the increasing union of the subsheaves $l^p \pi_* (F_k\Dx)_{\sharp}$ where $F_k \Dx$
is the sub-$\Ox$-bimodule consisting of the holomorphic differential operators of degree $\le k$. 
Hence it is enough to show that $l^p \pi_* (F_k\Dx)_l=l^p \pi_* (F_k\Dx)_r$.
The problem being local assume we have a coordinate system on an open set $U$ such that 
$\pi^{-1} (U)\simeq \Gamma \times U$. A section of $\pi_*\pi^{-1} F_k\Dx$ of the form 
$(\sum_{|\alpha|\le k} f_{\alpha,\gamma} \partial ^{\alpha} )$
is in $l^p(\pi_* (\Dx))_{l}$ iff, for all $K\Subset U$, $\sum_{\gamma}\int_K \sum_{\alpha} |f_{\alpha,\gamma}|^p <+\infty$
whereas a section $\pi_*\pi^{-1} F_k\Dx$ of the form 
$(\sum_{|\alpha|\le k}  \partial ^{\alpha} g_{\alpha,\gamma}) $
is in $l^p(\pi_* (\Dx))_{r}$ iff, for all $K\Subset U$,
 $$\sum_{\gamma}\int_K \sum_{\alpha} |g_{\alpha,\gamma}|^p <+\infty.$$
Since $\partial^{\alpha} g= g\partial^{\alpha} + \sum_{\beta < \alpha}P_{\beta,\alpha}(g) \partial^{\beta}$
where $P_{\beta,\alpha}$ is a universal differential operator, the Cauchy inequality gives
$$\int_K \sum_{\alpha} |f_{\alpha,\gamma}|^p  \le C_{K,K'} \int_{K'} \sum_{\alpha} |g_{\alpha,\gamma}|^p $$
if $K'\Subset U$ is a compact neighborhood of $K$. Whence
 the inclusion $l^p \pi_* (F_k\Dx)_l \subset l^p \pi_* (F_k\Dx)_r$. The reverse inclusion follows by the same token. 

This implies  the lemma  for $\L'$ a free $\Ox$-module of possibly infinite rank. 

Now, for the general case. 
The statement being local, we may 
choose $\varphi: \Ox^N \to \L'$ a presentation,  $N$ being some cardinal.  The definition implies that 
$l^p\pi_* (\L'\otimes_{\Ox} \Dx)_{\sharp} \in \pi_*\pi^{-1} \L'\otimes_{\Ox} \Dx$ is 
the image by $\pi_*\pi^{-1} \varphi$ of $l^p\pi_* (\Ox^N \otimes_{\Ox} \Dx)_{\sharp}$
 in $\pi_*\pi^{-1} \L'\otimes_{\Ox} \Dx$.  The lemma follows. 
\end{prf}

\begin{lem} Under the hypothesis of Lemma \ref{cauchy}, 
 let $P\in \diff(\L,\L')$ and let $p\in Hom_{\Ox}(\L, \L'\otimes_{\Ox} \Dx)$ be the unique
right $\Ox$-linear morphism such that $P=\nu_{\L'}\circ p$. Then $$l^p\pi_*P:= \nu_{l^p\pi_* \L'}\circ l^p\pi_* p:
l^p\pi_*\L \to l^p\pi_*\L'$$
is the restriction of $\pi_*\pi^{-1} P$ and defines a $W_{l,p}(\Gamma) \otimes_{\C}\Ox$-linear morphism of sheaves. 

The assignement $P\mapsto l^p\pi_* P$ defines an additive functor $$l^p\pi_*:\mathbf{Qcoh} (\Ox, \diff)\longrightarrow \mathbf{Mod}(\underline{W_{l,p}(\Gamma)}_X)$$
where $\underline{W_{l,p}(\Gamma)}_X$ is the constant sheaf with constant value $W_{l,p}(\Gamma)$ and  $\mathbf{Qcoh} (\Ox, \diff)$ is the additive category
whose objects are quasi coherent $\Ox$-modules and whose morphisms are differential operators. 
\end{lem}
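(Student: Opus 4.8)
The plan is to assemble the claimed functoriality from the pieces already in place: Lemma~\ref{cauchy} provides the compatibility of $l^p\pi_*$ with the two $\Ox$-module structures on $\L'\otimes_{\Ox}\Dx$, and the exactness results of the previous subsection provide the well-definedness of $l^p\pi_*$ on all quasicoherent $\Ox$-modules. First I would verify that $l^p\pi_*P$ is well defined, i.e. that $\nu_{l^p\pi_*\L'}\circ l^p\pi_* p$ lands in $l^p\pi_*\L'$ and is the restriction of $\pi_*\pi^{-1}P$. The key point is that the square
$$
\xymatrix{
l^p\pi_*\L \ar[r]^-{l^p\pi_* p} \ar[d] & l^p\pi_*(\L'\otimes_{\Ox}\Dx)_r \ar[r]^-{\nu} \ar[d] & l^p\pi_*\L' \ar[d] \\
\pi_*\pi^{-1}\L \ar[r]^-{\pi_*\pi^{-1}p} & \pi_*\pi^{-1}(\L'\otimes_{\Ox}\Dx) \ar[r]^-{\nu} & \pi_*\pi^{-1}\L'
}
$$
commutes. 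The right-hand vertical arrow makes sense because, by Lemma~\ref{cauchy}, $l^p\pi_*(\L'\otimes_{\Ox}\Dx)_r=l^p\pi_*(\L'\otimes_{\Ox}\Dx)_l$, and $\nu_{\L'}=\L'\otimes_{\Ox}\nu_{\Ox}$ is left $\Ox$-linear, hence preserves $l^p\pi_*$ of the \emph{left} structure by the functoriality of $l^p\pi_*$ on $\omod$ established in the previous subsection. The left-hand arrow $l^p\pi_* p$ makes sense because $p$ is right $\Ox$-linear and $l^p\pi_*$ applied to the right structure agrees with that applied to the left structure, again by Lemma~\ref{cauchy}; concretely $p$ is an honest morphism of quasicoherent $\Ox$-modules $\L\to(\L'\otimes_{\Ox}\Dx)_r$ to which the exact functor $l^p\pi_*$ of the previous subsection applies. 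That $l^p\pi_*P$ is the restriction of $\pi_*\pi^{-1}P$ is then immediate from the definition $P=\nu_{\L'}\circ p$ and the compatibility of all these operations with $\pi_*\pi^{-1}$; that it is $W_{l,p}(\Gamma)\otimes_{\C}\Ox$-linear follows because both $l^p\pi_* p$ and $\nu_{l^p\pi_*\L'}$ are, the former being $l^p\pi_*$ of an $\Ox$-linear map and the latter being $l^p\pi_*$ of the $\Ox$-linear map $\nu_{\Ox}$ tensored with $\L'$.

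Next I would check that $P\mapsto l^p\pi_*P$ is additive and respects composition, so that it indeed defines a functor on $\mathbf{Qcoh}(\Ox,\diff)$. Additivity is clear since $P\mapsto p$ is additive (it is the inverse of the adjunction isomorphism followed by postcomposition with the additive $\nu_{\L'}$) and $l^p\pi_*$ is additive on $\omod$. For compatibility with composition, given $P\in\diff(\L,\L')$ and $Q\in\diff(\L',\L'')$ with associated right $\Ox$-linear $p$, $q$, one must identify the right $\Ox$-linear morphism associated to $Q\circ P$; this is the standard computation in $\Dx$-module theory (the composite in $\diff$ corresponds, under the identification $\diff(\L,\L')=Hom_{\Dx^o}(\L\otimes_{\Ox}\Dx,\L'\otimes_{\Ox}\Dx)$, to composition of $\Dx^o$-linear maps), and applying the already-established $l^p\pi_*$ of that $\Dx^o$-linear composite together with Lemma~\ref{cauchy} gives the result. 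Since the identity differential operator corresponds to the unit of the adjunction, $l^p\pi_*$ sends identities to identities. The target category $\mathbf{Mod}(\underline{W_{l,p}(\Gamma)}_X)$ is the natural one since, as noted in the previous subsection and in Remark~\ref{RemLpdmo}, $l^p\pi_*\Ox$ carries a $W_{l,p}(\Gamma)$-action commuting with everything in sight.

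The main obstacle is bookkeeping rather than mathematics: one must be careful that the functor $l^p\pi_*$ of the previous subsection, which was extended from $\coh$ to $\omod$ via $l^p\pi_*\L:=l^p\pi_*\Ox\otimes_{\Ox}\L$, genuinely commutes with the two operations used here — tensoring by $\Dx$ on the right (handled by Lemma~\ref{cauchy}, whose proof already reduces to the free case and then to $\L'=\Ox$ via a presentation) and postcomposition with the canonical $\nu_{\Ox}$ — and that these commutations are compatible with the adjunction isomorphism defining $\diff$. Once the diagram above is seen to commute at the level of presheaves on Stein opens trivializing $\pi$, everything sheafifies and the functoriality is formal. No new analytic estimate beyond the Cauchy-inequality estimate of Lemma~\ref{cauchy} is needed.
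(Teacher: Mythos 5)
Your proposal is correct and follows the same route as the paper, whose own proof is just the one-line observation that the definition makes sense thanks to Lemma \ref{cauchy} and that the rest follows from the previously established properties of $l^p\pi_*$; you have merely written out in full the commuting diagram and the additivity/composition checks that the paper leaves implicit. No gap.
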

\begin{prf} The definition makes sense thanks to lemma  \ref{cauchy}. The statement is thus an
 easy consequence of the definition and of the properties of $l^p\pi_*$ described above.  
\end{prf}

\subsection{$L_p$ De Rham cohomology}

Let $\M$ be a (quasi) coherent $\Dx$-module viewed as a $\Ox$-module endowed with a flat connection
$\nabla: \M \to \M \otimes_{\Ox} \Omega^1_X$. The De Rham complex of $\M$  defined as:
$$
DR(\M)=(\M \buildrel{\nabla}\over \to \M \otimes_{\Ox}\Omega^1_X \buildrel{\nabla}\over \to \M \otimes_{\Ox}\Omega^2_X \to \ldots )[\dim X]
$$
is a complex in $\mathbf{Qcoh} (\Ox, \diff)$. Applying the functor $l^p\pi_*$ we define the 
$L^p$ De Rham complex $l^p\pi_* DR(\M)$ and the $L^p$ De Rham cohomology:
$$ H^{\bullet}_{DR, L^p} (\tilde X, \M):= H^{\bullet}(X, l^p\pi_* DR(\M)).
$$
We will not try to put more structure than the natural $W_{l,p}(\Gamma)$-module structure on these general
 $L^p$ cohomology groups. 

The $L^p$ De Rham constructible cohomology groups come from a $\partial$-functor
$$H^{\bullet}_{DR, L^p,}: D^b(\cohd) \to D^b (Mod_{W_{l,p}(\Gamma)}). 
$$

\begin{exe}
 If $\F$ is a quasi coherent $\Ox$-module,  $$H^{\bullet}_{DR, L^p} (\wt{X}, \Dx\otimes_{\Ox} \F)=H^{\bullet}_{L^p} (\wt{X}, \F\otimes \omega_X).$$
\end{exe}

\begin{prf}
 The natural augmentation
 $\epsilon: \Dx \otimes_{\Ox}  \F\otimes_{\Ox}\omega_X  \to \F\otimes_{\Ox}\omega_X$
gives rise to a quasi-isomorphism $DR(\Dx\otimes_{\Ox} \F)\buildrel{\epsilon}\over\to \F\otimes_{\Ox}\omega_X$. 
Locally it is a Koszul complex for the regular sequence $(\partial_{x_1}, \ldots, \partial_{x_n})$.
The same is actually true for its $l^p \pi_*$
and  we get a quasi-isomorphism $l^p\pi_* DR(\F)\buildrel{l^p\pi_* \epsilon}\over\longrightarrow l^p\pi_* \F\otimes_{\Ox}\omega_X$.
\end{prf}

\begin{exe} Denote by $\ell^p\pi_* \C_{\wt{X}} \subset \pi_* \C_{\wt{X}}$ the locally constant  sheaf of $W_{l,p}(\Gamma)$-modules
 attached to the 
right regular representation of $\Gamma$ in $L^p\Gamma$. Let $\V$ be a finite rank complex local system
on $X$ and $\mathcal{V}$ be the $\Dx$-module whose underlying finite rank locally free
$\Ox$-module is $\V\otimes_{C_X}\Ox$ and holomorphic connection $\nabla$ so that the natural
morphism  $\sigma: \V \to \mathcal{V}$ represents $\ker(\nabla)$. 

Then $H^{\bullet}_{DR, L^p} (\wt{X}, \mathcal{V})= H^{\bullet+\dim(X)} (X, l^p\pi_* \C_{\wt{X}} \otimes_{\C_X} \V)
=\mathbb{H}^{\bullet+\dim(X)}_{(p)} (\tilde X, \V)$.
 
\end{exe}

\begin{prf} Left to the reader. 
\end{prf}

\begin{rem} With the notation of Remark \ref{RemLpdmo}, 
 $l^p\pi_* DR(\M)=DR(l^p\pi_* \M)$. 
\end{rem}

\subsection{Compatibility to the Riemann-Hilbert correspondance}

 \begin{rem}
 The natural sheaf monomorphism $i_{\pi}: \ell^p\pi_* \C_{\wt{X}} \otimes_{\C_X }\Ox \to l^p \pi_* \Ox$ is not an epimorphism. One would need a 
completed tensor product of sheaves in locally convex topological vector spaces we will not try and discuss.
\end{rem}

\begin{prop}\label{rhcomp}
 Let $\M$ be a holonomic $\Dx$-module. Then the natural map $Rl^p\Gamma \otimes_{\C\Gamma} \pi_!\pi^{-1} DR(\M) \to l^p\pi_*DR(\M)$
is a quasi-isomorphism. 
\end{prop}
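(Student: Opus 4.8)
\emph{Proof sketch.} The plan is to recast the map as $DR$ of a $\Dx$-linear morphism, reduce the Proposition to the vanishing of the de Rham complex of a certain cone, settle the case $\M=\Ox$ as an $L^p$-holomorphic Poincaré lemma, and then propagate this along a dévissage of $\M$ into flat connections supported on strata.

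First, the statement is local on $X$: the functor $Rl^p\Gamma\otimes_{\C\Gamma}\pi_!\pi^{-1}(-)$ is computed stalkwise and is exact, since the stalks of $\pi_!\pi^{-1}F$ are free $\C\Gamma$-modules, and $l^p\pi_*$ is exact with $l^p\pi_*\F=l^p\pi_*\Ox\otimes_{\Ox}\F$; so we may work over a Stein polydisc $U\ni x$ with $\pi^{-1}(U)\simeq\Gamma\times U$. Equip $\ell^p\pi_*\C_{\wt{X}}\otimes_{\C_X}\Ox$ with its tautological flat connection; then $i_\pi:\ell^p\pi_*\C_{\wt{X}}\otimes_{\C_X}\Ox\hookrightarrow l^p\pi_*\Ox$ is a $\Dx$-linear monomorphism, with cokernel a $\Dx$-module $\mathcal{Q}$ (in general neither coherent nor $\Ox$-flat). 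Because twisting a de Rham complex by a flat connection is the same as the corresponding $\C_X$-twist, the source of our map is $DR\big((\ell^p\pi_*\C_{\wt{X}}\otimes_{\C_X}\Ox)\otimes_{\Ox}\M\big)$, its target is $DR(l^p\pi_*\Ox\otimes_{\Ox}\M)=l^p\pi_*DR(\M)$ by the Remark above, and the map itself is $DR(i_\pi\otimes_{\Ox}\mathrm{id}_{\M})$. Since $\ell^p\pi_*\C_{\wt{X}}\otimes_{\C_X}\Ox$ and $l^p\pi_*\Ox$ are $\Ox$-flat and $i_\pi$ is injective, $[\ell^p\pi_*\C_{\wt{X}}\otimes_{\C_X}\Ox\to l^p\pi_*\Ox]$ is a flat resolution of $\mathcal{Q}$, so the cone of $i_\pi\otimes_{\Ox}\mathrm{id}_{\M}$ represents $\mathcal{Q}\otimes^{\mathbf{L}}_{\Ox}\M$ in $D^b(\Dx)$ and the Proposition is equivalent to the vanishing $DR(\mathcal{Q}\otimes^{\mathbf{L}}_{\Ox}\M)\simeq 0$.

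For $\M=\Ox$ this says that $l^p\pi_*\Omega^{\bullet}_X$ is a resolution of $\ell^p\pi_*\C_{\wt{X}}$, i.e.\ $DR(\mathcal{Q})\simeq 0$: in degree zero it is the description of the $d$-closed sections of $l^p\pi_*\Ox$, and in positive degrees it is the holomorphic Poincaré lemma with $L^p$-estimates. Concretely, the classical homotopy operator on the polydisc (integration of forms along radii) is bounded on $L^p(K)$ for every $K\Subset U$ by Minkowski's integral inequality, acts sheetwise in $\gamma\in\Gamma$, and therefore preserves the growth condition defining $l^p\pi_*$; this furnishes the required contraction of $l^p\pi_*\Omega^{\bullet}_X$ onto $\ell^p\pi_*\C_{\wt{X}}$ and is essentially contained in \cite{Eys} (compare the Example on $DR(\Dx\otimes_{\Ox}\F)$ above).

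To reach all holonomic $\M$, observe that $\M\mapsto DR(\mathcal{Q}\otimes^{\mathbf{L}}_{\Ox}\M)$ is an exact functor, so the holonomic modules it annihilates form a triangulated subcategory of $D^b(\hol)$, stable under extensions and direct summands; by the case just treated it contains $\Ox$, hence every flat connection on the simply connected polydisc $U$. Now apply the standard dévissage of holonomic $\Dx$-modules: $\M$ is a successive extension of modules $i_{Z,+}\mathcal{L}$ with $i_Z:Z\hookrightarrow U$ a smooth locally closed immersion and $\mathcal{L}$ a vector bundle with flat connection on $Z$. For the closed part of such an immersion, the compatibility of $DR$ with the $\Dx$-module direct image (Kashiwara), the projection formula, and the compatibility of $DR$ with $\Dx$-module inverse images — so that $\mathbf{L}i_Z^{*}\mathcal{Q}$ still has vanishing de Rham complex on $Z$ — reduce the vanishing for $i_{Z,+}\mathcal{L}$ to the analogous vanishing on $Z$ for $\mathcal{L}$, which is immediate since $\mathcal{L}$ is trivial on a polydisc. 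The main obstacle is the non-closed part of the dévissage, namely the open immersions $j:V\hookrightarrow U$ with $U\setminus V$ a hypersurface and the modules $j_{+}\mathcal{L}$, $j_{!}\mathcal{L}$: here one has to control $l^p\pi_*$ in a neighbourhood of the boundary divisor, which — after a local reduction to a normal-crossing situation — comes down to $L^p$-estimates for the Deligne/meromorphic extension of $\mathcal{L}$, of the same delicate nature as the normal-crossing computations behind Theorem \ref{thm2}. That holonomicity is genuinely needed here is visible from the fact that the statement fails for $\M=\Dx$, so no argument using a local free $\Dx$-resolution of $\M$ can succeed.
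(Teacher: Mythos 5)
Your reduction to the vanishing of $DR(\mathcal{Q}\otimes^{\mathbf{L}}_{\Ox}\M)$ and the treatment of $\M=\Ox$ are sound, but the proof is not complete: the open-immersion step of the dévissage, which you yourself flag as ``the main obstacle'', is exactly where the whole difficulty of the statement sits, and you leave it unproven. A general holonomic $\M$ on a polydisc is \emph{not} an iterated extension of modules $i_{Z,+}\mathcal{L}$ with $i_Z$ a \emph{closed} immersion of a smooth subvariety; the generic-smoothness dévissage unavoidably produces localizations $j_{+}\mathcal{L}$, $j_{!}\mathcal{L}$ along hypersurfaces, and for these the required $L^p$-control of the Deligne/meromorphic extension near the divisor (worse still in the irregular case, which the Proposition does not exclude) is an open analytic problem in your write-up, not a quotable fact. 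As it stands the argument proves the Proposition only for $\M$ in the triangulated subcategory generated by the $i_{Z,+}\mathcal{L}$ with $i_Z$ closed.

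The paper's proof avoids all of this by never doing explicit estimates on a dévissage. One picks a (local) good filtration $F_{\bullet}$ of $\M$, replaces $DR(\M)$ by the quasi-isomorphic \emph{coherent} differential complex $F_qDR(\M)$ for $q\gg 0$ (Saito, Bj\"ork), and then invokes Kashiwara's constructibility theorem: the cohomology sheaves of $DR(\M)$ are constructible, hence have finite-dimensional sections over a suitable Stein open $U$. The comparison then reduces to solving $z=dt+g(h(z))$ with a local $L^p$ bound on $t$, and this bound comes for free from soft functional analysis --- continuity of $d$ on the Fr\'echet spaces of sections of coherent sheaves, closedness of the range of a continuous Fr\'echet operator with finite-dimensional cohomology, and the open mapping theorem. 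In other words, the finiteness supplied by holonomicity via Kashiwara's theorem is converted directly into the a priori estimate, with no holomorphic Poincar\'e lemma with $L^p$-bounds and no boundary analysis. If you want to salvage your route, you would need to either carry out the $L^p$-estimates for $j_{+}\mathcal{L}$ and $j_{!}\mathcal{L}$ in a normal-crossing model, or import this open-mapping-theorem mechanism at that step --- at which point you may as well run the paper's argument globally on $F_qDR(\M)$ from the start.
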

\begin{prf}
The problem is local. Thus, we can assume $\M$ has a good filtration $F_.$. 
Then $F_.DR(\M)$ is a filtration of $DR(\M)$ by differential complexes of coherent sheaves. 
For $q \gg 0$ $F_q DR(M)\to DR(\M)$ is a quasi-isomorphism \cite[Lemma 1.14]{Sai}  (see also \cite[Lemma 1.5.6 p. 31]{Bjo}). Hence it is enough to
prove that $Rl^p\Gamma \otimes_{\C\Gamma} \pi_!\pi^{-1} F_qDR(\M) \to l^p\pi_*F_qDR(\M)$ 
is a quasi-isomorphism for such a $q\gg 0$. 

Thanks to the Kashiwara constructibility  theorem\footnote{Which is used implicitely in the statement of the proposition.} \cite{Ka,Me}, the cohomology of $F_qDR(\M)$ is constructible. 
Choosing $U$ appropriately such that it is Stein and  $$H^i(U,\mathcal{H}^j(DR(\M)))=0$$
for $i>0$. 
Note that the kernel and images
of  the differentials in  $F_q DR(\M)$ have also vanishing cohomology in positive degree on $U$. 

We have to show that 
every element $z$  of $Ker(d): F_{q+k}\M\otimes \Omega^{k} (U) \to F_{q+k+1}\M \otimes\Omega^{k+1}(U)$ 
can be decomposed as a sum $ z= d t+ g(h(z))$ where $$g: \mathcal{H}^{k-n}(F_q DR(\M)) \to \mathop{Ker}(d)\subset F_{q+k}\M$$
is a  section over $U$ of the morphism of sheaves
$$h: Ker(d: F_{q+k}\M\otimes \Omega^{k}  \to F_{q+k+1}\M \otimes \Omega^{k+1} ) \to \mathcal{H}^{k-n}(DR(\M))$$
and $t$ a section over $U$ of $F_{q+k-1}\M\otimes \Omega^{k-1}$ with local $L^p$-estimates. 

This means the following. The Fr\'echet structure of $\mathcal{G}(U)$ where $\mathcal{G}$ is coherent
 is given by an inverse limit of a countable family of $L^p$  norms $(\| -\|_n)_{n\in \N}$ defined by integration on an exhaustive family of compact subsets of $U$ if the sheaf 
is locally free,  of quotient norms of such  $L_p$ norms in a locally presentation of the sheaf in general \cite{Eys}. 
A local $L^p$ estimate is  then, for all $n \in\N$, a series of  estimates of the form:

$$ \| t \|_n \le C_n . \| z \|_{n'}
$$
for some $n'\in \N$.

This follows from  the continuity of $d$ for this Fr\'echet structure, the fact that a continuous
operator of Fr\'echet spaces has closed range if it has a finite dimensional kernel
and the open mapping theorem  for Fr\'echet spaces using  a standard argument (cf. e.g. \cite[pp. 534-535]{Eys}).  
\end{prf}

\begin{coro}
 The natural map $i_{\pi}$ induces a natural invertible transformation of functors on $\mathbf{Hol}(\Dx)$: 
 $$rh_{(p)}: H^{\bullet}_{DR, L^p} (\wt{X}, \_) \buildrel{\simeq}\over\longleftarrow \mathbb{H}^{\bullet+\dim_{\C}(\tilde{X}}_{(p)}(\wt{X}, DR(\_)).
 $$

\end{coro}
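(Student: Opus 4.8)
The plan is to obtain the corollary as a formal consequence of Proposition~\ref{rhcomp} by passing to hypercohomology over the compact base $X$; no new analytic input is needed, since all of it has been absorbed into that proposition.

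First I would check that the right-hand functor is well defined. For holonomic $\M$, Kashiwara's theorem (already invoked in Proposition~\ref{rhcomp}) makes the cohomology sheaves of $DR(\M)$ constructible, so $DR(\M)$ is an object of $D^b_c(X)$ and $\pi^{-1}DR(\M)$, with its tautological $\Gamma$-equivariant structure, is an object of $D^b_{c,\Gamma}(\wt X)$; thus $\mathbb{H}^\bullet_{(p)}(\wt X,-)$ of Definition~\ref{CconstrL2} applies to it (here, as in the Example above, $\mathbb{H}^\bullet_{(p)}(\wt X,DR(\M))$ abbreviates $\mathbb{H}^\bullet_{(p)}(\wt X,\pi^{-1}DR(\M))$). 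By Proposition~\ref{lpcocons}, unwinding Definition~\ref{lpco}, we then have on the nose
$$
\mathbb{H}^k_{(p)}(\wt X,\pi^{-1}DR(\M))\;=\;\mathbb{H}^k\bigl(X,\;Rl^p\Gamma\otimes_{\C\Gamma}\pi_!\pi^{-1}DR(\M)\bigr),
$$
the functor $F\mapsto Rl^p\Gamma\otimes_{\C\Gamma}\pi_! F$ being exact on all sheaves of $\C$-vector spaces by Lemma~\ref{extension1}, so the right-hand side may be computed from the complex $\pi^{-1}DR(\M)$ term by term.

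Next I would invoke Proposition~\ref{rhcomp}: the natural morphism
$$
Rl^p\Gamma\otimes_{\C\Gamma}\pi_!\pi^{-1}DR(\M)\;\longrightarrow\;l^p\pi_* DR(\M),
$$
which is precisely the morphism functorially induced by $i_\pi$, is a quasi-isomorphism. Since $X$ is compact, $\mathbb{H}^\bullet(X,-)$ of a bounded complex of sheaves sends quasi-isomorphisms to isomorphisms, whence
$$
\mathbb{H}^k\bigl(X,Rl^p\Gamma\otimes_{\C\Gamma}\pi_!\pi^{-1}DR(\M)\bigr)\;\buildrel{\simeq}\over{\longrightarrow}\;H^k\bigl(X,l^p\pi_* DR(\M)\bigr)\;=\;H^k_{DR,L^p}(\wt X,\M).
$$
Concatenating with the identification of the previous step, and keeping track of the shift $[\dim_\C X]$ built into $DR$ together with $\dim_\C\wt X=\dim_\C X$, gives the claimed isomorphism $rh_{(p)}$ for each individual $\M$.

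Finally I would upgrade this to a natural invertible transformation of $\partial$-functors. Naturality in $\M$ holds because every ingredient — the de Rham functor $\M\mapsto DR(\M)$, the pullback $\pi^{-1}$, the exact functor $Rl^p\Gamma\otimes_{\C\Gamma}\pi_!(-)$, the comparison morphism of Proposition~\ref{rhcomp}, and hypercohomology over $X$ — is functorial, and the quasi-isomorphism of Proposition~\ref{rhcomp} is natural in $\M$ by construction. Compatibility with the triangulated structure follows since the source $H^\bullet_{DR,L^p}(\wt X,-)$ is already a $\partial$-functor on $D^b(\cohd)$ (Section~\ref{sec2}), the target factors as the triangulated de Rham functor followed by the $\partial$-functor $\mathbb{H}^\bullet_{(p)}(\wt X,-)$ of Proposition~\ref{lpcocons}, and $rh_{(p)}$ commutes with connecting morphisms because it is induced by an honest morphism of complexes of sheaves. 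I do not expect a genuine obstacle: the only points requiring care are bookkeeping ones, namely verifying that the comparison morphism of Proposition~\ref{rhcomp} really is the one induced by $i_\pi$ (through the identification $\ell^p\pi_*\C_{\wt X}\cong Rl^p\Gamma\otimes_{\C\Gamma}\pi_!\pi^{-1}\C_{\wt X}$ followed by $\Ox$-linear, then differential-operator-linear, extension along $DR(\M)$) and the normalization of the degree shift.
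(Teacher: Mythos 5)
Your proposal is correct and is exactly the argument the paper intends: the corollary is stated without a separate proof precisely because it follows by taking hypercohomology over the compact base $X$ of the quasi-isomorphism of Proposition~\ref{rhcomp}, together with the exactness of $Rl^p\Gamma\otimes_{K\Gamma}\pi_!(-)$ and the functoriality of all the ingredients. Your bookkeeping remarks (identifying the comparison map with the one induced by $i_\pi$, and tracking the shift built into $DR$) are the right points to flag.
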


\begin{rem}
 As in section \ref{sec1} we may work in  the more general set-up of a proper action of $\Gamma$ on a complex manifold
$\wt{X}$ with cocompact quotient or even restrict our attention to cocompactly supported
equivariant coherent $\Dx$ or $\Ox$-modules. 
\end{rem}

\section{Farber's abelian category and its localisation}

Up to this point, we were working with $L^p$-cohomology. Now, it is time to admit
that unless $p=2$ the objects we constructed are out of control.  
 
We will change our notations and define $\Ng=W_{l,2}(\Gamma)$ and survey some relevant
homological algebraic aspects of modules over this operator algebra.

\subsection{Hilbert $\Gamma$-modules}
Let us first briefly review a very nice construction due to
Farber and L\"uck \cite{Farb1}.  For a longer review adapted to our purposes, see \cite[pp.539-544]{Eys}. For a complete review including applications in topology and algebra, see the bible of the subject \cite{Luc}.

\begin{defi} A Hilbert $\Gamma$-module (resp. of finite type, resp. separable)
is a topological $\mathbb C$-vector space with a continuous
$\Gamma$-action which can be realized as a closed
$\Gamma$-invariant subspace of $l_2\Gamma \hat{ \otimes} H$
where $H$ is a Hilbert space (resp. of finite dimension, resp. separable).
\end{defi}

\begin{lem}
 The action of $\mathbb{C}\Gamma$ on a Hilbert $\Gamma$-module $E$ extends uniquely to an action of the $C^*$-algebra $\Ng$ 
 in such a way that the image of $\Ng$ is strongly closed in $\mathfrak{B}(E)$. 
 \end{lem}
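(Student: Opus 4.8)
\begin{prf}
The plan is to produce the extension explicitly from the defining embedding, to recognise its image through the reduction theory of von Neumann algebras, and to obtain uniqueness from the density of $\C\Gamma$ in $\Ng$. First I would fix a realisation of $E$ as a closed $\Gamma$-invariant subspace of $\mathcal H:=l_2\Gamma\,\hat{\otimes}\,H$, with $\Gamma$ acting by $\lambda\otimes 1_H$; restricting the ambient Hilbert structure to $E$, I may assume the $\Gamma$-action on $E$ is unitary, and I let $P\in\mathfrak B(\mathcal H)$ be the orthogonal projection with range $E$. The elementary but crucial point is that $P$ commutes with every $\lambda(\gamma)\otimes 1_H$, hence $P$ lies in the commutant of $(\lambda(\C\Gamma)\otimes 1_H)''=\Ng\otimes 1_H$; in particular each operator $a\otimes 1_H$, $a\in\Ng$, commutes with $P$ and therefore preserves $E$.

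I would then set $\pi_E\colon\Ng\to\mathfrak B(E)$, $\pi_E(a):=(a\otimes 1_H)|_E$. This is a unital $*$-homomorphism whose restriction to $\C\Gamma\subset\Ng$ recovers the given action of $\C\Gamma$ on $E$, since $\lambda(\gamma)|_E$ is the given action of $\gamma$; and it is normal, being the composite of the amplification $\Ng\to\mathfrak B(\mathcal H)$, $a\mapsto a\otimes 1_H$ (normal, with image inside $\{P\}'$), followed by the reduction homomorphism $\{P\}'\to\mathfrak B(E)$, $x\mapsto x|_E$ (also normal).

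The one step demanding a real argument is that $\pi_E(\Ng)$ is strongly closed. For this I would invoke reduction theory: if $N\subseteq\mathfrak B(\mathcal H)$ is a von Neumann algebra and $q\in N'$ a projection, then the reduced algebra $N_q=\{x|_{q\mathcal H}:x\in N\}$ is again a von Neumann algebra on $q\mathcal H$. Applying this with $N=\Ng\otimes 1_H$ and $q=P$ shows that $\pi_E(\Ng)=N_P$ is a von Neumann algebra acting on $E$, hence strongly closed; a Kaplansky density argument moreover identifies $N_P$ with $(\lambda(\C\Gamma)|_E)''$, the von Neumann algebra generated by the original $\Gamma$-action, as one expects.

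As for uniqueness, since $\C\Gamma$ is strongly (a fortiori $\sigma$-weakly) dense in $\Ng$ by von Neumann's bicommutant theorem, any two $\sigma$-weakly continuous extensions coincide, and $\pi_E$ is one such; the content of the uniqueness assertion is thus that an extension with strongly closed image is automatically $\sigma$-weakly continuous. I expect this last point, together with the appeal to reduction theory, to be the only genuinely non-formal ingredients: one has to exclude the Calkin-type pathologies possible for general $*$-homomorphisms of von Neumann algebras, exploiting that here $E$ is a subrepresentation of a multiple of the regular representation. Everything else is bookkeeping with the defining embedding.
\end{prf}
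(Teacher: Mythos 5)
The paper states this lemma without proof, so there is nothing to compare against; your argument is the standard one and is correct. You realize $E$ as a closed $\Gamma$-invariant subspace of $l_2\Gamma\,\hat{\otimes}\,H$, observe that the orthogonal projection $P$ onto $E$ lies in the commutant of $\Ng\otimes 1_H$, and obtain the extension by reduction; the image is the reduced von Neumann algebra $(\Ng\otimes 1_H)_P$, hence strongly closed, exactly as in the Farber--L\"uck setting the paper is quoting. The only soft spot is your final paragraph: uniqueness is immediate only among $\sigma$-weakly continuous (equivalently, SOT-continuous on bounded sets) extensions, which is certainly the intended reading of ``extends uniquely''; a uniqueness claim over arbitrary $*$-homomorphic extensions with strongly closed image would require the automatic-normality argument you sketch but do not carry out, and you are right to flag that as the one genuinely non-formal point.
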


\begin{prop}
The following categories $E_f(\Gamma)
\subset E (\Gamma)$ :
\begin{itemize}
\item Objects of $E(\Gamma)$ are triples
$(E_1,E_2,e)$ where $E_1$ et $E_2$ are
 Hilbert $\Gamma$-module and $e$
 continuous $\Gamma$-equivariant linear map. 
\item $Hom_{E(\Gamma)} ((E_1,F_2,e),(F_1,F_2,f))$
is the set of pairs
$(\phi_1:E_1\to F_1, \phi_2:E_2 \to F_2)$
of  continuous $\Gamma$-equivariant linear maps such that
$\phi_2 e = f\phi_1$ under the equivalence relation
$(\phi_1,\phi_2) \sim ( \phi'_1,\phi'_2) \Leftrightarrow
\exists T\in L_{\Gamma} (E_2,F_1), \ \phi_2'-\phi_2 =
fT$.
\item $E_f(\Gamma)$
is the full subcategory
of $E(\Gamma)$ whose objects $(E_1,E_2,e)$ have the property that
$E_2$ is of finite type ($E_1$ is then also of finite type).
\end{itemize}
are abelian categories of projective
 dimension one. The forgetful functor $\Phi$ from $E(\Gamma)$
to the category of $\Ng$-modules defined by
$\Phi((E_1,E_2,e)):= E_2/e(E_1)$ is faithful, respects direct sums, kernels and cokernels and is conservative. 
\end{prop}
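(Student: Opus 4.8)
The plan is to verify each assertion in turn, taking as a black box the key structural facts from Farber--Lück (projective dimension one, the universal additive invariant $\dim_\Gamma$, and the fact that a $\Gamma$-equivariant map of Hilbert $\Gamma$-modules has a "weak cokernel"). First I would spell out the additive structure and the composition law on $E(\Gamma)$: objects $(E_1,E_2,e)$ should be thought of as "a map presenting a cokernel up to noise", and the equivalence relation on morphisms exactly kills the ambiguity coming from lifting a map on cokernels along $e$. Checking that $\sim$ is an equivalence relation compatible with composition, and that $E(\Gamma)$ is additive with biproduct $(E_1\oplus F_1, E_2\oplus F_2, e\oplus f)$, is routine bookkeeping; I would present it tersely. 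The full subcategory $E_f(\Gamma)$ is closed under biproducts since finite type Hilbert $\Gamma$-modules are (a closed $\Gamma$-invariant subspace of a finite-type module is of finite type, and quotients and finite sums of finite-type modules are of finite type, which also yields the parenthetical claim that $E_1$ may be taken of finite type once $E_2$ is).

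Next I would establish abelianness. Given a morphism $[(\phi_1,\phi_2)]\colon (E_1,E_2,e)\to(F_1,F_2,f)$, I would write down explicit candidates for kernel and cokernel in terms of ordinary kernels/images of the $\phi_i$ in the category of Hilbert $\Gamma$-modules together with the induced maps, e.g. the cokernel object built from $F_1\oplus E_2 \to F_2$ (combining $f$ and $\phi_2$) and the kernel object built from a suitable fibre product, then verify the universal properties modulo $\sim$. The crucial point, and the step I expect to be the main obstacle, is showing that the canonical map from coimage to image is an isomorphism in $E(\Gamma)$: this is where one genuinely uses that the ambient category of Hilbert $\Gamma$-modules, while not abelian, becomes abelian after this "Farber construction", i.e. one must carefully track that the non-closedness of images in Hilbert $\Gamma$-modules is precisely absorbed by the $E_1$-slot. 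I would handle this by reducing, via the structure theory, to the polar decomposition / weak isomorphism statement of Farber, quoting \cite{Farb1,Luc}. Projective dimension one then follows because every object $(E_1,E_2,e)$ sits in a short exact sequence with the two-term complex $[E_1\to E_2]$ (viewed with $E_1$ in the "$E_1$-slot" zeroed appropriately), exhibiting any object as the cokernel of a map of projectives, the projectives being exactly $(0,E_2,0)$ with $E_2$ of finite type projective Hilbert $\Gamma$-module.

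Finally, for the forgetful functor $\Phi$, $\Phi((E_1,E_2,e)) = E_2/e(E_1)$ (algebraic cokernel, an $\mathcal N(\Gamma)$-module via the extended action from the preceding lemma): I would check well-definedness on morphisms using that $\phi_2 e = f\phi_1$ sends $e(E_1)$ into $f(F_1)$, and that the relation $\phi_2' - \phi_2 = fT$ changes the induced map on quotients by something landing in $f(F_1)$, hence by zero on $F_2/f(F_1)$. Compatibility with biproducts is immediate. Faithfulness: if the induced map $E_2/e(E_1)\to F_2/f(F_1)$ is zero then $\phi_2(E_2)\subset f(F_1)$; a closed-graph / measurable-selection argument (or directly the fact that $f$ admits a bounded Borel right inverse on its image in the von Neumann setting) produces $T\in L_\Gamma(E_2,F_1)$ with $fT = \phi_2$, so $(\phi_1,\phi_2)\sim(\phi_1,0)$, and one argues the class is zero. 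That $\Phi$ respects kernels and cokernels is then forced by the explicit descriptions above, and conservativity follows from faithfulness together with $\Phi$ respecting kernels and cokernels (a morphism with trivial kernel and cokernel in an abelian category is an isomorphism, and $\Phi$ detects triviality of those). The only delicate input throughout is the passage between topological (closed range) and algebraic (honest quotient) cokernels, which I would consistently defer to the cited Farber--Lück theory rather than reprove.
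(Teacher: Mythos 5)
Your proposal is correct in outline and takes essentially the same route as the paper, whose entire proof is the citation ``See [Eys]; the main point is that the proof in [Farb1] does not require finite type'': your sketch simply fleshes out the Farber--L\"uck construction contained in those references, correctly isolating the coimage--image comparison and the Douglas-type factorization (your bounded equivariant right inverse of $f$ on its algebraic image) as the genuine analytic inputs, and since every step you describe works for arbitrary Hilbert $\Gamma$-modules you implicitly cover the one point the paper emphasizes. The only slight imprecision is the parenthetical ``$E_1$ is then also of finite type'', which requires replacing $E_1$ by $(\ker e)^{\perp}$ and using polar decomposition to identify it with a closed $\Gamma$-invariant subspace of $E_2$, but this is in the spirit of what you wrote.
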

 \begin{prf} See \cite{Eys}. The main point is that the proof in \cite{Farb1} does not require finite type. 
\end{prf}
\begin{rem}
It is not clear to the author whether the forgetful functor $\Phi$ is fully faithful on $E_f(\Gamma)$. 
It is fully faithful  on the full subcategory 
of 
projective modules thanks to \cite{Griffin}. Fully faithfulness would follow if  $\Phi(E)$ was a projective $\Ng$-module whenever $E$ is a finite type Hilbert $\Gamma$-module but it doesn't seem to be true. 

 \end{rem}

The following corollary greatly simplifies our treatment: 

\begin{coro}\label{algiso}
 If $f^{\bullet}: K^{\bullet} \to L^{\bullet}$ is a 
 continuous morphism of complexes of Hilbert $\Gamma$-modules whose terms are in $E(\Gamma)$, hence a morphism of complexes in $E(\Gamma)$, $f^{\bullet}$ induces a  isomorphism in $D(E(\Gamma))$ if and only if $\Phi(f^{\bullet})$ induces 
 an algebraic isomorphism  in cohomology.
\end{coro}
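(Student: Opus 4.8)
The plan is to reduce everything to the statement that $\Phi$ is an exact, conservative functor, together with the structure of $E(\Gamma)$ as an abelian category of projective dimension one. First I would note that one direction is trivial: if $f^\bullet$ induces an isomorphism in $D(E(\Gamma))$, then applying the forgetful functor $\Phi$ (which is exact, hence descends to the derived categories and commutes with taking cohomology of complexes) immediately gives that $\Phi(f^\bullet)$ is a cohomology isomorphism at the level of $\Ng$-modules. So the content is the converse.

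For the converse, suppose $\Phi(f^\bullet)$ is an isomorphism on cohomology. Form the mapping cone $C^\bullet = \mathrm{Cone}(f^\bullet)$, a bounded complex with terms in $E(\Gamma)$. Saying $f^\bullet$ is an isomorphism in $D(E(\Gamma))$ is equivalent to saying $C^\bullet$ is acyclic in $E(\Gamma)$, i.e. $H^q(C^\bullet) = 0$ in $E(\Gamma)$ for all $q$. Now $\Phi$ is exact, so $\Phi(C^\bullet) = \mathrm{Cone}(\Phi(f^\bullet))$ and $\Phi(H^q(C^\bullet)) = H^q(\Phi(C^\bullet))$; the hypothesis that $\Phi(f^\bullet)$ is a cohomology isomorphism says precisely that $H^q(\Phi(C^\bullet)) = 0$ for all $q$, i.e. $\Phi(H^q(C^\bullet)) = 0$. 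Since $\Phi$ is conservative — and in particular, being exact and faithful, it reflects the zero object (an object $E$ with $\Phi(E) = 0$ has $\mathrm{id}_E$ killed by the faithful functor, hence $\mathrm{id}_E = 0$, hence $E = 0$) — we conclude $H^q(C^\bullet) = 0$ in $E(\Gamma)$ for every $q$. Thus $C^\bullet$ is acyclic and $f^\bullet$ is an isomorphism in $D(E(\Gamma))$.

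The one point that needs a word of care, and which I expect to be the only mild subtlety, is the passage from "acyclic complex" to "isomorphism in the derived category": in a general abelian category a quasi-isomorphism need not be invertible in $D^b$, but it always is — that is exactly the defining property of the derived category, where quasi-isomorphisms are formally inverted — so $C^\bullet$ acyclic forces $f^\bullet$ to be invertible in $D(E(\Gamma))$ with no further hypothesis. The finite projective dimension of $E(\Gamma)$ is not strictly needed for this argument, though it is what guarantees the derived categories in question are well-behaved (e.g. that $D^b E(\Gamma)$ computes via bounded projective resolutions by finite type Hilbert $\Gamma$-modules), and it is implicitly what makes the ambient framework of Theorem \ref{thm1} function.

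In summary, the whole corollary is the formal consequence: \emph{an exact, faithful, conservative functor between abelian categories reflects acyclicity of complexes, hence reflects isomorphisms in the bounded derived category.} The main (and essentially only) obstacle is bookkeeping — checking that $\Phi$ commutes with cones and with $H^q(-)$, which is immediate from its exactness as recorded in the preceding proposition — after which the statement is automatic.
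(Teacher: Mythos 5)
Your argument is correct and is exactly the intended derivation: the paper states this as an immediate corollary of the preceding proposition (that $\Phi$ respects kernels and cokernels, hence commutes with cohomology of complexes, and is conservative), and your mapping-cone reduction together with the observation that a conservative exact additive functor reflects zero objects is precisely that formal argument. The only remark worth adding is that conservativity of $\Phi$ is already asserted in the proposition, so your re-derivation of it from faithfulness, while valid, is not needed.
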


An object $X=(E_1,E_2,e)$ of $E_f(\Gamma)$ has two basic invariants. Its
Von Neumann dimension $\dim_{\Gamma} X\in \mathbb{R}$ depends
only on $P(X)=E_2\slash\overline{eE_1}$ and has properties similar to the
dimension function of ordinary linear  algebra and its Novikov-Shubin
invariant $NoSh(X)=(E_1,\overline{eE_1},e)$.

 \begin{rem}(Tapia) \label{tapia} This construction of an abelian category is a special case of
 \cite[pp. 20, 40-41]{BBD}. Actually Hilbert $\Gamma$-modules form an exact category,
even a quasi abelian one  as follows from \cite[section 3.2]{Sch},  which satisfies the conditions in \cite{BBD}.
The same holds with $\Ng$-Fr\'echet modules. 
\end{rem}

\subsection{$\Gamma$-Fredholm Complexes}
The main nice property  complexes of   Hilbert $\Gamma$-modules can 
have in general is being $\Gamma$-Fredholm.

\begin{defi}
 A bounded  complex of Hilbert $\Gamma$-modules (with a positive inner product) $(C^k, d_k)$  
 is $\Gamma$-Fredholm if and only if the
 spectral family $E^{dd^*+d^*d}_{\lambda}$ satisfies $\exists \lambda>0$ such that the image of $E_{\lambda}$ 
 has finite $\Gamma$-dimension.
\end{defi}

This notion  depends on the notion of a Fredholm operator given in \cite[Definition 1.20, p. 26]{Luc}. 
 It is invariant by quasi-isomorphisms in $E(\Gamma)$ thanks to \cite[Theorem 2.19 p. 83]{Luc} .
 There is a stronger notion. 

\begin{defi}
 A bounded  complex of Hilbert $\Gamma$-modules (with a positive inner product) $(\bar{C}^k, \bar{d}_k)$ is strongly $\Gamma$-Fredholm if and only if
 it is quasi-isomorphic as a complex in $E(\Gamma)$ to another complex $(C^k, d_k)$ whose
 spectral family $E^{dd^*+d^*d}_{\lambda}$ satisfies $\exists \lambda>0$ such that the image of $E_{\lambda}$ is a 
 finitely generated Hilbert $\Gamma$-module.
\end{defi}

This is a stronger notion since a finite $\Gamma$-dimensional Hilbert module need not be finitely generated (e.g. for $\Gamma=\Z$). 

\begin{ques}
 Can one drop the quasi-isomorphism? Perhaps the proof of \cite[Theorem 2.19 p. 83]{Luc} can be modified using 
 the center-valued trace. 
\end{ques}


\begin{lem}
The  homotopy category of bounded strongly $\Gamma$-Fredholm  is equivalent to $D^b(E_f(\Gamma))$. 
\end{lem}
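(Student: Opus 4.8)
The plan is to exhibit an equivalence between the homotopy category $K^b$ of bounded strongly $\Gamma$-Fredholm complexes and $D^b(E_f(\Gamma))$ by constructing functors in both directions and checking they are mutually quasi-inverse. First I would note that a bounded complex of finite type Hilbert $\Gamma$-modules is in particular a complex of projective objects of $E_f(\Gamma)$ (via the embedding $E\mapsto (0,E,0)$, or more precisely $E\mapsto (E_1,E_2,e)$ with the projectives being exactly the finite type Hilbert modules, as recorded just before Remark \ref{tapia}), and that a bounded complex of projectives in an abelian category of projective dimension one computes an object of the bounded derived category; since every object of $E_f(\Gamma)$ has a length-one projective resolution, every object of $D^b(E_f(\Gamma))$ is represented by such a complex. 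So the natural functor from $K^b(\text{proj }E_f(\Gamma))$ to $D^b(E_f(\Gamma))$ is an equivalence by the standard homological-algebra fact (projective objects, projective dimension one, hence enough projectives and boundedness is preserved). The remaining task is to identify $K^b(\text{proj }E_f(\Gamma))$ with the homotopy category of bounded strongly $\Gamma$-Fredholm complexes.

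The key point here is that strongly $\Gamma$-Fredholm complexes are, up to quasi-isomorphism in $E(\Gamma)$, precisely the complexes whose Laplacian has a spectral gap with finite type image — and I would argue that this finite type image condition forces the whole complex to be, again up to $E(\Gamma)$-quasi-isomorphism and homotopy, a bounded complex of finite type Hilbert $\Gamma$-modules. Concretely, if $(C^k,d_k)$ has $\mathrm{im}(E_\lambda)$ of finite type for some $\lambda>0$, then splitting $C^k = \mathrm{im}(E_\lambda^k) \oplus \mathrm{im}(1-E_\lambda^k)$ is a $\Gamma$-equivariant orthogonal decomposition into subcomplexes (the Laplacian, hence $d$ and $d^*$ up to the usual care, commute with the spectral projections — more precisely $d$ maps $\mathrm{im}(1-E_\lambda)$ into itself and similarly $\mathrm{im}(E_\lambda)$), the summand $\mathrm{im}(1-E_\lambda)$ is acyclic with a bounded contracting homotopy ($d^*(dd^*+d^*d)^{-1}$ restricted there, bounded by $1/\lambda$), hence contractible, and the complement is a bounded complex of finite type Hilbert $\Gamma$-modules. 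Thus the homotopy category of strongly $\Gamma$-Fredholm complexes coincides with that of bounded complexes of finite type Hilbert modules, i.e.\ with $K^b(\text{proj }E_f(\Gamma))$. Conversely any bounded complex of finite type Hilbert $\Gamma$-modules is trivially strongly $\Gamma$-Fredholm (it is its own witness, with all of $C^k$ finitely generated after choosing generators, or at least finite type, and the spectral family condition is vacuous once the modules are finite type since one takes $\lambda$ below the smallest nonzero eigenvalue). Composing the two equivalences gives the claim.

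The main obstacle I expect is the compatibility of the spectral decomposition with the differential and with the equivalence relation defining morphisms in $E(\Gamma)$: one must check that $\mathrm{im}(E_\lambda)$ and its orthocomplement are genuinely subcomplexes (this uses that $E_\lambda$ is a function of the Laplacian $\Delta=dd^*+d^*d$ and that $d\Delta = \Delta d$, so $d$ commutes with every bounded Borel function of $\Delta$), that the contracting homotopy on the orthocomplement is a bounded $\Gamma$-operator (immediate from the spectral gap, $\|(\Delta|_{\mathrm{im}(1-E_\lambda)})^{-1}\|\le 1/\lambda$), and that passing to a quasi-isomorphic complex in $E(\Gamma)$ — already built into the definition of "strongly $\Gamma$-Fredholm" — does not change the homotopy type in $D^b(E_f(\Gamma))$, which follows from the invariance results cited after the definition (\cite[Theorem 2.19 p.~83]{Luc}) together with the fact, used in Corollary \ref{algiso}, that an $E(\Gamma)$-quasi-isomorphism is an isomorphism in $D(E(\Gamma))$. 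A secondary subtlety is that "finite type Hilbert $\Gamma$-module" is not the same as "finitely generated"; but for the purpose of identifying the homotopy category with $K^b(\text{proj }E_f(\Gamma))$ only finite type is needed, so this causes no trouble, and indeed the open Question just above the Lemma flags exactly the stronger finitely-generated variant as unresolved.
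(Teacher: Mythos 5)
Your proof is correct, but it is organized quite differently from the paper's. The paper never performs the spectral splitting: it observes that a bounded strongly $\Gamma$-Fredholm complex is a bounded complex of projective objects of $E(\Gamma)$ whose cohomology lies in $E_f(\Gamma)$, so the natural functor $\psi'$ from its homotopy category to $D^b(E(\Gamma))$ is fully faithful with values in the full subcategory $D^b_{E_f(\Gamma)}(E(\Gamma))$, and then invokes the equivalence $\psi\colon D^b(E_f(\Gamma))\to D^b_{E_f(\Gamma)}(E(\Gamma))$ (enough projectives, finite projective dimension); essential surjectivity of $\psi'$ follows because the image of $\psi$ is represented by bounded complexes of finite type projective Hilbert modules, which are strongly Fredholm. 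You instead retract each spectral-gap witness onto $\mathrm{Im}(E_\lambda)$, a bounded complex of finite type Hilbert $\Gamma$-modules, using that $d$ intertwines the spectral projections and that $\mathfrak{d}\,\Delta^{-1}$ contracts the orthocomplement; this is exactly the mechanism used later in the proof of Lemma \ref{qiso} ($[D,h]=\mathrm{Id}-E_\lambda$ with $h=\mathfrak{d}g$), and it buys an explicit finite type model --- in effect re-proving, rather than deducing from the abstract equivalence, that every strongly Fredholm complex is homotopy equivalent to a bounded complex of finite type projectives --- at the cost of operator-theoretic verifications the paper's formal argument avoids. Three small points to tighten: the contracting homotopy is bounded by $\lambda^{-1/2}$, not $\lambda^{-1}$ (harmless); the passage from a strongly Fredholm complex to its spectral-gap witness is a quasi-isomorphism between bounded complexes of projectives of $E(\Gamma)$ and is therefore automatically a homotopy equivalence (the cone is an acyclic bounded complex of projectives, hence contractible) --- this, rather than Corollary \ref{algiso} alone, is what makes the ``up to quasi-isomorphism'' in the definition harmless at the level of homotopy categories; and you should not promise functors ``in both directions'', since $C^{\bullet}\mapsto\mathrm{Im}(E_\lambda)$ is not functorial ($\lambda$ and the projections depend on the complex) --- the clean formulation is that the inclusion of the full subcategory of bounded complexes of finite type Hilbert modules into the homotopy category of strongly Fredholm complexes is essentially surjective, which is precisely what your retraction establishes.
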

\begin{prf} Since  the full abelian subcategory $E_f(\Gamma)\subset E(\Gamma)$ has enough $E(\Gamma)$ projective and both have finite projective dimension
$\psi:D^b(E_f(\Gamma))\to D^b_{E_f(\Gamma)}(E(\Gamma))$ is an equivalence. 

Certainly a strongly Fredholm complex has its cohomology in $E_f(\Gamma)$. 

Quasi-isomorphisms in the homotopy category of complexes in $E(\Gamma)$ are exactly the homotopy classes of morphisms of complexes that are
algebraic quasi isomorphisms
thanks to the exactness of the faithful forgetful functor $E(\Gamma)\to {Mod}_{\Ng}$. 

Since strongly Fredholm complexes are complex of projective objects in $E(\Gamma)$, the functor $\psi'$ from  the homotopy category of bounded above strongly $\Gamma$-Fredholm 
complexes to the derived category $D^{b} E(\Gamma)$ is fully faithful and takes its values in $D^b_{E_f(\Gamma)}(E(\Gamma))$. Since $\psi$ is an equivalence, whose image
is contained in the image of $\psi'$, $\psi'$ is essentially surjective.

\end{prf}

\subsection{An equivalence of categories}
There is a more algebraic approach to $E_f(\Gamma)$
 \cite[p. 288]{Luc}.  $\Ng$ is a semihereditary \cite[Theorem 6.7 p. 239]{Luc} hence coherent ring. It turns out that $E_f(\Gamma) $ is equivalent to the abelian
 category of finitely presentable  $\Ng$-modules. But the  equivalence in question, denote it by $\nu$,  is not given by $\Phi$. Indeed it is constructed  using the 
 equivalence given by the functor on finite rank free $\Ng$-modules defined  by $M\mapsto l^2\Gamma\otimes_{\Ng} M$. It is not obvious that it is an equivalence. 
 
  There is also a dimension theory for arbitrary $\Ng$-modules which generalizes $\dim_{\Gamma}$ and more or less reduces the theory of $L_2$-Betti numbers and Novikov-Shubin invariants to algebra. However 
 non zero $\Ng$-modules of dimension $0$ may exist in sharp contreast with projective Hilbert $\Gamma$-modules.

\subsection{Affiliated Operators}
The algebra of affiliated operators $\Ug$
 is a flat  extension  $\Ng\subset \Ug$ \cite[Theorem 8.2.2]{Luc} such that $\Ug \otimes  NoSh(X)=0$ whenever $X$ is an object in $E_f(\Gamma)$. It is a coherent ring, even a Von Neumann regular one. 
 So that finitely presented $\Ug$-modules form an abelian category of projective dimension $0$ (all objects are projective!). 
Furthermore $\dim_{\Gamma}$ extends to $\Ug$-modules (no topological structure needed) in such a way that  objects  of the form $\Ug\otimes_{\Ng} E$ where $E$ is a Hilbert $\Gamma$-module of finite type are finite dimensionnal, $\dim_{\Gamma}$
being preserved. In particular, for a complex $K^{\bullet}$ in $E_f(\Gamma)$, 
$$H^q(\Ug \otimes K^{\bullet})=\Ug \otimes_{\Ng} \bar H^q (K^{\bullet})=\Ug \otimes_{\Ng} P(H^q (K^{\bullet})).$$
The author does not know how to use in Complex Analytic Geometry the torsion information lost in that process. So forgetting about it seems to be 
appropriate from a pragmatic point of view. 

The  relevance of this localization process for Hodge theory is one of the main ideas of \cite{Din}. 
 There,  intermediate abelian categories fitting in a succession of exact functors of abelian categories $$E_f(\Gamma) \buildrel{faith.}\over\longrightarrow {{Mod}}(\Ng) 
 \to  {{Mod}}(\Ng)/\tau \to  {{Mod}}(\Ug)$$  are introduced where $\tau$ is a torsion theory (or an appropriate Serre subcategory).
 Here we will only consider the case $\tau=\tau_{\Ug}$ 
 in the notations of loc.cit.: when it is possible we will work in $E_f(\Gamma)$ and when it  becomes necessary we will apply the functor $\Ug\otimes_{\Ng}$. 
 
However, the reason \cite{Luc} introduces affiliated operators is his $[0,+\infty]$-valued dimension theory for $\Ug$-modules which enables him to make the theory of $L^2$-Betti numbers 
more or less algebraic. This enables one to look at non-locally finite simplicial complexes like $K(\Gamma, 1)$ 
simplifying Cheeger-Gromov's article \cite{CG}. One has however to do a minimal amount of functional analysis to prove the $\Ug$-modules we encounter are 
finite dimensional or finitely generated projective. This is why we will not just apply the functor $\Ug\otimes_{\Ng}$ to the construction of the two preceding sections with $p=2$ although it is extremely tempting.

We conclude with the following lemmas:

\begin{lem}
 $\Ug \otimes_{\Ng} \nu$ is naturally equivalent to $\Ug\otimes_{\Ng} $ on $E_f(\Gamma)$.
\end{lem}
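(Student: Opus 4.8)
The plan is to compare the two functors $G:=\Ug\otimes_{\Ng}\Phi(-)$ and $G':=\Ug\otimes_{\Ng}\nu(-)$ from $E_f(\Gamma)$ to $\mathbf{Mod}(\Ug)$ (reading ``$\Ug\otimes_{\Ng}$ on $E_f(\Gamma)$'' as the forgetful functor $\Phi$ followed by extension of scalars) by first reducing to projective objects. Both $G$ and $G'$ are exact: $\Phi$ is exact by the proposition of \S3.1, $\nu$ is an equivalence, and $\Ug\otimes_{\Ng}-$ is exact since $\Ng\subset\Ug$ is flat \cite[Theorem 8.2.2]{Luc}; in particular they are right exact. Since $E_f(\Gamma)$ has projective dimension one with projective objects the finite type projective Hilbert $\Gamma$-modules $E$, which I identify with the triples $(0,E,0)$, every object $X=(E_1,E_2,e)$ is, by the construction of Farber's category \cite{Farb1} (see also \cite[pp. 539--544]{Eys}), the cokernel in $E_f(\Gamma)$ of the morphism $(0,E_1,0)\to(0,E_2,0)$ induced by $e$. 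Hence it suffices to produce a natural isomorphism $\theta\colon G'|_{\mathcal P}\xrightarrow{\ \sim\ }G|_{\mathcal P}$ on the full subcategory $\mathcal P$ of projective objects: two right exact functors that agree naturally on a projective subcategory through which every object is presented are canonically naturally isomorphic, the extension being forced by the universal property of cokernels.

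On projectives the comparison is explicit. By construction $\nu$ is a quasi-inverse of the equivalence extending $\Ng^n\mapsto l^2\Gamma\otimes_{\Ng}\Ng^n=(l^2\Gamma)^n$; writing a finite type Hilbert $\Gamma$-module $E$ as $p(l^2\Gamma)^n$ for a self-adjoint idempotent $p\in M_n(\Ng)$ and using that $(0,E,0)$ is, up to the conservative functor $\Phi$, the cokernel of $\mathrm{id}-p$ on $(0,(l^2\Gamma)^n,0)$, one gets $\nu((0,E,0))=\mathbf p(E):=p\,\Ng^n$, the associated finitely generated projective $\Ng$-module, while $\Phi((0,E,0))=E=p(l^2\Gamma)^n$. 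The componentwise inclusion $\Ng\hookrightarrow l^2\Gamma$, $a\mapsto a\delta_e$, restricts to an $\Ng$-linear map $\iota_E\colon\mathbf p(E)\to E$, natural in $E$ because a morphism of finite type Hilbert $\Gamma$-modules is given by a matrix over $\Ng$ acting compatibly on $\mathbf p(\cdot)$ and on $\cdot$. One must then show $\Ug\otimes_{\Ng}\iota_E$ is an isomorphism. As $\mathbf p(E)$ (resp. $E$) is the direct summand $p\,\Ng^n$ of $\Ng^n$ (resp. $p(l^2\Gamma)^n$ of $(l^2\Gamma)^n$) and $\iota_E$ is $p\circ\iota^{\oplus n}$ with $\iota\colon\Ng\hookrightarrow l^2\Gamma$, this reduces, after applying $\Ug\otimes_{\Ng}-$ and splitting off summands, to the single assertion that
$$\Ug=\Ug\otimes_{\Ng}\Ng\xrightarrow{\ \mathrm{id}\otimes\iota\ }\Ug\otimes_{\Ng}l^2\Gamma$$
is an isomorphism of $(\Ug,\Ng)$-bimodules.

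For this last step — the only substantive point — I would use that $l^2\Gamma=L^2(\Ng)$ embeds as an $\Ng$-bimodule into $\Ug$: every $\xi\in L^2(\Ng)$ defines, by left multiplication on the dense domain $\Ng\subset L^2(\Ng)$, a preclosed operator whose closure is affiliated to $\Ng$, and this assignment is injective and $\Ng$-bilinear (cf. \cite[Ch. 8]{Luc}), giving inclusions $\Ng\subseteq l^2\Gamma\subseteq\Ug$ refining $\Ng\subseteq\Ug$. Now $l^2\Gamma/\Ng$ is an $\Ng$-submodule of $\Ug/\Ng$; since $\Ug$ is the Ore localization of $\Ng$ the map $\Ng\to\Ug$ is a ring epimorphism, so $\Ug\otimes_{\Ng}\Ug\cong\Ug$ and $\Ug\otimes_{\Ng}(\Ug/\Ng)=0$, whence $\Ug\otimes_{\Ng}(l^2\Gamma/\Ng)=0$ by flatness of $\Ug$. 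Applying $\Ug\otimes_{\Ng}-$ to $0\to\Ng\to l^2\Gamma\to l^2\Gamma/\Ng\to0$ then shows $\mathrm{id}\otimes\iota$ is an isomorphism, and one identifies it with the natural bimodule map $u\mapsto u\otimes\delta_e$. Consequently every $\Ug\otimes_{\Ng}\iota_E$ is an isomorphism, these assemble into the sought natural isomorphism $\theta$ on $\mathcal P$, and the extension principle of the first paragraph concludes. I expect the functional-analytic input $l^2\Gamma\hookrightarrow\Ug$ (equivalently $\Ug\otimes_{\Ng}l^2\Gamma\cong\Ug$) to be the only real obstacle; the remainder is formal homological algebra once right-exactness and the presentation of objects by projectives are in hand.
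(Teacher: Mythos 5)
Your argument is correct and is essentially an expanded version of the paper's own one-line proof, which likewise rests on the construction of $\nu$ via $M\mapsto l^2\Gamma\otimes_{\Ng}M$ on free modules together with the relation $\Ug\otimes_{\Ng}l^2\Gamma=\Ug$ deduced from the Ore-localization description of $\Ug$. The reduction to projectives by right-exactness and the bimodule embedding $\Ng\subseteq l^2\Gamma\subseteq\Ug$ are exactly the details the paper leaves implicit.
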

\begin{prf}
 This follows from the construction of $\nu$ and of the relation $\Ug \otimes_{\Ng} l^2 \Gamma =\Ug$ which in turn follows from the realization of $ \Ug$ as an Ore localization of $\Ng$ \cite{Luc}.
\end{prf}

 \begin{lem} \label{ugtensclosure} Let $E$ be an object of $E_f(\Gamma)$ endowed with 3 filtrations $W$, $F$, $G$. $\bar F$ and $\bar G$ are $n$-opposed on $Gr_{\bar W ^n} P(E)$ if and only if 
 $\Ug \otimes_{\Ng} F$ and $\Ug \otimes_{\Ng} G$ are $n$-opposed on $Gr_{\Ug \otimes_{Ng} W ^n} \Ug \otimes_{\Ng}E$ 
 \end{lem}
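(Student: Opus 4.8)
The plan is to reduce the statement about $n$-opposedness over $\Ng$ to a statement over $\Ug$ by exploiting the flatness of $\Ng \subset \Ug$ together with the compatibility of the localization functor with the formation of graded pieces and of intersections of filtrations. First I would recall what $n$-opposedness means: two filtrations $\bar F$, $\bar G$ on a module $V$ are $n$-opposed if the natural morphism $\bigoplus_{p+q=n} Gr_{\bar F}^p Gr_{\bar G}^q V \to V$ is an isomorphism, equivalently if $\bar F^p \cap \bar G^q = 0$ for $p+q = n+1$ and $\bar F^p + \bar G^q = V$ for $p+q=n$ (the usual two conditions defining a Hodge structure of weight $n$ in the sense of Deligne). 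Here the relevant module on the $\Ng$-side is $V := Gr_{\bar W^n} P(E)$, where $P(E) = E_2/\overline{e(E_1)}$ is the reduced cohomology, and on the $\Ug$-side it is $\Ug \otimes_{\Ng} Gr_{W^n}(\Ug \otimes_{\Ng} E)$; the first step is therefore to identify these.

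The key steps, in order: (1) Using flatness of $\Ug$ over $\Ng$ (Lück, \cite[Theorem 8.2.2]{Luc}), the functor $\Ug \otimes_{\Ng} -$ is exact, hence commutes with the formation of quotients, kernels and therefore of graded pieces of filtrations; in particular $\Ug \otimes_{\Ng} Gr_{W^n}(-) \cong Gr_{\Ug \otimes_{\Ng} W^n}(\Ug \otimes_{\Ng} -)$. (2) Since $\Ug \otimes NoSh(X) = 0$ for every object $X$ of $E_f(\Gamma)$, the functor $\Ug \otimes_{\Ng} -$ kills the torsion $NoSh(E)$ and therefore $\Ug \otimes_{\Ng} E \cong \Ug \otimes_{\Ng} P(E)$ (this is the content of the displayed formula $H^q(\Ug \otimes K^\bullet) = \Ug \otimes_{\Ng} P(H^q(K^\bullet))$ applied to the complex with $E$ in a single degree, or can be read off directly from the fact that $e(E_1)$ and $\overline{e(E_1)}$ differ by a Novikov--Shubin object). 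Combining (1) and (2) gives $\Ug \otimes_{\Ng} Gr_{W^n}(\Ug \otimes_{\Ng} E) \cong \Ug \otimes_{\Ng} Gr_{\bar W^n}P(E) = \Ug \otimes_{\Ng} V$, so the two sides of the asserted equivalence are computed on the same $\Ug$-module, namely $\Ug \otimes_{\Ng} V$, equipped with the filtrations $\Ug \otimes_{\Ng} \bar F$ and $\Ug \otimes_{\Ng} \bar G$. (3) It then remains to prove the abstract statement: for a module $V$ over $\Ng$ (here finitely presented, coming from $E_f(\Gamma)$) with two filtrations, $\bar F$ and $\bar G$ are $n$-opposed on $V$ if and only if $\Ug \otimes_{\Ng} \bar F$ and $\Ug \otimes_{\Ng} \bar G$ are $n$-opposed on $\Ug \otimes_{\Ng} V$. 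The "only if" direction is immediate from exactness: applying the exact functor $\Ug \otimes_{\Ng} -$ to the iso $\bigoplus_{p+q=n} Gr_{\bar F}^p Gr_{\bar G}^q V \xrightarrow{\sim} V$ yields the corresponding iso over $\Ug$. For the "if" direction one uses that $\Ug \otimes_{\Ng} -$ is not only exact but \emph{faithful} on finitely presented modules together with $\dim_\Gamma$-preservation: if the morphism $\varphi: \bigoplus_{p+q=n} Gr_{\bar F}^p Gr_{\bar G}^q V \to V$ becomes an isomorphism after $\Ug \otimes_{\Ng}-$, then $\ker \varphi$ and $\operatorname{coker}\varphi$ are finitely presented $\Ng$-modules killed by $\Ug \otimes_{\Ng} -$, hence are Novikov--Shubin (i.e.\ $\dim_\Gamma = 0$ torsion) objects; but $n$-opposedness in $E_f(\Gamma)$ is a statement about the \emph{reduced} cohomology $P(-)$ exactly so as to be insensitive to such torsion — the bars in $\bar F$, $\bar G$, $\bar W$ already denote the images in $P(E)$ — so $\varphi$ is by definition required only to be an isomorphism in the relevant quotient category ${Mod}(\Ng)/\tau_{\Ug}$, which it is. One should phrase condition (3) carefully using the two-inequality form $\bar F^p \cap \bar G^q = 0$, $\bar F^p + \bar G^q = V$: sums of filtration steps are preserved by any additive functor, and intersections are preserved here because $\Ug$ is flat (a flat ring map preserves finite intersections of submodules), so both conditions transfer in both directions once torsion is discarded.

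The main obstacle I expect is bookkeeping around the distinction between the honest submodules $\bar F^p \cap \bar G^q \subseteq V$ and their closures / images in $P(E)$, i.e.\ making sure that "$n$-opposed on $Gr_{\bar W^n} P(E)$" is interpreted in the Serre-quotient sense $Mod(\Ng)/\tau_{\Ug}$ rather than literally in $Mod(\Ng)$ — the equivalence genuinely fails on the nose in $Mod(\Ng)$ precisely because of $\dim_\Gamma = 0$ modules, which is why the statement is true only after passing to $P$ on one side and to $\Ug$ on the other. Once that interpretive point is fixed, the proof is a formal consequence of: flatness of $\Ug/\Ng$, vanishing of $\Ug \otimes NoSh$, and the fact that flat base change commutes with finite sums and intersections of submodules and with taking graded pieces. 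I would keep the write-up short, citing \cite[Theorem 8.2.2]{Luc} and \cite{Din} for the torsion-theory framework and the preceding display for $\Ug \otimes_{\Ng} E \cong \Ug \otimes_{\Ng} P(E)$.
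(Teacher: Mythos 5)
The paper's own proof of this lemma is ``Left to the reader,'' and your argument supplies exactly the intended one: it uses precisely the ingredients the surrounding text provides (flatness of $\Ng\subset\Ug$ from \cite[Theorem 8.2.2]{Luc}, the vanishing $\Ug\otimes NoSh=0$ giving $\Ug\otimes_{\Ng}E\cong\Ug\otimes_{\Ng}P(E)$ and $\Ug\otimes_{\Ng}\bar F=\Ug\otimes_{\Ng}F$, and compatibility of flat base change with graded pieces, finite sums and finite intersections of submodules). The one point to make fully explicit in the ``if'' direction is the Hilbert-module input behind your appeal to the quotient category $Mod(\Ng)/\tau_{\Ug}$: a closed submodule of a projective Hilbert $\Gamma$-module with $\dim_\Gamma=0$ is zero, so the condition $\bar F^p\cap\bar G^q=0$ for $p+q=n+1$ is recovered on the nose, whereas the complementary condition must be read with a closure, $\overline{\bar F^p+\bar G^q}=Gr_{\bar W^n}P(E)$ (which is what a torsion cokernel yields, and is the correct reading of $n$-opposedness for the closed filtrations $\bar F,\bar G$ on the reduced module).
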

\begin{prf}
Left to the reader. 
\end{prf}

 \subsection{A natural question}\label{selfinj}
The construction of a completely satisfying $L_2$ Mixed Hodge Theory might be eased  by the use of further  results from the theory of operators algebras.  
 A saliant feature of $\Ug$ is that it 
is self injective which is exactly, according to a remark in \cite{Luc}, what is needed for neat duality statements.
 It would be helpful if the following question had a positive answer:

\begin{ques}\label{fredcomplex}  Assume we have a complex of separable Hilbert $\Gamma$-modules (or Fr\'echet $\Ng$-modules) whose cohomology is 
isomorphic as a  $\Ng$-module to the $\Ng$-module underlying an object of $E_f(\Gamma)$. 
Is the complex strongly $\Gamma$-Fredholm? 
 \end{ques}
\subsection{Real Structures}

The $*$-algebras $\Ng$ and $\Ug$ carry a real structure, namely a conjugate 
linear algebra involutive automorphism $\dagger$ commuting
with the conjugate linear algebra involutive anti-automorphism $*$ ,   and a real structure on a module 
over these algebras is just a conjugate linear automorphism on the underlying vector space compatible with $\dagger$. For instance $l_2 \Gamma$ has a real structure. We will denote
by $\R E_f(\Gamma)$ the category of formal quotients of real Hilbert $\Gamma$-modules. 

\section{\label{sec3} Refined $L^2$-cohomology}

\subsection{Finiteness theorem for $L^2$ constructible cohomology}

\begin{prop} \label{liftcons1}
\label{co}
Let $\mathbb{S}$ be a cocompact $\Gamma$-simplicial complex.
There is a $\partial$-functor $\mathbb{H}^*_2(|\mathbb{S}|,-): D^b(\mathbf{Cons}_{K,\Gamma} (\mathbb{S}))\to
D^b (E_f(\Gamma))$ such that, upon composition with the forgetful
functor to the derived category of $\Ng$-modules
$\Phi( \mathbb{H}^*_2(|\mathbb{S}|,-))=
\mathbb{H}^*_{(2)}(|\mathbb{S}|,-)$ (cf. Definition \ref{lpco}).
\end{prop}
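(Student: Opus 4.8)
The plan is to produce the lift degreewise from a concrete cochain model, then invoke Corollary \ref{algiso} and the preceding lemmas to promote it to a $\partial$-functor. First I would fix a cocompact $\Gamma$-simplicial complex $\mathbb{S}$ and, for an object $F^\bullet$ of $D^b(\mathbf{Cons}_{K,\Gamma}(\mathbb S))$, replace it by a bounded complex of $\mathbb S$-constructible equivariant sheaves (using Proposition \ref{ks8111}). Applying the exact functor $F\mapsto Rl_2\Gamma\otimes_{K\Gamma}\pi_!F$ of Lemma \ref{extension1} (with $p=2$), and then the simplicial cochain functor on the \emph{finite} simplicial complex $\Gamma\backslash|\mathbb S|$, one obtains a bounded complex of $W_{l,2}(\Gamma)=\Ng$-modules whose terms are finite direct sums of copies of $l_2\Gamma$ (up to the finite-stabilizer correction already used in the proof of the exactness lemmas): indeed over each simplex of the finite quotient complex the stalk of $\pi_!F$ is a finite sum of $\mathbb C[\Gamma/H]$'s, so the cochain group is a finite-type projective Hilbert $\Gamma$-module. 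Each such cochain group is therefore canonically an object of $E_f(\Gamma)$ (a projective one), and the differentials are $\Ng$-linear, hence continuous, so we get a genuine complex in $E_f(\Gamma)$ and may define $\mathbb H^*_2(|\mathbb S|,-)$ as its image in $D^b(E_f(\Gamma))$.

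Next I would check this is well defined on the derived category. Two quasi-isomorphic bounded complexes of $\mathbb S$-constructible sheaves give, after applying the exact functor and the cochain functor, complexes in $E_f(\Gamma)$ whose underlying complexes of $\Ng$-modules compute the same $L_2$-hypercohomology; by Corollary \ref{algiso} an algebraic isomorphism on cohomology is already an isomorphism in $D(E(\Gamma))$, so the assignment descends to $D^b(E_f(\Gamma))$. The same argument, applied to the mapping cone, shows that a distinguished triangle in $D^b(\mathbf{Cons}_{K,\Gamma}(\mathbb S))$ goes to a distinguished triangle: one chooses representatives so that the triangle is (up to quasi-isomorphism) a termwise split short exact sequence of constructible sheaves, applies the exact functor and the (exact) cochain functor to get a termwise split short exact sequence in $E_f(\Gamma)$, and reads off the associated triangle. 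Compatibility with translation is immediate from the cochain model. Finally, by construction $\Phi$ of the cochain complex in $E_f(\Gamma)$ is exactly $Rl_2\Gamma\otimes_{K\Gamma}\pi_! F^\bullet$ computed on $\Gamma\backslash|\mathbb S|$, i.e. $\mathbb H^*_{(2)}(|\mathbb S|,-)$ of Definition \ref{lpco}; since $\Phi$ is exact it commutes with taking cohomology, giving the asserted identification.

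The one genuine point that needs care — and the step I expect to be the main obstacle — is verifying that the cochain groups really land in $E_f(\Gamma)$, i.e. are finite-type Hilbert $\Gamma$-modules and not merely algebraic $\Ng$-modules: this uses cocompactness of $\mathbb S$ (so $\Gamma\backslash\Delta$ is finite) together with the splitting $\mathbb C[\Gamma/H]\hookrightarrow\mathbb C\Gamma$ with $\Gamma$-equivariant retraction already exhibited in the proof of the exactness lemma, which exhibits each $\mathbb C[\Gamma/H]$ as a direct summand of $\mathbb C\Gamma$ and hence, after $l_2$-completion, as a finite-type projective Hilbert $\Gamma$-module; one must also check the cochain differentials are bounded, which is automatic since they are $\mathbb Z\Gamma$-linear maps between finite direct sums of copies of $l_2\Gamma$. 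A secondary subtlety is independence of the choice of $\Gamma$-triangulation, but this is handled exactly as in Proposition \ref{lpcocons}: triangulations are cofinal among stratifications and a refinement induces a quasi-isomorphism of cochain complexes, which by Corollary \ref{algiso} is an isomorphism in $D^b(E_f(\Gamma))$. Everything else is formal.
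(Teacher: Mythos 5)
Your proposal is essentially the paper's own proof: both build the explicit equivariant simplicial/\v{C}ech cochain model of $\pi_!F$, observe that cocompactness plus the equivariant splitting $\C[\Gamma/H]\hookrightarrow\C\Gamma$ makes each cochain group a finite-type projective Hilbert $\Gamma$-module after tensoring with $l_2\Gamma$, and then use exactness of the relevant functors together with Corollary \ref{algiso} to descend to a $\partial$-functor on derived categories agreeing with $\mathbb{H}^*_{(2)}$ after applying $\Phi$.

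One technical point you gloss over: you invoke ``the simplicial cochain functor on the finite simplicial complex $\Gamma\backslash|\mathbb S|$'', but $\Gamma\backslash|\mathbb S|$ need not be a simplicial complex --- the action may identify adjacent vertices (already for $\Gamma=\Z$ acting on $\R$ triangulated by the integers, the quotient is a circle with one vertex and one edge). Relatedly, to write down the equivariant cochain differential one must choose orderings of the vertices of each simplex, with signs $\epsilon_{\tau,\sigma}$, compatibly with the $\Gamma$-action and with passage to faces. The paper handles both issues at once by imposing the technical hypothesis that distinct adjacent vertices have distinct images in $\Gamma\backslash S$ (so a well-ordering of $\Gamma\backslash S$ induces coherent equivariant orderings), and by noting that this hypothesis always holds for the barycentric subdivision $\beta\mathbb S$, to which one passes via the fully faithful functor $\mathbf{Cons}_{K,\Gamma}(\mathbb S)\to\mathbf{Cons}_{K,\Gamma}(\beta\mathbb S)$. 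This is a standard and easily supplied fix --- you even mention subdivision in the context of independence of triangulation --- but as literally written your construction of the cochain complex is not well defined for a general cocompact $\Gamma$-simplicial complex. The rest of your argument (finite-type projectivity of the terms, boundedness of the differentials as right multiplications by matrices over $K\Gamma$, descent via mapping cones and Corollary \ref{algiso}, and the identification under $\Phi$) matches the paper.
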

\begin{prf} First assume that $\mathbb{S}$ satisfies the following
technical assumption: for every pair of distinct adjacent vertices $p,q\in S$,
$ \psi(p)\not=\psi(q)$ where $\psi: S\to \Gamma \backslash S$ is the quotient map. In particular, the set of vertices  of a given simplex maps injectively
to $\Gamma \backslash S$. Choose a well-ordering of $\Gamma\backslash S$.
This provides each simplex $\sigma$ with an order $<_{\sigma}$ on its
vertices such that $<_{\gamma.\sigma}=\gamma(<_{\sigma})$ and if $\tau\subset \sigma$, $<_{\sigma}|_{\tau}=<_{\tau}$. This  defines a sign
$\epsilon_{\tau, \sigma}$ for every pair of simplices $\tau\subset \sigma$
such that ${\text{Card}}(\sigma)={\text{Card}}(\tau)+1$, namely $\epsilon_{\tau,\sigma}=(-1)^{\nu}$ where $(\sigma,<_{\sigma})=
p_0<p_1<\ldots<p_{{\text{Card}}(\sigma)-1}$ and $\sigma - \tau=\{p_{\nu}\}$.

Let $\mathcal{F}$ be an object of $\mathbf{Cons}_{\Gamma}(\mathbb S)$.
For every simplex $\sigma$, set
 $U_{\sigma}:=\cup_{\sigma\subset\tau}|\tau|$
and  $F_{\sigma}:=H^0(U_{\sigma},\mathcal{F})$.
For $\tau\subset \sigma$, the sheaf structure gives a map $\rho_{\tau,\sigma}:F_{\tau}\to F_{\sigma}$.
 Set $C_c^p(\mathbb{S}, \mathcal F):=\oplus_{{\text{Card}}({\sigma})=p+1} F_{\sigma}$ and
 for $f_{\tau}\in {\mathcal{F}}_{\tau}$,
 $$df_{\tau}= \sum_{\tau\subset \sigma, {\text{Card}}(\sigma)={\text{Card}}(\tau)-1}
 \epsilon_{\tau,\sigma} \rho_{\tau,\sigma}(f_{\tau}).$$
  This defines a complex
 of $\Gamma$-modules $C_c^{\bullet}(\mathbb{S},\mathcal{F}).$

 This complex is actually the \v Cech complex of $\pi_!{\mathcal{F}}$
 in the covering $(U'_q)_{q\in\Gamma\backslash S}$ of $\Gamma\backslash|\mathbb{S}|$ where we define $U'_q=\pi(U(p))$
 where $p\in S$ satisfies $\pi(p)=q$. Using Proposition 8.1.4 p.323 in
 \cite{KS}, we see that $l_p\Gamma \otimes_{K\Gamma} C_c^{\bullet}(\mathbb{S},\mathcal{F})$ computes $H^{\bullet}_{(p)}(|\mathbb{S}|,\mathcal{F})$. In case $p=2$; this complex
 is in fact a complex in $E_f(\Gamma)$.

 This construction is obviously functorial, and taking the simple complex
 associated to a double complex one would construct the sought-for $\partial$-functor. The technical assumption on $\mathbb{S}$ is not always
 satisfied, but it holds for the barycentric subdvision $\beta \mathbb{S}$.
 We certainly have a fully faithful forgetful functor $\mathbf{Cons}_{K,\Gamma}
 (\mathbb{S})\to \mathbf{Cons}_{K,\Gamma}
 (\beta\mathbb{S})$ and we  define a functor between categories of complexes
 to be $s(l_2\Gamma \otimes_{K\Gamma} C^{\bullet}(\beta\mathbb{S},-))$. Passing to derived categories, it  descends to
 $\mathbb{H}^*_2(|\mathbb{S}|,-)$.
\end{prf}

\begin{prop}\label{subalrefcoh}

Let $\wt{X}$ be a cocompact subanalytic $\Gamma$-space.

There is a $\partial$-functor $\mathbb{H}^*_2(\wt{X},-): D^b_{\mathbb{R}-c, \Gamma} (\wt{X}))\to
D^b (E_f(\Gamma))$ such that one has
$\Phi( \mathbb{H}^*_2(X,-))=
\mathbb{H}^*_{(2)}(X,-)$.

This functor enjoys the following properties:

\begin{itemize}
\item (Leray spectral sequence) For every proper
$\Gamma$-equivariant morphism  $f: \wt{X}\to  \wt{Y}$
$\mathbb{H}_{2}( \wt{X}, -)$ and $\mathbb{H}_{2}( \wt{Y}, -)\circ Rf_*$
are naturally isomorphic functors.
 \item (Atiyah's $L_2$ index theorem) If $\Gamma$ is fixed point free on $\wt{X}$
 $$ \sum_{i} (-1)^i \dim_{\Gamma} \mathbb{H}_2^i( \wt{X},\mathcal
{F}^{\bullet})=
 \sum_{i} (-1)^i \dim_{\mathbb C} \mathbb{H}^i(\Gamma\backslash\wt{X},\Gamma\backslash\mathcal
{F}^{\bullet}).
 $$
\end{itemize}
\end{prop}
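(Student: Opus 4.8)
The plan is to reduce Proposition \ref{subalrefcoh} to the simplicial case treated in Proposition \ref{liftcons1}, using the cofinality of $\Gamma$-triangulations established earlier. First I would construct the $\partial$-functor: given a $\Gamma$-triangulation $\mathbb{X}$ compatible with a complex $\mathcal{F}^\bullet \in D^b(\mathbb{R}\mathbf{Cons}_{K,\Gamma}(\wt X))$, restrict attention to the full subcategory $D^b\mathbf{Cons}_{K,\Gamma}(\mathbb{X}) \subset D^b_{\R-c,\Gamma}(\wt X)$ and apply the simplicial functor $\mathbb{H}^*_2(|\mathbb{X}|,-)$ of Proposition \ref{liftcons1}. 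Since $\wt X$ is cocompact, every object of $D^b_{\R-c,\Gamma}(\wt X)$ is constructible with respect to some $\Gamma$-triangulation (Proposition 8.2.5 of \cite{KS} as recalled in the text), so $D^b\mathbb{R}\mathbf{Cons}_{K,\Gamma}(\wt X)$ is the filtered colimit of the $D^b\mathbf{Cons}_{K,\Gamma}(\mathbb{X})$; one checks that for a refinement $\mathbb{X}' \to \mathbb{X}$ the \v Cech-type complexes $s(l_2\Gamma \otimes_{K\Gamma} C^\bullet_c(\beta\mathbb{X},-))$ are canonically quasi-isomorphic in $E(\Gamma)$, hence by Corollary \ref{algiso} the induced maps in $D^b(E_f(\Gamma))$ are isomorphisms, and glue the functors. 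The identity $\Phi(\mathbb{H}^*_2(\wt X,-)) = \mathbb{H}^*_{(2)}(\wt X,-)$ then follows from the same identity in the simplicial case (Proposition \ref{liftcons1}) together with Proposition \ref{lpcocons}, since $\Phi$ commutes with the colimit.

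Next I would establish the Leray spectral sequence. The point is that $Rf_*$ for a proper $\Gamma$-equivariant map preserves constructibility (this is the equivariant lift of the standard fact, reducible to the non-equivariant statement on $\Gamma\backslash\wt X \to \Gamma\backslash\wt Y$ when the action is free, and to it by barycentric subdivision in general), so $Rf_*$ defines a functor $D^b_{\R-c,\Gamma}(\wt X) \to D^b_{\R-c,\Gamma}(\wt Y)$. The natural transformation comes from the fact that on the quotient level $\pi_! \circ Rf_* \cong R(\Gamma\backslash f)_! \circ \pi_!$ (proper base change / $\pi_!$ commutes with proper pushforward), combined with flatness of $Rl_2\Gamma \otimes_{K\Gamma} -$ on constructible sheaves (Lemma \ref{lemext1}); composing with $R\Gamma$ of $\Gamma\backslash\wt Y$ gives the isomorphism of $\partial$-functors. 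Here it is convenient to choose compatible $\Gamma$-triangulations of source and target so that $f$ becomes simplicial after subdivision, reducing the whole assertion to a manipulation of \v Cech complexes; one must be slightly careful that the resulting double complex computes both sides, which is where Corollary \ref{algiso} is again used to pass from algebraic quasi-isomorphism to isomorphism in $D^b(E_f(\Gamma))$.

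For Atiyah's $L_2$ index theorem, since $\Gamma$ acts freely, choose a $\Gamma$-triangulation and use the \v Cech/simplicial model $l_2\Gamma \otimes_{\C\Gamma} C^\bullet_c(\beta\mathbb{X},\mathcal{F}^\bullet)$, whose terms are of the shape $l_2\Gamma \otimes_{\C\Gamma} \pi_! \mathcal{F}_\sigma \cong (l_2\Gamma)^{\oplus n_\sigma}$ since the stabilizers are trivial; each such term has $\dim_\Gamma = n_\sigma = \dim_{\C}$ of the corresponding term of the finite complex $C^\bullet_c(\beta(\Gamma\backslash\mathbb{X}),\Gamma\backslash\mathcal{F}^\bullet)$ on $\Gamma\backslash\wt X$. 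The Euler characteristic is a homotopy invariant computable from either the chain complex or its cohomology (additivity of $\dim_\Gamma$ on $E_f(\Gamma)$, using that $\dim_\Gamma$ is insensitive to the torsion part $NoSh$), so the alternating sum of $\dim_\Gamma$ of the cohomology equals the alternating sum of the $n_\sigma$, which is exactly $\sum (-1)^i \dim_{\C} \mathbb{H}^i(\Gamma\backslash\wt X, \Gamma\backslash\mathcal{F}^\bullet)$.

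The main obstacle I expect is the coherent gluing in the first step: verifying that the family of functors $\mathbb{H}^*_2(|\mathbb{X}|,-)$ over all $\Gamma$-triangulations is genuinely compatible under refinement — i.e. that the comparison maps between the \v Cech complexes attached to $\mathbb{X}$ and a subdivision $\mathbb{X}'$ are not merely abstract isomorphisms but form a compatible system giving a well-defined functor on the colimit category, including checking the $\partial$-functor (triangle-preserving) property survives the limit. Everything else is a matter of transporting the classical constructible-sheaf formalism of \cite{KS} through the exact functor $Rl_2\Gamma \otimes_{K\Gamma}\pi_!(-)$ and invoking Corollary \ref{algiso} to replace algebraic quasi-isomorphisms by honest isomorphisms in $D^b(E_f(\Gamma))$.
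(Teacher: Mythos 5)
Your proposal is correct and follows essentially the same route as the paper: the first part is obtained by reducing to the simplicial case of Proposition \ref{liftcons1} via the cofinality of $\Gamma$-triangulations (exactly as in the proof of Proposition \ref{lpcocons}), and the paper then simply asserts that the Leray and Atiyah statements are proved as in \cite{Eys} (indeed more simply) without giving details. Your fleshed-out arguments for those two items — compatibility of $\pi_!$ with proper pushforward plus simplicial models for Leray, and additivity of $\dim_\Gamma$ on the free \v Cech complex for the index theorem — are the standard ones the paper is implicitly invoking.
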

\begin{prf} 
As in the proof of Theorem \ref{lpcocons}, the first part derives  from Proposition \ref{co}. 
The proof of the additionnal statements is  simpler than the proof of 
similar statements for coherent cohomology in \cite{Eys} and will not be given in detail.
\end{prf}

If $\wt{X}$ is complex analytic,  
we will restrict $\mathbb{H}^*_2(\wt{X},-)$ to $ D^b(\mathbf{Cons}_{\Gamma} (\wt{X}))$.
We will also denote by $\overline{\mathbb{H}}^k_2(\wt{X},-)$ the $k$-th reduced cohomology functor 
$$\overline{\mathbb{H}}^k_2(\wt{X},-) = P( H^k(\mathbb{H}^*_2(\wt{X},-)).
$$

It is a projective Hilbert $\Gamma$-module and one has $\dim_{\Gamma} \mathbb{H}_2^i( \wt{X},\mathcal
{F}^{\bullet})= \dim_{\Gamma} \overline{\mathbb{H}}_2^i( \wt{X},\mathcal
{F}^{\bullet})$. 

\subsection{\label{vncohd}Finiteness theorem for $L^2$ coherent De Rham cohomology}

Assume in this subsection that $X$ be  a compact complex manifold.
The $L^2$ coherent cohomology functor $$H^{\bullet}_{L^2}(\wt{X}, -): D^b \coh \to D^b (E_f(\Gamma))$$
defined in \cite[Th\'eor\`eme 5.3.8]{Eys} comes from a $\C$-linear functor denoted
by ${\mathcal{C}}$ from a category $A$ of  coherent $\Ox$-modules endowed with  local locally free presentations
taking values in the so-called qhtf  complexes of an additive category  $MontMod$ of so-called Montelian modules which contain $E(\Gamma)$ as a full subcategory. 

The homotopy category of qhtf complexes of Montelian modules localized with respect to a specific class of quasi-isomorphisms
is naturally equivalent to $D^b (E_f(\Gamma))$. 

The resulting functor $C^b(A) \to D^b (E_f(\Gamma))$ factors through $D^b(\coh)$.  

$\mathcal{C}$  is essentially the \v{C}ech cohomology of $l^2\pi_*$ with respect to some
Stein covering $\mathfrak{U}_0$ of $X$. 
Actually, we  can choose an appropriate 
 germ at $t=0$ of an increasing family of coverings $\mathfrak{U}_t$ defined for $t\ge 0$ 
such that $\mathfrak{U}=\mathfrak{U}_0$ and: $$C_*(t)=\mathcal{C}^{\bullet}(\mathfrak{U}_t, l^2\pi_*\mathcal{F}) \to \mathcal{C}^{\bullet}(\mathfrak{U}_{t'}, l^2\pi_*\mathcal{F})  $$ is a quasi-isomorphism for $t'\le t$. 
Then $\mathcal{C}(\mathcal{F})= (C_*(t))_{t\ge0}$  up to some inessential auxiliary data. We relegate to an appendix the more detailed discussion of these auxiliary data.

\begin{lem}
The functor $\mathcal{C}'$ extends to $C^b\mathbf{Coh}(\Ox, \diff)$
the full subcategory of $C^b\mathbf{M}(\Ox, \diff)$ whose objects are differential complexes of coherent analytic sheaves  as an additive functor.
\end{lem}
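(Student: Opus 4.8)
The plan is to extend $\mathcal{C}'$ by exactly the recipe that defines it on $A$ — degreewise \v{C}ech cohomology of $l^2\pi_*$ on the germ of Stein coverings $\mathfrak{U}_t$ — using the fact, established in Section \ref{sec2}, that $l^2\pi_*$ has already been promoted to a functor for differential operators. Concretely, given a differential complex $(\F^{\bullet}, P^{\bullet})$ of coherent analytic sheaves (so each $P^j\in\diff(\F^j,\F^{j+1})$), the lemma producing the functor $l^2\pi_*$ on $\mathbf{Qcoh}(\Ox,\diff)$ yields a bounded complex $l^2\pi_*\F^{\bullet}$ of sheaves of $\Ng$-modules (no longer coherent, cf.\ Remark \ref{RemLpdmo}, but this plays no role). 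I would set $\mathcal{C}'(\F^{\bullet},P^{\bullet})$ to be the simple complex of the double complex obtained by applying to each column the \v{C}ech machinery underlying $\mathcal{C}$, with its germ of coverings $\mathfrak{U}_t$ and its inessential auxiliary data as fixed once and for all; on a single sheaf this reproduces $\mathcal{C}'(\F^j)$ after choosing a local locally free presentation of $\F^j$.

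Then I would verify the three points making this a well-defined additive functor into the homotopy category of qhtf complexes of Montelian modules. First, for a single $P\colon\F\to\F'$, the map $l^2\pi_* P$ is the restriction of the sheaf morphism $\pi_*\pi^{-1}P$, hence commutes with restriction to the opens of any covering; it therefore induces an honest morphism of \v{C}ech complexes $\mathcal{C}^{\bullet}(\mathfrak{U}_t,l^2\pi_*\F)\to\mathcal{C}^{\bullet}(\mathfrak{U}_t,l^2\pi_*\F')$ compatible with the transition maps $C_*(t)\to C_*(t')$, so it descends to the germ, giving $\mathcal{C}'(P)$. That this lands in $MontMod$ is exactly the Cauchy estimate of Lemma \ref{cauchy}: on each Stein $U$ of the covering, $l^2\pi_* P$ sends $l^2\pi_*\F(U)$ continuously into $l^2\pi_*\F'(U)$ for the Fr\'echet structures, each $L^2$-seminorm attached to a compact $K\Subset U$ being bounded by the one attached to a slightly larger $K'\Subset U$, which is precisely the morphism condition. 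Second, functoriality is formal: $P\mapsto l^2\pi_* P$ is already a functor on $\mathbf{Qcoh}(\Ox,\diff)$, and forming \v{C}ech complexes and then total complexes is tautologically functorial, so composition and identities are respected; additivity is immediate. Third, $\mathcal{C}'(\F^{\bullet},P^{\bullet})$ is built from the finite family of qhtf complexes $\mathcal{C}'(\F^j)$ by an iterated mapping cone, and qhtf complexes are stable under shift and cone, hence under finite total complexes; the output is qhtf and its class in $D^b E_f(\Gamma)$ factors through $C^b$ of the category of differential complexes of coherent sheaves, as required.

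The one place where something substantial could go wrong is the interaction of a differential operator with the localization: a priori $P$ need not preserve the $l^2$-subsheaf, and even when it does one might fear being forced to shrink the opens of $\mathfrak{U}_t$, spoiling compatibility with the germ and with the auxiliary data relegated to the appendix. Both difficulties, however, are already dissolved upstream: preservation of $l^2\pi_*$ is Lemma \ref{cauchy} together with the lemma deducing the functor $l^2\pi_*$ on $\mathbf{Qcoh}(\Ox,\diff)$, and the estimate there is an \emph{interior} one — bounding an $L^2$-norm on $K$ by one on $K'\Subset U$ — so no open of the covering is shrunk and $l^2\pi_* P$ is genuinely a morphism of sheaves. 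Consequently the extension is essentially bookkeeping: the real work lives in Section \ref{sec2}, and I expect the only remaining effort to be the routine but slightly tedious task of matching it precisely to the framework of \cite{Eys} and the appendix.
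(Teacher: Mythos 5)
Your proposal is correct and takes essentially the same route as the paper's (one-line) proof, which simply observes that differential operators between coherent analytic sheaves act continuously on the Fr\'echet spaces of their sections so that the arguments of \cite{Eys} apply; you correctly trace this continuity back to the interior estimate of Lemma \ref{cauchy} and spell out the degreewise \v{C}ech construction and totalization that the paper leaves implicit.
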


\begin{prf}
Since differential operators between coherent analytic sheaves act continuously on
the Fr\'echet space of their  sections, the arguments of \cite{Eys} apply. 
\end{prf}

Recall from \cite{Sai} that the correct notion of quasi isomorphism in the
triangulated category $K^b\mathbf{M}(\Ox, \diff)$ of homotopy classes of bounded 
complexes in $\mathbf{M}(\Ox, \diff)$ are the differential quasi isomorphisms. 
We denote by $dqi$ this localizing class which is a priori smaller than the class $qi$ of 
sheaf-theoretic quasi-isomorphisms. The class $dqi$ is needed  to invert
the De Rham functor and one has a $\partial$-functor  
$$\nu:K^b\mathbf{Coh}(\Ox, \diff)_{dqi} \to K^b\mathbf{Coh}(\Ox, \diff)_{qi}. 
$$
Here $K^b\mathbf{Coh}(\Ox, \diff)_{dqi}$ stands for the  essential image of  $C^b\mathbf{Coh}(\Ox, \diff)$ (the essential image is a stricly full category). 

\begin{coro}
 The functor $H^{\bullet}_{L^2}(\wt{X}, -)$ extends to a $\partial$-functor
$$H^{\bullet}_{2}(\wt{X}, -):K^b\mathbf{Coh}(\Ox, \diff)_{dqi} \to D^b (E_f(\Gamma))$$ such that
$\Phi\circ H^{\bullet}_{DR,2}(\wt{X}, -)$ is naturally isomorphic to
$H^{\bullet}(X, l^2 \pi_* -)$. 
\end{coro}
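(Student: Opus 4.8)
The plan is to bootstrap from the already-established $\partial$-functor on $D^b\mathbf{Coh}(\Ox)$ (or rather the chain-level functor $\mathcal{C}$, now extended to $C^b\mathbf{Coh}(\Ox,\diff)$ by the preceding lemma) and observe that the De Rham–type localization $dqi$ is exactly engineered so that $\mathcal{C}$ sends $dqi$-equivalences to quasi-isomorphisms in the homotopy category of qhtf Montelian complexes. First I would record that $\mathcal{C}$ on $C^b\mathbf{Coh}(\Ox,\diff)$ is already defined, additive, and commutes with the translation functor and with taking the total complex of a bicomplex (all this being formal from the \v{C}ech-type construction, since $l^2\pi_*$ is exact on $\coh$ and differential operators act continuously). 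This gives a $\partial$-structure on the level of homotopy categories of differential complexes. Then I would pass to $K^b\mathbf{Coh}(\Ox,\diff)_{dqi}$ by showing that $\mathcal{C}$ inverts $dqi$: a differential quasi-isomorphism $f$ becomes, after applying $l^2\pi_*$ and taking \v{C}ech complexes, a morphism of qhtf complexes of Montelian modules which is an algebraic quasi-isomorphism on cohomology — here I would invoke Corollary \ref{algiso} (for the $E(\Gamma)$ terms) together with the fact, recalled in subsection \ref{vncohd}, that the homotopy category of qhtf complexes localized at the relevant class of quasi-isomorphisms is equivalent to $D^b(E_f(\Gamma))$.

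The key technical point to verify is that a $dqi$ really does become a quasi-isomorphism after $\mathcal{C}$. The class $dqi$ is generated by the differential quasi-isomorphisms that are \emph{not} sheaf quasi-isomorphisms, the prototype being the augmentation $DR(\Dx\otimes_{\Ox}\F)\to \F\otimes\omega_X$, which the paper has already shown (in the displayed \textbf{Example}) is turned by $l^p\pi_*$ into a quasi-isomorphism via a Koszul-complex argument. More generally a differential quasi-isomorphism, locally, is one that induces an isomorphism on the associated graded with respect to the order filtration, i.e. a quasi-isomorphism of the symbol (Koszul/de Rham) complexes; and the point is that $l^2\pi_*$ respects the order filtration $F_\bullet\Dx$ — this is essentially the content of Lemma \ref{cauchy} — so the symbol computation survives after applying $l^2\pi_*$. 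I would therefore reduce to checking that $\mathcal{C}$ applied to a local differential quasi-isomorphism is a quasi-isomorphism by a filtered/spectral-sequence argument on the order filtration, the associated graded being a complex of locally free sheaves where $l^2\pi_*$ is manifestly exact and the statement is purely homological.

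Finally, for the compatibility statement $\Phi\circ H^{\bullet}_{2}(\wt X,-)\cong H^\bullet(X,l^2\pi_*-)$: this is immediate from the chain-level description, since $\mathcal{C}(\L)$ for a single coherent sheaf is the \v{C}ech complex of $l^2\pi_*\L$ on a Stein cover, whose cohomology after applying the forgetful functor $\Phi$ is $H^\bullet(X,l^2\pi_*\L)$ by Leray/Cartan (Stein covers are acyclic for coherent sheaves, hence for $l^2\pi_*$ of coherent sheaves since $l^2\pi_*\L = l^2\pi_*\Ox\otimes_{\Ox}\L$ is a module over $W_{l,2}(\Gamma)\otimes\Ox$ whose $\Ox$-module structure is still flat-by-coherent). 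For a differential complex one then takes the total complex and the identification is functorial and compatible with the $\partial$-structure, by naturality of \v{C}ech resolutions.

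The main obstacle I expect is precisely the verification that $\mathcal{C}$ inverts $dqi$ — i.e. that the order-filtration argument genuinely goes through with the $L^2$-estimates intact. One must be careful that the filtration $F_\bullet$ on $DR(\M)$ by differential subcomplexes of coherent sheaves (the thing used in the proof of Proposition \ref{rhcomp}) is preserved by $l^2\pi_*$ and that the spectral sequence of this filtration converges after applying $\mathcal{C}$; the convergence is not automatic in the Fréchet/Montelian setting and really relies on the finite-length / coherence input (Lemma 1.14 of \cite{Sai}) plus the open mapping theorem machinery already used in the proof of Proposition \ref{rhcomp}. Everything else — additivity, commutation with shifts and totalization, the forgetful-functor compatibility — is routine bookkeeping on top of the construction of $\mathcal{C}$ already imported from \cite{Eys}.
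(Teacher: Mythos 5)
Your overall architecture is the one the paper uses: the preceding lemma already gives the chain-level extension of $\mathcal{C}$ to $C^b\mathbf{Coh}(\Ox,\diff)$, the paper's entire written proof of the corollary is that descent to the localization can be checked algebraically because a complex in $E_f(\Gamma)$ is acyclic iff it is algebraically acyclic (Corollary \ref{algiso}), and your \v{C}ech/Leray identification of $\Phi\circ H^{\bullet}_{2}(\wt X,-)$ with $H^{\bullet}(X,l^2\pi_*-)$ is exactly what the appendix records. So the skeleton is fine.

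The problem is the step you yourself single out as the key technical point: your verification that $\mathcal{C}$ inverts $dqi$ rests on a mischaracterization of that class. The paper states explicitly that $dqi$ is \emph{smaller} than $qi$: every differential quasi-isomorphism is in particular a sheaf-theoretic quasi-isomorphism (your own ``prototype'', the augmentation $DR(\Dx\otimes_{\Ox}\F)\to\F\otimes\omega_X$, is shown in the Example to be a quasi-isomorphism of complexes of sheaves), so $dqi$ is certainly not ``generated by the differential quasi-isomorphisms that are not sheaf quasi-isomorphisms''. More seriously, a $dqi$ in Saito's sense is by definition a morphism $u$ such that $u\otimes_{\Ox}\mathrm{id}_{\Dx}$ is a quasi-isomorphism of the induced complexes of right $\Dx$-modules; it is \emph{not} ``an isomorphism on the associated graded for the order filtration'', and the ``symbol complex'' of a differential complex is not even well defined (an operator of order $\le k$ is also of order $\le k+1$), so the filtered/spectral-sequence argument on the order filtration does not get off the ground. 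The route consistent with the paper's tools is to work with the induced complexes $C\otimes_{\Ox}\Dx$ of the cone $C$ of a $dqi$: their differentials are right-$\Ox$-linear, so exactness of $l^2\pi_*$ on $\mathbf{Mod}(\Ox)$ applies, $l^2\pi_*$ of them is unambiguous by Lemma \ref{cauchy}, and one then compares with $l^2\pi_*C$ itself via the Koszul/de Rham augmentation of the Example together with the closed-range/open-mapping estimates used in the proof of Proposition \ref{rhcomp} — the ingredient you mention only in passing in your last paragraph, and which is where the real analytic content sits.
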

\begin{prf}
This follows from the fact that a complex in $E_f(\Gamma)$ is acyclic iff it is algebraically acyclic by Corollary \ref{algiso}. 
\end{prf}

We will from now on make a technical assumption, namely that the
coherent $\Dx$-modules we consider admit a  global good filtration.  A second possibility would be to work on the category $ \filtdmod$.

\begin{lem}
 Let $\mathcal{K}^{\bullet}\in Ob(K^b  \cohd)$. Let $F_.$ be a filtration of $K^{\bullet}$
inducing a good filtration on each term. Then $F_p DR(\mathcal{K}^{\bullet}) \to DR(\mathcal{K}^{\bullet})$
is a quasi isomorphism for $p\gg 0$ and $F_p DR(\mathcal{K}^{\bullet})$  is independant of $p\gg 0$ up to a unique differential quasi-isomorphism
hence defines unambiguously an object $DR'(\mathcal{K}^{\bullet})$ of $K^b\mathbf{Coh}(\Ox, \diff)_{dqi}$. This assignement is functorial.

\end{lem}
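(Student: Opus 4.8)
The plan is to reduce the assertion to the single-module statement already invoked in the excerpt (\cite[Lemma 1.14]{Sai}, \cite[Lemma 1.5.6 p. 31]{Bjo}) by a total-complex argument, and then to upgrade the sheaf-theoretic quasi-isomorphisms that appear into differential quasi-isomorphisms. First I recall the single-module picture: for a coherent $\Dx$-module $\M$ with good filtration $F_\bullet\M$, equip $DR(\M)$ with the induced filtration whose degree-$k$ piece is $F_{p+k}\M\otimes_{\Ox}\Omega^k_X$. Since $F_1\Dx\cdot F_q\M\subseteq F_{q+1}\M$, the connection preserves $F_pDR(\M)$, and since every $F_q\M$ is an $\Ox$-coherent module while $\Omega^k_X$ is locally free of finite rank, $F_pDR(\M)$ is an object of $C^b\mathbf{Coh}(\Ox,\diff)$ (its differentials being genuine, order-one, differential operators). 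By the cited lemmas $F_pDR(\M)\hookrightarrow DR(\M)$ is a quasi-isomorphism of complexes of sheaves for $p\gg 0$; because $X$ is compact and $Gr^F\M$ is coherent over $Gr^F\Dx=\mathrm{Sym}_{\Ox}(\Theta_X)$, the threshold can be taken to depend only on $\M$.

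Next I pass from a module to a bounded complex. Given $\mathcal K^\bullet\in K^b\cohd$ with $F_\bullet$ restricting to a good filtration on each term, only finitely many $K^j$ are nonzero, so $F_pDR(\mathcal K^\bullet)$ is the total complex of the bounded double complex $\bigl(F_pDR(K^j)\bigr)_j$: it has $\Ox$-coherent terms, $\Dx$-linear (hence $\Ox$-linear) horizontal differentials coming from $\mathcal K^\bullet$ and differential-operator vertical differentials, so $F_pDR(\mathcal K^\bullet)\in C^b\mathbf{Coh}(\Ox,\diff)$. Taking $p_0$ to be the maximum of the finitely many thresholds produced above, for $p\ge p_0$ the cone of $F_pDR(\mathcal K^\bullet)\to DR(\mathcal K^\bullet)$ is the total complex of a bounded double complex with acyclic columns, hence acyclic; this is the quasi-isomorphism assertion.

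For independence and functoriality, fix $p_0\le p\le p'$ and consider the inclusion $\iota_{p,p'}\colon F_pDR(\mathcal K^\bullet)\hookrightarrow F_{p'}DR(\mathcal K^\bullet)$, a morphism in $C^b\mathbf{Coh}(\Ox,\diff)$ which, by the previous paragraph and two-out-of-three, is a sheaf quasi-isomorphism. It is in fact a differential quasi-isomorphism: its cone is the total complex of the cones of $\iota_{p,p'}$ on the individual $K^j$, each of which is $dqi$-acyclic by the precise form of \cite{Sai} for a single module (this is exactly the situation the localizing class $dqi$ is built to handle), and since the functor inverting $DR$ is triangulated, a bounded total complex with $dqi$-acyclic columns is $dqi$-acyclic. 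The $\iota_{p,p'}$ for $p\ge p_0$ satisfy the evident cocycle identities, hence identify all the $F_pDR(\mathcal K^\bullet)$ in $K^b\mathbf{Coh}(\Ox,\diff)_{dqi}$; their common value is $DR'(\mathcal K^\bullet)$, well defined up to the canonical — hence "unique" — $dqi$. For functoriality, a morphism $f\colon\mathcal K^\bullet\to\mathcal L^\bullet$ in $K^b\cohd$ with chosen good filtrations on both satisfies $f(F_pK^\bullet)\subseteq F_{p+c}L^\bullet$ for some fixed shift $c$ (goodness plus compactness of $X$); then $f$ induces $F_pDR(\mathcal K^\bullet)\to F_{p+c}DR(\mathcal L^\bullet)$ for $p\ge p_0$, whence a morphism $DR'(\mathcal K^\bullet)\to DR'(\mathcal L^\bullet)$ in the localized category, independent of $p$ and of $c$ by the cocycle identities and compatible with composition.

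I expect the only genuinely delicate point to be the upgrade in the last paragraph, namely that the inclusions $\iota_{p,p'}$ are differential quasi-isomorphisms and not merely sheaf quasi-isomorphisms (the latter being strictly weaker, per the excerpt). This forces one to use the exact definition of $dqi$ from \cite{Sai}, through the functor that inverts the de Rham functor, rather than any softer acyclicity criterion; the companion technical nuisances — choosing the threshold $p_0$ and the comparison shift $c$ uniformly — are precisely where compactness of $X$ and the finiteness built into the notion of good filtration are used.
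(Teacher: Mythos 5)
Your argument is correct and follows the same route the paper intends: the paper's own ``proof'' is simply a citation of \cite{Sai}, and what you have written is a correct unpacking of that reference --- the single-module statement for $F_pDR(\M)$, the bounded total-complex reduction, the upgrade from sheaf quasi-isomorphism to $dqi$ via the induced-$\Dx$-module functor $L^\bullet\mapsto L^\bullet\otimes_{\Ox}\Dx$, and the uniform threshold and comparison shift supplied by compactness and goodness. Nothing further is needed.
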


\begin{prf}
 See \cite{Sai}, in particular for the functorial behaviour of this construction. 
\end{prf}

It is tempting to believe that one could work with local good filtrations (actually with the local presentations inducing them) using simplicial gluing techniques as in \cite[Section 6]{Eys}
but we shall refrain from doing so. 

We  can thus define a $\partial$-functor 
$$ tDR:
D^b \cohd_{good filt} \to K^b\mathbf{Coh}(\Ox, \diff)_{dqi}
$$
which is compatible to the restriction to $D^b\cohd$ of Saito's equivalence:
$$DR: D^b \dmod \to K^b \mathbf{M}(\Ox, \diff)_{dqi}
$$

\begin{prop}
The functor $$H^{\bullet}_{DR, 2}(\wt{X}, -)= H^{\bullet}_{2}(\wt{X}, -) \circ tDR:
D^b \cohd_{good filt} \to D^b(E_f(\Gamma))$$
is a $\partial$-functor such that $\Phi\circ H^{\bullet}_{DR, 2}(\wt{X}, -)$ 
is naturally equivalent to the restriction of the functor
$H^{\bullet}_{DR,L^2}(\wt{X}, -)$ .
\end{prop}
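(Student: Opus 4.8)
The plan is to assemble the functor $H^{\bullet}_{DR,2}(\wt X,-)$ as a composition of two $\partial$-functors already established in this section and then to identify its image under the forgetful functor $\Phi$. First I would recall that $tDR:D^b\cohd_{good\,filt}\to K^b\mathbf{Coh}(\Ox,\diff)_{dqi}$ is a $\partial$-functor, constructed just above from the construction $DR'$ of the preceding lemma together with Saito's functorial good-truncation argument \cite{Sai}; being built out of complexes of coherent sheaves with differential-operator differentials, it lands in the $dqi$-localized category by design. Next, the corollary preceding the good-filtration discussion provides the $\partial$-functor $H^{\bullet}_2(\wt X,-):K^b\mathbf{Coh}(\Ox,\diff)_{dqi}\to D^b(E_f(\Gamma))$, which extends $H^{\bullet}_{L^2}(\wt X,-)$ and satisfies $\Phi\circ H^{\bullet}_2(\wt X,-)\cong H^{\bullet}(X,l^2\pi_*-)$. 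Since a composition of $\partial$-functors is a $\partial$-functor (it commutes with translation and sends distinguished triangles to distinguished triangles, because each factor does), the composite $H^{\bullet}_{DR,2}(\wt X,-):=H^{\bullet}_2(\wt X,-)\circ tDR$ is a $\partial$-functor on $D^b\cohd_{good\,filt}$, which is the first assertion.

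For the identification of $\Phi\circ H^{\bullet}_{DR,2}(\wt X,-)$ with the restriction of $H^{\bullet}_{DR,L^2}(\wt X,-)$, I would compare on the level of complexes rather than at the end. Given $\M$ holonomic with a chosen global good filtration $F_\bullet$, by the lemma above $F_p DR(\M)\to DR(\M)$ is a quasi-isomorphism for $p\gg 0$, and $tDR(\M)$ is represented by $F_p DR(\M)$, a differential complex of coherent analytic sheaves. Applying $\mathcal{C}'$ (the extension of $\mathcal{C}$ to $C^b\mathbf{Coh}(\Ox,\diff)$, from the first lemma of this subsection) and then $\Phi$ computes $H^{\bullet}(X,l^2\pi_* F_p DR(\M))$. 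On the other hand $H^{\bullet}_{DR,L^2}(\wt X,\M)=H^{\bullet}(X,l^2\pi_* DR(\M))$ by definition in Section \ref{sec2}. So the content is that the inclusion $l^2\pi_* F_p DR(\M)\to l^2\pi_* DR(\M)$ is a quasi-isomorphism for $p\gg 0$; this is exactly the local statement proved in the course of Proposition \ref{rhcomp} (the filtration $F_\bullet DR(\M)$ consists of differential complexes of coherent sheaves and $l^2\pi_*$ of the comparison map is a quasi-isomorphism by the Fréchet open-mapping / finite-dimensional-cokernel argument there). Hence $\Phi\circ H^{\bullet}_{DR,2}(\wt X,-)$ and $H^{\bullet}_{DR,L^2}(\wt X,-)$ agree on objects; functoriality of $tDR$ and of $\mathcal{C}'$ upgrades this to a natural isomorphism of functors, and Corollary \ref{algiso} guarantees the identifications made in $D^b(E_f(\Gamma))$ are detected after applying $\Phi$.

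The main obstacle I anticipate is not the formal composition but checking that the two candidate descriptions of $tDR(\M)$ — via the abstract $dqi$-localization and via an explicit choice of $p\gg 0$ — feed into $\mathcal{C}$ compatibly, i.e. that $\mathcal{C}'$ really does descend through $K^b\mathbf{Coh}(\Ox,\diff)_{dqi}$ and not merely through the sheaf-theoretic localization. This is where one uses that $\mathcal{C}$ factors through $D^b(E_f(\Gamma))$ after inverting the specific class of quasi-isomorphisms discussed for qhtf complexes of Montelian modules, together with the earlier corollary's conclusion that acyclicity in $E_f(\Gamma)$ is algebraic acyclicity (Corollary \ref{algiso}); a differential quasi-isomorphism of coherent complexes is in particular a sheaf-theoretic quasi-isomorphism, so $l^2\pi_*$ of it is a quasi-isomorphism and $\mathcal{C}'$ of it becomes invertible in $D^b(E_f(\Gamma))$. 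Once that well-definedness point is granted, everything else is bookkeeping with the $\partial$-functor structure, and the naturality of the isomorphism $\Phi\circ H^{\bullet}_{DR,2}(\wt X,-)\cong H^{\bullet}_{DR,L^2}(\wt X,-)$ follows from the functoriality of each ingredient.
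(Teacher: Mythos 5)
Your proof is correct and takes the route the paper intends: the paper in fact offers no written proof of this proposition, treating it as an immediate consequence of the preceding corollary (which supplies the $\partial$-functor $H^{\bullet}_{2}(\wt{X},-)$ on $K^b\mathbf{Coh}(\Ox,\diff)_{dqi}$ with $\Phi\circ H^{\bullet}_{2}(\wt{X},-)\cong H^{\bullet}(X,l^2\pi_*-)$) and of the lemma constructing $tDR$, which is exactly how you assemble it. One small caution: your appeal to the Fr\'echet open-mapping argument of Proposition \ref{rhcomp} to see that $l^2\pi_*F_pDR(\M)\to l^2\pi_*DR(\M)$ is a quasi-isomorphism is both redundant and slightly misplaced here, since that argument uses Kashiwara constructibility and hence holonomicity, whereas the present proposition concerns arbitrary coherent $\Dx$-modules with good filtration; the correct justification is the one in your final paragraph, namely that $F_pDR(\M)\to DR(\M)$ is a $dqi$ for $p\gg 0$ and that $H^{\bullet}(X,l^2\pi_*-)$ already descends through the $dqi$-localization by the preceding corollary (with Corollary \ref{algiso} detecting the resulting isomorphisms after $\Phi$).
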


Once again, one can define the reduced $L_2$ cohomology of $\mathcal{M}$ a holonomic $\Dx$-module admitting a good filtration (or a complex of such):
$$ \bar H^{k}_{DR, 2}(\wt{X}, \mathcal{M} )=P(H^{k}_{DR, 2}(\wt{X}, \mathcal{M})).
$$

\begin{rem}
 When $\mathcal{F}$ is a coherent $\Ox$-module, one can form the induced $\Dx$-module $ind(\mathcal{F}):=\Dx\otimes \mathcal{F}$, a coherent $\Dx$-module with a global good filtration,  and there is a morphism of complexes of sheaves
 $DR(ind(\mathcal{F})) \to \mathcal{F}\otimes \omega^n_X$ which is a quasi-isomorphism. We have a natural isomorphism in $D^b(E_f(\Gamma))$
 $$H^{\bullet}_{DR,2}(\wt{X}, \Dx\otimes_{\Ox} \mathcal{F}) \simeq H^q_2(\wt{X}, \mathcal{F}\otimes \omega^n_X)$$ where we use the notation of \cite{Eys} for coherent $L^2$-cohomology. 
\end{rem}

\subsection{$L^2$ Poincar\'e-Verdier Duality}

If $\mathbb{S}$ be a cocompact (in particular finite dimensionnal) $\Gamma$-simplicial complex $\mathbf{Cons}_{K, \Gamma}(\mathbb{S})$ can described combinatorially in terms of the poset $(\Sigma(\mathbb{S}), \le)$ where $\Sigma(\mathbb{S})$ is the set 
of simplices and $\sigma \le \tau$ if and only if $\sigma$ is a face of $\overline{\tau}$ . The partial order is $\Gamma$-equivariant. Then  $\mathbf{Cons}_{K, \Gamma}(\mathbb{S})$  is the category
of $\Gamma$-equivariant functors $$(\Sigma({\mathbb{S}}), \le) \to \mathrm{Finite-dimensionnal\ } K - \mathrm{Vector \ Spaces}.$$ This is nothing but a reformulation of a part of the construction in the proof of Proposition \ref{co}. 
We call the maps $F_{\tau}\to F_{\sigma}$ when $\tau \le \sigma$ the corestriction maps of $F\in \mathrm{Ob}(\mathbf{Cons}_{K, \Gamma}(\mathbb{S}))$. 

Poincar\'e-Verdier has an explicit combinatorial formulation \cite{Sh, PSch, Curry2} which is presented very efficiently in the note \cite{Curry1} and was apparently first observed by A. Shephard 
in his 1985 unpublished thesis under  R. MacPherson's direction. 

\begin{prop} Let $F$ be a $\Gamma$-constructible sheaf. Then its Verdier dual is represented by the complex of injective  $\Gamma$-constructible sheaves :
$$ \mathbb{D}(F)= \ldots \longrightarrow \bigoplus_{\dim(\sigma)=i} \iota_{\bar \sigma *}  \underline{K}_{\bar \sigma} \otimes F_{\sigma}^{\vee} 
\buildrel{\partial}\over\longrightarrow  \bigoplus_{\dim(\tau)=i-1} \iota_{\bar \tau *} \underline{K}_{\bar \tau} \otimes F_{\tau}^{\vee} \longrightarrow \ldots
$$
where 
\begin{itemize}
 \item 
$\iota_{\sigma}: \bar \sigma \to | \mathbb{S} |$ is the closed embedding,  
\item For every $i\in \N$,
$\mathbb{D}^i(F)= \bigoplus \iota_{\bar \sigma *} \underline{K}_{\bar \sigma} \otimes F_{\sigma}^{\vee}$ is placed in degree $-i$, 
\item The $(\tau, \sigma)$ matrix component of  $\partial$ is the tensor product of 
the transpose of the corestriction map $F_{\tau}\to F_{\sigma}$ when $\tau \le \sigma$ and the homology boundary map $\iota_{\bar{\sigma} *} 
\underline{K}_{\bar \sigma}  \to  \iota_{\bar{\tau} *}  \underline{K}_{\bar \tau}$ and $ 0$ if $\tau$ is not a facet of $\bar \sigma$ . 
\item $V\mapsto V^{\vee}$ is the usual duality functor on finite-dimensionnal $K$ - Vector  Spaces,  
\end{itemize}
If $F^{\bullet}$ is a bounded complex of $\Gamma$-constructible sheaves $\mathbb{D}(F^{\bullet})$ is represented by the totalisation of the double complex obtained by applying $\mathbb{D}$. 
\end{prop}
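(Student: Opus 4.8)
The plan is to verify that the proposed complex $\mathbb D(F)$ represents $R\mathcal{H}om(F,\omega_{|\mathbb S|})$ by a local computation, checking first that each term is injective in $\mathbf{Cons}_{K,\Gamma}(\mathbb S)$ and then that the complex has the correct stalks (equivalently, the correct costalks) at every point of $|\mathbb S|$. First I would recall why the sheaves $\iota_{\bar\sigma *}\underline K_{\bar\sigma}$ are injective objects of the category of $\Gamma$-constructible sheaves: in the combinatorial description $\mathbf{Cons}_{K,\Gamma}(\mathbb S)=\mathrm{Fun}_\Gamma((\Sigma(\mathbb S),\le),\mathrm{Vect}_K^{f})$, the functor $\iota_{\bar\sigma*}\underline K_{\bar\sigma}$ corresponds to the representable-type functor sending $\tau$ to $K$ if $\tau\le\sigma$ and to $0$ otherwise, with identity corestrictions; by the (equivariant) Yoneda lemma these are exactly the indecomposable injectives, and arbitrary direct sums of injectives are injective here because stalks are finite dimensional and the indexing is locally finite. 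Hence $\mathbb D(F)$ is a complex of injectives, so it computes a right-derived functor once we identify the differential $\partial$ with the one induced by $R\mathcal{H}om(-,\omega)$.

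Next I would pin down the map $\partial$ intrinsically. The dualizing complex $\omega_{|\mathbb S|}$ of the simplicial complex has the well-known simplicial model whose value on the open star is the reduced simplicial chain complex of the link (shifted); concretely $\iota_{\bar\sigma*}\underline K_{\bar\sigma}$ carries the local orientation data of $\bar\sigma$, and the boundary maps $\iota_{\bar\sigma*}\underline K_{\bar\sigma}\to\iota_{\bar\tau*}\underline K_{\bar\tau}$ for $\tau$ a facet of $\bar\sigma$ are the simplicial homology boundary maps (these are the $\Gamma$-equivariant refinements of the classical formula; the signs $\epsilon_{\tau,\sigma}$ introduced in the proof of Proposition~\ref{co} provide the $\Gamma$-equivariant sign convention). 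Tensoring with $F^\vee_\sigma$ and using transposed corestriction maps is then forced by the adjunction defining $R\mathcal{H}om$: a morphism $F\to\mathbb D^{-i}(F)$ in the category of functors is precisely a compatible family $F_\sigma\to F^\vee_\sigma\otimes(\text{orientation})$, i.e. the identity up to the pairing, and $d\circ d=0$ follows from the two facts that $\partial^{\mathrm{homology}}\circ\partial^{\mathrm{homology}}=0$ and that corestriction is functorial.

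The verification that this complex has the right stalks is the technical heart. At a point $x$ lying in the open simplex $|\sigma_0|$, one computes $\mathbb D(F)_x$: only the summands with $\sigma_0\le\sigma$ survive (since $x\in\bar\sigma$ iff $\sigma_0\le\sigma$), and the resulting complex is $\bigoplus_{\sigma_0\le\sigma}(\text{orientation of }\bar\sigma)\otimes F^\vee_\sigma$ with the homology differential. This is the local cohomology computation $R\Gamma_{\{x\}}$-style identity $(\mathbb D F)_x\cong R\mathcal{H}om(F|_{U_{\sigma_0}},\omega)_x$, and it reduces to the classical fact that the (co)chain complex of the closed star $\overline{\mathrm{St}}(\sigma_0)$ computes the local duality pairing — i.e. $R\mathcal{H}om$ against the simplicial dualizing complex recovers Poincaré–Lefschetz duality for the star, which is contractible. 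I expect this local identification — matching the combinatorially-defined $\partial$ with the derived-functor differential including all signs, done $\Gamma$-equivariantly — to be the main obstacle; the cleanest route is to cite \cite{Curry1,Sh,PSch,Curry2} for the non-equivariant statement and then observe that, since $\Gamma$ acts properly with finite stabilizers, every construction above is manifestly $\Gamma$-equivariant and the proof descends verbatim (or, when the action is free, passes through $\Gamma\backslash|\mathbb S|$, as in Proposition~\ref{ks8111}). Finally, the statement for a bounded complex $F^\bullet$ follows formally: $\mathbb D$ is additive and exact on the level of these injective resolutions, so applying it termwise and totalizing computes $\mathbb D(F^\bullet)$, by the usual spectral-sequence-of-a-double-complex argument together with the single-object case just proved.
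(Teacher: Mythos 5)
Your argument is correct in outline, but you should know that the paper itself gives no proof of this proposition: it is stated as the $\Gamma$-equivariant form of the combinatorial (cellular) Verdier duality of Shepard and Curry, with the citations \cite{Sh,PSch,Curry2,Curry1} standing in for the argument. What you have written is essentially a reconstruction of that standard proof plus the observation that every step is $\Gamma$-equivariant, which is exactly what the paper expects the reader to supply; so the two routes agree in substance, yours simply being explicit where the paper defers to the literature. Two points where you are looser than you should be. First, injectivity of the terms in the \emph{equivariant} category is not just local finiteness: for a $\Gamma$-orbit of simplices one has $Hom_{\Gamma}\bigl(F,\ \bigoplus_{\gamma}\iota_{\overline{\gamma\sigma}*}\underline{K}_{\overline{\gamma\sigma}}\otimes F_{\gamma\sigma}^{\vee}\bigr)\cong \bigl(F_{\sigma}\otimes F_{\sigma}^{\vee}\bigr)^{H_{\sigma}}$ with $H_{\sigma}$ the finite stabilizer, and exactness of this functor in $F$ uses that $\Q\subset K$ so that invariants under a finite group are exact; this hypothesis is in force in the paper but you must invoke it. Second, the appeal to ``Poincar\'e--Lefschetz duality for the contractible star'' is not quite the right local statement: what one actually verifies at $x\in|\sigma_0|$ is the characterization $\mathbb{D}(F)_x\cong RHom\bigl(R\Gamma_c(U_{\sigma_0},F),K\bigr)$ together with the fact that $R\Gamma_c$ of the open star $U_{\sigma_0}$ is computed by the cellular cochain complex $\bigoplus_{\sigma\ge\sigma_0}F_{\sigma}$ placed in degrees $\dim\sigma$ (this is \cite[Prop.~8.1.4]{KS}, already exploited in the proof of Proposition \ref{co}); dualizing this finite complex of finite-dimensional spaces gives precisely the stalk of your candidate complex, signs included via the $\epsilon_{\tau,\sigma}$ convention. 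With those two points made precise, your proof is complete and can serve as the missing justification the paper delegates to \cite{Curry1}.
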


This proposition gives a lift of Verdier Duality to the category of complexes, before taking the derived category. 

Now if $F$ is a $\Gamma$-constructible sheaf, we may construct $F^{\vee}$ by applying the usual duality functor and get a $\Gamma$-constructible cosheaf, namely a {\em contravariant} functor 
$$(\Sigma({\mathbb{S}}), \le) \to \mathrm{Finite-dimensionnal\ } K - \mathrm{Vector \ Spaces}.$$ We may construct its homology chain complex $C_{\bullet}(\mathbb{S}, F^{\vee})$ , concentrated in positive degrees, 
and view it as a {\em cochain} complex $C_{-\bullet}(\mathbb{S}, F^{\vee})$ concentrated in negative degrees. These are complexes of projective $K \Gamma$-modules. 

\begin{lem}
 There is a functorial monomorphism of complexes of $K\Gamma$-modules:
 $$
  C_{-\bullet}(\mathbb{S}, F^{\vee}) \to C_c^{\bullet} (\mathbb{S}, \mathbb{D}(F))
 $$
 such that the quotient complex is acyclic. 
\end{lem}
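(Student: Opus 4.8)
The statement asserts the existence of a functorial monomorphism $C_{-\bullet}(\mathbb{S}, F^{\vee}) \to C_c^{\bullet}(\mathbb{S}, \mathbb{D}(F))$ with acyclic cokernel. The plan is to unwind both sides explicitly in degree $-i$ using the combinatorial descriptions already given. On the left, $C_{-i}(\mathbb{S}, F^{\vee})$ is by definition $\bigoplus_{\dim(\sigma)=i} F_{\sigma}^{\vee}$, the $i$-chains of the cosheaf $F^{\vee}$. On the right, $C_c^{-i}(\mathbb{S}, \mathbb{D}(F))$ is the degree-$(-i)$ part of the total complex of the Čech-type double complex associated to $\mathbb{D}(F)$; since $\mathbb{D}^j(F) = \bigoplus_{\dim(\sigma)=j} \iota_{\bar\sigma *}\underline{K}_{\bar\sigma}\otimes F_\sigma^{\vee}$, the Čech complex $C_c^{\bullet}(\mathbb{S}, \iota_{\bar\sigma *}\underline{K}_{\bar\sigma})$ computes (compactly supported, combinatorial) cohomology of the closed star-closure $\bar\sigma$, which is contractible, so $H^0$ of that Čech complex is $K$ (sitting at the vertex $\sigma$ in the ordering) and the higher cohomology vanishes. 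The natural map I would write down sends the generator $F_\sigma^{\vee}$ on the left into the $\sigma$-vertex summand of $C_c^{0}(\mathbb{S}, \iota_{\bar\sigma *}\underline{K}_{\bar\sigma})\otimes F_\sigma^{\vee}$ inside the total complex; this is manifestly injective, $\Gamma$-equivariant, and functorial in $F$ (because everything is built from $F_\sigma^{\vee}$ and the transposed corestriction maps, with no further choices).

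\textbf{Key steps, in order.} First I would fix the explicit bases: index the total complex $C_c^{\bullet}(\mathbb{S},\mathbb{D}(F))$ in bidegree $(-j, k)$ by pairs $(\sigma, \text{a }k\text{-simplex in the nerve of the cover of }\bar\sigma)$ tensored with $F_\sigma^{\vee}$, with total degree $-j+k$. Second, I would write down the chain map: $C_{-i}(\mathbb{S},F^{\vee}) = \bigoplus_{\dim\sigma = i} F_\sigma^{\vee} \to \bigoplus_{\dim\sigma=i} (\text{unit of the star-Čech complex of }\bar\sigma)\otimes F_\sigma^{\vee} \subset C_c^{-i}(\mathbb{S},\mathbb{D}(F))$, and check it commutes with the differentials — this amounts to matching the homology boundary on chains of $F^{\vee}$ (which is the dual corestriction boundary in $\mathbb{D}(F)$) with the vertical differential of $\mathbb{D}$, while the horizontal (Čech) differential kills the chosen unit representative up to a coboundary, handled by the acyclicity of the star-Čech complexes. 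Third, identify the cokernel: in each "column" indexed by $\sigma$, the cokernel of $K \hookrightarrow C_c^{\bullet}(\mathbb{S}, \iota_{\bar\sigma *}\underline{K}_{\bar\sigma})$ is exactly the augmented/reduced Čech complex of the contractible space $\bar\sigma$, which is acyclic; tensoring with $F_\sigma^{\vee}$ and assembling over all $\sigma$ preserves acyclicity because the total complex of a double complex with acyclic columns is acyclic (a standard spectral sequence argument, or an explicit contracting homotopy assembled column by column). Finally, $\Gamma$-equivariance and functoriality are checked by inspection since the map involves no choices beyond the fixed well-ordering already used in Proposition \ref{co}.

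\textbf{Main obstacle.} The only genuine subtlety is being careful about signs and the precise identification of the vertical differential of $\mathbb{D}(F)$ — described in the proposition as the transpose of the corestriction maps tensored with the homology boundary $\iota_{\bar\sigma *}\underline{K}_{\bar\sigma}\to\iota_{\bar\tau *}\underline{K}_{\bar\tau}$ — with the homology boundary of the cosheaf chain complex $C_{\bullet}(\mathbb{S},F^{\vee})$. One must verify these literally agree (not merely up to sign) once the ordering conventions $\epsilon_{\tau,\sigma}$ fixed in the proof of Proposition \ref{co} are used consistently on both sides; this is where the "functorial" adjective does real work, since a sign discrepancy would still give a map but not a natural one. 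I expect this bookkeeping, together with assembling the column-wise contracting homotopies into a global one (or invoking the double-complex spectral sequence and noting boundedness of $\mathbb{S}$ finite-dimensional cocompact guarantees convergence), to be the bulk of the actual write-up, while the conceptual content — star-closures are contractible, hence their combinatorial compactly supported cochain complexes are acyclic above degree $0$ with $H^0 = K$ — is immediate.
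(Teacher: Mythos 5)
Your overall strategy is the one the paper uses: identify the $\sigma$-column of $C_c^{\bullet}(\mathbb{S},\mathbb{D}(F))$ with $C^{\bullet}(\bar\sigma)\otimes F_{\sigma}^{\vee}$, where $C^{\bullet}(\bar\sigma)$ is the simplicial cochain complex of the closed simplex $\bar\sigma$ (you call it a ``star-\v{C}ech complex'', but since $(\iota_{\bar\sigma *}\underline{K}_{\bar\sigma})_{\tau}=K$ exactly when $\tau$ is a face of $\bar\sigma$ and $0$ otherwise, it is literally $C^{\bullet}(\bar\sigma)$); inject $F_{\sigma}^{\vee}$ as the degree-zero cohomology of that column; observe that the cokernel of each column inclusion is acyclic because $\bar\sigma$ is contractible; and totalize. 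The functoriality and equivariance discussion is also as in the paper.

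There is, however, one concrete error in the map you write down. You send the generator of $F_{\sigma}^{\vee}$ to ``the $\sigma$-vertex summand'' of $C_c^{0}(\mathbb{S},\iota_{\bar\sigma *}\underline{K}_{\bar\sigma})\otimes F_{\sigma}^{\vee}$, i.e.\ to a single-vertex indicator cochain. For $\dim\sigma\ge 1$ that element is not a cocycle for the horizontal (\v{C}ech) differential: $d(1_{v})=\sum_{e\ni v}\epsilon_{v,e}\,1_{e}\neq 0$. Since the source $C_{-\bullet}(\mathbb{S},F^{\vee})$ has nothing in the horizontal direction, the horizontal component of the total differential applied to the image must vanish exactly, not merely up to coboundary; your hedge that this is ``handled by the acyclicity of the star-\v{C}ech complexes'' would at best give an isomorphism in the derived category, whereas the lemma demands an honest monomorphism of complexes with acyclic quotient. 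The fix is exactly what the paper's proof does: represent the unit of $H^{0}(C^{\bullet}(\bar\sigma))$ by the cochain equal to $1$ on \emph{every} vertex of $\bar\sigma$; the signs $\epsilon_{v,e}$ attached to the two vertices of each edge cancel, so this is a genuine cocycle, the resulting map commutes with both differentials on the nose, and the rest of your argument (column-wise acyclicity, totalization, functoriality) then goes through as you describe.
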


\begin{proof} The complex $C_c^{\bullet} (\mathbb{S}, \mathbb{D}^i(F))$ is the same as 
$$ \bigoplus_{\dim(\sigma)=i} C^{\bullet} (\bar \sigma) \otimes F_{\sigma}^{\vee}
$$
 where  $C^{\bullet} (\bar \sigma)$ is the simplicial cochain complex of the closed simples $\bar \sigma$. 
 
 We have a natural cochain equivalence $K[0] \to C^{\bullet} (\bar \sigma)$ sending $1$ to the cochain which $1$ on all vertices of $\bar \sigma$. This enables to construct a $\Gamma$-equivariant cochain equivalence:
 $$ \bigoplus _{\dim(\sigma)=i} F_{\sigma}^{\vee} [0] \to \bigoplus_{\dim(\sigma)=i} C^{\bullet} (\bar \sigma) \otimes F_{\sigma}^{\vee}
 $$
 which commutes with the natural boundaries by construction. This proves the lemma since $C_c^{\bullet} (\mathbb{S}, \mathbb{D}(F))$ is the simple complex attached to the double complex $C_c^{\bullet} (\mathbb{S}, \mathbb{D}^{\bullet}(F))$. 
\end{proof}

\begin{lem} Let $F^{\bullet}$ be a bounded complex of $\Gamma$-constructible sheaves, then there is a functorial map of bounded complexes of projective Hilbert $\Gamma$-modules obtained by taking the simple complex attached to 
$$
l^2 \Gamma \otimes_{K\Gamma}  C_{-\bullet}(\mathbb{S}, F^{\bullet \vee}) \to l^2 \Gamma \otimes_{K\Gamma}  C_c^{\bullet} (\mathbb{S}, \mathbb{D}(F^{\bullet})).
$$
 
\end{lem}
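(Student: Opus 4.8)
The plan is to deduce the statement from the previous lemma by working one cohomological degree of $F^{\bullet}$ at a time and then totalizing. Write $F^{\bullet}=(F^{j},d_{F})$ with each $F^{j}$ a $\Gamma$-constructible sheaf. For every $j$ the previous lemma supplies a functorial monomorphism of complexes of $K\Gamma$-modules $C_{-\bullet}(\mathbb{S},(F^{j})^{\vee})\to C_{c}^{\bullet}(\mathbb{S},\mathbb{D}(F^{j}))$, and, by the functoriality asserted there, the maps induced by $d_{F}$ on both sides commute with these morphisms. Remembering that $C_{c}^{\bullet}(\mathbb{S},\mathbb{D}(F^{j}))$ is itself the total complex of the double complex $C_{c}^{\bullet}(\mathbb{S},\mathbb{D}^{\bullet}(F^{j}))$, one thus obtains a morphism of triple complexes of $K\Gamma$-modules; totalizing with the usual Koszul sign rule yields a functorial morphism of bounded complexes of $K\Gamma$-modules $C_{-\bullet}(\mathbb{S},F^{\bullet\vee})\to C_{c}^{\bullet}(\mathbb{S},\mathbb{D}(F^{\bullet}))$. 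Boundedness is immediate: $F^{\bullet}$ is bounded and $\mathbb{S}$ is finite dimensional, so on each side only finitely many degrees occur.

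It then remains to apply the functor $l^{2}\Gamma\otimes_{K\Gamma}-$ and to check that it lands in projective Hilbert $\Gamma$-modules. Since $\mathbb{S}$ is cocompact, each term of the two complexes above is a finite direct sum of $K\Gamma$-modules of the form $K[\Gamma/H_{\sigma}]^{\oplus n}$, where $H_{\sigma}\subset\Gamma$ is the (finite) stabilizer of a simplex $\sigma$. Exactly as in the proof of the exactness of $F\mapsto Rl_{p}\Gamma\otimes_{K\Gamma}\pi_{!}F$ in Section~\ref{sec1} and in the proof of Proposition~\ref{co}, the averaging map $(a_{\gamma})_{\gamma}\mapsto\bigl(g\mapsto\tfrac{1}{\mathrm{Card}(H_{\sigma})}\sum_{h\in H_{\sigma}}a_{gh}\bigr)$ splits the inclusion $K[\Gamma/H_{\sigma}]\hookrightarrow K\Gamma$, so each such term is a finitely generated projective $K\Gamma$-module and $l^{2}\Gamma\otimes_{K\Gamma}K[\Gamma/H_{\sigma}]$ is a closed $\Gamma$-invariant direct summand of $l^{2}\Gamma$, that is, a finite type projective Hilbert $\Gamma$-module; finite direct sums of these are again of that kind. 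The $K\Gamma$-linear differentials and the comparison morphism become bounded $\Gamma$-equivariant operators, so applying $l^{2}\Gamma\otimes_{K\Gamma}-$ produces the asserted morphism of bounded complexes of projective Hilbert $\Gamma$-modules.

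Functoriality in $F^{\bullet}$ follows by composing the functoriality of the previous lemma, the naturality of totalization, and the additivity of $l^{2}\Gamma\otimes_{K\Gamma}-$. I expect no genuine obstacle here: the only points needing care are the sign bookkeeping when passing from the triple complex to its total complex, which is the standard Koszul rule and is canonical up to the obvious isomorphisms, and the verification that the cosheaf chain complex $C_{-\bullet}(\mathbb{S},F^{\bullet\vee})$ is built from projective $K\Gamma$-modules, which is the cocompactness computation just made. Note that no acyclicity or quasi-isomorphism property of the comparison morphism is claimed at this stage, so the subtler comparison with Verdier duality on $\wt{X}$ is deferred to the sequel.
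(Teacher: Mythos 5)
Your argument is correct and is exactly the expansion of what the paper dismisses as ``Obvious'': apply the previous lemma degree by degree, use its functoriality to commute with $d_F$, totalize, and observe that cocompactness plus finite stabilizers makes every term a finitely generated projective $K\Gamma$-module whose image under $l^2\Gamma\otimes_{K\Gamma}-$ is a finite type projective Hilbert $\Gamma$-module. The only cosmetic imprecision is that the terms are induced modules $\mathrm{Ind}_{H_\sigma}^{\Gamma}F_\sigma$ rather than literally $K[\Gamma/H_\sigma]^{\oplus n}$ when $H_\sigma$ acts nontrivially on $F_\sigma$, but since $\Q\subset K$ Maschke's theorem still gives the splitting and hence projectivity, so nothing is affected.
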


\begin{proof}
 Obvious. 
\end{proof}

\begin{lem} Given $F^{\bullet}$ an object of $D^b(\mathbf{Cons}_{K, \Gamma} (\mathbb{S})$, there 
 is a functorial perfect duality of projective Hilbert $\Gamma$-modules $$ \overline{\mathbb{H}}_2^{\ i}(|\mathbb{S} | , F^{\bullet} \otimes_K\C) \otimes  \overline{\mathbb{H}}_2^{ \ -i}(|\mathbb{S} | , \mathbb{D} (F^{\bullet})\otimes_K\C ) \to \C $$
 which preserves the natural real structure if $K\subset \R$. 
\end{lem}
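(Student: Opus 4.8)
The plan is to deduce the perfect duality from the explicit cochain-level duality provided by the preceding three lemmas, combined with the $L^2$-index/Hodge-theoretic machinery already in place. First I would fix a cocompact $\Gamma$-triangulation $\mathbb{S}$ representing $|\mathbb{S}|$ (which one may always do by Proposition \ref{ks8111} and the cofinality of triangulations), so that by the proof of Proposition \ref{co} the $L^2$-hypercohomology $\mathbb{H}^{\bullet}_2(|\mathbb{S}|,F^{\bullet})$ is computed by the bounded complex of finitely generated projective Hilbert $\Gamma$-modules $l^2\Gamma \otimes_{K\Gamma} C_c^{\bullet}(\mathbb{S},F^{\bullet})$ (or $s(l^2\Gamma \otimes C^{\bullet}(\beta\mathbb{S},-))$ in the general case). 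Since the homology chain complex $C_{-\bullet}(\mathbb{S},F^{\bullet\vee})$ also consists of finitely generated projective $K\Gamma$-modules, its $l^2\Gamma \otimes_{K\Gamma}(-)$ is likewise a bounded complex in the homotopy category of finitely generated projective Hilbert $\Gamma$-modules. The map of the last-stated lemma, together with the fact that its cone is acyclic (the quotient complex in the penultimate lemma is acyclic, and $l^2\Gamma \otimes_{K\Gamma}(-)$ is exact on projective $K\Gamma$-complexes by the usual projectivity-splitting argument used throughout Section \ref{sec1}), shows this map is a quasi-isomorphism; hence $l^2\Gamma \otimes_{K\Gamma} C_{-\bullet}(\mathbb{S},F^{\bullet\vee})$ also computes $\mathbb{H}^{\bullet}_2(|\mathbb{S}|,\mathbb{D}(F^{\bullet})\otimes_K\C)$.

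Next I would write down the pairing at the chain level. For each simplex $\sigma$ there is the evident perfect $\C$-bilinear pairing $F_\sigma \otimes F_\sigma^{\vee} \to \C$, and tensoring with the standard $l^2\Gamma$-valued pairing $l^2\Gamma \otimes_{K\Gamma}(K\Gamma) \otimes l^2\Gamma \otimes_{K\Gamma}(K\Gamma) \to l^2\Gamma$ coming from the $*$-structure (the ``regular representation'' pairing, whose composition with the trace $\tau_\Gamma$ recovers the Hilbert inner product), one obtains a $\Gamma$-equivariant pairing of the degree-$i$ part of $l^2\Gamma\otimes_{K\Gamma} C_{-\bullet}(\mathbb{S},F^{\bullet\vee})$ with the degree-$i$ part of $l^2\Gamma\otimes_{K\Gamma} C_c^{\bullet}(\mathbb{S},F^{\bullet})$ (for the appropriate total degree, and modulo keeping track of the shift by $\dim|\mathbb{S}|$ that is built into Verdier duality on simplicial complexes). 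The sign bookkeeping is arranged precisely so that the chain differential and the cochain differential are adjoint for this pairing — this is exactly the classical statement that simplicial homology of a finite complex is dual to its cochain cohomology, and the $\Gamma$-equivariant version is formal once the orientations $\epsilon_{\tau,\sigma}$ from the proof of Proposition \ref{co} are used consistently. Thus the pairing descends to a pairing on cohomology, and hence on reduced cohomology.

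To see perfectness of the induced pairing on \emph{reduced} cohomology, I would argue as follows. A map of bounded complexes of finitely generated projective Hilbert $\Gamma$-modules inducing a perfect $l^2\Gamma$-valued pairing degreewise, for which the two differentials are adjoint, induces a perfect pairing of projective Hilbert $\Gamma$-modules on reduced cohomology: concretely, choose the harmonic representatives for the combinatorial Laplacians $\Delta = dd^* + d^*d$ on each side; adjointness of the differentials identifies the harmonic space on one side with the orthogonal complement dual of the harmonic space on the other, and the degreewise perfect pairing then restricts to a perfect pairing of these finitely generated Hilbert $\Gamma$-modules. (Equivalently, one invokes that reduced $L^2$-cohomology is the $L^2$-cohomology of the associated Hilbert complex and uses the Hodge decomposition $C^i = \overline{\mathrm{im}\, d}\oplus \mathcal{H}^i \oplus \overline{\mathrm{im}\, d^*}$, which is orthogonal for the inner product $\tau_\Gamma$ applied to the $l^2\Gamma$-pairing.) Real structures are preserved because, when $K\subset\R$, all the maps above — the combinatorial duality $F_\sigma\otimes F_\sigma^{\vee}\to K$, the $l^2\Gamma$-pairing, the differentials — are defined over $\R$, so they commute with the conjugate-linear involution $\dagger$ on $\R E_f(\Gamma)$ and on $l^2\Gamma$.

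The main obstacle I anticipate is \emph{not} the existence of the pairing (which is essentially formal given the three preceding lemmas) but the careful verification of adjointness of the differentials together with the correct degree shift: one must match the Verdier-duality normalization $\mathbb{D}(F)$ placed in degrees $-i$ with the homology complex $C_{-\bullet}$ placed in negative degrees, and check that the homology boundary $\iota_{\bar\sigma *}\underline{K}_{\bar\sigma}\to\iota_{\bar\tau*}\underline{K}_{\bar\tau}$ appearing in $\mathbb{D}$ is exactly transpose-adjoint to the corestriction $F_\tau\to F_\sigma$ under the chosen pairings, with signs $\epsilon_{\tau,\sigma}$ consistent on both sides. A secondary subtlety is that for the general $\mathbb{S}$ one works with $\beta\mathbb{S}$ and the totalization of a double complex, so one should check the pairing is compatible with barycentric subdivision — but this follows because subdivision induces an $l^2\Gamma$-chain equivalence compatible with the cup/cap structures, so reduced cohomology and its pairing are unchanged.
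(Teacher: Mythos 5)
Your proposal is correct and follows essentially the same route as the paper: the paper's proof simply combines the preceding lemma (identifying $l^2\Gamma\otimes_{K\Gamma}C_{-\bullet}(\mathbb{S},F^{\bullet\vee})$ with a complex computing $\mathbb{H}_2^{\bullet}(|\mathbb{S}|,\mathbb{D}(F^{\bullet}))$) with the general duality between reduced $L^2$-cohomology of a finitely generated Hilbert $\Gamma$-cochain complex and that of its adjoint complex, quoted from L\"uck's Lemma 2.17(2). Your harmonic-representative/Hodge-decomposition argument is exactly an unpacking of that cited lemma, so the two proofs coincide in substance.
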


\begin{proof}
 This follows from the previous lemma and \cite[Lemma 2.17 (2), p. 82]{Luc}. 
\end{proof}

\begin{prop}\label{duality} Notations of Proposition \ref{subalrefcoh}. There 
 is a functorial perfect duality of projective Hilbert $\Gamma$-modules $$ \overline{\mathbb{H}}_2^{\ i}(\widetilde{X} , {F}^{\bullet}\otimes_K\C ) \otimes  \overline{\mathbb{H}}_2^{ \ -i}(\widetilde{X}  , 
 R\underline{Hom}^{\bullet} ({F}^{\bullet}, \omega^{\bullet}) \otimes_K\C) \to \C $$
 which preserves the natural real structure if $K\subset \R$ where $\omega^{\bullet}=\mathbb{D}(\underline{K}_{\widetilde{X}})$ is the Verdier dualizing complex.
 
\end{prop}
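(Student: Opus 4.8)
The plan is to deduce Proposition \ref{duality} from the simplicial statement just proved, by passing to a $\Gamma$-triangulation adapted to $F^{\bullet}$. Since $\widetilde{X}$ is cocompact subanalytic, the $\Gamma$-triangulations are cofinal among $\Gamma$-stratifications (the generalization of Proposition 8.2.5 of \cite{KS} recorded above), so there is a cocompact $\Gamma$-simplicial complex $\mathbb{S}$ together with a $\Gamma$-equivariant homeomorphism $|\mathbb{S}| \cong \widetilde{X}$ for which $F^{\bullet}$ lies in the essential image of $D^b(\mathbf{Cons}_{K,\Gamma}(\mathbb{S}))$; we may also assume this of the dualizing complex $\omega^{\bullet}$ and of the Verdier dual of $F^{\bullet}$. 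By the construction of $\mathbb{H}^{*}_2(\widetilde{X},-)$ as the filtered colimit over triangulations (Propositions \ref{lpcocons} and \ref{subalrefcoh}), one has $\mathbb{H}^{*}_2(\widetilde{X}, G^{\bullet}) \cong \mathbb{H}^{*}_2(|\mathbb{S}|, G^{\bullet})$, and hence $\overline{\mathbb{H}}^{*}_2(\widetilde{X},G^{\bullet}) \cong \overline{\mathbb{H}}^{*}_2(|\mathbb{S}|,G^{\bullet})$, for every $\mathbb{S}$-constructible $G^{\bullet}$.

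Next I would identify $R\underline{Hom}^{\bullet}(F^{\bullet},\omega^{\bullet})$, computed in $D^b_{c,\Gamma}(\widetilde{X})$, with the combinatorial Verdier dual $\mathbb{D}(F^{\bullet})$ of the proposition on simplicial duality above. For the constant sheaf this is built into the notation, $\omega^{\bullet}=\mathbb{D}(\underline{K}_{\widetilde{X}})$ (and the dualizing complex of $|\mathbb{S}|$ agrees with that of $\widetilde{X}$, being a topological invariant); in general it is exactly the content of that proposition — a statement due, in the non-equivariant case, to A. Shephard, see \cite{Curry1,Curry2,Sh,PSch} — that the explicit complex of injective constructible sheaves written there represents $R\underline{Hom}^{\bullet}(-,\omega^{\bullet})$. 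Its proof is local on the closed-star covering of $|\mathbb{S}|$, so the $\Gamma$-action is carried along for free; alternatively, when the action is free one reduces to the statement on $\Gamma\backslash|\mathbb{S}|$ as in Proposition \ref{ks8111}, and the general case follows after a barycentric subdivision making the action free on vertices. In particular $\mathbb{D}(F^{\bullet})$ is canonically independent of the triangulation in $D^b_{c,\Gamma}(\widetilde{X})$.

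Granting this identification, Proposition \ref{duality} is precisely the preceding lemma applied to $\mathbb{S}$: it yields the functorial perfect duality of projective Hilbert $\Gamma$-modules
$$\overline{\mathbb{H}}_2^{\,i}(|\mathbb{S}|, F^{\bullet}\otimes_K\C)\otimes\overline{\mathbb{H}}_2^{\,-i}(|\mathbb{S}|,\mathbb{D}(F^{\bullet})\otimes_K\C)\longrightarrow\C,$$
which respects the real structure when $K\subset\R$. Functoriality in $F^{\bullet}$ is inherited because the combinatorial $\mathbb{D}$ and the pairing of the preceding lemma are both functorial already at the level of complexes; independence of the choice of $\mathbb{S}$ follows since any two $\Gamma$-triangulations have a common refinement, over which all the structure maps are compatible by cofinality of the triangulation system together with the functoriality of $\mathbb{H}^{*}_2$ and of $\mathbb{D}$.

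The main obstacle is the compatibility bookkeeping in the second step: one must verify not only that the combinatorial model computes $R\underline{Hom}^{\bullet}(F^{\bullet},\omega^{\bullet})$ equivariantly, but that this identification is compatible with the $L^2$-completion functor $l^2\Gamma\otimes_{K\Gamma}(-)$ and with the passage to reduced cohomology $\overline{\mathbb{H}}_2^{\bullet}=P(H^{\bullet}(-))$, so that the pairing furnished by the simplicial lemma coincides with the one asserted here. Once the combinatorial description is pinned down this is routine, but it does require keeping track of which quasi-isomorphisms are being inverted — sheaf-theoretic ones suffice, since $\mathbb{D}$ is defined on honest complexes of sheaves before any localization — and invoking the exactness of $l^2\Gamma\otimes_{K\Gamma}(-)$ on equivariant constructible sheaves (Lemma \ref{extension1}), which is exactly what makes $\overline{\mathbb{H}}_2^{\bullet}$ invariant under sheaf-theoretic quasi-isomorphisms.
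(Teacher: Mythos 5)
Your proposal is correct and follows the same route as the paper, whose entire proof of Proposition \ref{duality} is the single sentence that it is a restatement of the preceding simplicial duality lemma; you have simply made explicit the steps that restatement presupposes (choice of an adapted cocompact $\Gamma$-triangulation, identification of the combinatorial $\mathbb{D}$ with $R\underline{Hom}^{\bullet}(-,\omega^{\bullet})$, and independence of the triangulation).
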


\begin{proof}
This is a restatement the previous lemma. 
\end{proof}

\begin{coro} $\dim_{\Gamma} \overline{\mathbb{H}}_2^{\ i}(\widetilde{X} , {F}^{\bullet}\otimes_K\C ) =\dim_{\Gamma}  \overline{\mathbb{H}}_2^{ \ -i}(\widetilde{X}  , 
 R\underline{Hom}^{\bullet} ({F}^{\bullet}, \omega^{\bullet}) \otimes_K\C)$.
 
\end{coro}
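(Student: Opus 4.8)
The plan is to deduce this directly from Proposition \ref{duality}, so the only real work is translating the Hilbert $\Gamma$-module duality stated there into an equality of Von Neumann dimensions. First I would observe that a perfect $\Gamma$-equivariant pairing $M\otimes N\to\C$ between two finite-type (projective) Hilbert $\Gamma$-modules, being bounded and non-degenerate on both factors, induces a bounded $\Gamma$-equivariant injection $N\to \overline{M}^{\ast}$ into the (conjugate) dual Hilbert $\Gamma$-module with dense image; since both sides are finite-type Hilbert $\Gamma$-modules this map is in fact an isomorphism of Hilbert $\Gamma$-modules — one may see this via polar decomposition of the associated operator, or simply read it off from the bookkeeping in \cite{Luc}, which is exactly what the chain of lemmas culminating in Proposition \ref{duality} (in particular the appeal to \cite[Lemma 2.17 (2), p. 82]{Luc}) already encodes. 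Applying this with $M=\overline{\mathbb{H}}_2^{\,i}(\widetilde X,F^{\bullet}\otimes_K\C)$ and $N=\overline{\mathbb{H}}_2^{\,-i}(\widetilde X,R\underline{Hom}^{\bullet}(F^{\bullet},\omega^{\bullet})\otimes_K\C)$ gives an isomorphism $N\cong \overline{M}^{\ast}$ of finite-type Hilbert $\Gamma$-modules.

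Next I would invoke invariance of $\dim_{\Gamma}$ under duality: for any finite-type Hilbert $\Gamma$-module $M$ one has $\dim_{\Gamma}\overline{M}^{\ast}=\dim_{\Gamma}M$. Concretely, realizing $M$ as the image of a self-adjoint $\Gamma$-equivariant idempotent $p\in\mathrm{Mat}_n(\Ng)$ acting on $(l_2\Gamma)^{\oplus n}$, the conjugate dual $\overline{M}^{\ast}$ is the image of the idempotent obtained from $p$ by the canonical real structure $\dagger$, and the canonical trace $\mathrm{tr}_{\Gamma}$ on $\Ng$ is preserved by $\dagger$ since it is real-valued and continuous on $\mathbb{Z}\Gamma$; hence $\dim_{\Gamma}\overline{M}^{\ast}=\mathrm{tr}_{\Gamma}(p^{\dagger})=\mathrm{tr}_{\Gamma}(p)=\dim_{\Gamma}M$. (This is a standard fact, see \cite{Luc}.) Combining the two steps yields $\dim_{\Gamma}\overline{\mathbb{H}}_2^{\,i}(\widetilde X,F^{\bullet}\otimes_K\C)=\dim_{\Gamma}\overline{\mathbb{H}}_2^{\,-i}(\widetilde X,R\underline{Hom}^{\bullet}(F^{\bullet},\omega^{\bullet})\otimes_K\C)$, which is the assertion; the preservation of real structures noted in Proposition \ref{duality} plays no role here.

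I do not expect a genuine obstacle: the corollary is the dimension-level shadow of a perfect duality already established, and Von Neumann dimension is designed precisely to behave well under such dualities. The one point that deserves a sentence of care is making sure "perfect duality" is read in the strong sense — that the induced map to the (conjugate) dual is an isomorphism of Hilbert $\Gamma$-modules, not merely an isomorphism up to $\Gamma$-dimension of the underlying $\Ng$-modules — but that strong form is exactly what the lemmas preceding Proposition \ref{duality} deliver, so nothing further is needed.
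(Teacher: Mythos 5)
Your proposal is correct and follows the route the paper intends: the corollary is stated without proof precisely because it is the dimension-level consequence of the perfect duality in Proposition \ref{duality}, and your two steps (perfect pairing of finite-type projective Hilbert $\Gamma$-modules gives an isomorphism onto the conjugate dual, and $\dim_{\Gamma}$ is invariant under this duality, e.g.\ via the trace-of-idempotent description) are exactly the standard facts from \cite{Luc} that make this immediate.
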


\begin{rem}
 It is extremely tempting to conjecture that, with the notations of \cite{Eys},  we have a $L_2$-Serre Duality theorem for Coherent analytic sheaves on complex spaces stating that there is perfect duality:
 $$\bar H^{q}_2 (\widetilde{X}, \mathcal{F}) \otimes \bar H^{-q}_2 (\widetilde{X}, R \underline{Hom}^{\bullet}_{\Ox}( \mathcal{F}, \Omega^{\bullet}_X)) \to \C, 
 $$
 where $\Omega^{\bullet}_X$ is the dualizing complex and that this even holds for coherent $\Dx$-modules if $X$ is smooth. Proving this conjecture is likely to be quite technical and does not seem promising for applications.
\end{rem}

\subsection{\label{liftrh}The comparison isomorphism}
We need to show that the comparison isomorphism which a priori lives is a quasi-isomorphism of  complexes of $\Ng$-module lifts to a quasi isomorphism of 
bounded complexes in $E_f(\Gamma)$. This is not completely trivial. However, with the notations in Theorem \ref{thm1}:
\begin{lem} If $\Sigma$ is a triangulation of $X$ refining a stratification $S$ of $X$ and $DR(\mathcal{M})$ has $S$-constructible cohomology, and if $P$ is a bounded complex of $\Sigma$ -constructible
sheaves of $\C$-vector spaces then one represent the quasi-isomorphism $\alpha$ by a morphism of complexes $\widetilde{\alpha}:P \to F_p DR(\mathcal{M})$ composed with the natural 
quasi-isomorphism $ F_p DR(\mathcal{M}) \to DR(\mathcal{M})$ for some $p\gg 1$. 
 
\end{lem}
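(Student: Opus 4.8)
The plan is to promote $\alpha$, given only as an isomorphism in $D^b(X)$, to an honest morphism of complexes of sheaves with target $F_pDR(\mathcal{M})$. First I would use that, by \cite[Lemma 1.14]{Sai} (already invoked in Proposition \ref{rhcomp}), $F_pDR(\mathcal{M})\hookrightarrow DR(\mathcal{M})$ is a quasi-isomorphism as soon as $p\gg 1$, hence an isomorphism in $D^b(X)$; composing $\alpha$ with its inverse yields a single morphism $\bar\alpha\colon P\to F_pDR(\mathcal{M})$ in $D^b(X)$, so it is enough to represent $\bar\alpha$ by a cochain map. Next, by Kashiwara's constructibility theorem \cite{Ka,Me}, $DR(\mathcal{M})$ and hence $F_pDR(\mathcal{M})$ has $S$-constructible, hence $\Sigma$-constructible, cohomology (as $\Sigma$ refines $S$); so $P$, $F_pDR(\mathcal{M})$ and $DR(\mathcal{M})$ all lie in $D^b_{\Sigma-c}(X)$, and by Proposition \ref{ks8111} the group $\mathrm{Hom}_{D^b(X)}(P,F_pDR(\mathcal{M}))$ coincides with the corresponding group in $D^b(\mathbf{Cons}_{\Sigma})$, where $\mathbf{Cons}_{\Sigma}$ is the abelian category of functors on the poset of simplices of $\Sigma$.

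I would then reduce to the case where $P$ itself is a bounded complex of projective cellular sheaves. Indeed $\mathbf{Cons}_{\Sigma}$ has enough projectives and finite projective dimension, so $P$ admits a quasi-isomorphism $L^\bullet\to P$ from a bounded complex $L^\bullet$ whose terms are finite direct sums of the indecomposable projectives $j_{U_\sigma!}\underline{\C}_{U_\sigma}$ attached to the open stars $U_\sigma=\bigcup_{\sigma\subseteq\tau}|\tau|$ ($j_{U_\sigma}\colon U_\sigma\hookrightarrow X$ the open inclusion); replacing $P$ by $L^\bullet$ changes nothing downstream, since $L^\bullet\to P$ is a quasi-isomorphism of $\Sigma$-constructible complexes and therefore induces isomorphisms on constructible $L^2$-cohomology and on its lift to $E_f(\Gamma)$, by Lemma \ref{extension1} and Corollary \ref{algiso}. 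So assume $P=L^\bullet$. Refining $\Sigma$ if necessary — a finer triangulation still refines $S$ and, by Proposition \ref{ks8111}, affects neither $D^b_{\Sigma-c}(X)$ nor $\bar\alpha$ — I would arrange that $H^{q}(U_\sigma,\mathcal{G})=0$ for all $q>0$ and for each of the finitely many coherent $\Ox$-modules $\mathcal{G}$ occurring in $F_pDR(\mathcal{M})$; the open stars $U_\sigma$, whose nonempty finite intersections are again open stars, may be taken Stein for this, exactly the kind of choice made for the Stein set $U$ in the proof of Proposition \ref{rhcomp}. Since $R\underline{Hom}_{\C_X}(j_{U_\sigma!}\underline{\C}_{U_\sigma},\mathcal{G})=Rj_{U_\sigma *}(\mathcal{G}|_{U_\sigma})$, this gives $\mathrm{Ext}^{q}_{\C_X}(j_{U_\sigma!}\underline{\C}_{U_\sigma},\mathcal{G})=H^q(U_\sigma,\mathcal{G})=0$ for $q>0$, so, applying this termwise, the natural map $\mathrm{Hom}^{\bullet}_{\C_X}(P,F_pDR(\mathcal{M}))\to R\Gamma\big(X,R\underline{Hom}_{\C_X}(P,F_pDR(\mathcal{M}))\big)$ is a quasi-isomorphism.

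Hence $\bar\alpha\in\mathrm{Hom}_{D^b(X)}(P,F_pDR(\mathcal{M}))$, being the $H^0$ of the right-hand complex, is the class of an actual morphism of complexes $\widetilde\alpha\colon P\to F_pDR(\mathcal{M})$; its composite with $F_pDR(\mathcal{M})\to DR(\mathcal{M})$ represents $\alpha$, and since $\alpha$ is invertible in $D^b(X)$ so is that composite, so $\widetilde\alpha$ itself is a quasi-isomorphism. (Equivalently one may first build a cochain representative $P\to DR(\mathcal{M})=\bigcup_p F_pDR(\mathcal{M})$ of $\alpha$ and note that, $P$ being a bounded complex of finite-type sheaves, it factors through $F_pDR(\mathcal{M})$ for $p\gg 1$.) The delicate point is the middle step, upgrading a morphism of $D^b(X)$ to a morphism of complexes: this fails for an arbitrary representative of $P$ — a constructible sheaf with a closed-support constituent $i_{\bar\sigma *}\underline{\C}_{\bar\sigma}$ has nonzero $\mathrm{Ext}^{>0}_{\C_X}$ into coherent $\Ox$-modules — and it is precisely the reduction to projective cellular $P$ together with the coherent acyclicity of the open stars that removes the obstruction; securing that acyclicity is the real technical content, handled as in the proof of Proposition \ref{rhcomp}.
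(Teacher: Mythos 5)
Your argument is in substance the paper's own: the paper disposes of this lemma by citing \cite[Prop.~8.1.9]{KS}, and what you have written out --- resolving $P$ by the cellular projectives $j_{U_\sigma !}\underline{\C}_{U_\sigma}$ and using acyclicity of the terms of $F_pDR(\mathcal{M})$ on open stars to identify $\mathrm{Hom}$ in the homotopy category with $\mathrm{Hom}$ in the derived category --- is precisely the content of that proposition together with the verification of its hypotheses in this coherent setting. Two caveats. First, your conclusion is strictly a zigzag $P \leftarrow L^{\bullet} \to F_pDR(\mathcal{M})$ rather than a cochain map out of $P$ itself; this is harmless for the way the lemma is used in the subsequent corollary (which already manipulates zigzags of \v{C}ech complexes), but it is a weakening of the literal statement. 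Second, the assertion that $\Sigma$ can be refined so that the open stars are Stein is nontrivial and left unproved: open stars are star-shaped but not convex, and star-shaped domains in $\C^n$ need not be Stein, so one must actually arrange (say) that each closed star is convex in a holomorphic chart --- and do so compatibly with the refinement of $S$ and the $\Gamma$-equivariance. This Steinness is exactly the hypothesis $H^k(U(\sigma);G^j)=0$ for $k\neq 0$ of \cite[Prop.~8.1.9]{KS} applied to the coherent terms $G^j$ of $F_pDR(\mathcal{M})$, which the paper's one-line citation also leaves unverified; so it is a shared elision rather than a defect peculiar to your route, but it is, as you say yourself, where the real technical content sits.
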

\begin{proof}
 This follows from \cite[Prop 8.1.9]{KS}. 
\end{proof}

\begin{coro}
 If $\mathfrak{U}$ is a finite covering by Oka-Weil domains such that:
 \begin{itemize}
  \item 
it refines the covering $\mathfrak{V}$ of $X$ by the stars of the vertices of $\Sigma$
\item the non empty intersections are contractible,
 \end{itemize} 
 we have a quasi-isomorphism of $\Ng$-Fr\'echet modules, the leftmost two being in $E_f(\Gamma)$:  
 $$  \mathcal{C}^{\bullet}(\mathfrak{V}, P) \longleftarrow \mathcal{C}^{\bullet}(\mathfrak{U}, P) \buildrel{\wt{\alpha}}\over\longrightarrow \mathcal{C}^{\bullet}(\mathfrak{U}, F_p DR(\mathcal{M})).
 $$
\end{coro}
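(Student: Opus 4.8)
The plan is to verify the three claimed quasi-isomorphisms separately and then check that all the complexes involved actually lie in $E_f(\Gamma)$ where claimed. The geometric content is a standard comparison of \v{C}ech complexes for nested good coverings, combined with the previous lemma, so the real work is purely organizational: keeping track of which comparisons are differential quasi-isomorphisms, which are sheaf-theoretic quasi-isomorphisms, and which complexes are genuinely finitely generated over $\Ng$.

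First I would treat the leftward arrow $\mathcal{C}^{\bullet}(\mathfrak{V}, P) \longleftarrow \mathcal{C}^{\bullet}(\mathfrak{U}, P)$. Since $\mathfrak{U}$ refines $\mathfrak{V}$ and all nonempty intersections in both coverings are contractible, the sheaf $P$ (a bounded complex of $\Sigma$-constructible sheaves, hence with locally constant cohomology on each contractible patch) has vanishing higher cohomology on the members and intersections of both coverings. Therefore both \v{C}ech complexes compute $\mathbb{H}^{\bullet}(X, P)$ and the refinement map is a sheaf-theoretic quasi-isomorphism. Applying $Rl^2\Gamma \otimes_{\C\Gamma} \pi_! (-)$, which is exact on $\Gamma$-constructible sheaves by Lemma \ref{extension1}, turns this into a quasi-isomorphism of complexes of $\Ng$-Fr\'echet modules; because $\mathfrak{V}$ is the cover by stars of vertices of a cocompact triangulation, the terms $\mathcal{C}^{\bullet}(\mathfrak{V}, P)$ are finite direct sums (over $\Gamma\backslash\Sigma$) of induced modules $l^2\Gamma\otimes_{\C H} V$ with $H$ finite and $V$ finite-dimensional, hence objects of $E_f(\Gamma)$ — this is exactly the computation performed in the proof of Proposition \ref{co}. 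The same holds for $\mathcal{C}^{\bullet}(\mathfrak{U}, P)$ since $\mathfrak{U}$ is a finite Oka-Weil refinement with contractible intersections.

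Next I would handle the rightward arrow $\mathcal{C}^{\bullet}(\mathfrak{U}, P) \buildrel{\wt{\alpha}}\over\longrightarrow \mathcal{C}^{\bullet}(\mathfrak{U}, F_p DR(\mathcal{M}))$. Here I invoke the previous lemma to get, for $p \gg 1$, the morphism of complexes of sheaves $\wt{\alpha}: P \to F_p DR(\mathcal{M})$ lifting $\alpha$ and inducing an isomorphism on cohomology sheaves. Since the members of $\mathfrak{U}$ and their intersections are Stein (Oka-Weil) and the cohomology sheaves of $F_pDR(\mathcal{M})$ are $S$-constructible, hence locally constant on the contractible intersections, one can arrange (shrinking $\mathfrak{U}$ if necessary, or using that Oka-Weil domains have no higher coherent cohomology) that $\mathfrak{U}$ is acyclic for all terms of $F_pDR(\mathcal{M})$ as well as for $P$; then the induced map on \v{C}ech complexes computes the sheaf hypercohomology of $\wt{\alpha}$ and is therefore a quasi-isomorphism. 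Finally, Proposition \ref{rhcomp} (with $p=2$) identifies $\mathcal{C}^{\bullet}(\mathfrak{U}, F_p DR(\mathcal{M}))$ up to quasi-isomorphism with the $L^2$ De Rham \v{C}ech complex, and by the discussion preceding the corollary this complex is quasi-isomorphic to one in $E_f(\Gamma)$; but for the statement as written it suffices to note that the leftmost two complexes are the ones claimed to be in $E_f(\Gamma)$, which we established above.

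The main obstacle I anticipate is \emph{not} any of the individual quasi-isomorphisms — each is a routine acyclic-cover argument — but rather the bookkeeping needed to choose a single covering $\mathfrak{U}$ that is simultaneously acyclic for $P$, acyclic for every term of $F_pDR(\mathcal{M})$, refines $\mathfrak{V}$, and has all nonempty intersections contractible; one must appeal to the cofinality of triangulations among stratifications (Proposition 8.2.5 in \cite{KS}, as recalled earlier) together with the existence of arbitrarily fine Stein/Oka-Weil refinements, and verify these choices are compatible with the $\Gamma$-action so that everything descends to $\Gamma\backslash X$ with finitely many patches. A secondary subtlety is that $F_pDR(\mathcal{M})$ is a complex of coherent sheaves and differential operators, not of $\C$-vector spaces, so the \v{C}ech functor here is the coherent one $\mathcal{C}$ of Section \ref{vncohd}; one must check that the morphism $\wt{\alpha}$, being a sheaf morphism but not $\Ox$-linear, is still handled by that formalism — which it is, since $\mathcal{C}$ extends to differential complexes by the lemma in Section \ref{vncohd}, and $\wt{\alpha}$ factors through the inclusion of constructible sheaves via the monomorphism $i_\pi$.
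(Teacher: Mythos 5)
Your argument is correct and is essentially the proof the paper intends (the paper states the corollary without proof, treating it as immediate from the preceding lemma): Leray acyclicity of both coverings for $P$ and for the coherent terms of $F_pDR(\mathcal{M})$, the sheaf-level representative $\wt{\alpha}$ from the lemma, Proposition \ref{rhcomp} to pass from the constructible to the coherent $L^2$ model, and the term-by-term identification of the constructible \v{C}ech complexes with finite-type Hilbert $\Gamma$-modules as in Proposition \ref{co}. Your closing remarks on choosing a single $\Gamma$-compatible covering and on $\wt{\alpha}$ not being $\Ox$-linear are exactly the right points of care, and are handled as you indicate.
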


Now we have a model of $\mathbb{H}_{2,DR}(\wt{X}, \mathcal{M}$ which is a bounded complex of finite type projective Hilbert $\Gamma$-Modules with a quasi isomorphism:
$$ L^{\bullet} \to \mathcal{C}^{\bullet}(\mathfrak{U}, F_p DR(\mathcal{M})).
$$

Since the left hand side underlies a qhtf complex of Montelian modules, by \cite[Proposition 4.4.14]{Eys},  one constructs a morphism of complexes of projective Hilbert $\Gamma$-module which is a 
quasi isomorphism $$
\mathcal{C}^{\bullet}(\mathfrak{V}, P)\to M^{\bullet}
$$
and is the promised  lift of $rh_{2}$ to an isomorphism in $D^b(E_f(\Gamma))$. The functoriality of the construction is left to the reader. 
This concludes  the proof of Theorem \ref{thm1}. 

\subsection{$L_2$-cohomology of Mixed Hodge Modules} 
Now, $X$ is a complex projective manifold.

Let $\mathbb{M}$ be a Mixed Hodge Module in the sense of \cite{SaiMHM}. It is a triple $$((\mathcal{M}, F, W), (\mathbb{M}^B, W^B), \alpha)$$ where:
\begin{itemize}
 \item $(\mathcal{M}, F, W)$ is a bifiltered $\Dx$-module (which is regular holonomic),
 \item  $(\mathcal{M},F)$ is a good filtration,
 \item   $\mathbb{M}^B$ is a perverse sheaf over  $\Q$,
 \item $W_B$ is a filtration of $\mathbb{M}^B$ in the abelian category of perverse sheaves,
 \item $\alpha: DR(\mathcal{M})\to \mathbb{M}^B\otimes_{\Q} \C$ an isomorphism in $D^b_c(X, \C)$, actually a filtered quasi isomorphism if the  weight filtrations are 
taken into account.
\end{itemize}

all these data satisfying some non-trivial conditions.  

\begin{propdefi}
We can define in $E_f(\Gamma)$,  a real structure and a real weight filtration $W$ on the $\Ng$-module $\mathbb{H}_2^{k}(\wt{X},\pi^{-1} \mathbb{M}^B )$
 by taking the image of the functorial morphism $$ \mathbb{H}_2^{k}(\wt{X},\pi^{-1} W_{\bullet} \mathbb{M}^B ) \longrightarrow \mathbb{H}_2^{k}(\wt{X},\pi^{-1} \mathbb{M}^B )$$ 
 and  a filtration $F_{DR,2}$ on $H^k_{DR,2}(\wt{X}, \mathcal{M})$ by taking the image of the natural map
 $$ H^k_2 ( \wt{X}, F_{\bullet} DR(\mathcal{M})) \to H^k_2 ( \wt{X},  DR(\mathcal{M})). 
 $$ 
 
 Transporting the $F_{DR,2}$
 filtration by the isomorphism $rh_2$ induced by $\alpha$ between these objects of $E_f{\Gamma}$, 
 we get  a real structure
, a real  $W$-filtration, the weight filtration,  and a complex  filtration $F$, which we shall call the algebraically defined Hodge filtration,  on $H^k_2(\wt{X}, \mathbb{M}):=\mathbb{H}_2^{k}(\wt{X},\pi^{-1} \mathbb{M}^B )$. 

There is a perfect duality of the Hilbert $\Gamma$-modules $\bar H^{q}_2 (\widetilde{X}, \mathbb{M})$ and $ \bar H^{-q}_2 (\widetilde{X}, \mathbb{D}(\mathbb{M}))$.  
\end{propdefi}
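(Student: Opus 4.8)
The plan is to prove the Proposition-Definition in two parts: first the well-definedness of the real structure and the two filtrations, then the perfect duality statement. For the well-definedness, the real structure on $\mathbb{H}_2^k(\wt X,\pi^{-1}\mathbb{M}^B)$ comes directly from the fact that $\mathbb{M}^B$ is a perverse sheaf of $\Q$-vector spaces, so $\pi^{-1}\mathbb{M}^B$ is defined over $\Q\subset\R$, and the functor $\mathbb{H}^\bullet_2(\wt X,-)$ lifts to $\R E_f(\Gamma)$ by the remarks at the end of Section \ref{sec1} and Section 3 on real structures; thus the underlying $\Ng$-module carries a canonical conjugate-linear involution. Since $W_\bullet\mathbb{M}^B$ is a filtration by $\Q$-perverse subsheaves, each term $\mathbb{H}_2^k(\wt X,\pi^{-1}W_\bullet\mathbb{M}^B)$ is again real, and the image filtration $W$ on $\mathbb{H}_2^k(\wt X,\pi^{-1}\mathbb{M}^B)$ is therefore a real filtration in the abelian category $\R E_f(\Gamma)$ — here one uses that images exist in $E_f(\Gamma)$ (it is abelian) and that $\Phi$ being exact and faithful detects them.

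For the Hodge filtration $F$, I would invoke the functor $H^\bullet_{DR,2}(\wt X,-)=H^\bullet_2(\wt X,-)\circ tDR$ from Subsection \ref{vncohd}, applied to the good filtration $F_\bullet\mathcal{M}$: each $F_p DR(\mathcal M)$ is an object of $K^b\mathbf{Coh}(\Ox,\diff)_{dqi}$ (this is exactly the lemma preceding $tDR$), so $H^k_2(\wt X, F_\bullet DR(\mathcal M))$ makes sense in $E_f(\Gamma)$, and $F_{DR,2}$ is the image filtration of the natural maps to $H^k_2(\wt X, DR(\mathcal M))=H^k_{DR,2}(\wt X,\mathcal M)$. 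One then transports $F_{DR,2}$ along the isomorphism $rh_2$ in $D^b(E_f(\Gamma))$ lifting $\alpha$, constructed in Subsection \ref{liftrh}; since $rh_2$ is an isomorphism in $E_f(\Gamma)$ (not merely of $\Ng$-modules) and $\mathbb{H}_2^k(\wt X,\pi^{-1}\mathbb{M}^B)$ is canonically identified with $H^k_{DR,2}(\wt X,\mathcal M)$ through it, the filtration $F$ lands on the stated object. Functoriality in $\mathbb{M}$ follows because all three ingredients — $\mathbb{H}^\bullet_2(\wt X,-)$, $H^\bullet_{DR,2}(\wt X,-)$, and $rh_2$ — are functorial, and image filtrations are preserved by morphisms in an abelian category.

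For the perfect duality, I would combine Proposition \ref{duality} with the compatibility of Verdier duality and the de Rham/Riemann–Hilbert picture inside $MD(X)$. Concretely, $\mathbb{D}(\mathbb{M})$ is again an object of $MHM(X)\subset MD(X)$ with Betti part $\mathbb{D}(\mathbb{M}^B)=R\underline{Hom}^\bullet(\mathbb{M}^B,\omega^\bullet)$ up to the appropriate Tate twist; Proposition \ref{duality} already gives a functorial perfect duality of projective Hilbert $\Gamma$-modules $\bar{\mathbb{H}}_2^i(\wt X,\mathbb{M}^B\otimes\C)\otimes\bar{\mathbb{H}}_2^{-i}(\wt X, R\underline{Hom}^\bullet(\mathbb{M}^B,\omega^\bullet)\otimes\C)\to\C$ preserving real structures. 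Since by Proposition-Definition the left factor is, by definition, $\bar H^q_2(\wt X,\mathbb{M})$ and the right factor is $\bar H^{-q}_2(\wt X,\mathbb{D}(\mathbb{M}))$, the duality follows immediately; one only needs to check the Tate-twist bookkeeping matches the weight conventions, which for the statement as phrased (no explicit twist normalization on the filtrations) is a formality.

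The main obstacle I expect is not in the duality — which is essentially a restatement of Proposition \ref{duality} — but in making the transport of $F_{DR,2}$ along $rh_2$ rigorous at the level of $E_f(\Gamma)$ rather than $\Ng$-modules: one must ensure that the chain-level lift of $\alpha$ constructed in Subsection \ref{liftrh} is compatible with the sub-objects $H^k_2(\wt X,F_p DR(\mathcal M))$, i.e.\ that the comparison quasi-isomorphism can be chosen filtered with respect to $F_\bullet$. This is where one uses that $\alpha$ is in fact a filtered quasi-isomorphism (part of the definition of a Mixed Hodge Module) together with the filtered version of the lemma of Subsection \ref{liftrh}; making the filtered lift explicit via the \v{C}ech models $\mathcal{C}^\bullet(\mathfrak U, F_p DR(\mathcal M))$ and Proposition 4.4.14 of \cite{Eys} is the technical heart. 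Everything else is bookkeeping in the abelian category $E_f(\Gamma)$ and its real refinement $\R E_f(\Gamma)$.
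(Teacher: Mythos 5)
Your proof is correct and follows essentially the same route as the paper, which disposes of the statement in one line ("a direct application of our construction" for the definitions, Proposition \ref{duality} for the duality); you have simply spelled out which pieces of the construction are being invoked, and you identify them accurately. The one remark worth making is that the "main obstacle" you flag is not actually needed for this statement: $F_{DR,2}$ is defined on $H^k_{DR,2}(\wt{X},\mathcal{M})$ first and then transported along $rh_2$, which is an isomorphism of \emph{objects} of $E_f(\Gamma)$, so no filtered chain-level compatibility of the lift of $\alpha$ with the subobjects $H^k_2(\wt{X},F_pDR(\mathcal{M}))$ is required here (it only becomes relevant for the later comparison of algebraic and analytic Hodge filtrations).
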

\begin{proof} This is a direct application of our construction. 
 The last statement follows from Proposition \ref{duality}. 
\end{proof}

The following implies Conjecture \ref{theconj} in the introduction. 

\begin{conj}\label{L2hlpack} After taking reduced cohomology and closure of $W, F$, the real structure,
 the weight filtration and the algebraically defined Hodge filtration on $H^k_2(\wt{X}, \mathbb{M})$ are the constituents of a functorial graded polarisable Mixed Hodge structure in the abelian category $\R E_f(\Gamma)$. 
 
 The mixed Hodge numbers (namely the dimension of its $I^{p,q}$) obey the same  restrictions as in \cite{DelH2,DelH3,SaiMHM}. 
 
 If $\mathbb{M}$ is pure polarized,  $L$ is the cup product by a Hodge class, and $S$
 is a polarization  defined by the combination of a Saito polarization and $L_2$-Poincar\'e Verdier duality,
$(\bigoplus_k \overline{H^k_2}(\wt{X}, \mathbb{M}), L, S)$ is a polarized Hodge-Lefschetz in $\R E_f(\Gamma)$ in the sense of \cite[Part 0, Chapter 3]{MHMproject}.
\end{conj}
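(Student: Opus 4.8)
The plan is to reduce Conjecture \ref{L2hlpack} to the case of pure polarizable Hodge modules and then, for the pure case, to import the $L_2$-Hodge-theoretic machinery of \cite{Eys1} and the formalism of polarized Hodge-Lefschetz structures in an abelian category. First I would observe that, by the same weight-filtration spectral sequence argument that underlies Deligne-Saito, a functorial graded-polarizable $\Ug$-MHS on $H^k_2(\wt X,\mathbb{M})$ for all $\mathbb{M}$ follows once one knows: (i) each $Gr^W_\ell H^k_2(\wt X,\mathbb{M})$ carries a functorial pure $\Ug$-Hodge structure of weight $\ell$, compatibly with the real structure, and (ii) the weight spectral sequence of the filtered object $(H^\bullet_2(\wt X,\pi^{-1}W_\bullet \mathbb{M}^B))$ degenerates at $E_2$ in $\R E_f(\Gamma)$ after applying $\Ug\otimes_{\Ng}$. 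Step (ii) is the usual consequence of $E_1$-degeneration for pure pieces plus the strictness statements, transported through $rh_2$; here one uses that $\Ug\otimes_{\Ng}-$ is exact (\cite[Theorem 8.2.2]{Luc}) and that finite $\Gamma$-dimensional $\Ug$-modules form an abelian category with a well-behaved dimension (so the usual dimension-counting proof of $E_2$-degeneration works verbatim). Lemma \ref{ugtensclosure} is exactly the tool needed to check that $n$-opposedness of the transported filtrations on the graded pieces is equivalent to it after $\Ug\otimes_{\Ng}$.

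Next I would treat the pure polarizable case, which is the heart of the matter. Given $\mathbb{M}$ pure of weight $w$, one needs a polarized Hodge-Lefschetz structure on $\bigoplus_k \overline{H^k_2}(\wt X,\mathbb{M})$ with $L$ the cup product by a K\"ahler class. By the decomposition theorem one may try to reduce to $\mathbb{M}=\mathbb{M}_i(\mathbb V)$ for $i:Z\hookrightarrow X$ a smooth closed submanifold and $\mathbb V$ a polarized $\Q$-VHS, which is precisely the first bullet of Theorem \ref{thm2}: there \cite{Eys1} provides the analytic $L_2$-Hodge decomposition, the hard Lefschetz theorem for $L_2$-cohomology of a VHS on a compact K\"ahler manifold, and the polarization, all in the category of Hilbert $\Gamma$-modules with real structure. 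One then transports these along the proper pushforward $i_*$ and the isomorphisms of Theorem \ref{thm1}, checking that cup product with a K\"ahler class corresponds to the Lefschetz operator $L$ on the $\Dx$-side; the compatibility of $L_2dR$ with proper direct images (stated in Theorem \ref{thm1}) makes this transport functorial. The Hodge-number restrictions follow since $\overline{H^k_2}$ and $H^k_2$ have the same $\Gamma$-dimension and the analytic harmonic representatives put the $I^{p,q}$-decomposition in the expected range. The polarization $S$ is the pairing built from the Saito polarization of $\mathbb{M}$ and the $L_2$-Poincar\'e-Verdier duality of Proposition \ref{duality}; positivity of $S$ on primitive parts is exactly what \cite{Eys1} establishes in the smooth VHS case and is inherited under $i_*$.

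The step I expect to be the main obstacle is the reduction from a general pure polarizable $\mathbb{M}$ to the geometric case $\mathbb{M}_i(\mathbb V)$: this requires a form of the decomposition theorem valid for $L_2$-cohomology on the infinite cover, i.e. that the $L_2dR$-image of a direct summand of a Hodge module is a direct summand compatibly with all the structures, together with the fact, alluded to after Theorem \ref{thm2}, that one needs the coincidence of the algebraic and analytic definitions of the Hodge filtration to know the algebraically defined $F$ on $H^k_2$ agrees with the analytically defined harmonic one. Until that coincidence is available, the cleanest honest statement is the one recorded in Theorem \ref{thm2}: the conjecture holds for $\mathbb{M}$ whose $Gr_W$ is a sum of modules $\mathbb{M}_i(\mathbb V^\alpha)$ and for the two intermediate-extension/co-extension cases, where the additional input is Dingoyan's degeneration argument \cite{Din} adapted to our $E_f(\Gamma)$-valued setting. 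So the proposal is: prove the reduction to pure polarizable via the $E_2$-degeneration and $\Ug$-exactness; prove the pure geometric case by transporting \cite{Eys1} through Theorem \ref{thm1}'s proper-pushforward compatibility and through $i_*$; and flag the general pure case as conditional on the algebraic-analytic comparison of Hodge filtrations plus an $L_2$ decomposition theorem, which is why the theorem is stated only in the three listed cases.
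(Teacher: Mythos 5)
The statement you were asked to prove is Conjecture \ref{L2hlpack}, and the paper does not prove it: it is explicitly left open, with only the partial results of Theorem \ref{thm2} established. Your proposal, read as a proof, therefore has an unavoidable gap --- but it is the same gap the author flags, and you identify it correctly. Your two open points (the coincidence of the algebraically defined Hodge filtration with the analytic/harmonic one, and the absence of an $L_2$ decomposition theorem, or even of $\Gamma$-Fredholmness, for the De Rham complex of a general pure polarizable Hodge module) are precisely the obstructions named in the introduction and at the end of Section \ref{Poin}. So you have not closed the conjecture, and you say so; what you have written is a strategy, not a proof.

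As a strategy it coincides with the paper's own program rather than offering an alternative. Your step (i)/(ii) reduction to the pure case via the weight spectral sequence, $E_1$-degeneration on the graded pieces, exactness of $\Ug\otimes_{\Ng}-$ and the $n$-opposedness criterion of Lemma \ref{ugtensclosure} is exactly the content of the remark following the conjecture together with Lemma \ref{reducpure} (packaged there as the $\Ug$-Mixed Hodge Complex formalism of Deligne). Your treatment of the pure geometric case $\mathbb{M}=\mathbb{M}_i(\mathbb{V})$ by transporting the analytic $L_2$-Hodge decomposition, hard Lefschetz and polarization of \cite{Eys1} through $i_*$ and the comparison isomorphism is the content of Theorem \ref{HodgeComplex} and of the first subsection of Section 6; note that the paper does not in fact use the compatibility of $L_2dR$ with proper direct images from Theorem \ref{thm1} here, but rather the elementary identification of \v{C}ech complexes $\mathcal{C}^{\bullet}(\mathfrak{U}, l^2\pi_* i_*DR(\cdot))=\mathcal{C}^{\bullet}(i^{-1}\mathfrak{U}, l^2\pi_* DR(\cdot))$ together with the filtered quasi-isomorphism $i_*DR(\mathbb{M}^{DR}_Z(\mathbb{V}))\to F_q DR(\mathbb{M}^{DR}_i(\mathbb{V}))$, which is a more concrete route to the same functoriality. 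One further caution: invoking ``the decomposition theorem'' to reduce an arbitrary pure polarizable $\mathbb{M}$ to modules of the form $\mathbb{M}_i(\mathbb{V})$ with $Z$ a \emph{smooth closed submanifold} is too optimistic even granting a decomposition --- the strict support of a simple Hodge module is in general a singular subvariety, and the paper's geometric case only covers smooth $Z$; this is part of why the general pure case is genuinely open and not merely a bookkeeping reduction.
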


In the rest of the article we will see what can be done in that direction using only standard results.  
To  establish the Mixed Hodge structure,  it is enough to prove that, after tensoring with $\Ug$,
$F$ and $F^{\dagger}$ become $n$-opposed in $Gr_W^n$, hence that the tensor product with $\Ug$ is a $\Ug$-Mixed Hodge Structure thanks to Lemma \ref{ugtensclosure}. 
The Hodge Lefschetz structure seems to require that the construction of the Hodge filtration
is compatible with \cite{CKS}\cite{KK}. The duality statement survives after tensoring with $\Ug$ thanks to the duality anti-equivalence 
on finitely generated  $\Ug$-modules given by $M \mapsto M^{\vee}= Hom_{\Ug}(M, \Ug)$ (recall that $\Ug$ is selfinjective and that all finitely generated $\Ug$ modules are projective )  the following form:
\begin{lem}
 There is natural isomorphism 
 $$\Ug\otimes_{\Ng} \overline{H^k_2}(\wt{X}, \mathbb{M}) \to (\Ug\otimes_{\Ng} \overline{H^{-k}_2}(\wt{X},\mathbb{D}( \mathbb{M})) ^{\vee}.
 $$
\end{lem}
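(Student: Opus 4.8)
The plan is to deduce this lemma from the Poincaré--Verdier duality already established in the preceding Proposition-Definition together with the behaviour of Saito's duality functor on Mixed Hodge Modules. First I would recall that, at the level of the underlying perverse sheaves, the duality $\mathbb{D}(\mathbb{M})$ has Betti realization $\mathbb{D}(\mathbb{M}^B)$, the Verdier dual of $\mathbb{M}^B$ (up to the usual Tate twist normalizing weights), and that on a compact (projective) manifold $X$ the Verdier dualizing complex is $\omega_X^\bullet = \underline{\C}_X[2\dim X]$ up to a twist. Thus the perfect duality of Proposition \ref{duality}, applied with $F^\bullet = \pi^{-1}\mathbb{M}^B$, already gives a functorial perfect pairing of projective Hilbert $\Gamma$-modules
$$\overline{\mathbb{H}}_2^{\,i}(\wt{X}, \pi^{-1}\mathbb{M}^B) \otimes \overline{\mathbb{H}}_2^{\,-i}(\wt{X}, \pi^{-1}\mathbb{D}(\mathbb{M}^B)) \to \C,$$
compatible with the real structures since $\mathbb{M}^B$ is defined over $\Q \subset \R$. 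Rewriting this in the notation $\overline{H^k_2}(\wt{X},\mathbb{M}) = P(H^k(\mathbb{H}_2^\bullet(\wt{X},\pi^{-1}\mathbb{M}^B)))$ yields the statement at the level of Hilbert $\Gamma$-modules.

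Next I would pass from the Hilbert $\Gamma$-module pairing to the asserted $\Ug$-module isomorphism. A perfect pairing $E \otimes E' \to \C$ of finite-type projective Hilbert $\Gamma$-modules is the same datum as an isomorphism $E' \xrightarrow{\sim} E^*$ with the conjugate-dual (or, fixing the real structure, the transpose-dual) Hilbert $\Gamma$-module; applying $\Ug \otimes_{\Ng} -$ and using that $\Ug$ is semihereditary, self-injective, and that $\Ug \otimes_{\Ng} l^2\Gamma = \Ug$ (the lemmas recalled in Section 3), one gets $\Ug \otimes_{\Ng} E^* \cong (\Ug \otimes_{\Ng} E)^\vee$ where $(-)^\vee = \mathrm{Hom}_{\Ug}(-,\Ug)$ is the duality anti-equivalence on finitely generated (hence projective) $\Ug$-modules. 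Combining these identifications with $k = i$, $\mathbb{M}$ and $\mathbb{D}(\mathbb{M})$ in the two slots produces the natural isomorphism
$$\Ug \otimes_{\Ng} \overline{H^k_2}(\wt{X},\mathbb{M}) \xrightarrow{\ \sim\ } \bigl(\Ug \otimes_{\Ng} \overline{H^{-k}_2}(\wt{X},\mathbb{D}(\mathbb{M}))\bigr)^\vee.$$
Functoriality is inherited from the functoriality in Proposition \ref{duality} and from the functoriality of $\Ug \otimes_{\Ng} -$ and of $(-)^\vee$.

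The main obstacle, and the step I would be most careful about, is the bookkeeping identifying the Betti realization of Saito's $\mathbb{D}(\mathbb{M})$ with the Verdier dual of $\mathbb{M}^B$, including the Tate twists and the normalization of weight $0$, and checking that these twists are harmless here — on a compact manifold $X$ they only shift cohomological degrees and rescale, so they do not affect the existence of a perfect pairing, but one wants the indices $k$ and $-k$ in the statement to come out exactly as claimed (this is where $\dim X$ and the dualizing complex normalization enter). A secondary point is to make sure the pairing of Proposition \ref{duality} is $\Ng$-sesquilinear in the appropriate sense so that, after applying $\Ug \otimes_{\Ng}-$, it really does yield a $\Ug$-linear isomorphism to the algebraic dual rather than merely a $\Ug$-semilinear one; fixing the real structure (available since $\mathbb{M}^B$ is a $\Q$-perverse sheaf) circumvents this by letting one work with the transpose pairing throughout.
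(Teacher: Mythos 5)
Your proposal is correct and follows essentially the same route as the paper: the paper's proof is simply ``this follows from Proposition \ref{duality}'', with the preceding sentence invoking exactly the duality anti-equivalence $M\mapsto \mathrm{Hom}_{\Ug}(M,\Ug)$ on finitely generated (projective) $\Ug$-modules, $\Ug$ being self-injective, to transport the perfect Hilbert $\Gamma$-module pairing through $\Ug\otimes_{\Ng}-$. Your additional care about Tate twists and sesquilinearity is sensible bookkeeping but does not change the argument.
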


\begin{prf}
 This follows from  Proposition \ref{duality}. 
\end{prf}


\section{Analytical $L_2$ Hodge Structures}

\subsection{Complex polarized VHS on complete K\"ahler manifolds}
\begin{defi} Let $M$ be a complex manifold. 
A quadruple $(M,{\mathbb V},
F^.,S)$ is called a complex polarized variation of Hodge 
structure (a  VHS) iff ${\mathbb 
V}$ 
is a 
flat bundle of finite dimensionnal complex 
vector spaces with flat 
connection  $D$,
 $F^.$
a deacreasing filtration by holomorphic
 subbundles of ${\mathbb V}$ indexed by 
integers 
and $S$ a flat non degenerate $(-1)^w$-hermitian  
pairing such that 
\begin{enumerate}
\item The $C^{\infty}$ vector bundle $V$ 
associated to ${\mathbb V}$
 decomposes as a direct sum
$V= \oplus_{p+q=w} H^{p,q}$  with 
$F^P = \oplus_{p\ge P} H^{p,q}$
\item $ p \not = r) \Rightarrow 
S(H^{p,q}, H^{r,s})=0$ and
$(\sqrt{-1})^{p-q} S $ is positive definite on $H^{p,q}$
\item $D^{1,0} F^p \subset
 F^{p-1} \otimes \Omega^{1,0}_M$
\end{enumerate}
\end{defi}

The subbundle $H^{p,q}$ can be given an 
holomorphic structure
by the isomorphism $H^{p,q}\to F^p/F^{p+1}$. 
Denote by $d''_{p}$ the corresponding
Dolbeault operator and set $d''=\oplus_p 
d''_{p}$.
$D^{1,0}$ induces a $C^{\infty}$-linear map 
$\nabla'_p: H^{p,q} \to 
H^{p-1, q+1} \otimes \Omega^1$ called the 
Gauss-Manin connection 
and set $\nabla'=\oplus _p \nabla'_p$.
The Hermitian metric $H=\oplus_p (\sqrt{-1})^{p -q}
S_{H^{p,q}}$ will be called the 
Hodge metric. The triple $({\mathbb V},d'',\nabla')$ is a 
Higgs bundle. 

Following Deligne, we define $E^{P,Q}(\mathbb V)= \oplus_{p+r=P,s+q=Q} 
H^{p,q}\otimes E^{r,s}$ and 
$D''=d''+\nabla'$. One also defines $E^k(\mathbb V)=\oplus_{r+s=k}\oplus_{p,q} H^{p,q}\otimes E^{r,s}$.  It follows
that
$D''E^{P,Q}(\mathbb V) \subset E^{P,Q+1}(\mathbb V)$. 
Then, see \cite{Zuc},  
given any K\"ahler metric $\omega_{\wt{X}}$
 on $M=\wt{X}$,  taking formal adjoints of 
differential operators
with respect to this K\"ahler metric and 
the Hodge metric on $\mathbb V$, 
the usual K\"ahler identities hold. 

If furthermore the metric $\omega_{\wt{X}}$ is complete then the Dirac operators $D''+\mathfrak{d}'', D+\mathfrak{d}, \ldots$ 
and the Laplace operator $\Delta_D=2\Delta_{D''}=2\Delta_{D'}$ are formally self-adjoint unbounded operators on the Hilbert 
space of $L_2$ forms with values in $ \mathbb V$ see  \cite[Section 5.1]{Eys1} in this case or \cite{BL} and the references therein for the general theory. Thanks to  \cite[Chap. VIII, Theorem 3.2]{Dem}, it also follows that
the closure of $D, D'', D'$ is given by the na\"ive ansatz (namely the domain   of $D$ is the space of globally $L_2$ forms $\phi$
such that $D \phi$ taken  in the sense of distributions is globally $L_2$),  the Hilbert
space adjoints of $D, D'', D'$ are given by the na\"ive adjoints (namely the domain   of $\mathfrak{d}$ is the space of globally $L_2$ forms $\phi$
such that $\mathfrak{d} \phi$ taken  in the sense of distributions is globally $L_2$) and that the $L_2$ decomposition theorem 
 holds replacing images of $D, D', D''$ and their adjoints by their closure , namely we have an orthogonal decomposition:
 
 $$L^2(\wt{X},E^k(\mathbb V))=\mathcal{H}^k(\wt{X}, \mathbb{V})\oplus \overline{\mathrm{Im}(D)}\oplus \overline{\mathrm{Im}(\mathfrak{d})},
 $$
where $\mathcal{H}:=\ker(\Delta_D)$ is the space of $L_2$ harmonic forms and similarly for $D''$. 
 
   The $L_2$ De Rham complex $L^2DR^{\bullet}(\wt{X}, \mathbb{V})$ (resp. its Dolbeault counterpart) is the complex of bounded linear operators obtained by restriccting $D$ (resp. $D''$) to its domain.
The $L_2$ de Rham cohomology groups
 $\ker(D)/D Dom(D)$ (resp. their $L_2$-Dolbeault counterparts) are not represented by harmonic forms but the reduced cohomology groups
  $\ker(D)/\overline{D Dom(D)}$  (resp.) are. 
  
\begin{lem}
The $k$-th reduced $L^2$ 
cohomology of the complete K\"ahler manifold $\wt{X}$ with coefficients in the VHS
${\mathbb V}$ has a Hodge 
structure of weight $w+k$.
\end{lem}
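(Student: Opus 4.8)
The plan is to run the harmonic-theory argument of Zucker (cf. also \cite[Section 5.1]{Eys1}) inside the complete K\"ahler $L^2$-framework already recalled above. First, by the $L_2$-decomposition theorem quoted in the previous paragraph, the $k$-th reduced $L_2$ de Rham cohomology of $\wt{X}$ with values in $\mathbb{V}$ is canonically isomorphic to the space $\mathcal{H}^k(\wt{X},\mathbb{V})=\ker(\Delta_D)\cap L^2(\wt{X},E^k(\mathbb{V}))$ of $L_2$ harmonic $k$-forms, and similarly the reduced $L_2$-Dolbeault cohomology is represented by $\ker(\Delta_{D''})$-harmonic forms. Since the K\"ahler identities hold for $\omega_{\wt{X}}$ and the Hodge metric, one has $\Delta_D=2\Delta_{D''}=2\Delta_{D'}$, and by completeness (Demailly's theorem, invoked above) these identities pass to the self-adjoint closures; hence the three $L_2$-harmonic spaces coincide.

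Second, I would use the Deligne bigrading $E^k(\mathbb{V})=\bigoplus_{P+Q=w+k}E^{P,Q}(\mathbb{V})$, which holds because $\mathbb{V}$ has weight $w$ (so $P+Q=(p+q)+(r+s)=w+k$). The operator $D''=d''+\nabla'$ is bihomogeneous of bidegree $(0,1)$ for this grading, so its formal adjoint $\mathfrak{d}''$ has bidegree $(0,-1)$ and $\Delta_{D''}=D''\mathfrak{d}''+\mathfrak{d}''D''$ is bihomogeneous of bidegree $(0,0)$. The orthogonal projections of $E^k(\mathbb{V})$ onto the summands $E^{P,Q}(\mathbb{V})$ are pointwise algebraic, hence bounded on $L^2$ and commuting with $\Delta_{D''}$; therefore $\mathcal{H}^k(\wt{X},\mathbb{V})=\bigoplus_{P+Q=w+k}\mathcal{H}^{P,Q}$ with $\mathcal{H}^{P,Q}:=\mathcal{H}^k(\wt{X},\mathbb{V})\cap E^{P,Q}(\mathbb{V})$. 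Transporting this through the isomorphism with reduced $L_2$ de Rham cohomology yields the sought weight-$(w+k)$ bigrading; functoriality in $\mathbb{V}$ is immediate since every construction is induced by bundle maps.

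Finally, the polarization: I would define a sesquilinear pairing on $\mathcal{H}^{\bullet}(\wt{X},\mathbb{V})$ by $\langle\alpha,\beta\rangle=\int_{\wt{X}}S(\alpha\wedge\bar\ast\,C\beta)$ with $C$ the Weil operator of the bigrading, check it descends to reduced cohomology (on a complete manifold the $L_2$-Poincar\'e pairing is well defined on harmonic representatives, and $S$ being flat lets one apply Stokes to $L_2$-forms with $L_2$ differentials), and verify positivity of $(\sqrt{-1})^{P-Q}\langle\cdot,\cdot\rangle$ on $\mathcal{H}^{P,Q}$ using property (2) of a VHS exactly as in the compact case. The part requiring genuine care is this last step --- making the Poincar\'e pairing and its positivity rigorous for $L_2$ harmonic forms on the noncompact complete manifold $\wt{X}$ --- but this is precisely what the completeness results of Demailly quoted above provide, so no new analytic difficulty arises, and the remaining verifications reduce to the standard pointwise linear-algebra identities of Hodge theory. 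When $\mathbb{V}$ carries a real structure one moreover gets $\overline{\mathcal{H}^{P,Q}}=\mathcal{H}^{Q,P}$, so the structure is a genuine $\R$-Hodge structure; in general it is the corresponding polarized complex Hodge structure.
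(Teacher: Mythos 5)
Your argument is correct and takes essentially the same route as the paper, whose entire proof is the observation that $\Delta_D=2\Delta_{D''}$ commutes with the decomposition into $(P,Q)$-type; you have merely spelled out the supporting details (harmonic representation of reduced $L^2$-cohomology via the decomposition theorem, bihomogeneity of $D''$ for the Deligne bigrading, orthogonality and boundedness of the type projections). Your final paragraph on the polarization is not required for this lemma --- it belongs to the subsequent lemma on the Hodge--Lefschetz package --- but it is consistent with what the paper asserts there.
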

\begin{proof}
 It follows from the fact $\Delta_{D}=2\Delta_{D''}$ commutes with the decomposition in $(P,Q)$ type. 
\end{proof}

\begin{lem}
 The Hodge-Lefschetz package holds for the 
 reduced $L^2$  
cohomology of the complete K\"ahler manifold $\wt{X}$ with coefficients in the VHS
${\mathbb V}$. More precisely $(\overline{H}_2^{*} (\wt{X}, \mathbb{V}), L)$ is a Hodge-Lefschetz structure polarized by $\int_{\wt{X}}S( - \wedge -))$ in the sense of \cite[Part 0, Chapter 3]{MHMproject}.
\end{lem}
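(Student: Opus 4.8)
The plan is to transport to the equivariant $L^2$ setting the classical K\"ahler package --- K\"ahler identities, the $\mathfrak{sl}_2$-triple $(L,\Lambda,B)$, fibrewise Lefschetz theory and the Hodge--Riemann bilinear relations --- along the lines of \cite{Eys1}, which I would cite for the technical details. First I would record that the Lefschetz operator $L=\omega_{\wt X}\wedge\cdot$, its formal adjoint $\Lambda$, and the degree operator $B$ (acting by $k-n$ on $E^k(\V)$, with $n=\dim_{\C}\wt X$) are \emph{bounded} operators on $L^2(\wt X,E^{\bullet}(\V))$; this is the one place where the hypothesis that $\omega_{\wt X}$ and $\V$ are pulled back from the compact base $X$ is used, through the bounded geometry of $\wt X$ and the constancy of $|\omega_{\wt X}|$. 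These operators satisfy the $\mathfrak{sl}_2$-relations and, by the K\"ahler identities quoted above together with $\Delta_D=2\Delta_{D''}$, commute with $\Delta_D$; hence $L,\Lambda,B$ preserve $\mathcal{H}^{\bullet}=\ker(\Delta_D)$, which the $L^2$ Hodge decomposition identifies, as a finite-$\Gamma$-dimensional graded Hilbert $\Gamma$-module, with $\overline{H}^{\bullet}_2(\wt X,\V)$.

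Next I would establish Hard Lefschetz fibrewise. The bundle map $L^k\colon\wedge^{n-k}T^{*}_{\wt X}\otimes V\to\wedge^{n+k}T^{*}_{\wt X}\otimes V$ is an isomorphism, and since $\omega_{\wt X}$ has constant pointwise norm its inverse is again a bounded bundle map; the K\"ahler identity $[\Delta_D,L]=0$ then forces $L^{-k}$ to commute with $\Delta_D$ on the relevant degrees, so it carries $\mathcal{H}^{n+k}$ into $\mathcal{H}^{n-k}$ and is a two-sided inverse of $L^k$. Thus $L^k\colon\overline{H}^{n-k}_2(\wt X,\V)\ \buildrel{\sim}\over\longrightarrow\ \overline{H}^{n+k}_2(\wt X,\V)$ is an isomorphism of Hilbert $\Gamma$-modules, and the primitive decomposition $\overline{H}^{k}_2=\bigoplus_{r\ge0}L^rP^{k-2r}$, with $P^d=\ker(\Lambda)\cap\mathcal{H}^d$, follows formally: the universal $\mathfrak{sl}_2$-projectors onto the summands are polynomials in $L$ and $\Lambda$, hence bounded and commuting with $\Delta_D$, so the pointwise primitive decomposition of $\wedge^{\bullet}T^{*}_{\wt X}\otimes V$ restricts to a topological direct sum of closed $\Gamma$-submodules of $\mathcal{H}^{\bullet}$. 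Since $L$ has type $(1,1)$ for the weight-$(w+\bullet)$ Hodge structures furnished by the previous lemma, $(\overline{H}^{\bullet}_2(\wt X,\V),L)$ is a Hodge--Lefschetz structure.

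Finally I would produce the polarization. The pairing $(\alpha,\beta)\mapsto\int_{\wt X}\alpha\wedge S(\beta)$ is well defined on reduced classes --- the wedge of two $L^2$ harmonic forms is integrable, and the $L^2$-Stokes theorem on the complete manifold $\wt X$ makes it annihilate $\overline{\mathrm{Im}(D)}$ --- and composing it with Hard Lefschetz gives, for each $k\le n$, a hermitian form on $\overline{H}^{k}_2$. On a primitive summand of fixed Hodge type the relevant integrand is, by the Hodge--Riemann relations for the polarized Hodge structure on the fibre of $\V$ (a pointwise statement, valid verbatim as in \cite{Zuc,Eys1}), a positive multiple of the pointwise Hodge-norm density; hence, up to the sign prescribed by the Weil operator and a positive scalar, the form coincides with the restriction of the $L^2$ inner product, so it is positive definite in the sense of Hilbert $\Gamma$-modules --- which is exactly the positivity axiom of \cite[Part 0, Chapter 3]{MHMproject}. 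I expect the main obstacle to be the passage from fibrewise linear algebra (invertibility of $L^k$, the primitive projectors, Hodge--Riemann positivity) to the corresponding statements about the Hilbert $\Gamma$-module $\mathcal{H}^{\bullet}$: this works only because every bundle map in sight is bounded --- here the bounded geometry of $\wt X\to X$ is indispensable --- and commutes with $\Delta_D$ by the K\"ahler identities, hence preserves $L^2$-harmonicity; but this is precisely the mechanism already set up in \cite{Eys1}, so in a final write-up one could legitimately deduce the lemma by invoking \cite{Eys1}.
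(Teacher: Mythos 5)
Your proposal is correct and follows the same route the paper intends --- the paper's own proof is literally ``The usual proof applies,'' and what you have written out (the bounded $\mathfrak{sl}_2$-triple $(L,\Lambda,B)$ commuting with $\Delta_D$ via the K\"ahler identities, fibrewise Hard Lefschetz and the universal primitive projectors restricted to $\mathcal{H}^{\bullet}$, and pointwise Hodge--Riemann positivity plus $L^2$-Stokes for the polarization) is exactly that usual proof transported to the complete equivariant setting. One small correction: the lemma is stated for an arbitrary complete K\"ahler manifold, and no bounded geometry or pull-back from a compact base is needed anywhere --- $L$, $\Lambda$, $B$ and the primitive projectors are pointwise bundle maps whose operator norms are universal constants depending only on $\dim_{\C}\wt{X}$, so they are automatically bounded on $L^2$.
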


\begin{proof}
 The usual proof applies. 
\end{proof}

Since $\Delta_D$ is essentially self-adjoint there exists a spectral decomposition $$\Delta_D=\int_{0}^{\infty} \lambda dE_{\lambda}$$
where  $(E_{\lambda})_{\lambda>0}$ is the spectral family of $\Delta_D$, an increasing orthonormal projector-valued function on $[0,+\infty[$ converging strongly to $\mathrm{Id}$ . The support of this spectral 
projector valued measure $dE_{\lambda}$ is the spectrum of $\Delta_D$. $E_0$ is the Hilbert space
projector on the closed subspace  $\mathcal{H}:=\ker(\Delta_D)$ and $E_{\lambda}$ is the projector on the space 
of $L_2$ forms $\phi$ such that: $$\forall n\in\N \quad <\Delta^n_D \phi,\phi > \le \lambda^n <\phi, \phi>.$$  The $E_{\lambda}$ commute with 
decomposition in $(P,Q)$-type and actually with all the operators $D,D',D'', \mathfrak{d}, \ldots, L, \Lambda$. 
The statement that $E_{\lambda}$ commutes with
a differential operator means in particular that it preserves its domain. 

For future use, we record the following more precise notation, for every $\lambda >0$:

$$ E^k_{\lambda}(\wt{X}, \mathbb{V})= \mathrm{Im} (E_{\lambda})  \cap L^2DR^{k}(\wt{X}, \mathbb{V}).
$$

This gives a  subcomplex of the $L^2$ De Rham complex:

$$ E^{\bullet}_{\lambda}(\wt{X}, \mathbb{V})= ( \ldots E^k_{\lambda}(\wt{X}, \mathbb{V}) \buildrel{D}\over\longrightarrow E^{k+1}_{\lambda}(\wt{X}, \mathbb{V}) \to \ldots ). 
$$

This first order differential operators have closed range if and only if $E_{\epsilon}=E_{0}$ for some $\epsilon>0$
namely if and and only $0$ is isolated in the spectrum of the Laplace operator. This fails for instance on the complex  line.

The natural analog of the space of smooth forms  in the compact case is the following subcomplex of $L^2DR^{\bullet}(\wt{X}, \mathbb{V})$:
$$
L^2DR_{\infty}^{\bullet}(\wt{X}, \mathbb{V})=(\bigoplus_k \bigcap_{n>0} Dom(\Delta_D^n|_{L^2(\wt{X},E^k)}), D). 
$$

It is a complex of $\Ng$-Fr\'echet spaces and we have $L^2DR_{\infty}^{\bullet}(\wt{X}, \mathbb{V})\subset C^{\infty, \bullet}(\wt{X}, \mathbb{V})$ 
by standard elliptic estimates.   The same construction works also for the Dolbeault complex. See \cite{BL} for a wider perspective. 

\begin{lem}\label{qiso}
Assume $\lambda'>\lambda>0$. Then the following inclusions of complexes:

$$
E^{\bullet}_{\lambda}(\wt{X}, \mathbb{V}) \subset E^{\bullet}_{\lambda'}(\wt{X}, \mathbb{V})\subset L^2DR_{\infty}^{\bullet}(\wt{X}, \mathbb{V}) \subset 
L^2DR^{\bullet}(\wt{X}, \mathbb{V}).
$$

are quasi-isomorphisms. In fact $E^{\bullet}_ {\lambda}(\wt{X}, \mathbb{V})$ is a homotopy retract of the three other complexes.

The same holds for the $L^2$-Dolbeault complex of a $\Gamma$-equivariant holomorphic hermitian vector bundle. 
\end{lem}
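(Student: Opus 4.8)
The plan is to produce, for each of the four complexes $C^{\bullet}$ appearing in the statement, an explicit deformation retraction onto $E^{\bullet}_{\lambda}(\wt{X},\mathbb{V})$ whose retraction is the spectral projector $E_{\lambda}$ and whose contracting homotopy is manufactured from the Green operator; the three inclusions will then be composites of such retraction data, hence homotopy equivalences, hence quasi-isomorphisms, and this will also yield the stronger ``homotopy retract'' assertion.

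First I would collect the operator-theoretic input. Since $\Delta_{D}$ commutes with $D$, with $\mathfrak{d}$ and with every $E_{\mu}$, the projector $E_{\lambda}$ preserves the domain of the closed operator $D$, maps $L^2DR^{k}(\wt{X},\mathbb{V})$ onto $E^{k}_{\lambda}(\wt{X},\mathbb{V})=\mathrm{Im}(E_{\lambda})\cap L^2DR^{k}$ and restricts to the identity there; thus $E_{\lambda}$ is a $\Gamma$-equivariant chain map $L^2DR^{\bullet}\to E^{\bullet}_{\lambda}$ splitting the inclusion. On the closed $\Gamma$-invariant subspace $\mathrm{Im}(\mathrm{Id}-E_{\lambda})$ the Laplacian is bounded below by $\lambda>0$, so $\widetilde{G}:=\Delta_{D}^{-1}(\mathrm{Id}-E_{\lambda})$ is a bounded $\Gamma$-equivariant operator with $\|\widetilde{G}\|\le\lambda^{-1}$, commuting with $D$, $\mathfrak{d}$, $\Delta_{D}$ and the $E_{\mu}$, and taking values in $\mathrm{Dom}(\Delta_{D})\subset\mathrm{Dom}(\mathfrak{d})\cap\mathrm{Dom}(D)$. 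I would then set $h:=\mathfrak{d}\,\widetilde{G}$ and, using the first-order a priori estimate $\|\mathfrak{d}\psi\|^{2}\le\langle\Delta_{D}\psi,\psi\rangle$, check that $\|h\phi\|^{2}\le\langle(\mathrm{Id}-E_{\lambda})\phi,\widetilde{G}\phi\rangle\le\lambda^{-1}\|\phi\|^{2}$ and that $D h\phi=(\Delta_{D}-\mathfrak{d}D)\widetilde{G}\phi=(\mathrm{Id}-E_{\lambda})\phi-h\,D\phi$ is $L^{2}$ whenever $D\phi$ is; hence $h$ is a bounded degree $-1$ endomorphism of $L^2DR^{\bullet}$ for the graph norm.

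Next I would verify the homotopy identity: since $\widetilde{G}$ commutes with $D$ and $D\mathfrak{d}+\mathfrak{d}D=\Delta_{D}$,
\[
D h + h D=(D\mathfrak{d}+\mathfrak{d}D)\widetilde{G}=\Delta_{D}\widetilde{G}=\mathrm{Id}-E_{\lambda}
\]
on $L^2DR^{\bullet}$ (using $E_{0}\le E_{\lambda}$). Thus the splitting $E_{\lambda}$ is chain homotopic to the identity, i.e. $E^{\bullet}_{\lambda}(\wt{X},\mathbb{V})$ is a deformation retract of $L^2DR^{\bullet}(\wt{X},\mathbb{V})$. Because $\widetilde{G}$, and hence $h$, commutes with every $E_{\mu}$ and with the powers of $\Delta_{D}$, the same $E_{\lambda}$ and $h$ restrict to deformation retractions onto $E^{\bullet}_{\lambda}$ of $E^{\bullet}_{\lambda'}(\wt{X},\mathbb{V})$ (whose underlying space $\mathrm{Im}(E_{\lambda'})$ is $h$-stable, and on which the relevant spectral range $(\lambda,\lambda']$ is bounded below) and of $L^2DR^{\bullet}_{\infty}(\wt{X},\mathbb{V})=\bigoplus_{k}\bigcap_{n}\mathrm{Dom}(\Delta_{D}^{n})$ ($h$ commutes with $\Delta_{D}^{n}$, hence preserves each $\mathrm{Dom}(\Delta_{D}^{n})$ and is continuous for the Fr\'echet topology). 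Each of the three inclusions in the statement therefore composes with one of these retractions to the identity of $E^{\bullet}_{\lambda}$, and composing instead with the retraction of the larger complex is homotopic to the identity; so all three inclusions are homotopy equivalences, in particular quasi-isomorphisms, and $E^{\bullet}_{\lambda}$ is a homotopy retract of each of the other three complexes. Finally, the Dolbeault statement follows by the same argument with $(D,\mathfrak{d},\Delta_{D})$ replaced by $(D'',\mathfrak{d}'',\Delta_{D''})$; the identity $\Delta_{D}=2\Delta_{D''}$ shows the family $E^{\bullet}_{\lambda}$ is the same up to rescaling the spectral parameter, and nothing beyond completeness of the metric and the K\"ahler identities is used, so the argument applies verbatim to the $L^{2}$-Dolbeault complex of any $\Gamma$-equivariant hermitian holomorphic bundle.

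I expect the only delicate step to be the boundedness and subcomplex-stability of the homotopy operator $h=\mathfrak{d}\,\Delta_{D}^{-1}(\mathrm{Id}-E_{\lambda})$: one must combine the fact that $\Delta_{D}$ is bounded below on $\mathrm{Im}(\mathrm{Id}-E_{\lambda})$ (so that $\widetilde{G}$ is bounded and lands in $\mathrm{Dom}(\Delta_{D})$) with the first-order estimate $\|\mathfrak{d}\psi\|^{2}\le\langle\Delta_{D}\psi,\psi\rangle$ (so that $\mathfrak{d}\widetilde{G}$ is bounded), and invoke the na\"ive description of the closed operators $D$ and $\mathfrak{d}$ to guarantee $\mathrm{Dom}(\Delta_{D})\subset\mathrm{Dom}(\mathfrak{d})$ and to compute $D h$ as above. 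The remaining verifications (that $h$ preserves $L^2DR^{\bullet}_{\infty}$ and the $E_{\lambda'}$-levels, and $\Gamma$-equivariance of all the operators involved) are formal, the latter because every operator in play is built from $\Gamma$-invariant differential operators and the spectral projectors of the $\Gamma$-invariant operator $\Delta_{D}$.
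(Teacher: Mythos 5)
Your proof is correct and follows exactly the paper's route: your operator $\widetilde{G}=\Delta_{D}^{-1}(\mathrm{Id}-E_{\lambda})$ is the paper's $g=\int_{\lambda}^{\infty}\mu^{-1}\,dE_{\mu}$, and both arguments take $h=\mathfrak{d}g$ and conclude from $[D,h]=\mathrm{Id}-E_{\lambda}$ together with the fact that $h$ preserves all four complexes. You have simply supplied the boundedness estimates and subcomplex-stability checks that the paper leaves implicit.
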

\begin{prf}
 Define $g=\int_{\lambda}^{\infty} \mu^{-1} d E_{\mu}$. Then $g$, a continuous linear operator,  
 preserves all the 4 complexes above and so does 
 $h=\mathfrak{d}g$. Now, one has $[D,h]=Id-E_{\lambda}$. The proof works for the Dolbeault complex too, using the Dolbeault laplacian 
 and $\mathfrak {d}''$. 
\end{prf}

Hence $E^{\bullet}_{\lambda}(\wt{X}, \mathbb{V})\to L^2DR^{\bullet}(\wt{X}, \mathbb{V})$ is an  isomorphism
in the derived category of the abelian  category of formal quotients of Hilbert spaces (aka separable Hilbert $\{1\}$-modules). 
And 
$E^{\bullet}_{\lambda}(\wt{X}, \mathbb{V})\to L^2DR_{\infty}^{\bullet}(\wt{X}, \mathbb{V})$ is an isomorphism in the derived category constructed in \cite{Sch}.

We endow the 4 complexes in Lemma \ref{qiso} with filtration induced by the Hodge filtration $F^p=\oplus_{P\ge p} E^{P,Q}(\mathbb V)$ on $L^2(\wt{X},E^{k}(\mathbb V))$.
This filtration is in each degree a closed subspace which is furthermore a summand. Actually the first three complexes 
are bigraded in the usual fashion. 

\begin{lem}\label{filtqiso}
 The first two inclusions of Lemma \ref{qiso} are filtered quasi-isomorphisms. 
\end{lem}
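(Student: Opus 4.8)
The plan is to reduce the statement to the associated graded with respect to $F$ and there invoke the Dolbeault case of Lemma \ref{qiso}. First I would record that all three complexes $E^{\bullet}_{\lambda}(\wt{X},\mathbb{V})\subset E^{\bullet}_{\lambda'}(\wt{X},\mathbb{V})\subset L^2DR^{\bullet}_{\infty}(\wt{X},\mathbb{V})$ are bigraded by the Deligne type $(P,Q)$: the spectral projectors $E_{\mu}$ of $\Delta_D$ commute with the decomposition into $(P,Q)$-type and with $\Delta_D$ itself, so that $E^{\bullet}_{\lambda}(\wt{X},\mathbb{V})=\bigoplus_{P,Q}\bigl(E^{P,Q}(\mathbb{V})\cap\mathrm{Im}(E_{\lambda})\bigr)$, and likewise $\Delta_D$ preserves the type, so $L^2DR^{\bullet}_{\infty}$ is bigraded. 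In this bigrading the differential splits in Deligne's fashion as $D=D'+D''$ with $D'\colon E^{P,Q}\to E^{P+1,Q}$ and $D''\colon E^{P,Q}\to E^{P,Q+1}$, and the Hodge filtration is the column filtration $F^{p}=\bigoplus_{P\ge p}E^{P,Q}$. Since the two inclusions respect the bigrading they are strict for $F$, so it suffices to prove that each of them induces a quasi-isomorphism on every $\mathrm{Gr}_F^{p}$ — that is exactly the definition of a filtered quasi-isomorphism, and, the filtration being finite in each degree, it re-implies the unfiltered statement via the associated spectral sequence.

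Next I would identify $\mathrm{Gr}_F^{p}$. Because $D'$ raises $P$ by exactly one, $\mathrm{Gr}_F^{p}(D)=D''$, so $\mathrm{Gr}_F^{p}$ of the $L^2$ de Rham complex is the complex $\bigl(E^{p,\bullet}(\mathbb{V}),D''\bigr)$, and $\mathrm{Gr}_F^{p}$ of $E^{\bullet}_{\lambda}$, $E^{\bullet}_{\lambda'}$, $L^2DR^{\bullet}_{\infty}$ are its subcomplexes cut out respectively by $\mathrm{Im}(E_{\lambda})$, $\mathrm{Im}(E_{\lambda'})$, and $\bigcap_{n}\mathrm{Dom}(\Delta_D^{n})$. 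The relevant Laplacian here is $\Delta_{D''}=D''\mathfrak{d}''+\mathfrak{d}''D''$, which preserves $E^{p,\bullet}(\mathbb{V})$ since $D''$ and $\mathfrak{d}''$ preserve $P$, and one has $\Delta_D|_{E^{p,\bullet}}=2\Delta_{D''}|_{E^{p,\bullet}}$; hence the truncation by $E_{\lambda}$ is, after the rescaling $\lambda\mapsto\lambda/2$, the truncation by the spectral family of $\Delta_{D''}$, and the $(p,\bullet)$-part of $L^2DR^{\bullet}_{\infty}$ is $\bigcap_{n}\mathrm{Dom}\bigl(\Delta_{D''}^{n}|_{E^{p,\bullet}}\bigr)$, i.e. the $D''$-smooth subcomplex. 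Thus $\bigl(E^{p,\bullet}(\mathbb{V}),D''\bigr)$ together with these truncations is precisely (a reindexing of) an $L^2$ Dolbeault complex on the complete Kähler manifold $\wt{X}$ with its spectral truncations, to which Lemma \ref{qiso} applies: $\Delta_{D''}$ is the elliptic Dolbeault Laplacian, and completeness yields essential self-adjointness and the naive ansatz exactly as in the de Rham case.

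I would then conclude by applying the Dolbeault case of Lemma \ref{qiso} to $\bigl(E^{p,\bullet}(\mathbb{V}),D''\bigr)$ and to the spectral levels $\lambda/2<\lambda'/2$: the inclusions $\mathrm{Gr}_F^{p}E^{\bullet}_{\lambda}\hookrightarrow\mathrm{Gr}_F^{p}E^{\bullet}_{\lambda'}\hookrightarrow\mathrm{Gr}_F^{p}L^2DR^{\bullet}_{\infty}$ are quasi-isomorphisms — in fact homotopy retracts, realized by $h''=\mathfrak{d}''g''$ with $g''=\int_{\lambda/2}^{\infty}\mu^{-1}\,dE^{D''}_{\mu}$, which satisfies $[D'',h'']=\mathrm{Id}-E^{D''}_{\lambda/2}$ on the column. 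Since this holds for every $p$, both inclusions of Lemma \ref{qiso} are filtered quasi-isomorphisms. The only point that really needs care is the identification in the second paragraph — that passing to $\mathrm{Gr}_F$ kills $D'$, that $\Delta_{D''}$ is compatible with the type grading, and that $\Delta_D=2\Delta_{D''}$, so that the ambient spectral calculus of $\Delta_D$ restricts column-wise to the spectral calculus of $\Delta_{D''}$ used in the Dolbeault case of Lemma \ref{qiso}; the rest is the standard bicomplex and spectral-sequence formalism.
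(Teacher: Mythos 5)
Your proposal is correct and follows essentially the same route as the paper: pass to $\mathrm{Gr}_F$, use $\Delta_D=2\Delta_{D''}$ to identify $\mathrm{Gr}_F E^{\bullet}_{\lambda}$ with a spectral truncation of the $L^2$ Dolbeault complex and $\mathrm{Gr}_F L^2DR^{\bullet}_{\infty}$ with $L^2Dolb^{\bullet}_{\infty}$, then invoke the Dolbeault version of Lemma \ref{qiso}. Your write-up is more detailed (and your rescaling of the spectral parameter is the consistent one), but no new idea is involved.
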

\begin{prf} We have to prove that the maps between  the $F$-exact sequences are isomorphic at the $E_1$ page. 
The usual proof does work perfectly well for the first three complexes.  Indeed $Gr_F E^{\bullet}_{\lambda}(\wt{X}, \mathbb{V})=(E_{2{\lambda}}(\Delta_{D''}), D'')$ 
and $Gr_F L^2DR_{\infty}^{\bullet}(\wt{X}, \mathbb{V})=L^2Dolb_{\infty}^{\bullet}(\wt{X}, \mathbb{V})$ whose cohomology are
isomorphic by the Dolbeault version of Lemma \ref{qiso}.
\end{prf}

\begin{rem} For the third one, it seems to be more delicate.  One has:
$$Gr_F L^2DR_{\infty}^{\bullet}(\wt{X}, \mathbb{V})\subsetneqq Gr_F L^2DR^{\bullet}(\wt{X}, \mathbb{V})\subsetneqq L^2Dolb^{\bullet}(\wt{X}, \mathbb{V}).$$
Using $\mathfrak{d}''g$ we get obtain a quasi-isomorphism of the two extreme complexes with $Gr_F E_{\lambda}(\wt{X}, \mathbb{V})$, hence the natural inclusion is a quasi isomorphism between them.
The problem is that $\mathfrak{d}''g$ does not seem to preserve $Gr_F^P  L^2DR^{\bullet}(\wt{X}, \mathbb{V})$. This group contains $Dom(D')\cap Dom(D'')$
which is preserved but the inclusion may be strict. 
\end{rem}

However, the classical case applies without any modification under a strong hypothesis that fails in the simplest case of the universal covering space of a genus one curve:

\begin{lem}
Zero is isolated in the spectrum of $\Delta_D$ if and only if $(E_0,0)\subset (E_{\lambda}, D)$ is a quasi-isomorphism.
\end{lem}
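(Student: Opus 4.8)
\begin{prf}
The plan is to deduce the statement from the chain of two equivalences: \emph{(i)} $(E_0,0)\hookrightarrow(E_\lambda,D)$ is a quasi-isomorphism if and only if $D$ has closed range; \emph{(ii)} $D$ has closed range if and only if $0$ is isolated in $\mathrm{spec}(\Delta_D)$. Here "$D$ has closed range'' may be read, interchangeably, on the full complex $L^2DR^{\bullet}(\wt{X},\mathbb{V})$ or on any truncation $E^{\bullet}_\mu(\wt{X},\mathbb{V})$: these are the same condition, because $E_\mu$ commutes with $D$, $\mathfrak{d}$ and the harmonic projector, while an accumulation of $\mathrm{spec}(\Delta_D)$ at $0$ produces nonzero spectral subspaces below every $\mu>0$ on which $D$ cannot be bounded below — this is the observation already recorded just after Lemma \ref{qiso}.

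For equivalence \emph{(i)}, recall from Lemma \ref{qiso} that $(E_\lambda,D)\hookrightarrow L^2DR^{\bullet}(\wt{X},\mathbb{V})$ is already a quasi-isomorphism, so $(E_0,0)\hookrightarrow(E_\lambda,D)$ is one precisely when the composite $(E_0,0)\hookrightarrow L^2DR^{\bullet}(\wt{X},\mathbb{V})$ is; by exactness and faithfulness of the forgetful functor to $\mathbb{C}$-vector spaces (cf.\ Corollary \ref{algiso}) this holds exactly when the inclusion of harmonic forms induces an algebraic isomorphism $\mathcal{H}^k\to H^k(L^2DR^{\bullet}(\wt{X},\mathbb{V}))=\ker D/\mathrm{Im}\,D$ in every degree $k$. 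Now the $L^2$-decomposition theorem quoted above gives $\ker D=\mathcal{H}^k\oplus\overline{\mathrm{Im}\,D}$, with $\mathcal{H}^k\cap\overline{\mathrm{Im}\,D}=0$ (since $\mathrm{Im}\,\mathfrak{d}$ is orthogonal to $\ker D$), so this map is always injective with image the class of $\mathcal{H}^k$, and it is surjective exactly when $\mathrm{Im}\,D=\overline{\mathrm{Im}\,D}$, i.e.\ when $D$ has closed range.

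For equivalence \emph{(ii)} one invokes the classical relation between closed range of the de Rham differential and a spectral gap of its Laplacian, which I would carry out inside $\mathcal{E}_\lambda:=\mathrm{Im}(E_\lambda)$ to keep everything bounded: on $\mathcal{E}_\lambda$ each form lies in the domains of $D,\mathfrak{d},\Delta_D$ (as $\langle\Delta_D\phi,\phi\rangle\le\lambda\|\phi\|^2$ there), so $\delta:=D+\mathfrak{d}$ restricts to a bounded self-adjoint operator with $\delta^2=\Delta_D|_{\mathcal{E}_\lambda}$ and $\ker\delta=\mathcal{H}$; using $D^2=\mathfrak{d}^2=0$, $\mathfrak{d}=D^*$ and the grading one has $\mathrm{Im}\,\delta=\mathrm{Im}\,D\oplus\mathrm{Im}\,\mathfrak{d}$ and $\overline{\mathrm{Im}\,\delta}=\overline{\mathrm{Im}\,D}\oplus\overline{\mathrm{Im}\,\mathfrak{d}}$, so $D|_{\mathcal{E}_\lambda}$ has closed range iff $\delta$ does; and a bounded self-adjoint operator $\delta$ has closed range iff $\delta^2=\Delta_D|_{\mathcal{E}_\lambda}$ does, iff $0$ is not an accumulation point of $\mathrm{spec}(\Delta_D|_{\mathcal{E}_\lambda})$ — which, since $\Delta_D\ge\lambda$ on $\mathrm{Im}(\mathrm{Id}-E_\lambda)$, is the same as $0$ being isolated in $\mathrm{spec}(\Delta_D)$ (read as $0\notin\mathrm{spec}(\Delta_D)$ in the degenerate case $\mathcal{H}^{\bullet}=0$). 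I do not expect any serious obstacle here: the single point that demands a little care is precisely this last equivalence between "$D$ has closed range'' and "$0$ is isolated in $\mathrm{spec}(\Delta_D)$'' — one must either keep track of domains if arguing on the full $L^2$ complex, or avoid that, as above, by working in the bounded setting of $\mathcal{E}_\lambda$; everything else is bookkeeping with the $L^2$-decomposition theorem and Lemma \ref{qiso} already in hand.
\end{prf}
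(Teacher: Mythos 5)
Your proof is correct, and it takes the route the paper itself intends: the paper states this lemma without proof, relying on the unproved remark recorded just after the definition of $E^{\bullet}_{\lambda}(\wt{X},\mathbb{V})$ that $D$ has closed range if and only if $E_{\epsilon}=E_0$ for some $\epsilon>0$, i.e.\ if and only if $0$ is isolated in the spectrum of $\Delta_D$. Your two equivalences supply exactly the missing justification -- (i) reduces the quasi-isomorphism condition to closed range via the $L^2$ decomposition $\ker D=\mathcal{H}^k\oplus\overline{\mathrm{Im}\,D}$ and Lemma \ref{qiso}, and (ii) establishes the closed-range/spectral-gap equivalence cleanly by passing to the bounded self-adjoint operator $D+\mathfrak{d}$ on $\mathrm{Im}(E_\lambda)$ -- and the only caveat worth recording is the convention you already flag, namely that when $\mathcal{H}^{\bullet}=0$ one must read ``$0$ is isolated in $\mathrm{spec}(\Delta_D)$'' as including the case $0\notin\mathrm{spec}(\Delta_D)$ for the equivalence to be literally an ``if and only if''.
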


\begin{lem}
 If zero is isolated in the spectrum of $\Delta_D$ then:
 \begin{enumerate}
  \item The  decomposition theorem  is valid without taking the closure of $Im(D)$, $Im(\mathfrak{d})$. Namely, $Im(D)$ and $Im(\mathfrak{d})$ are $L^2$-closed and: 
   $$L^2(\wt{X},E^k(\mathbb V))=\mathcal{H}^k(\wt{X}, \mathbb{V})\oplus {\mathrm{Im}(D)}\oplus {\mathrm{Im}(\mathfrak{d})}, 
 $$
 and also we have an equivariant decomposition as a direct sum of closed Fr\'echet subspaces:
 $$ L^2DR_{\infty}^{k}(\wt{X}, \mathbb{V})=\mathcal{H}^k(\wt{X}, \mathbb{V}) \oplus D(L^2DR_{\infty}^{k-1}(\wt{X}, \mathbb{V}))\oplus  \mathfrak{d}(L^2DR_{\infty}^{k+1}(\wt{X}, \mathbb{V})).
 $$
  \item The  decomposition theorem for the $L^2$ Dolbeault complex is valid without taking the closure of $Im(D'')$, $Im(\mathfrak{d}'')$.
  \item The  decomposition theorem for the $L^2$ $D'$ complex is valid without taking the closure of $Im(D')$, $Im(\mathfrak{d}')$.
  \item The $D'D''$ lemma holds. Namely, 
  $$   \phi \in L^2DR_{\infty}^{k}(\wt{X}, \mathbb{V}) \cap Im(D')\cap Im(D'') \Rightarrow \exists \psi \in L^2DR_{\infty}^{k-2}(\wt{X}, \mathbb{V}) \   \phi=D'D'' \psi.
  $$
  \item The Hodge to De Rham spectral sequence of $L^2DR_{\infty}^{k}(\wt{X}, \mathbb{V})$ degenerates at $E_1$ and $D$ is $F$-strict. 
  \item The Hodge to De Rham spectral sequence of $E^{\bullet}_{\lambda}(\wt{X}, \mathbb{V})$ degenerates at $E_1$ and $D$ is $F$-strict. 
 \end{enumerate}

\end{lem}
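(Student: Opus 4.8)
The plan is to use the spectral gap to reduce every item to its classical counterpart on a compact K\"ahler manifold (or compact K\"ahler manifold with coefficients in a polarized VHS). By the preceding lemma and the discussion above, $0$ being isolated in the spectrum of $\Delta_D$ means there is $\epsilon>0$ with $E_\epsilon=E_0$, so the Green operator $G:=\int_{\epsilon}^{\infty}\mu^{-1}\,dE_\mu$ is a bounded $\Gamma$-equivariant operator with $\Delta_D G=G\Delta_D=\mathrm{Id}-E_0$. Being built from the spectral family of $\Delta_D$, $G$ commutes with $D$, $D'$, $D''$, $\mathfrak{d}$, $\mathfrak{d}'$, $\mathfrak{d}''$, $L$, $\Lambda$ and with the projector onto each $(P,Q)$-component, it preserves the domains of all these operators, and it restricts to a continuous endomorphism of the Fr\'echet complex $L^2DR_{\infty}^{\bullet}(\wt{X},\mathbb{V})$ and of $E^{\bullet}_{\lambda}(\wt{X},\mathbb{V})$. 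The whole point of the hypothesis is precisely to make $G$ bounded; once it is available, the proofs are verbatim those of the compact case, the only extra work being to check boundedness (on $L^2$) resp. continuity (on the Fr\'echet complexes) of the operators involved.

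For (1)--(3): the homotopy formula $\psi=E_0\psi+\Delta_D G\psi=\mathcal{H}(\psi)+D(\mathfrak{d}G\psi)+\mathfrak{d}(DG\psi)$ together with the K\"ahler identities, which give $\mathrm{Im}(D)\perp\mathrm{Im}(\mathfrak{d})$, yields the orthogonal decomposition, and $\mathrm{Im}(D)=\mathrm{Im}(D\mathfrak{d}G)$, $\mathrm{Im}(\mathfrak{d})=\mathrm{Im}(\mathfrak{d}DG)$ are closed because $D\mathfrak{d}G$ and $\mathfrak{d}DG$ are bounded operators which, together with $E_0$, sum to the identity and are (up to the orthogonal projectors) idempotent. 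Reading the same formula inside $L^2DR_{\infty}^{\bullet}(\wt{X},\mathbb{V})$ gives the announced decomposition into closed Fr\'echet subspaces, since $\mathfrak{d}G$ and $DG$ are continuous there. Items (2) and (3) are obtained by repeating the computation with the Dolbeault, resp. the $D'$, Laplacian, which equals $\tfrac12\Delta_D$ by the K\"ahler identities, so it also has a spectral gap at $0$ and $2G$ is its Green operator.

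For (4): if $\phi\in L^2DR_{\infty}^{k}(\wt{X},\mathbb{V})$ lies in $\mathrm{Im}(D')\cap\mathrm{Im}(D'')$ it is $D$-closed, $D'$-exact and $D''$-exact, and the classical Green-operator proof of the $\partial\bar\partial$-lemma applies line by line: writing $\phi=D'\alpha$, decomposing $\alpha$ with respect to $\Delta_{D''}$ by means of $G$, and using $D^2=(D')^2=(D'')^2=0$, the K\"ahler identities, and the commutation of $G$ with $D',D'',\mathfrak{d}',\mathfrak{d}''$, one eliminates every term but one of the shape $D'D''\psi$ with $\psi\in L^2DR_{\infty}^{k-2}(\wt{X},\mathbb{V})$, the intervening primitives being of the form $\mathfrak{d}'G(\cdot)$ or $\mathfrak{d}''G(\cdot)$, which preserve this complex. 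Then (5) and (6) follow: $F$-strictness of $D$ on $L^2DR_{\infty}^{\bullet}(\wt{X},\mathbb{V})$ (resp. on $E^{\bullet}_{\lambda}(\wt{X},\mathbb{V})$) is equivalent to $E_1$-degeneration of its Hodge--de Rham spectral sequence, which follows from the $\partial\bar\partial$-lemma by Deligne's formal argument; alternatively, Lemma \ref{qiso}, Lemma \ref{filtqiso} and the equality $\Delta_D=2\Delta_{D''}$ identify the cohomology of $E^{\bullet}_{\lambda}(\wt{X},\mathbb{V})$ with the bigraded harmonic space $\bigoplus_{p,q}\mathcal{H}^{p,q}(\wt{X},\mathbb{V})$ and the cohomology of each $Gr_F^P$ with $\bigoplus_q\mathcal{H}^{P,q}(\wt{X},\mathbb{V})$, so the a priori inequality $\sum_P\dim_\Gamma H^k(Gr_F^P)\ge\dim_\Gamma H^k$ becomes an equality and the spectral sequence degenerates at $E_1$; the same argument works for $L^2DR_{\infty}^{\bullet}(\wt{X},\mathbb{V})$.

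The main obstacle is (4): although the orthogonal decomposition is now available without closures, one has to make sure that each primitive produced by the classical argument really stays in the Fr\'echet complex $L^2DR_{\infty}^{\bullet}(\wt{X},\mathbb{V})$ and not merely in $L^2$, and that the K\"ahler identities, valid here because the metric is complete, following \cite{Zuc}, enter only through the operators $G$, $\mathfrak{d}'G$, $\mathfrak{d}''G$, all of which are known to preserve this complex. The difficulty flagged in the Remark preceding this lemma, namely that $\mathfrak{d}''g$ need not preserve $Gr_F^P L^2DR^{\bullet}(\wt{X},\mathbb{V})$, does not occur under the present hypothesis, since one is free to work on the smaller complex $E^{\bullet}_{\lambda}(\wt{X},\mathbb{V})$, on which $Gr_F$ is well behaved by Lemma \ref{filtqiso}.
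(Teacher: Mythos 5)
Your proposal is correct and follows exactly the route the paper intends: the lemma is stated without proof, the text immediately before it saying only that ``the classical case applies without any modification'' under the spectral-gap hypothesis, and your argument is precisely the verification of that claim via the bounded Green operator $G=\int_{\epsilon}^{\infty}\mu^{-1}\,dE_{\mu}$ and its commutation with $D,D',D'',\mathfrak{d},\ldots$ and with the type decomposition. The attention you pay to the preservation of the Fr\'echet complex $L^2DR_{\infty}^{\bullet}$ by $\mathfrak{d}'G$, $\mathfrak{d}''G$ is exactly the point that makes the classical proofs go through here.
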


\subsection{Polarized VHS on Galois covering spaces of compact K\"ahler manifolds}
Let $X$ be a compact K\"ahler manifold and $(X, \mathbb{V}, F^., S)$ be a polarized complex Variation of Hodge Structure of weight $w$.  Assume $\wt{X}$ is a Galois covering space of $X$ so that 
its Galois group 
$\Gamma$ acts properly discontinuously by automorphisms on $(\wt{X},\pi^{-1}\omega_{{X}}, \pi^{-1}\mathbb{V}, \pi^{-1} F^.,\pi^{-1} S)$. 
Then it is easy to see that all the Hilbert spaces considered in the previous section are separable projective $\Gamma$-modules and as such 
are endowed with a $\Ng$-module structure. Furthermore if the VHS is real the $E_{\lambda}$ and the 
De Rham $L_2$ cohomology groups carry a natural real structure. Basic elliptic theory gives:

\begin{prop}
The $L^2$-De Rham complex $L^2DR^{\bullet}(\wt{X},\pi^{-1}\mathbb{V})$ is strongly $\Gamma$-Fredholm.           
\end{prop}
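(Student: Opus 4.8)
The plan is to exhibit, up to quasi-isomorphism in $E(\Gamma)$, a finite combinatorial model of $L^2DR^{\bullet}(\wt{X},\pi^{-1}\mathbb{V})$ whose terms are finitely generated Hilbert $\Gamma$-modules; for such a model strong $\Gamma$-Fredholmness holds for trivial reasons, and the property clearly depends only on the isomorphism class in $D^b(E(\Gamma))$. First I would record the elementary fact that a closed $\Gamma$-invariant subspace $V$ of a finitely generated projective Hilbert $\Gamma$-module $P$ is again finitely generated: realizing $P\subset l_2\Gamma^{\oplus n}$, the orthogonal projector onto $V$ lies in $M_n(\Ng)$, and $V$ is generated by the images of the $n$ tautological generators of $l_2\Gamma^{\oplus n}$. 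Consequently, if $(C^{k},d_{k})$ is any bounded complex of finitely generated Hilbert $\Gamma$-modules with a $\Gamma$-invariant inner product, its Laplacian $dd^{*}+d^{*}d$ is a bounded operator on the finitely generated projective module $\bigoplus_{k}C^{k}$, so for every $\lambda>0$ the image of $E^{dd^{*}+d^{*}d}_{\lambda}$ is a closed $\Gamma$-submodule of $\bigoplus_{k}C^{k}$, hence finitely generated; thus $(C^{k},d_{k})$ is strongly $\Gamma$-Fredholm. It therefore suffices to produce such a $C^{\bullet}$ isomorphic to $L^2DR^{\bullet}(\wt{X},\pi^{-1}\mathbb{V})$ in $D^b(E(\Gamma))$.

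For the model, let $L=\ker(D)\subset\mathbb{V}$ be the underlying local system of finite dimensional $\C$-vector spaces, so that the $C^{\infty}$ de Rham complex of $\pi^{-1}\mathbb{V}$ on $\wt{X}$ is a $\Gamma$-equivariant fine resolution of $\pi^{-1}L$ and $L^2DR^{\bullet}(\wt{X},\pi^{-1}\mathbb{V})$ is its subcomplex of globally $L^{2}$ forms. Choose a smooth triangulation of $X$ fine enough that $L$ is constant on each open star, pass to a barycentric subdivision so that $\Gamma$ acts freely, and lift it to a cocompact $\Gamma$-simplicial complex $\mathbb{S}$ with $|\mathbb{S}|=\wt{X}$. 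Form $C^{\bullet}:=l_2\Gamma\otimes_{\C\Gamma}C^{\bullet}_{c}(\mathbb{S},\pi^{-1}L)$ exactly as in the proof of Proposition \ref{co}; since $\Gamma\backslash\mathbb{S}$ is finite and $L$ has finite dimensional stalks, $C^{\bullet}$ is a bounded complex of finitely generated free Hilbert $\Gamma$-modules, and by Proposition \ref{co} and Definition \ref{lpco} its underlying complex of $\Ng$-modules computes $\mathbb{H}^{\bullet}_{(2)}(\wt{X},\pi^{-1}L)$.

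Next I would compare the two complexes by integration over simplices. By Lemma \ref{qiso} the inclusion $E^{\bullet}_{\lambda}(\wt{X},\pi^{-1}\mathbb{V})\hookrightarrow L^2DR^{\bullet}(\wt{X},\pi^{-1}\mathbb{V})$ is a $\Gamma$-equivariant homotopy equivalence of bounded complexes of Hilbert $\Gamma$-modules, effected by bounded $\Gamma$-equivariant homotopy operators; in particular it is an isomorphism in $D^b(E(\Gamma))$. On $\mathrm{Im}(E_{\lambda})$ the elliptic estimates on $\wt{X}$ (valid since $\wt{X}$ covers the compact $X$) show that the $L^{2}$ norm dominates every Sobolev norm, so by the Sobolev trace theorem and the fact that the compactly supported simplices of $\mathbb{S}$ cover $\wt{X}$ with bounded multiplicity, the de Rham map $I\colon\omega\mapsto(\int_{\sigma}\omega)_{\sigma}$ is a bounded $\Gamma$-equivariant operator $E^{\bullet}_{\lambda}(\wt{X},\pi^{-1}\mathbb{V})\to C^{\bullet}$, and Stokes' formula with coefficients in $L$ makes it a morphism of complexes in $E(\Gamma)$. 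By the classical $L^{2}$ de Rham theorem for flat bundles on coverings, $\Phi(I)$ is an isomorphism on $\Ng$-module cohomology, hence by Corollary \ref{algiso} $I$ is an isomorphism in $D^b(E(\Gamma))$. Combining with Lemma \ref{qiso}, $L^2DR^{\bullet}(\wt{X},\pi^{-1}\mathbb{V})\cong E^{\bullet}_{\lambda}(\wt{X},\pi^{-1}\mathbb{V})\cong C^{\bullet}$ in $D^b(E(\Gamma))$, and the first paragraph finishes the proof.

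The main obstacle is the third step: promoting the de Rham comparison to an honest bounded $\Gamma$-equivariant morphism of complexes of Hilbert $\Gamma$-modules — integration over a simplex is not $L^{2}$-continuous on all of $L^2DR^{\bullet}$, which forces one to work with the spectrally truncated complex $E^{\bullet}_{\lambda}$, where the $L^{2}$ and Sobolev topologies coincide — together with the invocation (or, if one prefers, the re-derivation from this truncation along Dodziuk's argument) of the $L^{2}$ de Rham isomorphism with local coefficients. Everything else is formal once Lemma \ref{qiso}, Proposition \ref{co} and Corollary \ref{algiso} are available; note in particular that the K\"ahler hypothesis plays no role in this statement.
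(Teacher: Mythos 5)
Your argument is correct in substance but follows a genuinely different route from the paper's. The paper stays entirely on the analytic side: using a finite partition of unity $(\phi_a)_{a\in A}$ with $\sum_a\phi_a^2=1$ subordinate to trivializing charts, it builds a $\Gamma$-equivariant isometric closed embedding $\Phi:\psi\mapsto(\overline{\phi_a\psi})_{a}$ of the $L^2$ forms on $\wt{X}$ into $(L^2\Gamma)^A\hat\otimes L^2DR^k(X,\mathbb{V})$, and a commutator/symbol estimate for $D+D^*$ shows that composing $\Phi$ with the spectral projector $pr^k_\mu$ of the \emph{base} remains a closed embedding on $E^k_{\lambda}(\wt{X},\pi^{-1}\mathbb{V})$ once $\mu$ is large relative to $\lambda$; since $E^k_\mu(X,\mathbb{V})$ is finite dimensional by elliptic theory on the compact quotient, $E^k_\lambda(\wt{X},\pi^{-1}\mathbb{V})$ is itself a finitely generated Hilbert $\Gamma$-module, and a closing lemma converts this into strong $\Gamma$-Fredholmness. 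You instead exhibit a quasi-isomorphic finite combinatorial model $l_2\Gamma\otimes_{\C\Gamma}C^\bullet_c(\mathbb{S},\pi^{-1}L)$ and transport the property across the de Rham comparison; both routes are legitimate (the paper itself notes the statement ``is essentially in \cite{Ati}'' and also follows from \cite{Shu}). What each buys: the paper's proof is self-contained modulo finite dimensionality of $E^k_\mu(X,\mathbb{V})$ and gives the stronger conclusion that the spectral truncations of the \emph{original} complex are finitely generated, whereas yours is softer and more formal but imports the unreduced $L^2$ de Rham theorem with local coefficients. That import is the one point needing real care, as you acknowledge: Dodziuk's theorem as usually stated concerns \emph{reduced} cohomology, while Corollary \ref{algiso} requires an isomorphism of algebraic (unreduced) $\Ng$-module cohomology, so you must invoke the chain-level de Rham/simplicial comparison of Gromov--Shubin type (or rederive it from the truncation $E^\bullet_\lambda$, on which your Sobolev estimates and the boundedness of integration over simplices are indeed uniform because everything is pulled back from the compact base); that input is of roughly the same depth as the proposition itself. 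Your preliminary observations --- that a closed $\Gamma$-invariant subspace of a finitely generated projective Hilbert $\Gamma$-module is finitely generated, hence that any bounded complex of finitely generated Hilbert $\Gamma$-modules is automatically strongly $\Gamma$-Fredholm, and that strong $\Gamma$-Fredholmness only depends on the class in $D^b(E(\Gamma))$ --- are correct, as is your remark that the K\"ahler hypothesis plays no role here.
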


\begin{prf}
This is essentially in \cite{Ati}. One can construct a $\Gamma$ equivariant parametrix namely a $L^2$ bounded 
 $\Gamma$-equivariant operator $$L^2DR^{\bullet}(\wt{X}, \pi^{-1}\mathbb{V})\to L^2DR^{\bullet}(\wt{X}, \pi^{-1}\mathbb{V})[-1]$$ such that $[D,P]=I-S$ where $S$ is a smoothing operator. 
 This also follows from \cite{Shu} which applies
 to any  elliptic complex (including the case of operators!). 
 For the reader's convenience, we will however 
 give an easy argument. 
 
 Let $(\phi_a)_{a\in A}$ be a finite family of smooth real functions such that $$ \sum_{a\in A} \phi_a^2=1$$ 
 and $\mathrm{Supp}(\phi_a) \subset U_a$ where $U_a$ is an open subset of $X$ small enough so that $\pi^{-1}(U_a) \cong \Gamma \times U_a$. 
 
 If $\psi \in  L^2DR^{k}(\wt{X}, \pi^{-1}\mathbb{V})$,   $\phi_a \psi$ identifies with an element with compact support in $L^2\Gamma \hat\otimes L^2DR^k(U_a, \mathbb{V})$ (Hilbert space tensor product)
 which we may prolongate by $0$ to an element $\overline{\phi_a\psi}$ of $L^2\Gamma \hat \otimes L^2DR^k(X, \mathbb{V})$. By construction the map $\Phi$:
 
 $$ L^2DR^{k}(\wt{X}, \pi^{-1}\mathbb{V}) \to (L^2 \Gamma)^A \hat \otimes L^2DR^k(X, \mathbb{V}) \quad \psi \mapsto \Phi(\psi) = (\overline{\phi_a\psi})_{a\in A}
 $$
 
 is a $\Gamma$-equivariant Hilbert space isometric (hence closed) embedding. 
 
 Let $\sigma_{D+D^*}$ be the symbol of the operator $D+D^*$. For every $\psi$ in the domain of $D+D^*$ on $L^2DR^{k}(\wt{X}, \pi^{-1}\mathbb{V})$ we have:
 
 $$(D+D^*) \phi_a \psi = \phi_a (D+D^ *) \psi +\pi^{-1}\sigma(d\phi_a) \psi. $$
 
 Summing up, we obtain 
 $$  \|( D+D^*) \psi \| +K \|\psi \| \ge \| \mathrm{Id}_{L^2 \Gamma ^A} \otimes (D+D^*) (\overline{\phi_a\psi})_{a\in A} \|  \ge  \| (D+D^*) \psi \| - K \|\psi \| 
 $$
 
 where $K= Card (A) \max_{x\in X} \| \sigma_x \|$. 
 
 Assume now $\psi \in E^k_{\lambda^2} (\wt{X}, \pi^{-1}\mathbb{V}) $. Then $\| (D+D^*) \psi  \| \le \lambda \| \psi \| $. Hence 
 $$ \| \mathrm{Id}_{pr_{\mu}^k \circ \Phi} \otimes (D+D^*) (\overline{\phi_a\psi})_{a\in A} \|  \le (\lambda + K) \|\psi\|..
 $$
 
 Introduce the tensor product by $\mathrm{Id}_{(L^2\Gamma)^A}$ of the spectral projector $E_{\mu}$ of $(v, \mathbb{V}$ : 
 $$ pr^k_{\mu}: (L^2\Gamma)^A \hat \otimes L^2DR^k (X, \mathbb{V}) \to (L^2\Gamma)^A \hat \otimes  E_{\mu}^k  (X, \mathbb{V}).
 $$
 
 Then if $\sqrt{\mu} > \lambda +K$ we have $\| pr_{\mu}^k \circ \Phi (\psi)\| \ge \epsilon \|\psi\|.$ for $\epsilon=\sqrt{\mu -\lambda-K} >0$.
 
 It follows that we have a closed embedding of Hilbert $\Gamma$-modules: 
 
 $$ pr_{\mu}^k \circ \Phi : E^k_{\lambda}(\wt{X}, \pi^{-1}\mathbb{V}) \to (L^2 \Gamma)^A \hat\otimes E_{\mu} ^k (X, \mathbb{V}).
 $$
 
 Since $E_{\mu} ^k (X, \mathbb{V})$ is a finite dimensional vector space by standard elliptic theory it follows that $E^k_{\lambda}(\wt{X}, \pi^{-1}\mathbb{V})$
 is a finitely generated Hilbert $\Gamma$-module for every $\lambda \ge 0$. We conclude using:

\begin{lem} The $L_2$ De Rham and Dolbeault complexes are $\Gamma$-Fredholm (resp. strongly) if and only if there exists $\epsilon>0$ such that $E_{\epsilon}$ is 
a finite $\Gamma$-dimensinal (resp. finite type) projective module.
 \end{lem}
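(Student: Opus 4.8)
The plan is to deduce both equivalences from Lemma~\ref{qiso}, which lets one replace the bounded complex $L^{2}DR^{\bullet}(\wt{X},\mathbb{V})$ (and its Dolbeault analogue) by the truncation $E^{\bullet}_{\epsilon}(\wt{X},\mathbb{V})$: for any $\epsilon>0$ the $\Gamma$-equivariant inclusion $E^{\bullet}_{\epsilon}(\wt{X},\mathbb{V})\hookrightarrow L^{2}DR^{\bullet}(\wt{X},\mathbb{V})$ is a $\Gamma$-equivariant homotopy retract (the operators $g=\int_{\epsilon}^{\infty}\mu^{-1}dE_{\mu}$ and $h=\mathfrak{d}g$ used in the proof of Lemma~\ref{qiso} commute with $\Gamma$), hence a quasi-isomorphism in $E(\Gamma)$, and on $E^{\bullet}_{\epsilon}(\wt{X},\mathbb{V})$ --- where the $L^{2}$-norm and the graph norm are equivalent --- the combinatorial Laplacian $\bar d\bar d^{*}+\bar d^{*}\bar d$ is literally the restriction of $\Delta_{D}$ to $\mathrm{Im}(E_{\epsilon})$, with spectral family $(E_{\delta})_{0<\delta\le\epsilon}$. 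Granting this, the $\Gamma$-Fredholm equivalence is immediate: being $\Gamma$-Fredholm is quasi-isomorphism-invariant in $E(\Gamma)$ by \cite[Theorem 2.19 p. 83]{Luc}, so $L^{2}DR^{\bullet}(\wt{X},\mathbb{V})$ is $\Gamma$-Fredholm iff the Laplacian of $E^{\bullet}_{\epsilon}(\wt{X},\mathbb{V})$ admits a spectral projector with image of finite $\Gamma$-dimension, i.e. iff $\dim_{\Gamma}\mathrm{Im}(E_{\delta})<\infty$ for some $\delta>0$; and $\mathrm{Im}(E_{\delta})$, a closed $\Gamma$-invariant subspace of the Hilbert $\Gamma$-module $L^{2}(\wt{X},E^{\bullet}(\mathbb{V}))$, is automatically a Hilbert $\Gamma$-module, that is a projective object of $E(\Gamma)$. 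The Dolbeault case is handled the same way with $\Delta_{D''}$.

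For the sufficiency half of the strong statement I would argue directly: if $\mathrm{Im}(E_{\epsilon})$ is of finite type, then $E^{\bullet}_{\epsilon}(\wt{X},\mathbb{V})$ is a bounded complex of finite type Hilbert $\Gamma$-modules whose own Laplacian has spectrum in $[0,\epsilon]$, so the image of $E_{\delta}$ for that Laplacian is of finite type for every $\delta>0$ --- the complex satisfies the defining spectral condition on the nose --- and since it is quasi-isomorphic in $E(\Gamma)$ to $L^{2}DR^{\bullet}(\wt{X},\mathbb{V})$, the latter is strongly $\Gamma$-Fredholm. This is in fact the only implication invoked in the proof of the preceding Proposition. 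For the necessity half, a strongly $\Gamma$-Fredholm complex is a fortiori $\Gamma$-Fredholm, so the first part gives $\dim_{\Gamma}\mathrm{Im}(E_{\epsilon})<\infty$ for some $\epsilon>0$; to promote finite $\Gamma$-dimension to finite type I would reuse the $\Gamma$-equivariant parametrix built in the proof of the preceding Proposition, namely the closed $\Gamma$-equivariant embedding $E^{k}_{\epsilon}(\wt{X},\pi^{-1}\mathbb{V})\hookrightarrow(l_{2}\Gamma)^{A}\hat{\otimes}E^{k}_{\mu}(X,\mathbb{V})$ with $E^{k}_{\mu}(X,\mathbb{V})$ finite-dimensional by elliptic theory on the compact base $X$, which exhibits $\mathrm{Im}(E_{\epsilon})$ as a closed submodule of a finite rank free Hilbert $\Gamma$-module, hence of finite type; the Dolbeault case uses the Dolbeault Laplacian identically.

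The delicate points, as opposed to the formal bookkeeping, are twofold. First, one must be careful that the combinatorial Laplacian of the bounded complex $L^{2}DR^{\bullet}(\wt{X},\mathbb{V})$ --- whose adjoints are those for the graph inner product --- is not a priori $\Delta_{D}$; passing to the quasi-isomorphic truncation $E^{\bullet}_{\epsilon}$, where the two inner products agree up to equivalence, is what legitimizes the replacement, and Lemma~\ref{qiso} is doing the real work here. Second, the step from ``finite $\Gamma$-dimension'' to ``finite type'' in the necessity direction is genuinely non-formal --- it is essentially the content of the open question about removing the quasi-isomorphism from the definition of strong $\Gamma$-Fredholmness --- and is available here only thanks to Atiyah's local parametrix on the compact quotient; since that argument is in fact unconditional, both sides of each equivalence always hold for these geometric complexes, and the lemma may equally be read as recording this.
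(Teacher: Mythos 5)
The paper offers no argument here --- its ``proof'' is literally \emph{left to the reader, see L\"uck} --- so there is nothing to match your proposal against line by line; what you wrote is a genuine proof, and it is the natural one. Your route (replace $L^2DR^{\bullet}$ by the spectral truncation $E^{\bullet}_{\epsilon}$, which by Lemma \ref{qiso} is a $\Gamma$-equivariant homotopy retract on which the graph and $L^2$ inner products agree and the combinatorial Laplacian is $\Delta_D|_{\mathrm{Im}(E_\epsilon)}$, then invoke the homotopy/quasi-isomorphism invariance of $\Gamma$-Fredholmness from \cite[Theorem 2.19]{Luc}) is exactly what the surrounding text implicitly relies on, and you correctly isolate the one non-formal point: passing from the graph-norm Laplacian of the ambient complex to $\Delta_D$ itself. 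The ``if'' direction of the strong statement --- the only implication actually used in the proof of the Proposition it concludes --- is clean: $E^{\bullet}_{\epsilon}$ is a bounded complex of finite type Hilbert $\Gamma$-modules satisfying the spectral condition on the nose, so the quasi-isomorphic De Rham complex is strongly $\Gamma$-Fredholm by definition.

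One caveat you should make explicit rather than bury: your ``only if'' for the strong case is not a proof of the lemma as an abstract equivalence, since promoting finite $\Gamma$-dimension of $\mathrm{Im}(E_\epsilon)$ to finite type uses the closed embedding $E^{k}_{\epsilon}\hookrightarrow(l_2\Gamma)^{A}\hat{\otimes}E^{k}_{\mu}(X,\mathbb{V})$ coming from the parametrix on the \emph{compact} quotient, i.e.\ it is unconditional in the setting of this subsection and makes both sides of the strong equivalence hold automatically. That is fine where the lemma sits (the subsection assumes $X$ compact K\"ahler), but the same statement is later leaned on in the non-cocompact Poincar\'e-metric situation of Section \ref{Poin}, where the paper itself admits it cannot verify $\Gamma$-Fredholmness; there your necessity argument would not be available, and indeed transporting ``finite type'' across a quasi-isomorphism is essentially the open Question \ref{fredcomplex}. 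So state clearly that you prove the equivalence for the non-strong version in general, the ``if'' of the strong version in general, and the ``only if'' of the strong version only under cocompactness.
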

\begin{prf}
 We leave this exercise to the reader. See \cite{Luc}. 
\end{prf}
 
\end{prf}

In the most  general relevant case, $\Gamma$ need not act in a cocompact fashion, hence we have to add the $\Gamma$-Fredholm hypothesis to state the following:

\begin{lem} Assume $L^2DR^{\bullet}(\wt{X},\pi^{-1}\mathbb{V})$ is $\Gamma$-Fredholm.
The following inclusions of complexes:

$$
E^{\bullet}_{\lambda}(\wt{X}, \pi^{-1}\mathbb{V}) \subset E_{\lambda'} (\wt{X}, \pi^{-1}\mathbb{V}) \subset L^2DR_{\infty}^{\bullet}(\wt{X}, \pi^{-1}\mathbb{V}) \subset L^2DR^{\bullet}(\wt{X}, \pi^{-1}\mathbb{V})
$$

are quasi-isomorphisms of complexes of Hilbert (resp. Fr\'echet for the third one) $\Gamma$-modules and define the same element of $D^b(E_{sep}(\Gamma))$ (resp. of the derived category of the exact category of $\Ng$-Fr\'echet modules)
all of whose cohomology groups have finite $\Gamma$-dimension. 
 
\end{lem}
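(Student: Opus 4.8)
The plan is to rerun the argument of Lemma~\ref{qiso} while keeping track of the $\Gamma$-action, and then to extract the finiteness statement from the $\Gamma$-Fredholm hypothesis. Since $\Delta_D$ is a $\Gamma$-equivariant essentially self-adjoint operator, its spectral family $(E_\mu)_{\mu\ge0}$ consists of $\Gamma$-equivariant orthogonal projectors; hence for $\lambda>0$ the bounded operator $g=\int_\lambda^\infty \mu^{-1}\,dE_\mu$ and the operator $h=\mathfrak{d}g$ are $\Gamma$-equivariant. As in Lemma~\ref{qiso} one has $[D,h]=\mathrm{Id}-E_\lambda$, and $g$, $h$, $E_\lambda$ preserve all four complexes in the statement ($g$ because it is a bounded Borel function of $\Delta_D$; $\mathfrak{d}$ and $E_\lambda$ because they commute with $\Delta_D$, hence with its powers and spectral projectors). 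Restricting this identity to $E^{\bullet}_{\lambda'}(\wt X,\pi^{-1}\mathbb V)$, to $L^2DR_\infty^{\bullet}(\wt X,\pi^{-1}\mathbb V)$ and to $L^2DR^{\bullet}(\wt X,\pi^{-1}\mathbb V)$ exhibits $E^{\bullet}_\lambda(\wt X,\pi^{-1}\mathbb V)$ as a $\Gamma$-equivariant homotopy retract of each of the three larger complexes, the retraction being $E_\lambda$; in particular all three inclusions are $\Gamma$-equivariant quasi-isomorphisms.

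For the third, Fréchet, complex I would check in addition that $g$ preserves $\bigcap_n \mathrm{Dom}(\Delta_D^n)$ and is continuous for the induced Fréchet topology, and likewise for $\mathfrak{d}$, so that $h$ is a morphism in the quasi-abelian category of $\Ng$-Fréchet modules and the homotopy retraction takes place there. A chain homotopy equivalence of bounded complexes of separable Hilbert $\Gamma$-modules is an isomorphism in the homotopy category, hence in $D^b(E_{sep}(\Gamma))$, so the retracts among $E^{\bullet}_\lambda$, $E^{\bullet}_{\lambda'}$ and $L^2DR^{\bullet}$ identify them with one and the same object there; regarding all four complexes as complexes of $\Ng$-Fréchet modules, the retracts likewise identify them with a single object of the derived category of the exact category of such modules, by \cite{Sch} together with Remark~\ref{tapia}. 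Finally, by the $\Gamma$-Fredholm hypothesis there is $\lambda_0>0$ with $\mathrm{Im}(E_{\lambda_0})$ of finite $\Gamma$-dimension; for $0<\lambda\le\lambda_0$ each term $E^k_\lambda(\wt X,\pi^{-1}\mathbb V)$ is a closed $\Gamma$-invariant subspace of $\mathrm{Im}(E_{\lambda_0})$, hence of finite $\Gamma$-dimension, so the subquotients $H^q(E^{\bullet}_\lambda(\wt X,\pi^{-1}\mathbb V))$ have finite $\Gamma$-dimension; via the retracts just built this is the common cohomology of all four complexes, so all of them have finite-$\Gamma$-dimensional cohomology.

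The part I expect to require the most care is the Fréchet statement: verifying that $h$ genuinely is a morphism in the quasi-abelian category of $\Ng$-Fréchet modules of \cite{Sch}, that the retraction is an admissible quasi-isomorphism there, and that the conventions line up so that ``defines the same element of the derived category'' is literally witnessed by the chain of homotopy equivalences above. Everything else is a mechanical $\Gamma$-equivariant upgrade of Lemma~\ref{qiso}, combined with the definition of $\Gamma$-Fredholmness and the monotonicity of $\dim_{\Gamma}$ under passage to closed submodules and subquotients.
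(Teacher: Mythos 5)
Your proposal is correct and follows exactly the route the paper intends: the lemma is stated without proof precisely because it is the $\Gamma$-equivariant rerun of Lemma~\ref{qiso} (the homotopy $h=\mathfrak{d}g$ with $[D,h]=\mathrm{Id}-E_{\lambda}$, all operators being $\Gamma$-equivariant since they are Borel functions of the equivariant $\Delta_D$), combined with the observation that the $\Gamma$-Fredholm hypothesis makes $E^{k}_{\lambda}$ finite $\Gamma$-dimensional for small $\lambda$ and that $\dim_{\Gamma}$ is monotone under closed submodules and subquotients. Your explicit flagging of the Fréchet-continuity check for $g$ and $\mathfrak{d}$ on $\bigcap_n \mathrm{Dom}(\Delta_D^n)$ is a point the paper leaves implicit, and it is handled correctly.
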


\begin{lem} Assume $L ^2DR^{\bullet}(\wt{X},\pi^{-1}\mathbb{V})$ is $\Gamma$-Fredholm.
 
 \begin{enumerate}
  \item The  decomposition theorem  is valid. Namely,  we have a direct sum decomposition of $\Ug$ modules
   $$ \Ug \otimes_{\Ng} L^2(\wt{X},E^k(\pi^{-1}\mathbb V))=  \Ug \otimes_{\Ng}\mathcal{H}^k(\wt{X}, \pi^{-1}\mathbb{V})\oplus {\mathrm{Im}(D)}\oplus {\mathrm{Im}(\mathfrak{d})}.
 $$
 \item The  decomposition theorem for the corresponding $\Ug$-Dolbeault complexes is valid.  
  \item The  decomposition theorem for the $\Ug$-$D'$ complexes is valid.
  \item $\Ug \otimes_{\Ng} E_{0}^{\bullet} (\wt{X}, \pi^{-1}\mathbb{V}) \subset \Ug \otimes_{\Ng} E_{\lambda}^{\bullet} (\wt{X}, \pi^{-1}\mathbb{V}) $ is a filtered quasi-isomorphism where $\lambda>0$.
  \item The Hodge to De Rham spectral sequence of  $\Ug\otimes_{\Ng}  E_{\lambda}^{\bullet} (\wt{X}, \pi^{-1}\mathbb{V})  $ degenerates at $E_1$ and $D$ is $F$-strict.
  \item The Hodge to De Rham spectral sequence of  $\Ug\otimes_{\Ng}  L^2DR_{\infty}^{\bullet} (\wt{X}, \pi^{-1}\mathbb{V})  $ degenerates at $E_1$ and $D$ is $F$-strict.
  \item Dingoyan's  $D'D''$ lemma holds.  Namely, 
  $$   \phi \in L^2DR^{k}(\wt{X}, \mathbb{V}) \cap Im(D')\cap Im(D'') \Rightarrow \exists \psi \exists u \in \Ug^{\times} \   u\phi=D'D'' \psi.
  $$
  
 \end{enumerate}

\end{lem}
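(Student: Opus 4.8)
The plan is to transport the proof of the classical K\"ahler Hodge package through the functor $\Ug\otimes_{\Ng}-$, using the spectrally truncated complexes $E_\lambda^{\bullet}(\wt X,\pi^{-1}\V)$ as intermediaries. First I would reduce every item to $E_\lambda^{\bullet}$ for an arbitrary $\lambda>0$. By the lemma immediately preceding the statement, the $\Gamma$-Fredholm hypothesis says that $\mathrm{Im}(E_\varepsilon)$ is a finite $\Gamma$-dimensional projective module for some $\varepsilon>0$, so each $E_\lambda^{k}$ is a finite $\Gamma$-dimensional Hilbert $\Gamma$-module; by Lemma \ref{qiso} and its Dolbeault and $D'$ analogues the inclusions $E_\lambda^{\bullet}\subset E_{\lambda'}^{\bullet}\subset L^2DR_\infty^{\bullet}\subset L^2DR^{\bullet}$ (and the corresponding chains for $D''$ and $D'$) are quasi-isomorphisms for $0<\lambda<\lambda'$. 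On these truncations the K\"ahler identities recalled above hold, the spectral projectors $E_\mu$ and all the operators $D,D',D'',\mathfrak d,\dots,L,\Lambda$ act as bounded operators preserving the truncation, $\Delta_D=2\Delta_{D''}=2\Delta_{D'}$, and consequently $E_\lambda^{\bullet}$ is bigraded, $E_0^{\bullet}=(\mathcal H^{\bullet},0)$ has zero differential, and $\mathcal H^{k}=\bigoplus_{p+q=k}\mathcal H^{p,q}$.

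Next I would apply $\Ug\otimes_{\Ng}-$. Since $\Ug$ is flat over $\Ng$ this preserves quasi-isomorphisms and commutes with cohomology, and since it annihilates the cokernel of any bounded injective $\Gamma$-operator between Hilbert $\Gamma$-modules (such an operator is a non-zero-divisor of $\Ng$, hence becomes invertible in $\Ug$) it kills the torsion quotients $\overline{\mathrm{Im}(D)}/\mathrm{Im}(D)$, $\overline{\mathrm{Im}(\mathfrak d)}/\mathrm{Im}(\mathfrak d)$ and their Dolbeault counterparts. Applying it to the orthogonal decomposition theorem of the complete K\"ahler case recalled above therefore removes the closures and produces the algebraic direct sum decompositions (1), (2), (3); the Fr\'echet version for $L^2DR_\infty^{\bullet}$ follows because the orthogonal projectors onto $\mathcal H$, $\overline{\mathrm{Im}(D)}$, $\overline{\mathrm{Im}(\mathfrak d)}$ are functions of $\Delta_D$ and so preserve $L^2DR_\infty^{\bullet}$. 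In the same way $\Ug\otimes_{\Ng}H^{q}(E_\lambda^{\bullet})=\Ug\otimes_{\Ng}\mathcal H^{q}$.

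For (4)--(6) I would pass to the associated graded for $F$, working with $E_\lambda^{\bullet}$ and $L^2DR_\infty^{\bullet}$ precisely because there $\mathrm{Gr}_F E_\lambda^{\bullet}=(E_{2\lambda}(\Delta_{D''}),D'')$ and $\mathrm{Gr}_F L^2DR_\infty^{\bullet}=L^2Dolb_\infty^{\bullet}$ are honestly (truncations of) the Dolbeault complex, unlike $L^2DR^{\bullet}$ itself. By (2) and the bigrading of harmonic forms the first page of the Hodge--de Rham spectral sequence of $\Ug\otimes_{\Ng}E_\lambda^{\bullet}$ is $E_1^{p,q}=\Ug\otimes_{\Ng}\mathcal H^{p,q}$, so $\sum_{p+q=k}\dim_\Gamma E_1^{p,q}=\dim_\Gamma(\Ug\otimes_{\Ng}\mathcal H^{k})=\dim_\Gamma H^{k}(\Ug\otimes_{\Ng}E_\lambda^{\bullet})$; these dimensions being finite and $\dim_\Gamma$ additive, and a bounded spectral sequence satisfying $\sum\dim_\Gamma E_\infty^{p,q}\le\sum\dim_\Gamma E_1^{p,q}$ with equality exactly when it degenerates at $E_1$, the spectral sequence degenerates and $D$ is $F$-strict, which is (5) and (6). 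Feeding this back, $\Ug\otimes_{\Ng}E_0^{\bullet}\subset\Ug\otimes_{\Ng}E_\lambda^{\bullet}$ is a quasi-isomorphism (both have cohomology $\Ug\otimes_{\Ng}\mathcal H^{\bullet}$) inducing an isomorphism on each $\mathrm{Gr}_F$-cohomology, hence a filtered quasi-isomorphism, which is (4) (compare Lemma \ref{filtqiso}).

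Item (7) is then the usual $D'D''$-lemma argument run over $\Ug$: using the three algebraic decompositions and the K\"ahler identities one produces $\psi$ with $\phi=D'D''\psi$ inside $\Ug\otimes_{\Ng}L^2DR_\infty^{\bullet}$, and clearing the Ore denominator yields a non-zero-divisor $u$ of $\Ng$, a unit in $\Ug$, together with $\psi$ at the $L^2$ level satisfying $u\phi=D'D''\psi$. The genuine difficulty here is not any single computation but the bookkeeping that keeps the filtered quasi-isomorphisms honest: items (4)--(6) must be carried out on $E_\lambda^{\bullet}$ and $L^2DR_\infty^{\bullet}$, where $\mathrm{Gr}_F$ is a true Dolbeault complex, since --- as the Remark preceding the statement shows --- on $L^2DR^{\bullet}$ the homotopy $\mathfrak d''g$ fails to preserve $\mathrm{Gr}_F^{P}$ and the classical argument collapses; and one must check once and for all that $\Ug\otimes_{\Ng}-$ really deletes the closures in the decomposition theorem, which rests on flatness of $\Ug$ over $\Ng$ and L\"uck's vanishing of $\Ug\otimes_{\Ng}-$ on Novikov--Shubin torsion.
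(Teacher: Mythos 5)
Your proposal is correct and follows the paper's overall strategy: reduce to the spectral truncations $E_\lambda^{\bullet}$, use $\Gamma$-Fredholmness for finiteness, flatness of $\Ng\subset\Ug$ to commute $\Ug\otimes_{\Ng}-$ with cohomology, and $\Delta_D=2\Delta_{D''}$ to identify $Gr_F$ of the truncations with truncated Dolbeault complexes. Two points of divergence are worth recording. For (1)--(3) the paper simply invokes Dingoyan's Lemme 2.15; your mechanism (that $\Ug\otimes_{\Ng}-$ deletes the closures) is the right one, but your parenthetical justification is misstated: a bounded injective $\Gamma$-operator need not become invertible over $\Ug$ (an isometric embedding onto a proper closed submodule has a nonzero cokernel that survives the base change). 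What you need, and what holds in the application, is that an injective operator with \emph{dense} image --- a weak isomorphism, such as $D$ restricted to the orthogonal complement of its kernel mapping onto $\overline{\mathrm{Im}(D)}$ --- becomes an isomorphism over $\Ug$, so that $\overline{\mathrm{Im}(D)}/\mathrm{Im}(D)$ is Novikov--Shubin torsion killed by $\Ug\otimes_{\Ng}-$. For (4)--(6) you invert the paper's logical order: the paper proves (4) first (isomorphism on cohomology and on $Gr_F$-cohomology after tensoring, using that the Dolbeault complex is also $\Gamma$-Fredholm), then obtains (5) by transporting the trivially degenerate spectral sequence of the zero-differential complex $E_0^{\bullet}$ through the filtered quasi-isomorphism, and (6) from Lemma \ref{filtqiso}; you instead prove degeneration directly by the classical count $\sum_{p+q=k}\dim_\Gamma E_1^{p,q}=\dim_\Gamma H^{k}$ and then deduce (4). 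Both routes are valid; your dimension count is self-contained but additionally requires that $\dim_\Gamma$ is faithful on $\Ug$-modules (a module of dimension zero vanishes), whereas the paper's transport argument needs only flatness. Item (7) is handled identically in both (adaptation of Dingoyan's Lemma 3.13).
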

\begin{prf}
As in \cite{Din}, 
(1), (2), (3) follow from \cite[Lemme 2.15]{Din} and the fact these complexes are $\Gamma$-Fredholm.   
Observing that the formation of the cohomology of a complex commutes with $\Ug \otimes_{\Ng}$, since $\Ng \to \Ug$ is flat,  (4) follows from the fact that we have an isomorphism on cohomology after tensoring with $\Ug$, 
and we also have an isomophism on cohomology after passing to $Gr_F$ since the $L^2$-Dolbeault complex is $\Gamma$-Fredholm too ($\Gamma$-Fredholmness means that $E_{\lambda}$ has finite $\Gamma$-dimension for $\lambda>0$ small enough). 
(5) follows from (4) and the fact that the statement is trivially true for $\lambda=0$ and invariant by filtered quasi-isomorphism. (6) follows in the same way from Lemma \ref{filtqiso} and (5). (7) follows by an easy adaptation of the argument of \cite[Lemma 3.13]{Din}.
 
\end{prf}

\begin{theo} The $k$-th cohomology group of $ \Ug \otimes_{\Ng} L^2DR_{\infty}^{\bullet}(\wt{X}, \pi^{-1}\mathbb{V}) $  carries a $\Ug$-Hodge structure of weight $k+w$ which we call the analytic Hodge  filtration. 

It gives rise to  a weight $w+k$  real Hodge structure on $\Ug \otimes_{\Ng} H^{k}_2(\wt{X},  \mathbb{M})$.

Every K\"ahler class on $X$ induces a Hodge-Lefschetz isomorphism $$L^k: \Ug \otimes_{\Ng} H^{\dim(X)-k}_2(\wt{X},  \mathbb{V}) \to \Ug \otimes_{\Ng} H^{\dim(X)+k}_2(\wt{X},  \mathbb{V}). $$ 
\end{theo}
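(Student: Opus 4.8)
The plan is to reduce the whole statement to the analysis of $L^2$-harmonic forms on $\wt X$, carried out $\Gamma$-equivariantly, and then to transport the resulting structure to $H^\bullet_2(\wt X,\mathbb M)$ through the comparison isomorphism of Theorem \ref{thm1}.

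\textbf{Step 1: the analytic Hodge filtration.} Since $X$ is compact K\"ahler, $L^2DR^\bullet(\wt X,\pi^{-1}\V)$ is strongly $\Gamma$-Fredholm by the Proposition establishing strong $\Gamma$-Fredholmness, so items (1)--(7) of the Lemma immediately preceding the theorem are available. By item (4), the inclusion $(\Ug\otimes_{\Ng}\mathcal H^\bullet,0)=\Ug\otimes_{\Ng}E^\bullet_0(\wt X,\pi^{-1}\V)\hookrightarrow\Ug\otimes_{\Ng}L^2DR_\infty^\bullet(\wt X,\pi^{-1}\V)$ is a filtered quasi-isomorphism, so $H^k\bigl(\Ug\otimes_{\Ng}L^2DR_\infty^\bullet(\wt X,\pi^{-1}\V)\bigr)\cong\Ug\otimes_{\Ng}\mathcal H^k(\wt X,\pi^{-1}\V)$. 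Because $\Delta_D=2\Delta_{D''}$ commutes with Deligne's $(P,Q)$-grading of $E^\bullet(\V)$, the space of $L^2$-harmonic forms splits as $\mathcal H^k=\bigoplus_{P+Q=w+k}\mathcal H^{P,Q}$, where a form valued in $H^{p,q}\otimes E^{r,s}$ has bidegree $(p+r,q+s)$; this decomposition is $\Gamma$-invariant and hence survives $\Ug\otimes_{\Ng}$. The analytic Hodge filtration is then the filtration induced on cohomology by $F^p=\bigoplus_{P\ge p}E^{P,Q}(\V)$; that it is well defined and compatible with $\Ug\otimes_{\Ng}$ is precisely the $E_1$-degeneration and $F$-strictness of items (5)--(6).

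\textbf{Step 2: Hodge symmetry and passage to $H^k_2(\wt X,\mathbb M)$.} When $\V$ underlies a real (or $\Q$) VHS one has $\overline{H^{p,q}}=H^{q,p}$ for the Hodge decomposition of $\V$ and $\overline{E^{r,s}}=E^{s,r}$ for smooth forms, so the conjugation carried by the real structure on $L^2DR_\infty^\bullet$ maps $E^{P,Q}(\V)$ onto $E^{Q,P}(\V)$; moreover $\Delta_D$ is a real operator (its adjoint $\mathfrak d$ is taken for a conjugation-compatible metric), whence $\overline{\mathcal H^{P,Q}}=\mathcal H^{Q,P}$. Thus $\bigoplus_{P+Q=w+k}\Ug\otimes_{\Ng}\mathcal H^{P,Q}$ is a pure Hodge structure of weight $w+k$ in the category of real $\Ug$-modules of finite $\Gamma$-dimension (Lemma \ref{ugtensclosure}, with the weight filtration pure, guarantees the $n$-oppositeness of $F$ and $\overline F$ is unaffected by $\Ug\otimes_{\Ng}$); in the merely complex case the same argument produces a $\Ug$-Hodge structure, i.e. a bidegree decomposition only. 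To move this to $H^k_2(\wt X,\mathbb M)$ with $\mathbb M=\mathbb M_X(\V)$, I would use a local $L^2$ de Rham--Dolbeault lemma with coefficients in $\V$, glued by a \v Cech argument over a suitable Stein covering, to identify the analytic complex $L^2DR_\infty^\bullet(\wt X,\pi^{-1}\V)$ with a degree shift of the complex computing $H^\bullet_{DR,2}(\wt X,\mathcal M)$, hence via $rh_2$ and Theorem \ref{thm1} with $\mathbb H^\bullet_2(\wt X,\pi^{-1}\mathbb M^B)=H^\bullet_2(\wt X,\mathbb M)$; transporting the analytic Hodge filtration and the real structure along this isomorphism gives the asserted real Hodge structure.

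\textbf{Step 3: Hodge--Lefschetz.} Represent a K\"ahler class on $X$ by a K\"ahler form and let $L$ be wedging with its pull-back $\pi^{-1}\omega_X$, a bounded $\Gamma$-equivariant operator. By the K\"ahler identities $L$ commutes with $\Delta_D$ and with every spectral projector $E_\lambda$, so it acts on $\mathcal H^\bullet$ and, after $\Ug\otimes_{\Ng}$, on $\bigoplus_k H^k(\Ug\otimes_{\Ng}L^2DR_\infty^\bullet)\cong\Ug\otimes_{\Ng}\mathcal H^\bullet$; together with $\Lambda$ and the weight operator it generates an $\mathfrak{sl}_2$-action. The Lemma on the Hodge--Lefschetz package for the reduced $L^2$ cohomology of the complete K\"ahler manifold $\wt X$ already yields the hard Lefschetz isomorphism $L^k:\overline{H}^{\dim X-k}_2(\wt X,\V)\buildrel{\sim}\over\to\overline{H}^{\dim X+k}_2(\wt X,\V)$ of projective Hilbert $\Gamma$-modules; applying the flat extension $\Ug\otimes_{\Ng}-$ and using $\Ug\otimes_{\Ng}\overline{H}^j_2=\Ug\otimes_{\Ng}H^j_2$ gives $L^k:\Ug\otimes_{\Ng}H^{\dim X-k}_2(\wt X,\V)\buildrel{\sim}\over\to\Ug\otimes_{\Ng}H^{\dim X+k}_2(\wt X,\V)$, compatibly with the bidegree decomposition and, via the polarization $\int_{\wt X}S(-\wedge-)$, with the polarization of the Hodge--Lefschetz structure.

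\textbf{Main obstacle.} Everything above is a $\Gamma$-equivariant rerun of the classical compact-K\"ahler arguments, made legitimate by strong $\Gamma$-Fredholmness and by the flatness of $\Ng\subset\Ug$; the one point requiring genuine work is the comparison in Step 2, namely matching the analytic $L^2$ de Rham complex of $\V$-valued forms with estimates with the algebraically defined $tDR$-complex computing $H^\bullet_2(\wt X,\mathbb M)$, together with a careful bookkeeping of the degree shift between the two conventions.
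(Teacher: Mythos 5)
Your proposal is correct and follows essentially the same route as the paper, which states this theorem without a separate proof because it is exactly the synthesis you describe: the strong $\Gamma$-Fredholmness proposition, items (4)--(6) of the preceding lemma combined with Lemma \ref{filtqiso}, the $(P,Q)$-decomposition of $L^2$-harmonic forms from $\Delta_D=2\Delta_{D''}$, and the Hodge--Lefschetz lemma of the previous subsection together with flatness of $\Ng\subset\Ug$. The comparison you flag as the main obstacle in Step 2 is precisely what the paper supplies immediately afterwards via the filtered \v Cech--De Rham quasi-isomorphism $CD_2$ of subsection 5.3 and the appendix, so your sketch of that step matches the paper's actual mechanism.
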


\subsection{$\Ug$-Hodge Complex}

Now its is time to compare with the previous construction. 

We can construct on $X$ a resolution of $l^2\pi_*\pi^{-1}\mathbb{V}$ 
by the sheafified $L^2$ De Rham complex. This is a complex of sheaves $l^2 DR^{\bullet} (\mathbb{V})$ whose value over $U\subset X$ is given in degree $k$ by  
$$l^2 DR^{\bullet} (\mathbb{V})(U)= \{ \omega \in L^2_{loc} (\pi^{-1} (U), E^k(\pi^{-1}\mathbb{V})) \ \forall K\Subset U \ \int_{K} \|\omega\|^2 +\| D\omega\|^2 <+\infty \}.  $$

One can construct a  polarizable Hodge module $\mathbb{M}=\mathbb{M}_{X}(\V)$ such that $\mathbb{M}^{B}=\mathbb{V}[\dim(X)]$, 
with a trivial  $W$-filtration, 
the underlying filtered $\Dx$-module the $\mathcal{V}=\mathbb{V}\otimes_{\C} \Ox$ endowed with the filtration $F^.$ made increasing, 
the underlying perverse sheaf is $\mathbb{V}[\dim(X)]$ and the comparison morphism $\alpha$ is the usual resolution $\mathbb{V}[\dim(X)] \to DR(\mathcal{V})$.

\begin{prop} There is a natural filtered quasi isomorphism of complexes in $E(\Gamma)$
$$ CD_2: (\mathcal{C}( DR(\mathcal{V}), F_.) \to (L^2DR^{\bullet}_{\infty}(\wt{X}, \pi^{-1} \mathbb{V}), F_.)$$ such that the composition with the comparison morphism induced by $\alpha$: 
$$rh_2: \mathbb{H}^{\bullet}_2(\wt{X}, \pi^{-1}\mathbb{V}) \to \mathcal{C}( DR(\mathcal{V}))\simeq H^{\bullet}_{DR,2}(\wt{X}, \mathcal{V} ) $$ 
is  the \v{C}ech-De Rham comparison isomorphism. 
\end{prop}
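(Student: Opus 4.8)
The plan is to realize both sides as (hyper)cohomology of one and the same complex of soft sheaves on $X$, namely the sheafified complex of \emph{smooth} $L^2$ de Rham forms, the point being that its associated graded for the Hodge filtration is the (smooth) $L^2$ Dolbeault complex of the Hodge bundles. Concretely, introduce on $X$ the complex $l^2DR^\bullet_{C^\infty}(\mathbb{V})$ whose sections over $U$ are the smooth sections $\omega$ of $E^{\bullet}(\pi^{-1}\mathbb{V})$ over $\pi^{-1}(U)$ such that $\int_{\pi^{-1}(K)}\|\Delta_D^n\omega\|^2<+\infty$ for every $K\Subset U$ and every $n$, with differential $D$, graded so that the morphism $\iota$ below is degree-preserving, and with the decreasing filtration $F^p$ given by the subcomplexes carried by the types $(P,Q)$ with $P\ge p$ (subcomplexes since $D''$ preserves and $D'$ raises $P$). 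Elliptic estimates with $\Gamma$-invariant constants (bounded geometry) show that $l^2DR^k_{C^\infty}(\mathbb{V})$, and each summand $F^p$, is a module over the soft sheaf $C^\infty_X$, hence soft; and, $X$ being compact and $\widetilde X$ complete for the pulled-back metric, $\Gamma(X,l^2DR^\bullet_{C^\infty}(\mathbb{V}))=(L^2DR^\bullet_\infty(\widetilde X,\pi^{-1}\mathbb{V}),F)$.

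First I would build a natural morphism of filtered complexes of sheaves $\iota:l^2\pi_*DR(\mathcal V)\to l^2DR^\bullet_{C^\infty}(\mathbb{V})$. A local holomorphic section of the vector bundle $\mathcal V\otimes\Omega^j$ carrying the $l^2\pi_*$ growth is a $D^{0,1}$-closed form of type $(j,0)$, hence smooth, and by Cauchy estimates of the type of Lemma~\ref{cauchy} the $L^2_{\mathrm{loc}}$-over-$\Gamma$ bound propagates to all its derivatives, so it lies in $l^2DR^\bullet_{C^\infty}(\mathbb{V})$; the de Rham differential $\nabla$ restricts to $D=D^{1,0}+D^{0,1}$ on such forms; and the good filtration $F_\bullet DR(\mathcal V)$ is carried into $F^\bullet$ by the classical identification of the Hodge filtration of $\mathcal V=\mathbb{V}\otimes\Ox$ with the $C^\infty$ bigrading, a consequence of Griffiths transversality (cf.\ \cite{Zuc,Eys1}). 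To see that $\iota$ is a filtered quasi-isomorphism one argues locally over polydisks $U\Subset X$, where $\pi^{-1}(U)\cong\Gamma\times U$ with the product metric: applying the exact functor $l^2\pi_*$, the unfiltered statement is the local $L^2$ Poincar\'e lemma for the flat bundle $\mathbb{V}|_U$, while on each $Gr_F^P$ it is the local $L^2$ Dolbeault (H\"ormander) lemma applied to the $\bar\partial$-resolution of the locally free Higgs complex $Gr_F^P DR(\mathcal V)|_U$, whose totalisation is precisely $Gr_F^P l^2DR^\bullet_{C^\infty}(\mathbb{V})|_U$ because $D''=d''+\nabla'$ and $\Delta_D=2\Delta_{D''}$, so that smoothness together with $L^2_{\mathrm{loc}}$-control of all powers of $\Delta_D$ on a form of type $(P,\cdot)$ is equivalent to the same for $\Delta_{D''}$; these local lemmas hold uniformly over the $\Gamma$-translates, which handles the Hilbert-module bookkeeping. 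Note finally that $\iota$ is an isomorphism on the bottom cohomology sheaf: a flat section that is $L^2_{\mathrm{loc}}$-over-$\Gamma$ has $l_2$-over-$\Gamma$ coefficients in a flat frame, which is exactly the condition defining $l^2\pi_*\ker\nabla$.

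Now choose $\mathfrak U$ a finite covering of $X$ by Oka--Weil domains refining the cover by the stars of a triangulation, with contractible intersections. The coherent sheaves $l^2\pi_*(\mathcal V\otimes\Omega^j)$ are $\mathfrak U$-acyclic by the Leray acyclicity of $l^2\pi_*$ on Stein opens \cite{Eys}, and the soft sheaves $l^2DR^k_{C^\infty}(\mathbb{V})$ (and their $F^p$) are acyclic; hence $\mathcal C(\iota)$ is a filtered quasi-isomorphism of total \v Cech complexes, and a partition of unity subordinate to $\mathfrak U$ provides a filtered quasi-isomorphism $\mathcal C^\bullet(\mathfrak U,l^2DR^\bullet_{C^\infty}(\mathbb{V}))\to\Gamma(X,l^2DR^\bullet_{C^\infty}(\mathbb{V}))=L^2DR^\bullet_\infty(\widetilde X,\pi^{-1}\mathbb{V})$. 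Composing with $\mathcal C(\iota)$, and recalling that $\mathcal C(DR(\mathcal V))$ is $\mathcal C^\bullet(\mathfrak U,l^2\pi_*DR(\mathcal V))$ up to the auxiliary data of \cite{Eys}, we get the filtered morphism $CD_2$; it is a filtered quasi-isomorphism, and since all terms are $\Ng$-Fr\'echet (Montelian) modules and all maps are algebraic filtered quasi-isomorphisms it is an isomorphism in the relevant derived category by Corollary~\ref{algiso}, an honest representative in $E(\Gamma)$ being produced as in Section~\ref{liftrh} via \cite[Proposition~4.4.14]{Eys}. The compatibility with $rh_2$ is then a diagram chase: $\alpha$ is induced by the inclusion of the constant sheaf $\mathbb{V}[\dim X]$ into $DR(\mathcal V)$, which composed with $\iota$ is the inclusion of $l^2\pi_*\pi^{-1}\mathbb{V}[\dim X]$ --- the $D$-closed smooth $L^2$ sections --- into the soft resolution $l^2DR^\bullet_{C^\infty}(\mathbb{V})$; hence $CD_2\circ\alpha$ is, by construction, the map that computes $\mathbb H^\bullet_2(\widetilde X,\pi^{-1}\mathbb{V})$ through this resolution, i.e.\ the \v Cech--de Rham isomorphism.

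The hard part will be making the comparison genuinely \emph{filtered}. This is what forces the use of the complex of smooth $L^2$ forms rather than the graph-norm $L^2$ de Rham complex --- precisely because, as the remark following Lemma~\ref{filtqiso} points out, passing to $Gr_F$ in the graph-norm complex does not return the $L^2$ Dolbeault complex, whereas in $L^2DR^\bullet_\infty$ it does --- and it is where the non-formal matching of the algebraic Hodge filtration $F_\bullet DR(\mathcal V)$ with the analytic bigrading, and with it the variation-of-Hodge-structure hypothesis, is genuinely used. Everything else --- softness, the local $L^2$ Poincar\'e and Dolbeault lemmas, Leray acyclicity of $l^2\pi_*$, and the lift to $E(\Gamma)$ --- is standard or already available in the paper.
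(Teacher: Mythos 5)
Your proposal is correct and follows essentially the same route as the paper: the paper defines $CD_2$ directly by the partition-of-unity (Weil) formula of the appendix, checks it lands in $L^2DR^{\bullet}_{\infty}$, and reduces the filtered statement to the \v{C}ech--Dolbeault isomorphism for $Gr^F\mathcal{V}$ --- exactly the reduction you perform via $\Delta_D=2\Delta_{D''}$ and the identification $Gr_F L^2DR^{\bullet}_{\infty}=L^2Dolb^{\bullet}_{\infty}$. Your packaging through a soft sheafification of the smooth $L^2$ de Rham complex is only a cosmetic variant of the same argument (the composite of your two quasi-isomorphisms is literally the paper's $CD_2$), and your expansion of the local $L^2$ Poincar\'e/Dolbeault lemmas fills in what the paper dismisses as routine.
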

\begin{proof} First observe that $CD_2$ indeed maps into $L^2DR^{\bullet}_{\infty}$ and that there is indeed a morphism of complexes. Then, it is a routine task to check that these 
maps are quasi-isomorphisms and that they have the stated compatibility. It is a  filtered quasi-isomorphism thanks to the \v{C}ech-Dolbeault isomorphism for $Gr^F \mathcal{V}$ described in the appendix.  
 
\end{proof}

\begin{prop}
Consider a locally finite covering $\mathfrak{U}$ of $X$ by small enough Oka-Weil domains,   we can define a morphism of filtered $\Ng$-Fr\'echet complexes
$$(\mathcal{C}^{\bullet}(\mathfrak{U}, l^2\pi_* DR(\mathbb{M}^{DR}_{X}(\V))) , F)\to (L^2DR^{\bullet}_{\infty}(\wt{X}, \pi^{-1} \mathbb{V}), F_.)$$
which is a filtered quasi isomorphism of complexes of $\Ng$-modules. 
 
\end{prop}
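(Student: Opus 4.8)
The plan is to build the morphism by factoring it through the sheafified smooth $L^2$ De Rham complex $l^2DR^{\bullet}(\V)$ and a \v{C}ech collapsing map attached to a partition of unity, and then to deduce that it is a filtered quasi-isomorphism from Lemma \ref{qiso}, Lemma \ref{filtqiso}, Proposition \ref{rhcomp} and the local $L^2$ Poincar\'e estimates of \cite{Eys}; this is the scheme used in the proof of the preceding proposition, now carried out for an arbitrary locally finite Oka--Weil covering $\mathfrak{U}$.

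\emph{Construction.} Since $\mathbb{M}^{DR}_{X}(\V)=\mathcal{V}=\V\otimes_{\C}\Ox$ and $Ds=\nabla s$ on holomorphic flat sections, a local section of $l^2\pi_*(\mathcal{V}\otimes_{\Ox}\Omega^q_X)$ is in particular a smooth, locally $L^2$ form on $\wt{X}$ with locally $L^2$ differential, so there is a natural monomorphism of complexes of sheaves on $X$, $\iota\colon l^2\pi_* DR(\mathcal{V}) \hookrightarrow l^2DR^{\bullet}(\V)$, the holomorphic flat De Rham complex into the smooth $L^2$ one. Applying $\mathcal{C}^{\bullet}(\mathfrak{U},-)$ and then the collapsing map $c$ attached to a partition of unity $(\rho_i)$ subordinate to $\mathfrak{U}$, namely $c\big((\sigma_{i_0\dots i_p})\big)=\sum \pm\,\rho_{i_0}\,d\rho_{i_1}\wedge\dots\wedge d\rho_{i_p}\wedge \sigma_{i_0\dots i_p}$, yields a chain map taking values a priori in $\Gamma(X,l^2DR^{\bullet}(\V))=L^2DR^{\bullet}(\wt{X},\pi^{-1}\V)$. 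I claim that on the image of $\mathcal{C}^{\bullet}(\mathfrak{U},\iota)$ it lands in the subcomplex $L^2DR_{\infty}^{\bullet}(\wt{X},\pi^{-1}\V)$: the $\rho_i$ are pulled back from the compact base, hence smooth with uniformly bounded derivatives, while a holomorphic $l^2\pi_*$-section is uniformly $C^{\infty}$, with all derivatives again locally $l^2$-summable, by interior elliptic estimates; so $c$ of a \v{C}ech cochain of such sections, together with all its covariant derivatives, is globally $L^2$ on $\wt{X}$ (here $\Gamma$ acts cocompactly because $X$ is compact), hence lies in $\bigcap_{n>0}\mathrm{Dom}(\Delta_D^{n})$. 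This is the sought morphism; its continuity for the Fr\'echet structures and its $\Ng$-linearity are immediate from naturality, the $\rho_i$ being $\Gamma$-invariant.

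\emph{It is a quasi-isomorphism.} I would argue by two-out-of-three using: (i) $\iota$ is a quasi-isomorphism of complexes of sheaves, because $l^2\pi_* DR(\mathcal{V})$ and $l^2DR^{\bullet}(\V)$ both resolve $l^2\pi_*\pi^{-1}\V$ (with the usual $\dim X$ shift on the holomorphic side) --- the target by construction, the source by the $L^2$ holomorphic Poincar\'e lemma of \cite{Eys}, equivalently by applying Proposition \ref{rhcomp} to the regular holonomic module $\mathcal{V}$ --- and $\iota$ is compatible with the augmentations; (ii) consequently $\mathcal{C}^{\bullet}(\mathfrak{U},\iota)$ is a quasi-isomorphism, since $l^2\pi_*$ of a coherent sheaf is acyclic on Oka--Weil domains \cite{Eys} and the $l^2DR^{q}(\V)$ are fine, so both \v{C}ech totals compute the hypercohomology of the respective sheaf complexes, i.e. $\mathbb{H}^{\bullet}_{DR,2}(\wt{X},\mathcal{V})$; (iii) the collapsing map $c\colon \mathcal{C}^{\bullet}(\mathfrak{U},l^2DR^{\bullet}(\V))\to L^2DR^{\bullet}(\wt{X},\pi^{-1}\V)$ is a left inverse of the augmentation of the fine complex $l^2DR^{\bullet}(\V)$ relative to the locally finite covering $\mathfrak{U}$ (which is a quasi-isomorphism), hence is itself a quasi-isomorphism, while $L^2DR_{\infty}^{\bullet}\hookrightarrow L^2DR^{\bullet}$ is a quasi-isomorphism by Lemma \ref{qiso}. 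Our morphism, composed with $L^2DR_{\infty}^{\bullet}\hookrightarrow L^2DR^{\bullet}$, equals the composite of (i)--(iii); hence it is a quasi-isomorphism. Alternatively, the statement follows from the preceding proposition by passing to a common refinement of $\mathfrak{U}$ and $\mathfrak{U}_0$, the refinement and collapsing maps being compatible up to filtered homotopy.

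\emph{It is filtered.} I would pass to $Gr_F$. As $Gr_F$ commutes with $\mathcal{C}^{\bullet}(\mathfrak{U},-)$ and with $l^2\pi_*$ (exact on coherent sheaves), the source becomes $\mathcal{C}^{\bullet}(\mathfrak{U}, l^2\pi_*\, Gr_F DR(\mathcal{V}))$, where $Gr_F DR(\mathcal{V})$ is the holomorphic Dolbeault (Higgs) complex of the graded Higgs bundle $(Gr_F\mathcal{V},\theta)$, $\theta=Gr\,\nabla$ being $\Ox$-linear by Griffiths transversality; and the target becomes $L^2Dolb_{\infty}^{\bullet}(\wt{X},\pi^{-1}\V)$, exactly as identified in the proof of the preceding proposition, $Gr_F$ of the morphism being the \v{C}ech--Dolbeault comparison for $Gr_F\mathcal{V}$. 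Rerunning the previous paragraph with the smooth $L^2$ Dolbeault complex $Gr_F\, l^2DR^{\bullet}(\V)$ in place of $l^2DR^{\bullet}(\V)$, the $L^2$ $\bar\partial$-Poincar\'e estimates with Higgs field from \cite{Eys} in place of the holomorphic De Rham Poincar\'e lemma, and the Dolbeault version of Lemma \ref{qiso}, together with Lemma \ref{filtqiso}, shows that $Gr_F$ of the morphism is a quasi-isomorphism; hence the morphism is a filtered quasi-isomorphism, and functoriality is clear. The main obstacle is precisely this $Gr_F$ step --- controlling the $L^2$ $\bar\partial$-Poincar\'e estimates for the Higgs bundle and checking that $Gr_F$ is compatible with all the maps in play --- together with the ``$\infty$''-claim of the construction; both are routine given the compactness of $X$ and the estimates of \cite{Eys}, but they carry the genuine content.
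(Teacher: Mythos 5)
Your proposal is correct and follows essentially the same route as the paper, whose entire proof is ``the preceding argument gives also this'' --- i.e.\ the partition-of-unity \v{C}ech--De Rham collapsing map $CD_2$ of the appendix, the observation that it lands in $L^2DR^{\bullet}_{\infty}$, and the \v{C}ech--Dolbeault comparison for $Gr_F\mathcal{V}$ to get the filtered statement. Your write-up merely makes explicit (via the factorization through the sheafified $L^2$ De Rham complex and the $Gr_F$ identification with the Higgs/Dolbeault complex) the steps the paper leaves implicit.
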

\begin{proof}
The preceding argument gives also this. 
\end{proof} 

Using the flatness of $\Ng\subset \Ug$, we deduce:

\begin{coro} Tensoring by $\Ug$, we obtain a filtered morphism of complexes of $\Ug$-modules
 $$\Ug \otimes_{\Ng}(\mathcal{C}^{\bullet}(\mathfrak{U}, l^2\pi_* DR(\mathbb{M}^{DR}_{X}(\V))) , F)\to \Ug \otimes_{\Ng}(L^2DR^{\bullet}_{\infty}(\wt{X}, \pi^{-1} \mathbb{V}), F_.)$$
 which is a filtered quasi isomorphism. 
\end{coro}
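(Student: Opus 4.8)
The plan is to deduce this corollary from the preceding Proposition by flat base change along the ring extension $\Ng \subset \Ug$, which was recalled to be flat \cite[Theorem 8.2.2]{Luc}. First I would make the meaning of ``filtered quasi-isomorphism'' precise here: on both $\mathcal{C}^{\bullet}(\mathfrak{U}, l^2\pi_* DR(\mathbb{M}^{DR}_{X}(\V)))$ and $L^2DR^{\bullet}_{\infty}(\wt{X}, \pi^{-1}\mathbb{V})$ the Hodge filtration $F$ is, in each cohomological degree, a finite decreasing filtration; hence a filtered morphism between them is a filtered quasi-isomorphism if and only if the induced morphism on the associated graded $Gr_F$ is a quasi-isomorphism of complexes of $\Ng$-modules. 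This is exactly what the preceding Proposition provides: writing $f$ for its morphism, $Gr_F f$ is a quasi-isomorphism.

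Then I would record the two elementary consequences of flatness. First, for any complex $K^{\bullet}$ of $\Ng$-modules and any $q$ one has $H^q(\Ug \otimes_{\Ng} K^{\bullet}) = \Ug \otimes_{\Ng} H^q(K^{\bullet})$, so $\Ug \otimes_{\Ng}(-)$ carries quasi-isomorphisms to quasi-isomorphisms. Second, flatness makes $\Ug \otimes_{\Ng} F^p K^{\bullet} \to \Ug \otimes_{\Ng} K^{\bullet}$ injective, so its image defines a filtration $F^p(\Ug \otimes_{\Ng} K^{\bullet})$, and right-exactness of the tensor product together with this injectivity identifies $\Ug \otimes_{\Ng} Gr_F K^{\bullet}$ with $Gr_F(\Ug \otimes_{\Ng} K^{\bullet})$. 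Applying both points to $f$, one gets $Gr_F(\Ug \otimes_{\Ng} f) = \Ug \otimes_{\Ng} Gr_F f$, which is a quasi-isomorphism; by the characterization above, $\Ug \otimes_{\Ng} f$ is a filtered quasi-isomorphism of complexes of $\Ug$-modules. Its $\Ug$-linearity and functoriality in $\mathfrak{U}$ are immediate from those of $f$.

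I do not expect a genuine obstacle here; the only point deserving a word of caution is that $\Ug \otimes_{\Ng}(-)$ is the purely algebraic tensor product — the Fr\'echet structures carried by the $\Ng$-modules of the Proposition play no role, and no topological completion is needed for this statement — so that only the algebraic flatness of $\Ng$ in $\Ug$ is invoked. If one wished to be scrupulous, one would also note that the filtrations being finite in each degree is precisely what makes ``filtered quasi-isomorphism'' equivalent to ``$Gr_F$ is a quasi-isomorphism'' and lets the second point above be applied degreewise without any convergence issue.
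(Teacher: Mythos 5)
Your argument is exactly the paper's: the corollary is stated there as an immediate consequence of the preceding proposition ``using the flatness of $\Ng\subset\Ug$,'' and your proposal simply spells out the two standard consequences of flatness (commutation with cohomology and with $Gr_F$ for degreewise finite filtrations) that make this deduction work. Correct, same route, just more detailed.
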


Before summarizing the outcome of the discussion,  we need the following definition \cite{DelH3}:
\begin{defi}
 A Hodge complex of $\Ug$-modules with real structure and weight $w$ is a triple: $(A^{\bullet}, (B^{\bullet}, F), \gamma)$ where $A^{\bullet}$ is a complex of $\Ug$-modules with real structures, $\gamma: A^{\bullet} \to B^{\bullet}$ 
 an isomorphism in the derived category of $\Ug$-modules such that the $F$-spectral sequence degenerates at $E_1$ and the pair of filtrations $(F,\overline{F})$ on $H^k(B^{\bullet})$ is a Hodge structure of weight $w+k$ in the category of $\Ug$-modules. 
 
We say  $(B^{\bullet}, F)$ underlies  a  Hodge complex of $\Ug$-modules with real structure and weight $w$ if it can be completed to such a triple. 
\end{defi}

\begin{theo} \label{HodgeComplex} 
Let $X$ be a compact K\"ahler manifold and $(X, \mathbb{V}, F^., S)$ be a polarized complex Variation of Hodge Structure of weight $w$.
\begin{enumerate}
 \item The $F$-spectral sequence of $\Ug \otimes_{\Ng}(\mathcal{C}^{\bullet}(\mathfrak{U}, l^2\pi_* DR(\mathbb{M}^{DR}_{X}(\V))) , F)$ degenerates at $E_1$.
\item The algebraically defined Hodge filtration gives rise to  a weight $w+k$  real Hodge structure on $\Ug \otimes_{\Ng} H^{k}_2(\wt{X},  \mathbb{V})$ which coincides with the analytically defined one.
\item $\Ug \otimes_{\Ng}(\mathcal{C}^{\bullet}(\mathfrak{U}, l^2\pi_* DR(\mathbb{M}^{DR}_{X}(\V))) , F)$  underlies a weight $w$ Hodge complex of $\Ug$-mod\-ules with real structure and finite $\Ug$-dimensional cohomology objects.
\item Every K\"ahler class on $X$ induces a Hodge-Lefschetz isomorphism $$L^k: \Ug \otimes_{\Ng} H^{\dim(X)-k}_2(\wt{X},  \mathbb{V}) \to \Ug \otimes_{\Ng} H^{\dim(X)+k}_2(\wt{X},  \mathbb{V}). $$ 
\end{enumerate}

\end{theo}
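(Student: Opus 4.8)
The plan is to assemble the four statements from the filtered comparison established just above and from the analytic $\Ug$-Hodge theory of the previous subsection, rather than to prove anything genuinely new. The key preliminary is that, $X$ being compact K\"ahler and $(X,\mathbb{V},F^{\cdot},S)$ polarized, the complex $L^2DR^{\bullet}(\wt{X},\pi^{-1}\mathbb{V})$ is strongly $\Gamma$-Fredholm by the Proposition above; hence every consequence of $\Gamma$-Fredholmness recorded in the preceding lemmas is available, namely: the $\Ug$-Hodge--de Rham degeneration and $F$-strictness of $D$, Dingoyan's $D'D''$ lemma, the decomposition theorem after $\Ug\otimes_{\Ng}$, the Hodge--Lefschetz isomorphism, and the finiteness $\dim_{\Gamma}E_{\lambda}<\infty$ for small $\lambda>0$. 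First I would record, from the Corollary above, the filtered quasi-isomorphism of complexes of $\Ug$-modules
\[
\Ug \otimes_{\Ng}\bigl(\mathcal{C}^{\bullet}(\mathfrak{U}, l^2\pi_* DR(\mathbb{M}^{DR}_{X}(\V))),F\bigr)\longrightarrow \Ug \otimes_{\Ng}\bigl(L^2DR^{\bullet}_{\infty}(\wt{X}, \pi^{-1} \mathbb{V}),F_{\cdot}\bigr),
\]
(with $\mathbb{M}^{DR}_{X}(\V)=\mathcal{V}=\mathbb{V}\otimes_{\C}\Ox$ carrying the Hodge filtration), together with the \v{C}ech--de Rham isomorphism $rh_2$ identifying $\mathbb{H}^{\bullet}_2(\wt{X},\pi^{-1}\mathbb{V})$ with $H^{\bullet}_{DR,2}(\wt{X},\mathcal{V})$ compatibly with the comparison morphism $\alpha$. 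Everything then reduces to transporting the analytic picture along this filtered quasi-isomorphism.

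For (1): a filtered quasi-isomorphism induces an isomorphism of the associated $F$-spectral sequences from the $E_1$-page on, so degeneration at $E_1$ for $\Ug \otimes_{\Ng}(\mathcal{C}^{\bullet}(\mathfrak{U}, l^2\pi_* DR(\mathbb{M}^{DR}_{X}(\V))),F)$ follows from the corresponding statement for $\Ug \otimes_{\Ng}(L^2DR^{\bullet}_{\infty}(\wt{X}, \pi^{-1}\mathbb{V}),F_{\cdot})$, which is item (6) of the $\Gamma$-Fredholm Lemma above (itself deduced from item (4), from Lemma \ref{filtqiso}, and from $\Gamma$-Fredholmness of the $L^2$-Dolbeault complex). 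In particular $H^{k}$ of the left-hand complex is computed termwise on $Gr_F$.

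For (2) and (3): by the Proposition--Definition, the algebraically defined Hodge filtration on $H^{k}_2(\wt{X},\mathbb{V})$ is the image filtration induced from $H^{k}_2(\wt{X},F_{\bullet}DR(\mathcal{V}))$ and then carried over by $rh_2$; since the map $CD_2$ of the Proposition above, together with the \v{C}ech resolution, identifies, after $\Ug\otimes_{\Ng}$, the filtered complex $\Ug\otimes_{\Ng}(\mathcal{C}^{\bullet}(\mathfrak{U},l^2\pi_*DR(\mathcal{V})),F)$ with $\Ug\otimes_{\Ng}(L^2DR^{\bullet}_{\infty}(\wt{X},\pi^{-1}\mathbb{V}),F_{\cdot})$ in the filtered derived category of $\Ug$-modules, the two filtrations on cohomology coincide — this is precisely the identification of the algebraically defined Hodge filtration with the analytic one. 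By the analytic Theorem above, $(F,\overline{F})$ on $H^{k}(\Ug\otimes_{\Ng}L^2DR^{\bullet}_{\infty}(\wt{X},\pi^{-1}\mathbb{V}))$ is a Hodge structure of weight $w+k$ in the category of (real) $\Ug$-modules; transporting it along $rh_2$ gives (2). For (3), one checks the axioms of the Definition of a Hodge complex of $\Ug$-modules with $A^{\bullet}$ the Betti complex, $B^{\bullet}$ the filtered \v{C}ech--de Rham complex and $\gamma=rh_2$: degeneration of the $F$-spectral sequence is (1), the pair $(F,\overline{F})$ on $H^{k}$ is a weight $w+k$ Hodge structure by (2), and the cohomology objects are finite-$\Ug$-dimensional by $\Gamma$-Fredholmness.

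Part (4) is the Hodge--Lefschetz isomorphism already established in the analytic Theorem above: the polarized Hodge--Lefschetz package on the complete K\"ahler covering $\wt{X}$ survives $\Ug\otimes_{\Ng}$ thanks to the $\Gamma$-Fredholm Lemma and is transported by the comparison isomorphism, cup product by a K\"ahler class being the Lefschetz operator. The main obstacle is the input underlying (2)--(3), namely that $CD_2$ is a genuine \emph{filtered} quasi-isomorphism after $\Ug\otimes_{\Ng}$: on $Gr_F$ it becomes the \v{C}ech--Dolbeault comparison for $Gr^{F}\mathcal{V}$, and one must know that $Gr_F L^2DR^{\bullet}_{\infty}(\wt{X},\pi^{-1}\mathbb{V})$ is the $L^2$-Dolbeault complex of the associated Higgs/Hodge bundle, which is again $\Gamma$-Fredholm, so that $\Ug\otimes_{\Ng}(-)$ — flat over $\Ng\subset\Ug$ — commutes with $Gr_F$, with taking cohomology, and preserves finite $\Gamma$-dimension. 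This is exactly the place where $\Gamma$-Fredholmness is invoked to circumvent the difficulty flagged in the Remark after Lemma \ref{filtqiso}, where $\mathfrak{d}''g$ fails to preserve $Gr_F^{P}L^2DR^{\bullet}$.
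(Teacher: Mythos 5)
Your proposal is correct and follows exactly the route the paper intends: the paper's own proof is literally ``Immediate'', relying on the filtered quasi-isomorphism $\Ug\otimes_{\Ng} CD_2$ and $\Ug\otimes_{\Ng} rh_2$ to transport the analytic $\Ug$-Hodge structure, the $E_1$-degeneration, and the Hodge--Lefschetz package from $\Ug\otimes_{\Ng}L^2DR^{\bullet}_{\infty}(\wt{X},\pi^{-1}\mathbb{V})$ to the \v{C}ech model. Your closing observation that the difficulty flagged after Lemma \ref{filtqiso} is avoided because the target of $CD_2$ is the smooth subcomplex $L^2DR^{\bullet}_{\infty}$ (whose $Gr_F$ is the $L^2$-Dolbeault complex) is precisely the point that makes the argument go through.
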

\begin{proof} Immediate. 
We use the natural quasi-isomorphism $\Ug \otimes _{\Ng} rh_{2}$ to constuct the Hodge complex in the third statement. 
Actually, the cohomology objects are in the abelian category of finitely presented $\Ug$-modules.
 
\end{proof}

\subsection{Using analytic realisations of Hodge Modules}\label{Poin} Assume now that $U\subset X$ is a K\"ahler snc compactification and let $\tilde X \to X$ be a Galois covering space. 
Endow $U$ with a Poincar\'e K\"ahler metric $\omega_U$ \cite{Zuc,CKS,KK}. The lift $\omega_{\wt{U}}$  of $\omega_U$ to
$\wt{U}:=U\times_X \wt{X}$ is a complete K\"ahler metric which is  Poincar\'e with respect to the partial K\"ahler snc compactification
$\wt{U}\subset \wt{X}$. Consider $(U,\mathbb{V}, F^.,S)$ a (say real) polarized VHS on $U$ with quasi unipotent monodromy. 
Recall the  fundamental result  of \cite{CKS,KK} that
the sheafified $L^2$ De Rham complex $\mathcal{DR}^{\bullet}_2(U, \mathbb{V})$  with respect to the Poincar\'e metric on $U$
is a fine model of the perverse sheaf $\mathcal{IC}_X(\mathbb{V})$. 

A trivial modification of the definition  in \cite{KK} replacing $U\to X$ with $\wt{U}\to X$ yields sheaves on $X$  we shall denote by $l^2\pi_* \mathcal{DR}_2^k(\wt{U}, \pi^{-1}\mathbb{V})$
and the operators $D',D'', D$ betwwen these sheaves on $X$ and we have:

\begin{prop}\label{ckskk}
$l^2\pi_* \mathcal{DR}^{\bullet}_2(\wt{U}, \pi^{-1}\mathbb{V})$ is a fine model of $l^2\pi_*\pi^{-1}\mathcal{IC}_X(\mathbb{V})$. 
 \end{prop}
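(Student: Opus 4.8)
The plan is to reduce Proposition~\ref{ckskk} to the already-cited Cattani--Kaplan--Schmid / Kashiwara--Kawai theorem by checking that the fine-model property is local on $X$ and then exploiting the $\Gamma$-equivariant decomposition of $\wt{U}\to U$ over small polydisc neighbourhoods. First I would recall that a complex of sheaves $\mathcal{A}^\bullet$ on $X$ is a fine model of a complex $\mathcal{F}^\bullet$ if the $\mathcal{A}^j$ are fine (soft) and there is a quasi-isomorphism $\mathcal{F}^\bullet\to\mathcal{A}^\bullet$; this is a statement that can be verified stalkwise, hence on an arbitrarily small coordinate polydisc $\Delta\subset X$ adapted to the snc divisor $X\setminus U$. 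So the problem becomes: for such a $\Delta$, show that $l^2\pi_*\mathcal{DR}^\bullet_2(\wt U,\pi^{-1}\mathbb V)|_\Delta$ is a fine model of $(l^2\pi_*\pi^{-1}\mathcal{IC}_X(\mathbb V))|_\Delta$.

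Next I would use that $\Delta$ can be chosen small enough that $\pi^{-1}(\Delta)\cong\Gamma\times\Delta$, so that $\wt U\cap\pi^{-1}(\Delta)\cong\Gamma\times(U\cap\Delta)$ and the lifted Poincaré metric $\omega_{\wt U}$ restricts, on each sheet $\{\gamma\}\times(U\cap\Delta)$, to the Poincaré metric $\omega_U$ on $U\cap\Delta$. Over such a $\Delta$ the sheaf $l^2\pi_*\mathcal{DR}^k_2(\wt U,\pi^{-1}\mathbb V)$ is, by the definition reproduced just before the Proposition (the $L^2_{loc}$ forms on $\pi^{-1}(V)$ with the $\sum_\gamma\int_K(\|\omega\|^2+\|D\omega\|^2)<\infty$ growth condition), literally $l^2\pi_*\Ox$-twisted, i.e. $Rl_2\Gamma\otimes_{\C\Gamma}$ applied sheet-by-sheet to the CKS--KK $L^2$ De Rham sheaf $\mathcal{DR}^k_2(U,\mathbb V)$. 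Concretely, writing $\mathcal{A}^k:=\mathcal{DR}^k_2(U,\mathbb V)$ on $U\cap\Delta$ and extending by the sheaf-theoretic pushforward to $\Delta$, one has a natural isomorphism of complexes of sheaves on $\Delta$
$$
l^2\pi_*\mathcal{DR}^\bullet_2(\wt U,\pi^{-1}\mathbb V)\big|_\Delta\;\cong\;\big(Rl_2\Gamma\otimes_{\C\Gamma}\pi_!\,\pi^{-1}\mathcal{A}^\bullet\big)\big|_\Delta,
$$
because the Hilbertian growth condition summed over the $\Gamma$-sheets is exactly the one packaged by $Rl_2\Gamma\otimes_{\C\Gamma}$. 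Since $\pi$ is a covering, $\pi_!\pi^{-1}\mathcal{A}^\bullet=\C\Gamma\otimes_\C\mathcal{A}^\bullet$ locally and the functor $Rl_2\Gamma\otimes_{\C\Gamma}\pi_!\pi^{-1}(-)$ is exact on $\C$-sheaves by Lemma~\ref{extension1} (its complex-analytic analogue), so it takes the CKS--KK quasi-isomorphism $\mathcal{IC}_X(\mathbb V)|_{U\cap\Delta}\xrightarrow{\sim}\mathcal{A}^\bullet|_{U\cap\Delta}$, and its extension across the divisor, to a quasi-isomorphism $l^2\pi_*\pi^{-1}\mathcal{IC}_X(\mathbb V)|_\Delta\xrightarrow{\sim}l^2\pi_*\mathcal{DR}^\bullet_2(\wt U,\pi^{-1}\mathbb V)|_\Delta$. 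Softness of the terms is inherited from softness of $\mathcal{DR}^k_2(U,\mathbb V)$ (these carry partitions of unity by smooth functions, and the $l^2\pi_*$-construction preserves this since one can multiply by pullbacks of smooth functions on $X$), so one indeed gets a fine model; globalizing via a locally finite cover and a partition of unity on $X$ finishes the argument.

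The main obstacle is the behaviour along the divisor $X\setminus U$: away from it everything is the trivial local covering picture, but near a boundary point the identification of $l^2\pi_*\mathcal{DR}^\bullet_2(\wt U,\pi^{-1}\mathbb V)$ with $Rl_2\Gamma\otimes_{\C\Gamma}$ of the downstairs $L^2$ complex needs the completeness of $\omega_{\wt U}$ and the fact that the Poincaré growth condition is a \emph{local} (in $X$) condition that does not see the $\Gamma$-action except through the orthogonal direct sum over sheets — which is exactly what makes $Rl_2\Gamma\otimes_{\C\Gamma}(-)$ the right functor. I would spell this out by choosing $\Delta$ of the form $(\Delta^*)^a\times\Delta^{n-a}$, checking that the monodromy weight filtrations and the norm estimates of \cite{CKS,KK} are insensitive to replacing $U\cap\Delta$ by $\Gamma$ disjoint copies of it, and invoking the $\Gamma$-Fredholm/analytic results of the previous subsection only to know the resulting $\Ng$-sheaf cohomology is well-behaved; the fine-model statement itself is purely local and does not need them. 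The functoriality and independence of the chosen cover are routine and I would leave them to the reader.
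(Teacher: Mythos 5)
Your reduction to a local statement on a polydisc adapted to the snc divisor, and the appeal to \cite{CKS,KK} for the downstairs quasi-isomorphism $\mathcal{IC}_X(\mathbb V)\to \mathcal{DR}^\bullet_2(U,\mathbb V)$, are both in the spirit of the paper's (very terse) proof, which simply says ``same method as Proposition \ref{rhcomp}''. But the pivot of your argument is a claimed \emph{isomorphism} of complexes of sheaves over a trivializing $\Delta$,
$$l^2\pi_*\mathcal{DR}^\bullet_2(\wt U,\pi^{-1}\mathbb V)\big|_\Delta\;\cong\;\bigl(Rl_2\Gamma\otimes_{\C\Gamma}\pi_!\,\pi^{-1}\mathcal{DR}^\bullet_2(U,\mathbb V)\bigr)\big|_\Delta,$$
and this is false. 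The functor $Rl_2\Gamma\otimes_{\C\Gamma}\pi_!\pi^{-1}(-)$ produces the \emph{algebraic} tensor product $l_2\Gamma\otimes_{\C}\mathcal A^\bullet$, i.e.\ families of sections spanning a finite-dimensional subspace; it does not ``package'' the condition $\sum_\gamma\int_K(\|\omega\|^2+\|D\omega\|^2)<\infty$ on arbitrary $\ell^2$-summable families over the sheets. For a finite-rank local system the two coincide because the stalks are finite-dimensional, but $\mathcal{DR}^k_2(U,\mathbb V)$ is an infinite-dimensional Fr\'echet sheaf, and the paper explicitly warns about precisely this point in the remark preceding Proposition \ref{rhcomp} (the map $i_\pi:\ell^p\pi_*\C_{\wt X}\otimes_{\C_X}\Ox\to l^p\pi_*\Ox$ is a monomorphism but \emph{not} an epimorphism). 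Once the isomorphism is replaced by the correct monomorphism, your exactness argument via Lemma \ref{extension1} only shows that $Rl_2\Gamma\otimes_{\C\Gamma}\pi_!\pi^{-1}\mathcal{IC}_X(\mathbb V)\to Rl_2\Gamma\otimes_{\C\Gamma}\pi_!\pi^{-1}\mathcal{DR}^\bullet_2(U,\mathbb V)$ is a quasi-isomorphism; the entire content of the proposition, namely that the further inclusion into $l^2\pi_*\mathcal{DR}^\bullet_2(\wt U,\pi^{-1}\mathbb V)$ is also a quasi-isomorphism, is left unproved.

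That missing step is exactly what the ``method of Proposition \ref{rhcomp}'' supplies, and it is analytic, not formal: since by \cite{CKS,KK} the complex $\mathcal{DR}^\bullet_2(U,\mathbb V)$ has constructible, hence locally finite-dimensional, hypercohomology, and its differentials are continuous for the natural Fr\'echet/Hilbert structures on sections over suitable $V\Subset\Delta$, the closed range theorem and the open mapping theorem for Fr\'echet spaces yield a splitting $z=dt+g(h(z))$ with a local $L^2$ estimate $\|t\|\le C\|z\|$ whose constant is the \emph{same} on every sheet (all sheets being isometric for the lifted Poincar\'e metric). Summing these estimates over $\gamma\in\Gamma$ is what shows that an $\ell^2$-summable family of cocycles is, up to an $\ell^2$-summable coboundary, an $\ell^2$-family of harmonic representatives, i.e.\ lies in the image of $l_2\Gamma\otimes$(cohomology downstairs). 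Without this uniform-estimate argument the proof does not go through; your remarks about completeness of $\omega_{\wt U}$ and insensitivity of the CKS--KK norm estimates to disjoint copies gesture in the right direction but do not substitute for it. The softness discussion and the globalization by a partition of unity are fine.
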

 \begin{prf} Same method as in the proof of Proposition \ref{rhcomp}.
Left to the reader.
 \end{prf}

\begin{coro} Under the current assumptions, the $k$-th cohomology of the $L_2$ De Rham  complex of $\wt{U}$ with values in $\pi^{-1}\mathbb{V}$ 
in the Poincar\'e metric $\omega_{\wt{U}}$ is isomorphic as a $\Ng$-module 
to the $\Ng$-module underlying   $\mathbb{H}^k_2(\wt{X},\pi^{-1}\mathcal{IC}_X(\mathbb{V}))$.
 \end{coro}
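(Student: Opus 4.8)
The plan is to deduce the corollary from Proposition \ref{ckskk} by computing a hypercohomology group on the compact base $X$ through a fine resolution, and then recognising the resulting complex of global sections as the honest global $L_2$ De Rham complex of $\wt U$.

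First I would unwind the definitions on the left-hand side. Since $X=\Gamma\backslash\wt X$, the covering map $\pi$ is precisely the quotient map occurring in Definition \ref{lpco} and its complex analytic avatar Definition \ref{CconstrL2}; hence, by Proposition \ref{subalrefcoh}, the $\Ng$-module underlying $\mathbb{H}^k_2(\wt X,\pi^{-1}\mathcal{IC}_X(\mathbb V))$ is $\mathbb{H}^k_{(2)}(\wt X,\pi^{-1}\mathcal{IC}_X(\mathbb V))$, which is the $k$-th hypercohomology on $X$ of the complex of sheaves $l^2\pi_*\pi^{-1}\mathcal{IC}_X(\mathbb V)$ appearing in Proposition \ref{ckskk}, carrying its natural $\Ng$-module structure.

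Next I would invoke Proposition \ref{ckskk}, which provides a quasi-isomorphism of complexes of sheaves of $\Ng$-modules
$$l^2\pi_*\pi^{-1}\mathcal{IC}_X(\mathbb V)\longrightarrow l^2\pi_*\mathcal{DR}^{\bullet}_2(\wt U,\pi^{-1}\mathbb V)$$
whose target is a complex of \emph{fine} sheaves: each $l^2\pi_*\mathcal{DR}^{k}_2(\wt U,\pi^{-1}\mathbb V)$ is a $\mathcal{C}^{\infty}_X$-module, hence soft, hence acyclic for $\Gamma(X,-)$ because $X$ is compact. Therefore the hypercohomology above is computed by the complex of global sections, yielding an isomorphism of $\Ng$-modules
$$\mathbb{H}^k_{(2)}(\wt X,\pi^{-1}\mathcal{IC}_X(\mathbb V))\;\cong\;H^k\!\bigl(\Gamma\bigl(X,\;l^2\pi_*\mathcal{DR}^{\bullet}_2(\wt U,\pi^{-1}\mathbb V)\bigr)\bigr).$$

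Finally I would identify the right-hand complex with $L^2DR^{\bullet}(\wt U,\pi^{-1}\mathbb V)$ for the Poincaré metric $\omega_{\wt U}$, compatibly with $D$. By construction the sheaf $l^2\pi_*\mathcal{DR}^{k}_2(\wt U,\pi^{-1}\mathbb V)$ has, over an open $V\subset X$, the De Rham $k$-forms on $\pi^{-1}(V)\cap\wt U$ that are $L_2$ over $\pi^{-1}(K)\cap\wt U$ for every $K\Subset V$, together with $D$-images sharing the same property. Covering the compact $X$ by finitely many open sets $V_i$ over which $\pi$ is trivial and choosing $V_i'\Subset V_i$ still covering $X$, a global section of this sheaf has finite $L_2$ norm over each $\pi^{-1}(V_i')\cap\wt U$, hence a finite global $L_2$ norm on $\wt U$, and likewise for its differential; conversely any form that is globally $L_2$ together with its $D$-differential restricts to a local section over every $V_i$. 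This matches the complex of global sections with $L^2DR^{\bullet}(\wt U,\pi^{-1}\mathbb V)$, whose $k$-th cohomology $\ker(D)/D\,\mathrm{Dom}(D)$ is thus the asserted $\Ng$-module. The one point requiring genuine care is this last identification, where the local $L_2$ conditions defining the sheaf on $X$ must be reconciled, via compactness and the finite-cover/partition-of-unity argument just sketched, with the honest global $L_2$ condition on $\wt U$; everything else is formal once Proposition \ref{ckskk} is granted.
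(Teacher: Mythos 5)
Your proposal is correct and follows exactly the route the paper intends: unwind the definition of constructible $L_2$-hypercohomology on the compact quotient, use Proposition \ref{ckskk} to replace $l^2\pi_*\pi^{-1}\mathcal{IC}_X(\mathbb V)$ by the fine (hence soft, hence $\Gamma(X,-)$-acyclic) sheafified $L_2$ De Rham complex, and identify its global sections with the global $L_2$ De Rham complex of $\wt U$ for the Poincar\'e metric via compactness of $X$ and a finite trivializing cover. The paper treats this as an immediate consequence of Proposition \ref{ckskk}; your write-up supplies precisely the standard details, including the one genuinely non-formal point (local-to-global $L_2$ estimates), correctly.
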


\begin{coro}
If the $L^2$ De Rham complex is strongly $\Gamma$-Fredholm, the reduced $k$-th cohomology group twisted by $\Ug$,  $$\Ug\otimes_{\Ng}\mathbb{H}^k_2(\wt{X},\pi^{-1}\mathcal{IC}_X(\mathbb{V}))=
H^k({X}, \Ug\otimes_{\Ng}l^2\pi_*\pi^{-1}\mathcal{IC}_X(\mathbb{V}))$$
carries a natural real $\Ug$-Hodge structure of weight $k+w$. 
 
\end{coro}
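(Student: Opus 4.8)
The plan is to deduce this last corollary directly from the preceding two corollaries together with the analytic Hodge theory assembled in Theorem \ref{HodgeComplex}. By Proposition \ref{ckskk} and the corollary that follows it, the $\Ng$-module underlying $\mathbb{H}^k_2(\wt{X},\pi^{-1}\mathcal{IC}_X(\mathbb{V}))$ is identified with the $k$-th cohomology of the sheafified $L^2$ De Rham complex of $\wt U$ in the Poincar\'e metric, i.e. with $H^k(X, l^2\pi_*\mathcal{DR}^{\bullet}_2(\wt U,\pi^{-1}\mathbb V))$. Under the strong $\Gamma$-Fredholm hypothesis this complex computes, after $\Ug\otimes_{\Ng}-$, the same object as the global $L^2$ De Rham complex $L^2DR^{\bullet}(\wt U,\pi^{-1}\mathbb V)$ of the complete K\"ahler manifold $\wt U$; the passage from the sheafified (local $L^2$) complex to the global one is exactly the type of argument used in the proof of Proposition \ref{rhcomp}, and the cohomology is computed by harmonic forms once $\Gamma$-Fredholmness is in force. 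So the first step is to record that $\Ug\otimes_{\Ng}\mathbb{H}^k_2(\wt X,\pi^{-1}\mathcal{IC}_X(\mathbb V))\cong H^k(\Ug\otimes_{\Ng}L^2DR^{\bullet}_{\infty}(\wt U,\pi^{-1}\mathbb V))$.

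Next I would transport the Hodge structure. The key input is that for a complete K\"ahler manifold carrying a polarized VHS, the Laplacian $\Delta_D=2\Delta_{D''}$ commutes with the $(P,Q)$-type decomposition coming from the Hodge filtration $F^p=\bigoplus_{P\ge p}E^{P,Q}(\mathbb V)$; this is precisely the mechanism behind the earlier lemma giving a weight $w+k$ Hodge structure on reduced $L^2$ cohomology, and its $\Ug$-linear, $\Gamma$-equivariant refinement is exactly Theorem \ref{HodgeComplex}(2) (proved there for the compact base case, but the argument is purely about the behaviour of the Laplacian on $L^2$ forms and applies verbatim to the complete Poincar\'e metric on $\wt U$, using the $\Gamma$-Fredholm hypothesis in place of cocompactness). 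Concretely: the $E_{\lambda}$-truncated De Rham complexes $E^{\bullet}_{\lambda}(\wt U,\pi^{-1}\mathbb V)$ are quasi-isomorphic to $L^2DR^{\bullet}_{\infty}$, the Hodge filtration is strict on them after $\Ug\otimes_{\Ng}-$ (the $F$-spectral sequence degenerates at $E_1$ because $Gr_F E^{\bullet}_{\lambda}$ is the corresponding Dolbeault complex, which is itself $\Gamma$-Fredholm), and the pair $(F,\bar F)$ on $H^k$ is then $(w+k)$-opposed, giving the real $\Ug$-Hodge structure of weight $k+w$.

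The remaining step is to match this analytically defined filtration with the algebraically defined one so that the structure is ``natural'' in the sense intended, i.e. compatible with the $\mathcal D$-module / Hodge-module description of $\mathcal{IC}_X(\mathbb V)$. For that I would invoke the \v{C}ech--De Rham comparison in the form of the propositions of the previous subsection: the filtered quasi-isomorphism $CD_2$ between $\mathcal{C}^{\bullet}(\mathfrak U, l^2\pi_*DR(\mathbb M^{DR}))$ and $L^2DR^{\bullet}_{\infty}$, tensored with $\Ug$, carries the algebraically defined Hodge filtration (image of $F_\bullet DR(\mathcal M)$) to the analytic one, exactly as in Theorem \ref{HodgeComplex}(2). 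Here $\mathbb M$ is the pure Hodge module $\mathbb M_i(\mathbb V)$ (or $\mathcal{IC}_X(\mathbb V)$), and the CKS--KK theorem \cite{CKS,KK} is what guarantees that the analytic $L^2$ De Rham complex is a fine resolution of $\mathcal{IC}_X(\mathbb V)$ with the right filtration, so the two descriptions agree on the nose after passing to $\Ug$.

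The main obstacle I expect is the interplay between strictness of the Hodge filtration and the non-closedness of the images of $D',D''$: on the complete Poincar\'e manifold $\wt U$ the decomposition theorem and the $D'D''$-lemma only hold after tensoring with $\Ug$ (Dingoyan's form), and one must be careful that $Gr_F$ of the \emph{global smooth} $L^2$ complex is not literally the $L^2$ Dolbeault complex — the subtlety flagged in the remark after Lemma \ref{filtqiso}. The way around it is to do everything on the truncated complexes $E^{\bullet}_{\lambda}(\wt U,\pi^{-1}\mathbb V)$, where $Gr_F$ \emph{is} the truncated Dolbeault complex and the $\Gamma$-Fredholm hypothesis makes all the relevant subspaces closed and finite $\Gamma$-dimensional, and only then pass to the quasi-isomorphic limit; this is precisely the device used throughout the analytic section, so no new idea is needed, only care in bookkeeping.
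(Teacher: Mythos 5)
Your first two paragraphs are exactly the paper's (one-word, ``Immediate'') proof: the preceding corollary to Proposition \ref{ckskk} identifies the $\Ng$-module $\mathbb{H}^k_2(\wt{X},\pi^{-1}\mathcal{IC}_X(\mathbb{V}))$ with the $k$-th $L^2$ De Rham cohomology of $(\wt{U},\omega_{\wt{U}},\pi^{-1}\mathbb{V})$, and then the $\Gamma$-equivariant analytic Hodge theory of Section 5 --- which is set up precisely with the $\Gamma$-Fredholm hypothesis replacing cocompactness, and whose proofs use only completeness of the metric and the commutation of $\Delta_D=2\Delta_{D''}$ with the $(P,Q)$-decomposition --- gives the weight $w+k$ real $\Ug$-Hodge structure after tensoring with $\Ug$. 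Your device of working on the truncated complexes $E^{\bullet}_{\lambda}$ to sidestep the $Gr_F$ subtlety of the global smooth complex is also the paper's.

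Your third paragraph, however, overreaches, and the step it proposes would fail as argued. The paper states, in the sentence immediately following this corollary, that it is \emph{not} clear whether the analytically defined filtration coincides with the algebraically defined one; this coincidence is flagged in the introduction as the missing ingredient for the general pure case. The tools you invoke do not deliver it: Proposition \ref{ckskk} and \cite{CKS,KK} give a quasi-isomorphism of the Poincar\'e-metric $L^2$ De Rham complex with $l^2\pi_*\pi^{-1}\mathcal{IC}_X(\mathbb{V})$ as complexes of sheaves, with no filtered content, while the filtered comparison $CD_2$ is constructed only for a VHS defined on all of the compact $X$, where $F_{\bullet}DR(\mathcal{V})$ is a complex of coherent sheaves with the obvious filtration. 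For the intermediate extension the issue is precisely the behaviour of Saito's Hodge filtration along $X\setminus U$ versus the $L^2$-growth conditions imposed by the Poincar\'e metric, and no argument in the paper (or in your proposal) bridges that. Fortunately the statement only asserts the existence of a natural $\Ug$-Hodge structure --- the analytic one --- so your first two paragraphs suffice and the third should simply be deleted or downgraded to a remark that the comparison is open.
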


\begin{prf} Immediate. 
\end{prf}

We do not have a proof of the $\Gamma$-Fredholmness in this case. 
It is not clear whether the filtrations of this analytically defined Hodge Structure coincide with the algebraic ones we have constructed in this article. 

\begin{rem}
It has been announced in \cite{ShZh} that one can construct for every indecomposable Hodge module $\mathbb{M}$  a  distinguished  complete metric  on the regular part $U$ of its strict support. Distinguished means that  
the sheafified $L^2$ De Rham complex $\mathcal{DR}^{\bullet}_2(U, \mathbb{V})$  with respect to the distinguished metric on $U$
is a fine model of the perverse sheaf $\mathbb{M}^{Betti}$. The above considerations apply to distinguished metrics. 
\end{rem}

\section{Proof of Theorem \ref{thm2}}

Let $X$ be a compact K\"ahler manifold and $\mathfrak{U}$ be  a finite covering of $X$ by sufficiently small Oka-Weil domains. 

\subsection{Direct image by a closed immersion}

Let $i: Z \to X$ be a closed immersion of a smooth compact complex manifold and $(Z, \mathbb{V}, F^., S)$ be a polarized complex Variation of Hodge Structure of weight $w$. The case when $Z=X$ follows
from Theorem \ref{HodgeComplex}.

Then $i^{MHM}_*\mathbb{M}_{Z}(\V)=\mathbb{M}_{i}(\V)$. The  filtered $\Dx$-module $(\mathbb{M}^{DR}_{i}(\V), F)$ can be computed as $i_+ (\mathbb{M}^{DR}_{Z}(\V), F)$. 
$DR(\mathbb{M}^{DR}_{i}(\V))$ is not equal to $i_*DR(\mathbb{M}^{DR}_{Z}(\V))$ except if $Z=X$ where $i_*=Ri_*$ is the ordinary sheaf theoretic direct image. Nevertheless, we can prove:

\begin{lem} $\Ug \otimes_{\Ng}( R\Gamma(X, l^2\pi_* \mathbb{M}^B), (\mathcal{C}^{\bullet}(\mathfrak{U}, l^2\pi_* \mathbb{M}^{DR}_{i}(\V), F), rh_2)$ is a  $\Ug$ Hodge Complex. 
 
\end{lem}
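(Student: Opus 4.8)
The plan is to deduce the statement from the case $Z=X$, which is Theorem \ref{HodgeComplex}, by showing that both the Betti complex and the filtered De Rham complex on $X$ attached to $\mathbb{M}_i(\V)$ are computed by the corresponding complexes on $Z$ attached to $\mathbb{M}_Z(\V)$, compatibly with $rh_2$.

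First I would lift $i$ to a $\Gamma$-equivariant closed immersion $\wt{i}:\wt{Z}\hookrightarrow\wt{X}$, with $\wt{Z}=Z\times_X\wt{X}$ and $\pi^Z:\wt{Z}\to Z$ the restricted covering, whose Galois group one may assume is again $\Gamma$ (the general case is only notationally heavier). Since $\pi^{-1}\mathbb{M}^B=\wt{i}_*(\pi^Z)^{-1}\mathbb{V}[\dim Z]$ and $\wt{i}$ is proper, the Leray isomorphism of Proposition \ref{subalrefcoh} yields a functorial isomorphism in $D^b(E_f(\Gamma))$
$$\mathbb{H}^{\bullet}_2(\wt{Z},(\pi^Z)^{-1}\mathbb{V}[\dim Z])\;\cong\;\mathbb{H}^{\bullet}_2(\wt{X},\pi^{-1}\mathbb{M}^B),$$
which is the direct-image compatibility already recorded in Theorem \ref{thm1}; realised by \v{C}ech complexes, it comes from $\pi_!\wt{i}_*=i_*\pi^Z_!$ together with $\mathcal{C}^{\bullet}(\mathfrak U,i_*-)=\mathcal{C}^{\bullet}(\mathfrak U\cap Z,-)$.

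On the De Rham side $\mathbb{M}^{DR}_i(\V)=i_+\mathcal{V}$, and the standard filtered isomorphism $DR_X\circ i_+\cong i_*\circ DR_Z$ of Saito's theory (Kashiwara's equivalence, $i$ being a closed immersion) gives a filtered quasi-isomorphism $DR_X(i_+\mathcal{V})\buildrel{\sim}\over\longrightarrow i_*DR_Z(\mathcal{V})$, matching $F_{\bullet}DR_X(i_+\mathcal{V})$ with $i_*F_{\bullet}DR_Z(\mathcal{V})$ up to the shift by $\mathrm{codim}\,Z$. In coordinates $(x_1,\dots,x_c)$ cutting out $Z$ this is realised by a Koszul complex for the regular sequence $(\partial_{x_1},\dots,\partial_{x_c})$; exactly as in the first Example of \S\ref{sec2} and in the proof of Proposition \ref{rhcomp}, this Koszul complex stays exact after applying $l^2\pi_*$ and keeps its local $L^2$-estimates, and likewise in each $F$-graded piece (where it reduces to the $\mathcal{O}$-linear Koszul--Dolbeault situation of the appendix). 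Using moreover $l^2\pi_*\circ i_*=i_*\circ l^2\pi^Z_*$ on coherent sheaves supported on $Z$ (a basic property of the coherent $L^2$-theory of \cite{Eys}), and passing to the \v{C}ech complex for $\mathfrak U$ and its trace $\mathfrak U\cap Z$, with $DR$ replaced by $F_pDR$ for $p\gg 0$ as in \S\ref{vncohd}, I obtain a filtered quasi-isomorphism of $\Ng$-Fr\'echet complexes
$$\bigl(\mathcal{C}^{\bullet}(\mathfrak U,l^2\pi_*\mathbb{M}^{DR}_i(\V)),F\bigr)\;\buildrel{\sim}\over\longrightarrow\;\bigl(\mathcal{C}^{\bullet}(\mathfrak U\cap Z,l^2\pi^Z_*DR_Z(\mathcal{V})),F\bigr).$$

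Finally, Theorem \ref{HodgeComplex} applied to $(Z,\mathbb{V},F^.,S)$ says that, after $\Ug\otimes_{\Ng}-$ (which commutes with cohomology since $\Ng\to\Ug$ is flat), the right-hand complex underlies a weight $w$ Hodge complex of $\Ug$-modules with real structure and finitely presented cohomology, with structural morphism the $rh_2$ of $\mathbb{M}_Z(\V)$. By naturality of the Riemann--Hilbert comparison under proper direct images, the $rh_2$ of $\mathbb{M}_i(\V)$ is carried, under the two identifications above, to that of $\mathbb{M}_Z(\V)$; since $E_1$-degeneration of the $F$-spectral sequence, the real structure, and the weight-$(w+k)$ property of $H^k$ are invariant under filtered isomorphisms in the derived category, the Hodge-complex structure transports back to $X$. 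This proves the lemma. The main obstacle is the \emph{filtered} statement of the third paragraph: the unfiltered assertion is essentially Proposition \ref{rhcomp} together with $\pi_!\wt{i}_*=i_*\pi^Z_!$, but the graded version requires propagating the $L^2$-estimates through the Koszul resolution in each $Gr^F$, which is where the \v{C}ech--Dolbeault analysis of the appendix enters.
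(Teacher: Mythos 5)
Your proof is correct and follows essentially the same route as the paper: reduce to Theorem \ref{HodgeComplex} on $Z$ via the filtered (differential) quasi-isomorphism $i_*DR(\mathbb{M}^{DR}_{Z}(\V))\to F_qDR(\mathbb{M}^{DR}_{i}(\V))$ for $q\gg 1$, the canonical identification of $\mathcal{C}^{\bullet}(\mathfrak U, l^2\pi_* i_*(-))$ with $\mathcal{C}^{\bullet}(i^{-1}\mathfrak U, l^2\pi_*(-))$, and the compatibility $\alpha_{\mathbb{M}_{i}(\V)}=i_*\alpha\circ\eta$ on the Betti side. The only real difference is that you re-derive the $L^2$-estimates through an explicit Koszul resolution, whereas the paper gets this for free because the comparison map is a \emph{differential} filtered quasi-isomorphism and the functor $\mathcal{C}^{\bullet}(\mathfrak U, l^2\pi_*-)$ inverts such morphisms by construction.
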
 
\begin{proof}

For $q\gg1$, there is a filtered quasi isomorphism $$ F_q(DR(\mathbb{M}^{DR}_{i}(\V)), F)\to( DR(\mathbb{M}^{DR}_{i}(\V)), F)$$ and there is a (differential) filtered quasi-isomorphism of bounded differential complexes  cohenrent sheaves of
$$ i_*DR(\mathbb{M}^{DR}_{Z}(\V)) \to F_q(DR(\mathbb{M}^{DR}_{i}(\V)), F).
$$

On the other hand,  one does have $i_* \V[\dim(Z)]=\mathbb{M}^{B}_{i}(\V)$ and the comparison isomorphism satisfies  $\alpha_{\mathbb{M}_{i}(\V)}= i_*\alpha \circ \eta$.

Consider a locally finite covering $\mathfrak{U}$ of $X$ by small enough Oka-Weil domains. We have a canonical identification of filtered $\Ng$-Fr\'echet complexes
$$(\mathcal{C}^{\bullet}(\mathfrak{U}, l^2\pi_* i_*DR(\mathbb{M}^{DR}_{Z}(\V)), F)= (\mathcal{C}^{\bullet}(i^{-1}\mathfrak{U}, l^2\pi_* DR(\mathbb{M}^{DR}_{Z}(\V))) , F)$$
in particular it is a filtered quasi isomorphism.

It follows that there is a filtered quasi isomorphism 
$$ \Ug \otimes_{\Ng}(\mathcal{C}^{\bullet}(i^{-1}\mathfrak{U}, l^2\pi_* DR(\mathbb{M}^{DR}_{Z}(\V))) , F) \to \Ug \otimes_{\Ng}(\mathcal{C}^{\bullet}(\mathfrak{U}, l^2\pi_* \mathbb{M}^{DR}_{i}(\V), F).
$$
 which is compatible with the comparison isomorphism. Hence, the lemma is a consequence of Theorem \ref{HodgeComplex}. 
\end{proof}

It follows from the construction that the real Hodge structure on $\Ug \otimes_{\Ng} H^{q}(\wt{X}, L^2dR (\mathbb{M}^{DR}_{i}(\V) )$ is the same as the one on $\Ug \otimes_{\Ng} H^{q}(\wt{Z}, L^2dR (\mathbb{M}^{DR}_{Z}(\V) )$. 
The first part of Theorem \ref{thm2} is proved.

\subsection{Comparison with Dingoyan's work} In this subsection, we finish the proof of the second case of Theorem \ref{thm2}. 

\begin{defi} [\cite{DelH3}]
 A Mixed Hodge complex of $\Ug$-modules with real structure  is a triple $((A^{\bullet}, W), (B^{\bullet}, W, F), \gamma)$ 
 where $A^{\bullet}$ is a biregular increasingly filtered complex of $\Ug$-modules with real structure, $\gamma: (A^{\bullet}, W) \to (B^{\bullet},W)$ 
 an isomorphism in the filtered derived category of $\Ug$-modules such that, for all $k\in \Z$,  $$(Gr^k_W A^{\bullet},  (Gr^k_W B^{\bullet}, Gr^k_W F), Gr^k_W\gamma)$$ is 
 Hodge complexes of weight $k$ with real structure. 
\end{defi}

\begin{lem}[\cite{DelH3}] If  $((A, W), (B, W, F), \gamma)$  is a  Mixed Hodge complex of $\Ug$-modules with real structure, for all $n\in \Z$
$$(H^n (A), \mathrm{Im}(H^n(W)\to H^n(A)), H^n(\gamma)^{-1} (F_{H^n (B)}), (H^n(\gamma)^{-1} (F_{H^n (B)}))^{\dagger})$$ 
where $F_{H^n (B)}=\mathrm{Im}(H^n(F) \to H^n (B))$ is a real $\Ug$ mixed Hodge structure. 
 
\end{lem}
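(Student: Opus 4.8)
The statement is the transcription, to the category of finitely presented real $\Ug$-modules, of Deligne's theorem that the cohomology of a mixed Hodge complex carries a mixed Hodge structure \cite[Scholie 8.1.9]{DelH3}; see also \cite{DelH2} for the two-filtration input. The plan is therefore not to prove anything new but to observe that Deligne's argument is purely formal: it takes place in an arbitrary $\C$-linear abelian category equipped with a conjugation, and uses only (a) the canonical filtration and the décalage functor $\mathrm{Dec}$ on filtered complexes, (b) the ``lemme des deux filtrations'' \cite[(1.3.3)--(1.3.16)]{DelH2}, which is an abelian-category statement needing no modification, and (c) the fact that pure Hodge structures of a fixed weight form an abelian category on which the forgetful functor to filtered objects is exact; then one checks that (a)--(c) are available here.

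First I would pin down the ambient category. Since $\Ug$ is von Neumann regular and coherent \cite{Luc}, its finitely presented modules coincide with its finitely generated projective ones, form an abelian category, and every finitely generated submodule of such a module is again finitely presented and is a direct summand. Putting on objects a conjugate-linear involution compatible with $\dagger$ yields the $\R$-linear abelian category $\mathcal{A}$ of finitely presented real $\Ug$-modules, and by our construction $H^n(A),H^n(B)\in\mathcal{A}$. For $H\in\mathcal{A}$ and a filtration $F$, write $\bar F$ for its image under the real structure; call $(H,F)$ a real Hodge structure of weight $n$ when $F$ and $\bar F$ are $n$-opposed, i.e. $Gr^p_F Gr^q_{\bar F} H = 0$ for $p+q\neq n$. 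Because finitely generated submodules of finitely presented $\Ug$-modules are direct summands, the decomposition $H=\bigoplus_{p+q=n}(F^p\cap\bar F^q)$ and all the attendant splitting statements go through exactly as over a field; hence real Hodge structures of weight $n$ form an abelian subcategory of the filtered objects of $\mathcal A$, morphisms between them are strict for $F$ and for $\bar F$, and a morphism between two such of different weights is zero. Compatibility of this picture with the one in $E_f(\Gamma)$, when one wants it, is Lemma \ref{ugtensclosure}.

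With (a)--(c) secured, I would run Deligne's proof verbatim. From the mixed Hodge complex one forms the weight spectral sequence ${}_WE_1^{p,q}=H^{p+q}\big(Gr^W_{-p}B^\bullet\big)$; by hypothesis each $Gr^W_k(A,B,\gamma)$ is a Hodge complex of weight $k$, so every ${}_WE_1$-term is a pure Hodge structure and every $d_1$ is a morphism of pure Hodge structures of a fixed weight, hence strict for $F$; since morphisms between pure Hodge structures of different weights vanish, $d_2=0$ and the weight spectral sequence degenerates at $E_2$. One then equips $H^n(B)$ with the three filtrations $W[n]$, $F$, $\bar F$ induced by $W$ and $F$, and uses $\mathrm{Dec}$ together with the lemme des deux filtrations to show that $F$ and $\bar F$ each induce on $Gr^{W[n]}_k H^n(B)$ the recurrent filtration, which by the $E_2$-degeneration agrees with the directly induced one; combined with the purity of the ${}_WE_1$-terms this says that $\big(Gr^{W[n]}_k H^n(B),F,\bar F\big)$ is pure of weight $k$, i.e. $(H^n(B),W[n],F)$ is a real mixed Hodge structure in $\mathcal A$, with Hodge numbers bounded as in \cite{DelH2,DelH3}. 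Transporting along $H^n(\gamma)^{-1}$, replacing $W[n]$ by $\mathrm{Im}(H^n(W)\to H^n(A))$ and $F$ by $H^n(\gamma)^{-1}(F_{H^n(B)})$, produces precisely the quadruple in the statement; the real structure on $A$ makes $W$ real and interchanges $F$ with $F^\dagger=\bar F$, so all the data are $\dagger$-compatible and functorial.

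The one point that is not bookkeeping — hence the main obstacle — is the claim in the second paragraph that real Hodge structures of a given weight form an abelian category with exact forgetful functor, equivalently that an $n$-opposed pair of filtrations on a finitely presented real $\Ug$-module splits it functorially as $\bigoplus_{p+q=n}(F^p\cap\bar F^q)$ with finitely presented summands. This is exactly where the special structure of $\Ug$ (von Neumann regularity, the dimension function, self-injectivity, as in \cite{Luc}) is used; granting it, the rest is Deligne's argument reproduced word for word and no further input is required.
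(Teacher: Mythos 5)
Your proposal is correct and is exactly the route the paper intends: the lemma is stated with no proof beyond the citation of Deligne, precisely because Deligne's argument (d\'ecalage, lemme des deux filtrations, strictness for pure objects) is categorical and applies verbatim in the abelian category of finitely presented real $\Ug$-modules. The only remark worth adding is that the step you single out as the main obstacle --- that $n$-opposed filtrations split the object and that weight-$n$ Hodge structures form an abelian category with strict morphisms --- is itself already proved by Deligne in an \emph{arbitrary} abelian category (Th\'eorie de Hodge II, \S 1.2), so beyond the coherence and von Neumann regularity of $\Ug$ guaranteeing that finitely presented modules form an abelian category with finitely presented filtration steps, no special operator-algebraic input is needed.
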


\begin{lem}\label{reducpure}
 Let  $X$ be a compact K\"ahler manifold and $\mathbb{M}$ such that the $Gr_{W}^k$ satisfy Conjecture \ref{theconj}. Then $\mathbb{M}$ satisfies Conjecture \ref{theconj}.
\end{lem}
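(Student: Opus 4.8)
The statement asserts that Conjecture \ref{theconj} is "constructible" along the weight filtration: if every graded piece $Gr_W^k\mathbb{M}$ satisfies the conjecture, so does $\mathbb{M}$. The natural approach is to assemble a Mixed Hodge complex of $\Ug$-modules out of the (already known) pure Hodge complexes attached to the $Gr_W^k$, and then invoke the preceding lemma of Deligne type to conclude that the cohomology carries a $\Ug$-mixed Hodge structure. Concretely, I would proceed as follows.

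First, fix a finite covering $\mathfrak{U}$ of $X$ by small enough Oka-Weil domains as in the rest of the section, and realize the $L_2$ de Rham side by the filtered complex $(\mathcal{C}^{\bullet}(\mathfrak{U}, l^2\pi_* DR(\mathbb{M}^{DR})), F)$, equipped in addition with the increasing filtration $W$ induced by the weight filtration of $\mathbb{M}^{DR}$ (a good filtration in each $W$-step, since subobjects and quotients of holonomic $\Dx$-modules with good filtration inherit one). On the Betti side take $(\mathcal{C}^{\bullet}(\mathfrak{V}, l^2\pi_* W_{\bullet}\mathbb{M}^{Betti}))$ for a triangulation $\mathfrak{V}$ refining $\mathfrak{U}$, with $W$ the weight filtration of the perverse sheaf; the isomorphism $\alpha$ together with the lift $rh_2$ constructed in Section \ref{liftrh} gives a filtered comparison morphism $\gamma$ in the filtered derived category of $\Ng$-Fr\'echet modules, and after applying the flat functor $\Ug\otimes_{\Ng}-$ we obtain a candidate triple $((A^{\bullet},W),(B^{\bullet},W,F),\gamma)$ of $\Ug$-modules with real structure. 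The point to check is that this is a \emph{Mixed Hodge complex} in the sense of the definition just recalled, i.e. that for each $k$ the triple $(Gr^k_W A^{\bullet},(Gr^k_W B^{\bullet},Gr^k_W F),Gr^k_W\gamma)$ is a pure Hodge complex of weight $k$.

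For the graded pieces, the key identification is that $Gr^k_W$ of the filtered $\Dx$-complex $DR(\mathbb{M}^{DR})$ together with its Hodge filtration computes, up to a differential filtered quasi-isomorphism, the $L_2dR$ of the pure Hodge module $Gr_W^k\mathbb{M}$ — this is a formal consequence of $DR$ being exact on the (abelian) category of regular holonomic modules and of the compatibility of the Hodge and weight filtrations in an MHM, so that $Gr^k_W DR(\mathbb{M}^{DR}) = DR(Gr^k_W\mathbb{M}^{DR})$ with its induced filtration, and similarly on the Betti side $Gr^k_W$ of the perverse weight filtration is the perverse sheaf underlying $Gr_W^k\mathbb{M}$. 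Taking $\mathcal{C}^{\bullet}(\mathfrak{U},l^2\pi_*-)$ commutes with these graded operations since $l^2\pi_*$ is exact (Lemma of Section \ref{sec2}) and \v{C}ech cohomology for a fixed covering is exact on the relevant sheaves. Hence, after $\Ug\otimes_{\Ng}-$, the $k$-th graded triple is exactly the triple whose being a $\Ug$-Hodge complex of weight $k$ is the content of Conjecture \ref{theconj} for $Gr_W^k\mathbb{M}$, which is our hypothesis. Applying the Deligne-style lemma then endows $H^n$ of the total complex — i.e. $\Ug\otimes_{\Ng}H^n(\wt X,L_2dR(\mathbb{M}))$, after passing to reduced cohomology and closures as in Conjecture \ref{L2hlpack} — with a real $\Ug$-mixed Hodge structure, with weight filtration the image of $H^n(W)$ and Hodge filtration the image of $H^n(F)$ transported by $rh_2$; functoriality in $\mathbb{M}$ is inherited from the functoriality of all the constructions involved, and the Hodge-number bounds follow from those on the pure pieces together with the standard behaviour of mixed Hodge complexes.

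\textbf{Main obstacle.} The delicate point is not the abstract assembly but the \emph{strictness} built into the notion of Mixed Hodge complex: one must know that the $W$-spectral sequence of the $\Ug$-complex $(A^{\bullet},W)$ degenerates appropriately (equivalently, that passing to $Gr_W$ commutes with taking cohomology), and for the resulting bifiltration on $H^n$ to be a genuine mixed Hodge structure one needs, via Lemma \ref{ugtensclosure}, that $F$ and $F^{\dagger}$ become $n$-opposed on $Gr_W^n$ after tensoring with $\Ug$. The degeneration is exactly what the hypothesis on the $Gr_W^k$ buys us — it is part of "being a Hodge complex of weight $k$" — so the argument is really a bookkeeping exercise provided one is careful that $\Ug\otimes_{\Ng}-$, being exact, preserves strictness and that the filtered quasi-isomorphisms of the underlying $\Ng$-Fr\'echet complexes (constructed in Sections \ref{vncohd}--\ref{liftrh}) are genuinely \emph{filtered} with respect to $W$ as well as $F$. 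Ensuring this double-filtered compatibility at the level of the explicit \v{C}ech models, rather than merely in the derived category, is where the one real technical check lies.
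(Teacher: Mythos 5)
Your proposal is correct and follows essentially the same route as the paper: the paper's proof consists precisely of asserting that the triple $\Ug\otimes_{\Ng}\bigl((R\Gamma(X,l^2\pi_*\mathbb{M}^B),W),(\mathcal{C}^{\bullet}(\mathfrak{U},l^2\pi_*DR(\mathbb{M}^{DR})),W,F),rh_2\bigr)$ is a $\Ug$-Mixed Hodge Complex, the graded pieces being handled by the hypothesis on the $Gr_W^k$, and then invoking the preceding Deligne-style lemma. Your write-up merely makes explicit the bookkeeping (exactness of $l^2\pi_*$, commutation with $Gr_W$, double-filtered compatibility of $rh_2$) that the paper compresses into ``we see immediately.''
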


\begin{proof} Under these hypotheses, we see immediately that:
 $$\Ug \otimes_{\Ng}( R\Gamma(X, l^2\pi_* \mathbb{M}^B, W), (\mathcal{C}^{\bullet}(\mathfrak{U}, l^2\pi_* \mathbb{M}^{DR}_{i}(\V), W,  F), rh_2)$$ is a  $\Ug$-Mixed Hodge  Complex. 
\end{proof}

Thanks to \cite{SaiMHM} -see \cite[Example 5.4]{Sch} for one smooth divisor- the second case of Theorem \ref{thm1} follows from the first case and Lemma \ref{reducpure}.  

The third one also follows using some of the properties of  
 of Verdier duality on Mixed Modules, see \cite{SaiMHM}. Indeed $Rj_!j^{-1} \mathbb{M}_X(\mathbb{V})= \mathbb{D} Rj_*j^{-1} {M}_X(\mathbb{V}^{\vee})$ hence $Rj_!j^{-1} \mathbb{M}_X(\mathbb{V})$ is a Mixed Hodge Module. 
Furthermore $\mathbb{D} (Gr_W^k \mathbb{M})= Gr_W^{-k} (\mathbb{D}(\mathbb{M}))$ and $\mathbb{D}(\mathbb{M}_i(\mathbb{V}))= \mathbb{M}_i(\mathbb{V}^{\vee})$.

\section*{Appendix: Fr\'echet Sheaves and the functor $\mathcal{C}$}

\centerline{{\it{\v{C}ech model.}}}

Given $B$ a Banach algebra and $X$ a secound countable locally compact topological space, a $B$-Fr\'echet sheaf is just a sheaf taking its values in
the category of Fr\'echet spaces with a continuous action of $B$. A coherent analytic sheaf on a complex analytic space is a $\C$-Fr\'echet sheaf \cite[Chapter VIII]{GR}, 
in fact a sheaf of Fr\'echet modules over
the structure sheaf which is a 
$\C$-Fr\'echet sheaf of algebras, see \cite{Ho, Sc2}. 

Given $\mathcal{F}$ a $\Gamma$-equivariant
coherent analytic sheaf on a proper $\Gamma$-complex manifold $\wt{X}$, the sheaf
$l^2\pi_*\mathcal{F}$ is a Fr\'echet sheaf of $\Ng$-modules as follows from the construction in \cite{CamDem,Eys} but it is not Montel in the sense of \cite{GR}. 

The sheaf $l^2\pi_*\mathcal{F}$
is not Montel when $\Gamma$ is infinite. There must be a good concept of  $\Gamma$-Montel sheaves (see \cite{Eys} for the corresponding notion of $\Gamma$-compactness and cp. \cite[p. 235]{GR}) but we don't want to try and develop it. 
It will be enough for our present purposes to use the ad hoc theory given in \cite{Eys}.  

For a locally finite covering $\mathfrak{U}$ of $\Gamma \backslash \wt{X}$ by small enough Oka-Weil domains \cite[p. 211]{GR}  we can define
$\mathcal{C}^{\bullet}(\mathfrak{U}, l^2\pi_*\mathcal{F})$ 
the \v{C}ech complex of $l^2\pi_*\mathcal{F}$.  
Here, an open subset $\Omega$ is {\em small enough} if and only if
 the preimage in $\wt{X}$ is a disjoint union of
open subsets finite over $\Omega$.

By a standard application of Leray's theorem,  $\mathcal{C}^{\bullet}(\mathfrak{U}, l^2\pi_*\mathcal{F})$ computes the cohomology of $l^2\pi_*\mathcal{F}$.  

The complex $\mathcal{C}^{\bullet}(\mathfrak{U}, l^2\pi_*\mathcal{F})$ does not depend on $\mathfrak{U}$ in the derived category $D$ of the exact category of
$\Ng$-Fr\'echet modules (cf. Remark \ref{tapia}) hence in the derived category of the abelian category of $\Ng$-modules.  In \cite{Eys} it is proved  that the functor defined $D^b \coh\to DMod(\Ng)$ defined by $\mathcal{F}\mapsto \mathcal{C}^{\bullet}(\mathfrak{U}, l^2\pi_*\mathcal{F})$
at the level of complexes
lifts uniquely and functorially to $D^b(E_f(\Gamma))$ under the natural functor $D^b(E_f(\Gamma)) \to D  Mod(\Ng)$ if $\Gamma \backslash \wt{X}$ is compact. More generally, this holds if $\Gamma\backslash Supp(\mathcal{F})$ is compact.
We denote by $\mathcal{C}: D^b\coh \to D^b(E_f(\Gamma)))$ the resulting functor.

The main ingredient is a construction of a quasi-isomorphism 
$$K^{\bullet}\to \mathcal{C}^{\bullet}(\mathfrak{U}, l^2\pi_*\mathcal{F})$$
from a bounded complex  of projective finite-type Hilbert $\Gamma$-modules $K^{\bullet}$  representing  $\mathcal{C}(\mathcal{F})$. 

To this end,  one uses that the $\Gamma$-Fr\'echet space  $l^2\pi_*\mathcal{F}(\Omega)$ of an open subset $\Omega \subset X$
is the inverse limit of a sequence of Hilbert $\Gamma$-modules with $\Gamma$-compact transition maps  
and the fact that Hilbert $\Gamma$-modules are projective in $E(\Gamma)$. 

Actually one has to do something slightly more complicated, one has a germ at $t=0$ of an increasing family of coverings $\mathfrak{U}_t$ defined for $t\ge 0$ 
such that $\mathfrak{U}=\mathfrak{U}_0$ and: $$C_*(t)=\mathcal{C}^{\bullet}(\mathfrak{U}_t, l^2\pi_*\mathcal{F}) \to \mathcal{C}^{\bullet}(\mathfrak{U}_{t'}, l^2\pi_*\mathcal{F})  $$ is a quasi-isomorphism for $t'\le t$ 
and there there is a germ at $t=0$ 
of a decreasing family of Hilbert $\Gamma$-modules  $(C(t)_{t>0})$  such that $C(t) \to C(t')$ is $\Gamma$-compact for $t>t'$ and $C_*(t)=\varprojlim_{t'<t} C(t')$ if $t>0$. 
The family $(C(t))_{t>0}$ is an inessential auxiliary datum but it is instrumental to the construction of $K^{\bullet}$,  its essential uniqueness and  its functorial properties. 
At least when $\mathcal{F}$ is locally free, 
 $C(t)$ is the subspace of $L_2$ \v{C}ech cochains in $C_*(t)$, $L_2$ being measured with respect to a smooth volume form.

\vskip 0.2cm
\centerline{{\it{Dolbeault model.}}}

In the case where  $\mathcal{F}$ is locally free and $\wt{X}$ is smooth there is a much better way to proceed. Define 
$$Dolb_2^k(\wt{X}, \mathcal {F})=\{ s \in L_{loc}^2(\wt{X}, \mathcal{E}^{0,q} (\mathcal{F})), \ \quad \int_{\wt{X}} \| s\|^2+ \| \bar\partial s \|^2 <+\infty \}$$
where the norms and volume form are computed with respect to a $\Gamma$-equivariant hermitian metric on $\mathcal{F}$ and a $\Gamma$-equivariant hermitian metric on $\wt{X}$

Indeed it is very natural to use the natural map of complexes of $\Ng$-Fr\'echet modules which will be refered to as $CD_2$,  the \v{C}ech-Dolbeault comparison map :

$$ \mathcal{C}^{\bullet}(\mathfrak{U}, l^2\pi_*\mathcal{F}) \to Dolb_2^{\bullet}(\wt{X}, \mathcal {F}))=(\ldots \to Dolb_2^k(\wt{X}, \mathcal {F})) \buildrel{\bar \partial}\over\longrightarrow Dolb_2^{k+1}(\wt{X}, \mathcal {F}) \to \ldots)
$$

attached to a smooth partition of unity $(\phi_{\alpha})$ subordinate to $\mathfrak{U}$
sending the $q$-cochain $(s_{\alpha_{0} \alpha(1) \ldots \alpha(q)})_{|\alpha|=q}$ to the twisted  $(0,q$ form:
$$\sum_{|\alpha|=q} s_{\alpha_{0} \alpha(1) \ldots \alpha(q)} \bar \partial \phi_{\alpha(0)} \wedge \ldots  \bar \partial \phi_{\alpha(q-1)} . \phi_{\alpha(q)}. $$

The complex $Dolb_2^{\bullet}(\wt{X}, \mathcal {F})) $ is complex of separable $\Gamma$-Hilbert modules and the resulting map $K^{\bullet} \to Dolb_2^{\bullet}(\wt{X}, \mathcal {F})) $ is an algebraic isomorphism
induced by a continuous maps at level of the representatives hence $Dolb_2^{\bullet}(\wt{X}, \mathcal {F}))  \in D^b_{E_f(\Gamma)} E_{sep}(\Gamma)$ and is  quasi-isomorphic to $\mathcal{C}(\mathcal{F})\in D^b E_{f}(\Gamma)$. 
\vskip 0.2cm
\centerline{{\it{\v{C}ech-De Rham comparison map}}}

In a similar fashion, if $\mathbb{V}$ is a local system on $X$, and the intersections of elements of $\mathfrak{U}$ are contractible,   we can construct, as in \cite{Dod}, a quasi-isomorphism of separable 
projective Hilbert $\Gamma$-modules in the essential image of $D^bE_f(\Gamma)$ 
 $$CDR_2: \mathcal{C}^{\bullet}(\mathfrak{U}, l^2\pi_*\mathbb{V}) \to L^2DR^{\bullet} (\wt{X}, \mathcal{V}).$$
 
Given a smooth partition of unity $(\phi_{\alpha})$ subordinate to $\mathfrak{U}$, it is defined by
sending the \v{C}ech $q$-cochain $(s_{\alpha_{0} \alpha(1) \ldots \alpha(q)})_{|\alpha|=q}$  to the twisted  $q$ form:
$$\sum_{|\alpha|=q} s_{\alpha_{0} \alpha(1) \ldots \alpha(q)} d \phi_{\alpha(0)} \wedge \ldots  d \phi_{\alpha(q-1)} . \phi_{\alpha(q)}. $$

\vskip 0.2cm

\section*{Concluding remarks}

Assume now $X$ be a complex analytic space that need not be smooth nor reduced. Once one knows that $\dim_{\Ng} H^q_2(\wt{X}, l^2\pi_* \mathcal{F})<+\infty$, e.g. that it lies in the essential image of $E_f(\Gamma)$, 
for all $\mathcal{F} \in \coh$, two purely algebraic properties, 
the special model of the Leray spectral sequence used in \cite[section 6.1]{Eys}, whose main feature is that it comes from a $E_f(\Gamma)$-spectral sequence, 
can be replaced with the usual Leray spectral sequence and L\"uck's theory of $\dim_{\Ng}$ to prove the version of Atiyah's $L_2$-index theorem given in \cite[Theorem 6.2.1]{Eys}. 
The abstract nonsense in \cite{Eys} needed to establish the functoriality of the lift to $E_f(\Gamma)$ can also be eliminated. 

We conclude by  stating a weaker form Theorem \ref{thm1} in terms of finite dimensional $\Ng$-modules, thus losing the information contained in Novikov-Shubin invariants. It follows from
the part of the proof of Theorem \ref{thm1} where global good filtrations are used (subsections \ref{vncohd}, \ref{liftrh}):

\begin{theoi}

Let $X$ be a complex manifold and $\widetilde{X} \to X$ be a Galois covering with Galois group $\Gamma$. 
Let $MD'(X)$  be the abelian category  whose objects are triples $$\mathbb{M}=(\mathcal{M}=\mathbb{M}^{DR}, P=\mathbb{M}^{Betti}, \alpha)$$ where $\mathcal{M}$ is a holonomic $\Dx$-module
supported on a $\Gamma$-cocompact analytic subspace,
 $P$ is a perverse sheaf of $\R$-vector spaces and $\alpha: P\otimes_{\R}\C \to DR(\mathcal{M})$ is an isomorphism in the derived category of sheaves and whose morphisms are the obvious ones.

There is a  
 $\partial$-functor which, on the Betti side, is compatible with proper direct images, satisfies Atiyah's $L_2$ index theorem:  
$$ L_2dR: D^b MD'(X) \to \mathrm{EssentialImage} (D^b(E_f(\Gamma)) \buildrel{\Phi}\over\to D^b Mod(\Ng))
$$
and for each $\mathbb{M}\in MD(X)$ and $q\in \Z$  functorial isomorphisms in $E_f(\Gamma)$ $$H^q(L_2dR(\mathbb{M})) \cong\mathbb{H}^q_{(2)}(\widetilde{X}, \mathbb{M}^{Betti})\cong\mathbb{H}^q_{DR, (2)}(\widetilde{X}, \mathbb{M}^{DR}).$$ 

The functor $\Ug\otimes_{\Ng}L_2dR : D^b MD'(X) \to  D^b Mod(\Ug)$ takes its values in the essential image of the the category of  complexes of finite type projective $\Ug$-modules, 
is compatible with proper direct images, satisfies Atiyah's $L_2$ index theorem and Poincar\'e-Verdier duality. 
\end{theoi}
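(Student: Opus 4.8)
The plan is to build $L_2dR$ entirely out of the Betti datum, so that the global good filtrations used in subsections~\ref{vncohd} and \ref{liftrh} are never needed, and to recover the de Rham description only at the level of $\Ng$-modules via Proposition~\ref{rhcomp}, before localizing at $\Ug$. First I would observe that $MD'(X)$ is abelian with kernels and cokernels computed componentwise: since $DR$ is $t$-exact on $\hol$, the two underlying morphisms of a morphism in $MD'(X)$ have kernels and cokernels matching under $\alpha$, and the condition ``$\mathbb{M}^{DR}$ holonomic and $\Gamma$-cocompactly supported'' is stable under sub-objects and quotients. Hence $\mathbb{M}\mapsto\mathbb{M}^{Betti}$ is an exact functor from $MD'(X)$ to $\R$-perverse sheaves on $X$ with $\Gamma$-cocompact support, and the composite of the induced triangulated functor $D^bMD'(X)\to D^b_{\R c}(X)$ (realization of $\R$-perverse sheaves), of $\pi^{-1}$, and of $\mathbb{H}^*_2(\wt X,-)$ from Proposition~\ref{subalrefcoh} — the latter applied to the $\Gamma$-cocompactly supported complex $\pi^{-1}\mathbb{M}^{Betti}$, after restriction to the $\Gamma$-cocompact subanalytic $\Gamma$-space carrying its support, exactly as in the final remark of Section~\ref{sec2} — is a $\partial$-functor $D^bMD'(X)\to D^b(E_f(\Gamma))$ with real structure. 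We let $L_2dR$ be this functor post-composed with $\Phi$; it lands in $\mathrm{EssentialImage}(\Phi)$, and $H^q(L_2dR(\mathbb{M}))\cong\mathbb{H}^q_{(2)}(\wt X,\mathbb{M}^{Betti})$ tautologically. Compatibility with proper direct images on the Betti side and Atiyah's $L_2$ index theorem are then the two supplementary assertions of Proposition~\ref{subalrefcoh}.

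For the second comparison isomorphism $\mathbb{H}^q_{(2)}(\wt X,\mathbb{M}^{Betti})\cong\mathbb{H}^q_{DR,(2)}(\wt X,\mathbb{M}^{DR})$, I would first apply $\mathbb{H}^*_2(\wt X,-)$ to the isomorphism $\pi^{-1}\alpha$ in $D^b_{c,\Gamma}(\wt X)$ to get, in $D^b(E_f(\Gamma))$, $\mathbb{H}^*_2(\wt X,\mathbb{M}^{Betti})\otimes_\R\C\cong\mathbb{H}^*_2(\wt X,DR(\mathbb{M}^{DR}))$, and then identify the right-hand side with $\mathbb{H}^*_{DR,(2)}(\wt X,\mathbb{M}^{DR})=H^*(X,l^2\pi_*DR(\mathbb{M}^{DR}))$. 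Since $\mathbb{M}^{DR}$ is holonomic, Proposition~\ref{rhcomp} — whose proof is local and uses only local good filtrations, which always exist — furnishes the sheaf-level quasi-isomorphism $Rl^2\Gamma\otimes_{\C\Gamma}\pi_!\pi^{-1}DR(\mathbb{M}^{DR})\to l^2\pi_*DR(\mathbb{M}^{DR})$, natural on $\hol$ by its Corollary. Taking \v{C}ech models of both sides over a $\Gamma$-cocompact covering by small enough Oka--Weil domains (the $l^2\pi_*$-side carrying such a model as recalled in the Appendix, the constructible side by Proposition~\ref{co}), the map $i_\pi$ is represented by a morphism of complexes with terms in $E(\Gamma)$ that is an $\Ng$-quasi-isomorphism; Corollary~\ref{algiso} then upgrades it to an isomorphism in $D^b(E_f(\Gamma))$, endowing $\mathbb{H}^q_{DR,(2)}(\wt X,\mathbb{M}^{DR})$ — a priori only an $\Ng$-module — with an $E_f(\Gamma)$-structure making the comparison an isomorphism there, functorially by naturality of $rh_{(2)}$ and exactness of $\mathbb{M}\mapsto\mathbb{M}^{DR}$. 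I expect \emph{this} step — checking that $i_\pi$ (and $\alpha$) descend to honest morphisms of complexes in $E(\Gamma)$ so that Corollary~\ref{algiso} applies, in the absence of a global good filtration — to be the main technical point; it is the $\Ng$-level counterpart of what subsection~\ref{liftrh} carries out with good filtrations.

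Next I would apply $\Ug\otimes_{\Ng}-$, exact by flatness of $\Ng\subset\Ug$, producing a $\partial$-functor $\Ug\otimes_{\Ng}L_2dR:D^bMD'(X)\to D^b\mathrm{Mod}(\Ug)$. To see that it takes values in the essential image of bounded complexes of finite type projective $\Ug$-modules, represent $L_2dR(\mathbb{M}^\bullet)$ by a bounded complex of objects of $E_f(\Gamma)$ and invoke the natural equivalence $\Ug\otimes_{\Ng}\Phi\simeq\Ug\otimes_{\Ng}\nu$ (the lemma preceding Lemma~\ref{ugtensclosure}) together with the equivalence $\nu$ between $E_f(\Gamma)$ and finitely presentable $\Ng$-modules: $\Ug\otimes_{\Ng}$ of a finitely presented $\Ng$-module is finitely presented over $\Ug$, hence — $\Ug$ being von Neumann regular — finite type projective. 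Compatibility with proper direct images again follows from Proposition~\ref{subalrefcoh} and flatness, and Atiyah's $L_2$ index theorem follows from the index theorem of Proposition~\ref{subalrefcoh} together with the facts that $\dim_\Gamma$ extends to $\Ug$-modules and is preserved by $\Ug\otimes_{\Ng}-$ on finite type Hilbert $\Gamma$-modules, so that $\sum_q(-1)^q\dim_\Gamma H^q(\Ug\otimes_{\Ng}L_2dR(\mathbb{M}))=\sum_q(-1)^q\dim_\Gamma\mathbb{H}^q_2(\wt X,\mathbb{M}^{Betti})=\sum_q(-1)^q\dim_\C\mathbb{H}^q(X,\mathbb{M}^{Betti})$.

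Finally, $MD'(X)$ carries a duality anti-equivalence $\mathbb{D}$ — holonomic duality on $\mathbb{M}^{DR}$, Verdier duality on $\mathbb{M}^{Betti}$, the support hypothesis being preserved — with $(\mathbb{D}\mathbb{M})^{Betti}=\mathbb{D}(\mathbb{M}^{Betti})$, so I would obtain Poincar\'e--Verdier duality for $\Ug\otimes_{\Ng}L_2dR$ from Proposition~\ref{duality}. That proposition yields a functorial perfect pairing of projective Hilbert $\Gamma$-modules $\overline{\mathbb{H}}^i_2(\wt X,\mathbb{M}^{Betti})\otimes\overline{\mathbb{H}}^{-i}_2(\wt X,\mathbb{D}(\mathbb{M}^{Betti}))\to\C$ compatible with real structures; since $H^q(\Ug\otimes_{\Ng}L_2dR(\mathbb{M}))=\Ug\otimes_{\Ng}\overline{\mathbb{H}}^q_2(\wt X,\mathbb{M}^{Betti})$ and $M\mapsto\mathrm{Hom}_{\Ug}(M,\Ug)$ is an exact duality anti-equivalence on finite type projective $\Ug$-modules ($\Ug$ being self-injective), tensoring with $\Ug$ turns this pairing into a natural isomorphism $\Ug\otimes_{\Ng}L_2dR(\mathbb{D}(\mathbb{M}^\bullet))\cong R\mathrm{Hom}_{\Ug}(\Ug\otimes_{\Ng}L_2dR(\mathbb{M}^\bullet),\Ug)$ up to the usual shift, the $R\mathrm{Hom}$ reducing to termwise $\mathrm{Hom}$ as all terms are projective; both sides being bounded complexes of finite type projective $\Ug$-modules with the same cohomology, they agree in $D^b\mathrm{Mod}(\Ug)$. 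The remaining bookkeeping — shifts, signs, and functoriality of $\mathbb{D}$ on $D^bMD'(X)$ — is routine.
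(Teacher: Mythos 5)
Your proposal is correct and follows essentially the route the paper intends: the paper's own ``proof'' of this theorem is only a pointer back to the Betti-side constructions (Propositions \ref{co}, \ref{subalrefcoh}, \ref{duality}), the local comparison of Proposition \ref{rhcomp}, and the $\Ug$-localization, observing that global good filtrations enter only in subsections \ref{vncohd} and \ref{liftrh}, which the weakened target $\mathrm{EssentialImage}(\Phi)$ allows one to bypass. You have in fact supplied more detail than the paper does, and correctly isolated the one genuinely delicate point (representing $i_\pi$ by a morphism of complexes in $E(\Gamma)$ so that Corollary \ref{algiso} applies, versus settling for the $\Ng$-level isomorphism and transporting the $E_f(\Gamma)$-structure).
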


\end{document}